\theoremstyle{plain}
\newtheorem{theo}{Theorem}[section]
\newtheorem{prop}[theo]{Proposition}
\newtheorem{lemm}[theo]{Lemma}
\newtheorem{coro}[theo]{Corollary}
\newtheorem{defi}[theo]{Definition}
\theoremstyle{definition}
\newtheorem{rema}[theo]{Remark}
\newtheorem{nota}[theo]{Notation}
\numberwithin{equation}{section}
\title[A paradifferential reduction for the gravity-capillary waves]{A paradifferential reduction for the gravity-capillary waves system at low regularity and applications}
\author{Thibault de Poyferr\'{e}}
\address{UMR 8553 du CNRS, Laboratoire de Math\'ematiques et Applications de l'Ecole Normale Sup\'erieure, 75005 Paris, France}
\email{tdepoyfe@dma.ens.fr}
\author{Quang-Huy Nguyen}
\address{UMR 8628 du CNRS, Laboratoire de Math\'ematiques d'Orsay, Univ. Paris-Sud, CNRS, Universit\'e Paris-Saclay, 91405 Orsay, France.}
\email{quang-huy.nguyen@math.u-psud.fr }
\date{}
\keywords{gravity-capillary waves, paradifferential reduction, blow-up criterion, a priori estimate, contraction of the solution map}
\DeclareMathOperator{\RE}{Re}
\DeclareMathOperator{\dive}{div}
\def \div {\dive}
\def\A{\mathcal{A}}
\def\B{\mathcal{B}}
\def\C{\mathcal{C}}
\def\wL{\widetilde L}
\def\l{\ell}
\def\Cs{C_*}
\def\d{\,\mathrm{d}}
\def\eps{\varepsilon}
\def\la{\left\vert}
\def\lA{\left\Vert}
\def\lb{\left[}
\def\lB{\left\{}
\def\lp{\left(}
\def\les{\lesssim}
\def\mez{\frac{1}{2}}
\def\tmez{\frac{3}{2}}
\def\ph{\varphi}
\def\R{\bm{\mathrm{R}}}
\def\ra{\right\vert}
\def\rA{\right\Vert}
\def\rb{\right]}
\def\rB{\right\}}
\def\rp{\right)}
\DeclareMathOperator{\cnx}{div}
\DeclareMathOperator{\dist}{dist}
\DeclareSymbolFont{pletters}{OT1}{cmr}{m}{sl}
\DeclareMathSymbol{s}{\mathalpha}{pletters}{`s}
\def\defn{\mathrel{:=}}
\def\Bl{B_{\infty, 1}}
\def\eps{\varepsilon}
\def\cM{\mathcal{M}}
\def\cH{\mathcal{H}}
\def\la{\left\vert}
\def\lA{\left\Vert}
\def\le{\leq}
\def\les{\lesssim}
\def\leo{}
\def\L#1{\langle #1 \rangle}
\def\eps{\varepsilon}
\def\mez{\frac{1}{2}}
\def\partialx{\nabla}
\def\partialyx{\nabla_{x,y}}
\def\ra{\right\vert}
\def\rA{\right\Vert}
\def\tdm{\frac{3}{2}}
\def\xC{\mathbf{C}}
\def\xN{\mathbf{N}}
\def\xR{\mathbf{R}}
\def\xZ{\mathbf{Z}}
\def\cF{ \mathcal{F}}
\def\Re{\text{Re}}
\newcommand{\bq}{\begin{equation}}
\newcommand{\eq}{\end{equation}}
\newcommand{\bqa}{\begin{eqnarray*}}
\newcommand{\eqa}{\end{eqnarray*}}
\newcommand{\hk}{\hspace*{.15in}}
\begin{document}
\def\smfbyname{}

\begin{abstract}
We consider in this article the system of gravity-capillary waves in all dimensions and under the Zakharov/Craig-Sulem formulation. Using a paradifferential approach introduced by Alazard-Burq-Zuily, we symmetrize this system into a quasilinear dispersive equation whose principal part is of order $3/2$. The main novelty, compared to earlier studies, is that this reduction is performed at the  Sobolev regularity of quasilinear pdes: $H^s(\xR^d)$ with $s>3/2+d/2$, $d$ being the dimension of the free surface. \\
\hk From this reduction, we deduce a blow-up criterion involving solely the Lipschitz norm of the velocity trace and the $C^{\frac 52+}$-norm of the free surface. Moreover, we obtain an a priori estimate in the $H^s$-norm and the contraction of the solution map in  the $H^{s-\tdm}$-norm using the control of a Strichartz norm. These results have been applied in establishing a local well-posedness theory for non-Lipschitz initial velocity in our companion paper \cite{NgPo}.
\end{abstract}
\maketitle
\tableofcontents
\section{Introduction}
\hk We consider the system of gravity-capillary waves describing the motion of a fluid interface under the effect of both gravity and surface tension. From the well-posedness result in Sobolev spaces  of Yosihara \cite{Yosihara} (see also Wu \cite{WuInvent, WuJAMS} for pure gravity waves) it is known that the system is quasilinear in nature. In the more recent work \cite{ABZ1}, Alazard-Burq-Zuily showed explicitly this quasilinearity by using a paradifferential approach (see Appendix \ref{Appendix}) to symmetrize the system into the following paradifferential equation
\bq\label{intro:reduce}
\big(\partial_t +T_{V(t, x)}\cdot\nabla +iT_{\gamma(t, x, \xi)} \big) u(t,x)=f(t,x)
\eq
where $V$ is the horizontal component of the trace of the velocity field on the free surface, $\gamma$ is an elliptic symbol of order $3/2$, depending only on the free surface. In other words, the transport part comes from the fluid and the dispersive part comes from the free boundary. The reduction \eqref{intro:reduce} was implemented for 
\bq\label{intro:reg:ABZ}
u\in L^\infty_tH^s_x\quad s>2+\frac{d}{2},
\eq
$d$ being the dimension of the free surface. It has many consequences, among them are the local well-posedness and smoothing effect in \cite{ABZ1}, Strichartz estimates in \cite{ABZ2}. As remarked in \cite{ABZ1}, $s>2+d/2$ is the minimal Sobolev index (in term of Sobolev's embedding) to ensure that the velocity filed is Lipschitz up to the boundary, without taking into account the dispersive property.  From the works of Alazard-Burq-Zuily \cite{ABZ3, ABZ4},  Hunter-Ifrim-Tataru \cite{HuIfTa} for pure gravity waves, it seems natural to require that {\it the velocity is Lipschitz} so that the particles flow is well-defined, in view of the Cauchy-Lipschitz theorem. On the other hand, from {\it the standard theory of quasilinear pdes}, it is natural to ask if the reduction \eqref{intro:reduce} holds at the Sobolev threshold $s>3/2+d/2$ and then, if a local-wellposedness theory holds at the same level of regularity?  The two observations above motivate us to study the gravity-capillary system at the following regularity level: 
\bq\label{intro:reg}
u\in \mathcal{X}:=L^\infty_tH^s_x\cap L^p_tW^{2, \infty}_x \quad\text{with}~
s>\frac{3}{2}+\frac{d}{2},
\eq
which exhibits a gap of $1/2$ derivative that may be filled up by Strichartz estimates. \eqref{intro:reg} means that on the one hand, the Sobolev regularity is that of quasilinear equations of order $3/2$; on the other hand, the  $L^p_tW^{2, \infty}_x$-norm ensures that the velocity is still Lipschitz for {\it a.e.} $t\in [0, T]$ (which is the threshold \eqref{intro:reg:ABZ} after applying Sobolev's embedding).\\
\hk By sharpening the analysis in \cite{ABZ1}, we shall perform the reduction \eqref{intro:reduce} assuming merely the regularity $\mathcal{X}$ of the solution. In order to do so, the main difficulty, compared to \cite{ABZ1}, is that further studies of the Dirichlet-Neumann operator in Besov spaces are demanded. Moreover, we have to keep all the estimates in the analysis to be {\it tame}, {\it i.e.}, linear with respect to the highest norm which is the H\"older norm in this case. \\
\hk  From this reduction, we deduce several consequences. The first one will be an {\it a priori estimate for the Sobolev norm}  $L^\infty_tH^s_x$ using in addition the Strichartz norm $L^p_tW^{2,\infty}_x$ (see Theorem \ref{intro:theo:apriori} below for an exact statement). This is an expected result, which follows the pattern established for other quasilinear equations. However, for water waves, it requires much more care due to the fact that the system is nonlocal and highly nonlinear. This problem has been addressed by Alazard-Burq-Zuily \cite{ABZ4} for pure gravity water waves. In the case with surface tension, though the regularity level is higher, it requires a more precise analysis of the Dirichlet-Neumann operator in that lower order terms in the expansion of this operator need to be taken into consideration (se Proposition \ref{prop:paralin:DN} below).\\
Another consequence will be a {\it blow-up criterion} (see Theorem \ref{intro:theo:blowup}), which implies that the solution can be continued as long as the $\mathcal{X}$-norm of $u$  remained bounded (at least in the infinite depth case) with $p=1$, {\it i.e.}, merely integrable in time. It also implies that, starting from a smooth datum, the solution remains smooth provided its $C^{2+}$-norm  is bounded in time. \\
\hk For more precise discussions, let us recall the Zakharov/Craig-Sulem formulation of water waves.
\subsection{The Zakharov/Craig-Sulem formulation}
 We consider an incompressible, irrotational, inviscid fluid with unit density moving in a time-dependent domain  
$$
\Omega = \{(t,x,y) \in[0,T] \times \xR^d \times \xR:(x, y)\in \Omega_t\} 
$$
where each $\Omega_t$ is a domain located underneath a free surface 
$$
\Sigma_t = \{(x,y)  \in \xR^d \times \xR: y=\eta(t, x)\} 
$$
and above a fixed bottom $\Gamma=\partial\Omega_t\setminus \Sigma_t$. We make the following separation assumption $(H_t)$ on the  domain at time~$t$:\\
{\it
$\Omega_t$ is the intersection of the half space 
\[
\Omega_{1,t}= \{(x,y)  \in \xR^d \times \xR: y<\eta(t, x)\} 
\]
and an open connected set $\mathcal{O}$ containing a fixed strip around $\Sigma_t$,  {\it i.e.}, there exists $h>0$ such that 
\bq\label{sepbot}
 \{(x,y)  \in \xR^d \times \xR: \eta(x)-h\le y\le\eta(t, x)\} \subset \mathcal{O}.
\eq
}
\hk The velocity field $v$ admits a harmonic potential $\phi:\Omega \to \xR$, {\it i.e.}, $v=\nabla \phi$ and $\Delta \phi=0$. Using the  idea of Zakharov, we introduce the trace of $\phi$ on the free surface
$$\psi(t,x)= \phi(t,x,\eta(t,x)).$$ 
 Then $\phi(t, x, y)$ is the unique variational solution to the problem
\bq
\Delta\phi =0\text{~in}~\Omega_t,\quad \phi(t, x, \eta(t, x))=\psi(t, x),\quad \partial_n\phi(t)\rvert_\Gamma=0.
\eq
The Dirichlet-Neumann operator is then defined by
\bq\label{defi:DN:intro}
\begin{aligned}
G(\eta) \psi &= \sqrt{1 + \vert \nabla_x \eta \vert ^2}
\Big( \frac{\partial \phi}{\partial n} \Big \arrowvert_{\Sigma}\Big)\\
&= (\partial_y \phi)(t,x,\eta(t,x)) - \nabla_x \eta(t,x) \cdot(\nabla_x \phi)(t,x,\eta(t,x)).
\end{aligned}
\eq
The gravity-capillary water waves problem with surface tension consists in solving the following so-called Zakharov-Craig-Sulem system of $(\eta,\psi)$
\begin{equation}\label{ww}
\left\{
\begin{aligned}
&\partial_t \eta = G(\eta) \psi,\\
&\partial_t \psi =- g\eta-H(\eta)-\mez \vert \nabla_x \psi \vert^2 + \mez \frac{(\nabla_x \eta \cdot \nabla_x \psi + G(\eta)\psi)^2}{1+ \vert \nabla_x \eta \vert^2}.
\end{aligned}
\right.
\end{equation}
Here, $H(\eta)$ denotes the mean curvature of the free surface:
\[
H(\eta)=-\cnx\Big( \frac{\nabla\eta}{\sqrt{1+|\nabla\eta|^2}}\Big).
\]
The vertical and horizontal components of the velocity on $\Sigma$  can be expressed in terms of $\eta$ and $\psi$ as
\begin{equation}\label{BV}
B = (v_y)\arrowvert_\Sigma = \frac{ \nabla_x \eta \cdot \nabla_x \psi + G(\eta)\psi} {1+ \vert \nabla_x \eta \vert^2},\quad V= (v_x)\arrowvert_\Sigma  =\nabla_x \psi - B \nabla_x \eta.
 \end{equation}
As observed by Zakharov (see \cite{Zakharov1968} and the references therein), \eqref{ww} has a Hamiltonian canonical Hamiltonian structure
\[
\frac{\partial\eta}{\partial t}=\frac{\delta \mathcal{H}}{\delta\psi},\quad \frac{\partial\psi}{\partial t}=-\frac{\delta\mathcal{H} }{\delta\eta},
\]
where the Hamiltonian $\mathcal{H}$ is the total energy given by 
\bq\label{Hamiltonian}
\mathcal{H}=\frac{1}{2}\int_{\xR^d} \psi G(\eta)\psi\ dx+\frac{g}{2}\int_{\xR^d}\eta^2 dx+\int_{\xR^d}\big(\sqrt{1+|\nabla\eta|^2}-1\big)dx.
\eq
\subsection{Main results}
The Cauchy problem has been extensively studied, for example in Nalimov \cite{Nalimov}, Yosihara \cite{Yosihara},  Coutand- Shkoller \cite{CS}, Craig \cite{Craig1985}, Shatah-Zeng \cite{SZ1, SZ2, SZ3},  Ming-Zhang \cite{MiZh},  Lannes \cite{LannesLivre}: for sufficiently smooth solutions and  Alazard-Burq-Zuily \cite{ABZ1} for solutions at the energy threshold. See also Craig \cite{Craig1985}, Wu \cite{WuInvent, WuJAMS}, Lannes \cite{LannesJAMS} for the studies on gravity waves. Observe that the linearized system of \eqref{ww} about the rest state $(\eta=0, \psi=0)$ (modulo a lower order term, taking $g=0$) reads
\[
\begin{cases}
\partial_t\eta-|D_x|\psi=0,\\
\partial_t\psi-\Delta \eta=0.
\end{cases}
\]
Put $\Phi=|D_x|^\mez\eta+i\psi$, this becomes 
\bq\label{eq:lin}
\partial_t\Phi +i|D_x|^\tdm \Phi=0.
\eq
Therefore, it is natural to study \eqref{ww} at the following algebraic scaling
\[
(\eta, \psi)\in H^{s+\mez}(\xR^d)\times H^s(\xR^d).
\]
From the formula \eqref{BV} for the velocity trace, we have that the Lipschitz threshold in \cite{ABZ1} corresponds to $s> 2+d/2.$  On the other hand, the threshold $s>3/2+d/2$ suggested by the quasilinear nature \eqref{intro:reduce} is also {\it the minimal Sobolev index to ensure that the mean curvature  $H(\eta)$ is bounded}. The question we are concerned with is the following:\\
\hk ({\bf Q}) Is the Cauchy problem for \eqref{ww} solvable for initial data 
\bq\label{intro:data}
(\eta_0, \psi_0)\in H^{s+\mez}\times H^s,\quad s>\tmez+\frac d2?
\eq
Assume now that 
\bq\label{intro:reg0}
(\eta,\psi)\in L^{\infty}\lp[0, T]; H^{s+\mez}\times H^s\rp\cap L^p\lp[0, T]; W^{r+\mez, \infty}\times W^{r, \infty}\rp
\eq
with 
\bq\label{intro:reg}
 s>\tdm+\frac{d}{2},\quad r >2
\eq
 is a solution with prescribed data as in \eqref{intro:data}. We shall prove in Proposition \ref{singleeq:Phi} that the quasilinear reduction \eqref{intro:reduce} of system \eqref{ww} still holds with the right-hand-side term $f(t, x)$ satisfying a tame estimate, meaning that it is linear with respect to the H\"older norm. To be concise in the following statements, let us define the quantities that control the system (see Definition \ref{defi:PL} for the definitions of functional spaces):
\begin{align*}
&\text{Sobolev norms}:~M_{\sigma,T}=\Vert (\eta, \psi)\Vert_{L^{\infty}([0, T]; H^{\sigma+\mez}\times H^\sigma)},~ M_{\sigma,0}=\Vert (\eta_0, \psi_0)\Vert_{H^{\sigma+\mez}\times H^\sigma},\\
& \text{``Strichartz norm''}:N_{\sigma,T}=\Vert (\eta, \nabla\psi)\Vert_{L^1([0, T]; W^{\sigma+\mez, \infty}\times \Bl^1)}.
\end{align*}
\hk Our first result concerns an a priori estimate for the Sobolev norm $M_{s, T}$ in terms of itself and the Strichartz norm $N_{r,T}$.
\begin{theo}\label{intro:theo:apriori}
Let~$d\geq 1$, $h>0$, $r>2$ and $s>\tdm+\frac{d}{2}$. Then there exists a  nondecreasing function~$\cF:\xR^+\to \xR^+$, depending only on $(d, s, r, h)$, such that: for all $T\in (0, 1]$ and all $(\eta, \psi)$ solution to \eqref{ww} on $[0, T]$ with
\begin{align*}
&(\eta,\psi)\in L^{\infty}\lp[0, T]; H^{s+\mez}\times H^s\rp,\\
&(\eta, \nabla\psi)\in L^1\lp[0, T]; W^{r+\mez, \infty}\times \Bl^1\rp,\\
&\inf_{t\in [0,T]}\dist(\eta(t), \Gamma)>h,
\end{align*}
 there holds
\[
 M_{s, T}\leq\cF\big( M_{s,0}+T\cF(M_{s, T})+N_{r,T}\big).
\]
\end{theo}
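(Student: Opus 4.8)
The strategy is the one now standard for quasilinear dispersive equations -- reduce \eqref{ww} to a single scalar paradifferential equation, run a symmetrized energy estimate on it, then undo the reduction -- the point being that every step has to be carried out tamely.

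\textbf{Step 1 (reduction).} First I would invoke the paradifferential reduction of Proposition \ref{singleeq:Phi}: after the change of unknown $\omega=\psi-T_B\eta$ and a further paradifferential conjugation, $(\eta,\psi)$ is encoded in a complex unknown $\Phi$, comparable to $(\eta,\psi)$ in $H^s$, solving
\[
\partial_t\Phi+T_V\cdot\nabla\Phi+iT_\gamma\Phi=f,
\]
where $V$ is the horizontal velocity trace, $\gamma=\gamma^{(3/2)}+\gamma^{(1/2)}$ is elliptic of order $3/2$ with $\gamma^{(3/2)}$ real and $\gamma^{(1/2)}$ chosen so that $iT_\gamma$ is skew-adjoint modulo an operator of order $\le 0$, and the source obeys the tame bound $\lA f\rA_{L^1([0,T];H^s)}\le\cF(M_{s,T})(T+N_{r,T})$. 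The essential input is the sharpened paralinearization of the Dirichlet--Neumann operator, Proposition \ref{prop:paralin:DN}, which controls not only its principal symbol but also the first subprincipal term, with tame estimates in Besov spaces -- it is this, together with the threshold for $H(\eta)$ to be bounded, that forces $r>2$. Using the elliptic regularity of $G(\eta)$ and the hypothesis $\dist(\eta(t),\Gamma)>h$, the change of unknown is tamely invertible: $M_{s,T}$ and $\lA\Phi\rA_{L^\infty([0,T];H^s)}$ are equivalent up to terms $\cF(M_{s,T})$ built from lower-order Sobolev norms, and $\lA\Phi(0)\rA_{H^s}\le\cF(M_{s,0})$. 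It is convenient to first prove the estimate for smooth solutions (regularizing by a mollifier commuted through the equation), the general case following by approximation.

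\textbf{Step 2 (energy estimate).} Then I would estimate $E_s(t):=\lA\jap^s\Phi(t)\rA_{L^2}^2$ (or a weighted variant adapted to $\gamma^{(3/2)}$) by differentiating in time and inserting the reduced equation; commuting $\jap^s$ through, $\tder E_s$ splits into transport, dispersive and source contributions. For the transport part one integrates by parts in the paraproduct: $T_V\cdot\nabla+(T_V\cdot\nabla)^*=-T_{\dive V}+(\text{order}\le0)$ and $[\jap^s,T_V\cdot\nabla]$ is of order $s$ with coefficient $\nabla V$, so this contributes $\les\big(\lA\dive V\rA_{L^\infty}+\lA\nabla V\rA_{L^\infty}\big)E_s$, and $\lA\nabla V\rA_{L^\infty}\les\cF(M_{s,T})\big(1+\lA\eta\rA_{W^{r+\mez,\infty}}+\lA\nabla\psi\rA_{\Bl^1}\big)$, integrable in time with integral $\les\cF(M_{s,T})(T+N_{r,T})$; here $\Bl^1\hookrightarrow W^{1,\infty}$ makes the velocity Lipschitz for a.e.\ $t$, which is the precise role of the ``Strichartz norm''. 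For the dispersive part, the direct contribution of $iT_\gamma$ is $\les(\text{symbol seminorm})\cdot E_s$ by the skew-adjointness built into Step 1; the genuinely delicate point is that its order-$1/2$ self-adjointness defect and the commutator $[\jap^s,T_\gamma]$, each apparently causing a loss of a quarter derivative, combine -- thanks to the reality of $\gamma^{(3/2)}$ and the choice of $\gamma^{(1/2)}$ -- into a contribution of order $0$. This combination is controllable at the level of $E_s$ because, on the one hand, $s+\tmez>2+\frac d2$ yields by Sobolev embedding $\lA\eta\rA_{W^{2,\infty}}\les\cF(M_{s,T})$, handling the ``autonomous'' coefficients, and, on the other hand, $\eta\in W^{r+\mez,\infty}$ with $r>2$ endows $\gamma$ with enough H\"older regularity in $x$ for the paradifferential calculus at order $3/2$ to produce remainders of negative order with symbol seminorms $\les\cF(M_{s,T})\lA\eta\rA_{W^{r+\mez,\infty}}$; the net dispersive contribution is thus $\les\cF(M_{s,T})\big(1+\lA\eta\rA_{W^{r+\mez,\infty}}\big)E_s$. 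Finally the source contributes $\les E_s^{\mez}\lA f\rA_{H^s}$.

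\textbf{Step 3 (conclusion).} Collecting Step 2 yields a differential inequality $\tder E_s\le a(t)E_s+b(t)E_s^{\mez}$ on $[0,T]$ with $\int_0^T\!\big(a(t)+b(t)\big)\,\d t\le\cF(M_{s,T})(T+N_{r,T})$ and $E_s(0)\le\cF(M_{s,0})$. Gr\"onwall's lemma applied to $E_s^{\mez}$ gives $\sup_{[0,T]}E_s^{\mez}\le\cF\big(M_{s,0}+T\cF(M_{s,T})+N_{r,T}\big)$, and undoing the reduction as in Step 1 turns this into $M_{s,T}\le\cF\big(M_{s,0}+T\cF(M_{s,T})+N_{r,T}\big)$ after enlarging $\cF$ (which is permitted since a nondecreasing function can absorb the exponential and the product $\cF(M_{s,T})N_{r,T}$), proving the claim.

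\textbf{Main obstacle.} Everything hinges on the reduction of Step 1 with a \emph{genuinely tame} source, i.e.\ on carrying the paralinearization of the Dirichlet--Neumann operator one order further than in \cite{ABZ1}, in Besov rather than Sobolev spaces, while keeping every estimate linear in the top H\"older norm -- this is precisely the novelty of the paper. Within the energy estimate, the subtle book-keeping is to ensure that the dispersive terms are bounded using only $\lA\eta\rA_{W^{r+\mez,\infty}}$ and the Sobolev norm $M_{s,T}$, never a norm of $(\eta,\psi)$ strictly above $H^s$ -- the concrete manifestation of the $\mez$-derivative gap between the quasilinear Sobolev threshold and the Lipschitz threshold.
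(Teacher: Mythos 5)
Your architecture (reduction via Proposition \ref{singleeq:Phi}, symmetrized energy estimate, Gr\"onwall) is the paper's, but two of your steps do not work as described. First, the energy functional: with $E_s=\lA\jap^s\Phi\rA_{L^2}^2$ the commutator term does \emph{not} cancel. Writing $\Psi=\jap^s\Phi$, the dispersive contribution contains $-2\Re\ls i[\jap^s,T_{\gamma^{(3/2)}}]\Phi,\Psi\rs$, and $i[\jap^s,T_{\gamma^{(3/2)}}]$ has real principal symbol $\partial_\xi\langle\xi\rangle^s\cdot\partial_x\gamma^{(3/2)}$ of order $s+\mez$ with no sign; the term is therefore of size $\lA\Psi\rA_{H^{1/4}}^2$, and neither the reality of $\gamma^{(3/2)}$ nor the choice of $\gamma^{(1/2)}$ helps (reality of a symbol $r$ of order $\mez$ does not make $\Re\ls T_r\Psi,\Psi\rs$ of order zero --- take $r=\langle\xi\rangle^{1/2}$; and the self-adjointness defect of $T_\gamma$ is a separate additive term, already of order $\leq 0$ by Proposition \ref{pq}, which cannot offset a commutator that depends on the weight you chose). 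The device the paper uses (following \cite{ABZ1}) is precisely the ``weighted variant'' you mention only in passing: conjugate by $T_\wp$ with $\wp=(\gamma^{(3/2)})^{2s/3}$ as in \eqref{def:wp}, so that $\partial_\xi\wp\cdot\partial_x\gamma^{(3/2)}=\partial_\xi\gamma^{(3/2)}\cdot\partial_x\wp$ and symbolic calculus at order $\tdm$ gives $\lA[T_\wp,T_\gamma]\rA_{H^s\to L^2}\les_\A\B$ (Proposition \ref{L2varphi}); with $\jap^s$ itself the estimate fails.

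Second, the closing absorption is invalid, and this hides the step that actually produces the term $T\cF(M_{s,T})$ in the statement. Gr\"onwall as you set it up yields $\sup_T E_s^{1/2}\le\cF(M_{s,0})\exp\big(\cF(M_{s,T})(T+N_{r,T})\big)$, and no nondecreasing one-variable $\cF$ satisfies $\exp\big(\cF(M_{s,T})N_{r,T}\big)\le\cF\big(M_{s,0}+T\cF(M_{s,T})+N_{r,T}\big)$: fix $N_{r,T}=1$, let $M_{s,T}\to\infty$ with $T=1/\cF(M_{s,T})$, so the right-hand argument stays bounded while the left side blows up. The paper avoids this by keeping every multiplicative constant a function of the \emph{low} norm $\A(t)$ (tameness is not only linearity in $\B$ but also that the nonlinear constants involve $\A$, not $M_{s,T}$), and then adding the step you omit: from $\partial_t\eta=G(\eta)\psi$ and Proposition \ref{est:DN:Sob} one gets $\lA\eta(t)-\eta(0)\rA_{H^{s-1}}\le T\cF(M_{s,T})$, and interpolation with $H^{s+\mez}$ plus Sobolev embedding give $\sup_{t\le T}\lA\eta(t)\rA_{\Cs^{2+\eps}}\le\cF\big(M_{s,0}+T\cF(M_{s,T})\big)$, with the analogous bound for $\psi$ from the second equation of \eqref{ww}. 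Only after the Gr\"onwall constant has been converted into a function of $M_{s,0}+T\cF(M_{s,T})$ does enlarging $\cF$ legitimately yield $M_{s,T}\le\cF\big(M_{s,0}+T\cF(M_{s,T})+N_{r,T}\big)$.
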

\begin{rema} Some comments are in order with respect to the preceding a priori estimate.
\begin{enumerate}
\item[1.]  We require only $\nabla \psi\in \Bl^1$ instead of $\psi\in W^{r,\infty}$. 
\item[2.] The function $\cF$ above can be highly nonlinear. It is not simply a straightforward outcome of a Gr\"onwall inequality but also comes from  estimates of the Dirichlet-Neumann operator in Sobolev spaces and Besov spaces (see the proof of Theorem \ref{theo:aprioriestimate}).
\item[3.] When $s>2+d/2$ one can take $r=s-\frac d2$ and retrieves by Sobolev embeddings the a priori estimate of \cite{ABZ1} (see Proposition $5.2$ there).
\end{enumerate}
\end{rema}
\hk  Our second result provides a blow-up criterion for solutions at the  energy threshold constructed in \cite{ABZ1}. Let $\Cs^r$ denote the Zymund space of order $r$ (see Definition \ref{defi:PL}). Note that $\Cs^r=W^{r, \infty}$ if $r\in (0, \infty)\setminus \{1, 2, 3, ...\}$ while $W^{r, \infty} \subsetneq \Cs^r$ if  $r\in\{0, 1, 2, ...\}$.
\begin{theo}\label{intro:theo:blowup}
Let~$d\geq 1$,  $h>0$ and $\sigma>2+\frac{d}{2}$. Let
\[ 
(\eta_0, \psi_0)\in  H^{\sigma+\mez}\times H^\sigma,\quad \dist(\eta_0, \Gamma)>h>0.
\]
Let $T^*=T^*(\eta_0, \psi_0, \sigma, h)$ be the maximal time of existence defined by \eqref{def:T*} and
 \[
(\eta,\psi)\in L^{\infty}\lp[0, T^*); H^{\sigma+\mez}\times H^\sigma\rp
\]
be the maximal solution of ~\eqref{ww} with prescribed data $(\eta_0, \psi_0)$. If ~$T^*$ is finite, then for all $\eps>0$,
\bq\label{intro:criter1}
P_\eps(T^*)+\int_0^{T^*}Q_\eps(t)dt +\frac{1}{h(T^*)}=+\infty,
\eq
where 
\[
\begin{aligned}
&P_\eps(T^*)=\sup_{t\in [0, T^*)}\Vert \eta(t)\Vert_{\Cs^{2+\eps}}+\Vert \nabla \psi(t)\Vert_{\Bl^0},\\
&Q_\eps(t)=\Vert \eta(t)\Vert_{\Cs^{\frac{5}{2}+\eps}}+\Vert \nabla\psi(t)\Vert_{\Cs^1},\\
&h(T^*)=\inf_{t\in [0, T^*)}\dist(\eta(t),\Gamma).
\end{aligned}
\]
Consequently, if $T^*$ is finite then for all $\eps>0$,
\bq\label{intro:criter2}
 P^0_\eps(T^*)+\int_0^{T^*}Q^0_\eps(t)dt+\frac{1}{h(T^*)}=+\infty,
\eq
where
\[
\begin{aligned}
&P^0_\eps(T^*)=\sup_{t\in [0, T^*)}\Vert \eta(t)\Vert_{\Cs^{2+\eps}}+\Vert (V, B)(t)\Vert_{\Bl^0},\\
&Q^0_\eps(t)=\Vert \eta(t)\Vert_{\Cs^{\frac{5}{2}+\eps}}+\Vert (V, B)(t)\Vert_{\Cs^1}.
\end{aligned}
\]
\end{theo}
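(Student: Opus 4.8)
The plan is to argue by contradiction. Fix $\eps>0$; since $P_{\eps'}(T^*)\le P_\eps(T^*)$ and $Q_{\eps'}\le Q_\eps$ whenever $\eps'\le\eps$, we may assume $\eps<\mez$, so that $\Cs^{\frac52+\eps}=W^{\frac52+\eps,\infty}$. Suppose $T^*<\infty$ while the left-hand side of \eqref{intro:criter1} is finite, that is $h(T^*)>0$, $P_\eps(T^*)<\infty$ and $\int_0^{T^*}Q_\eps(t)\,dt<\infty$. It suffices to prove the uniform Sobolev bound $\sup_{t<T^*}M_{\sigma,t}<\infty$: granting it, a standard continuation argument will extend $(\eta,\psi)$ past $T^*$, contradicting the maximality built into \eqref{def:T*}.

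To obtain the uniform bound I would revisit the proof of the a priori estimate (Theorem \ref{intro:theo:apriori}, i.e. Theorem \ref{theo:aprioriestimate}). After symmetrizing \eqref{ww} into the paradifferential equation \eqref{intro:reduce} with a tame right-hand side, that proof produces, for an energy functional $\mathcal{E}_\sigma(t)\simeq M_{\sigma,t}^2$, a differential inequality; the crucial point is that, because every estimate — in particular the expansion of the Dirichlet--Neumann operator of Proposition \ref{prop:paralin:DN} and the bounds on the symbols $V$ and $\gamma$ — is kept \emph{linear in the top Hölder/Zygmund norm}, this inequality takes the form
\[
\tder\,\mathcal{E}_\sigma(t)\ \les\ \cF\!\Big(P_\eps(T^*),\tfrac1{h(T^*)}\Big)\big(1+Q_\eps(t)\big)\,\mathcal{E}_\sigma(t),\qquad t\in[0,T^*),
\]
where the coefficient depends only on the quantities we control and the factor $1+Q_\eps(t)$ absorbs the symmetrization errors of the transport term $T_V\!\cdot\!\nabla$ and of the order-$3/2$ operator $iT_\gamma$. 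Grönwall's lemma together with $\int_0^{T^*}Q_\eps<\infty$ then yields
\[
\sup_{t<T^*}\mathcal{E}_\sigma(t)\ \les\ \mathcal{E}_\sigma(0)\exp\!\Big(\cF\big(P_\eps(T^*),\tfrac1{h(T^*)}\big)\big(T^*+\textstyle\int_0^{T^*}Q_\eps(t)\,dt\big)\Big)<\infty,
\]
and $\mathcal{E}_\sigma(0)\simeq M_{\sigma,0}^2<\infty$ since $(\eta_0,\psi_0)\in H^{\sigma+\mez}\times H^\sigma$; this is the desired bound.

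With $C_*:=\sup_{t<T^*}M_{\sigma,t}<\infty$ at hand, the continuation is routine. From \eqref{ww}, $(\eta,\psi)$ is bounded in $H^{\sigma+\mez}\times H^\sigma$ and Lipschitz in time with values in a weaker Sobolev space, hence converges as $t\to T^*$ to some $(\eta,\psi)(T^*)$ which, by weak compactness, lies in $H^{\sigma+\mez}\times H^\sigma$; moreover $\dist(\eta(T^*),\Gamma)\ge h(T^*)>0$ and the domain hypothesis $(H_t)$ persists. Since $\sigma>2+\frac d2>\tmez+\frac d2$, the local well-posedness of \cite{ABZ1} applies to the datum $(\eta,\psi)(T^*-\delta)$, whose $H^{\sigma+\mez}\times H^\sigma$-norm is $\le C_*$ and whose surface stays at distance $>h(T^*)$ from $\Gamma$, and produces a solution on an interval of length $\tau>0$ depending only on $C_*$, $h(T^*)$, $d$, $\sigma$; taking $\delta<\tau$ and using uniqueness prolongs $(\eta,\psi)$ strictly beyond $T^*$, the contradiction sought. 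This establishes \eqref{intro:criter1}.

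Finally, \eqref{intro:criter2} follows from \eqref{intro:criter1}. By \eqref{BV}, $\nabla_x\psi=V+B\,\nabla_x\eta$, and the product estimates in Besov spaces give (for $\eps\in(0,\mez)$) $\|\nabla_x\psi\|_{\Bl^0}\les\|V\|_{\Bl^0}+\|B\|_{\Bl^0}\|\eta\|_{\Cs^{2+\eps}}$ and $\|\nabla_x\psi\|_{\Cs^1}\les\|V\|_{\Cs^1}+\|B\|_{\Cs^1}\|\eta\|_{\Cs^{2+\eps}}$, whence $P_\eps(T^*)\les P^0_\eps(T^*)\big(1+P^0_\eps(T^*)\big)$ and $\int_0^{T^*}Q_\eps\les\big(1+P^0_\eps(T^*)\big)\int_0^{T^*}Q^0_\eps$. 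Therefore, if $P^0_\eps(T^*)+\int_0^{T^*}Q^0_\eps(t)\,dt+\frac1{h(T^*)}$ were finite, so would be $P_\eps(T^*)+\int_0^{T^*}Q_\eps(t)\,dt+\frac1{h(T^*)}$, contradicting \eqref{intro:criter1}. I expect the one genuinely delicate step to be the derivation of the displayed tame differential inequality with a coefficient involving only $P_\eps(T^*)$ and $h(T^*)$: this rests on the fine analysis of the Dirichlet--Neumann operator in Besov spaces and on keeping each intermediate estimate linear in the Hölder norm — precisely the technical core of the paper — whereas the weak-limit argument at $t=T^*$, the restart via \cite{ABZ1}, and the product estimates above are standard.
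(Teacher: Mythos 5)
Your overall architecture (contradiction, uniform bound on $M_{\sigma,T}$ for $T<T^*$, restart via the uniform local existence time of \cite{ABZ1}, and deduction of \eqref{intro:criter2} from \eqref{intro:criter1} through $\nabla\psi=V+B\nabla\eta$ and the product rules, as in \eqref{psi-VB}) is the same as the paper's. But the step you yourself flag as "the one genuinely delicate step" is precisely where the argument has a gap as written. You claim a differential inequality $\tder\mathcal{E}_\sigma\les\cF(P_\eps,1/h)\,(1+Q_\eps(t))\,\mathcal{E}_\sigma$ with $Q_\eps(t)=\Vert\eta(t)\Vert_{\Cs^{5/2+\eps}}+\Vert\nabla\psi(t)\Vert_{\Cs^1}$. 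The paper's tame estimates (Propositions \ref{prop:paralin:DN}, \ref{sym:prop}, \ref{energy:W}) are \emph{not} linear in the Zygmund norm $\Vert\nabla\psi\Vert_{\Cs^1}=\Vert\nabla\psi\Vert_{B^1_{\infty,\infty}}$: they are linear in $\B$, which contains the strictly stronger Besov norm $\Vert\nabla\psi\Vert_{\Bl^1}=\Vert\nabla\psi\Vert_{B^1_{\infty,1}}$. So the coefficient you need in Grönwall is $1+\Vert\nabla\psi\Vert_{\Bl^1}$, which is not controlled by $Q_\eps$; the single-exponential bound you write down is exactly what Remark \ref{rema:expo} says one gets only if $\Cs^1$ is replaced by $\Bl^1$ in the criterion, i.e.\ it proves a weaker statement than \eqref{intro:criter1}.

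The missing ingredient is the logarithmic interpolation of Lemma \ref{B-H}, $\Vert\nabla\psi\Vert_{\Bl^1}\le C\big(1+\Vert\nabla\psi\Vert_{\Cs^1}\big)\ln\big(e+\Vert\psi\Vert^2_{H^{\sigma}}\big)$, valid because $\sigma-1>1+\frac d2$ — this is the real reason the theorem assumes $\sigma>2+\frac d2$ rather than just $\sigma>\tmez+\frac d2$, a point your proof does not use. Inserting this into the energy inequality makes the Grönwall coefficient $Q_\eps(t)\ln\big(2e+\mathcal{E}_\sigma(t)\big)$, and a Beale--Kato--Majda type Grönwall argument then yields the \emph{double exponential} bound of Proposition \ref{prop:grow}, $M^2_{\sigma,T}\le\cF(P^2)\big(M^2_{\sigma,0}+2e\big)\exp\big(e^{\cF(P^2)\int_0^TQ_\eps}\big)-2e$, which is still finite under the contradiction hypothesis and suffices for your continuation step. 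Two further small points: the coefficient $\cF$ must also be allowed to depend on the conserved Hamiltonian $\cH(0)$ (and hence on the data), since $\A$ contains $E(\eta,\psi)+\Vert\eta\Vert_{L^2}$, which the paper bounds by $\cH(0)$ via Proposition \ref{identity}; and your weak-limit construction of $(\eta,\psi)(T^*)$ is unnecessary — restarting from $t=T^*-\frac{T_1}{2}$, where $T_1$ is the uniform existence time for data in the ball of radius $L$ staying at distance $1/K$ from $\Gamma$, already gives the contradiction.
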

\begin{rema}
\begin{enumerate}
\item[1.] We shall prove in Proposition \ref{prop:grow} below that the Sobolev norm $\Vert (\eta, \psi)\Vert_{L^\infty([0, T]; H^{\sigma+\mez}\times H^\sigma)}$, $\sigma>2+\frac{d}{2}$, is bounded by a double exponential 
\[
\exp\big(e^{C(T)\int_0^TQ_\eps(t)dt}\big)
\]
where $C(T)$ depends only on the lower norm $P_\eps(T)$. In the preceding estimate, $Q_\eps$ can be replaced by $Q_\eps^0$ by virtue of \eqref{psi-VB}. These bounds are reminiscent of the well-known result due to Beale-Kato-Majda \cite{BKM} for the incompressible Euler equations in the whole space, where the $\Cs^1$-norm of the velocity was sharpened to the $L^\infty$-norm of the vorticity. An analogous result in bounded, simply connected domains was obtained by Ferrari \cite{Ferrari}.
\item[2.] If in  $Q_\eps$ the Zygmund norm $\Vert \nabla \psi\Vert_{\Cs^1}$ is replaced by the stronger norm $\Vert \nabla \psi\Vert_{\Bl^1}$, then one obtains the following exponential bound (see Remark \ref{rema:expo})
\[
\Vert (\eta, \psi)\Vert_{L^\infty([0, T]; H^{\sigma+\mez}\times H^\sigma)}\le C(T)\Vert (\eta(0), \psi(0))\Vert_{H^{\sigma+\mez}\times H^\sigma}\exp\big(C(T)\int_0^TQ_\eps(t)dt\big),
\]
where $C(T)$ depends only on the lower norm $P_\eps(T)$ and $\sigma>\tdm+\frac{d}{2}$. The same remarks applies to $Q^0_\eps$ and $(V, B)$.
\end{enumerate}
\end{rema}
\hk In the survey paper \cite{CrWa} Craig-Wayne posed (see Problem $3$ there) the following questions on {\it How do solutions break down?}: \\
{\bf (Q1)}~{\it For which $\alpha$ is it true that, if one knows a priori that $\sup_{[-T, T]}\| (\eta, \psi)\|_{C^\alpha}<+\infty$  then $C^\infty$ data $(\eta_0, \psi_0)$ implies that the solution is $C^\infty$ over the time interval $[-T, T]$?}\\
{\bf (Q2)}~{\it 
It would be more satisfying to say that the solution fails to exist because the "curvature of the surface has diverged at some point", or a related geometrical and/or physical statement}.\\
\hk With regard to question ({\bf Q1}), we deduce from Theorem \ref{intro:theo:blowup} (more precisely, from \eqref{intro:criter1})  the following persistence of Sobolev regularity.
\begin{coro}\label{intro:prop:regularity}
Let $T\in (0, +\infty)$ and $(\eta, \psi)$ be a distributional solution to  \eqref{ww} on the time interval $[0, T]$  such that  $\inf_{[0, T]}\dist(\eta(t), \Gamma)>0.$ Then the following property holds: if one knows a priori that for some $\eps_0>0$
\bq\label{condition:reg}
\sup_{[0, T]}\| (\eta(t), \nabla\psi(t))\|_{\Cs^{\frac{5}{2}+\eps_0}\times \Cs^{1}}<+\infty,
\eq
 then $(\eta(0), \psi(0))\in  H^\infty(\xR^d)^2$ implies that $(\eta, \psi)\in  L^\infty([0, T]; H^\infty(\xR^d))^2$.
\end{coro}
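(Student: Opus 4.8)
The plan is to combine the blow-up criterion of Theorem~\ref{intro:theo:blowup} with a bootstrap on the Sobolev index $\sigma$, exploiting the fact that the quantities $P_\eps$, $Q_\eps$ and $h(\cdot)$ governing blow-up involve only Besov norms of $\eta$ and $\nabla\psi$ together with the distance to the bottom — not the Sobolev norm. The starting observation is that \eqref{condition:reg} already bounds all of these. Fix $\eps\in(0,\min(\eps_0,1/2))$. Since $B^{s_1}_{\infty,q_1}\hookrightarrow B^{s_2}_{\infty,q_2}$ whenever $s_1>s_2$, one has $\Cs^{\frac{5}{2}+\eps_0}\hookrightarrow\Cs^{2+\eps}$, $\Cs^{\frac{5}{2}+\eps_0}\hookrightarrow\Cs^{\frac{5}{2}+\eps}$ and $\Cs^{1}\hookrightarrow\Bl^0$; hence, writing $h:=\inf_{[0,T]}\dist(\eta(t),\Gamma)>0$ and using $T<\infty$, \eqref{condition:reg} gives
\[
\sup_{t\in[0,T]}P_\eps(t)+\int_0^T Q_\eps(t)\,dt+\frac1h<+\infty.
\]

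Now fix $\sigma>2+\tfrac d2$. Since $(\eta(0),\psi(0))\in H^\infty\subset H^{\sigma+\mez}\times H^\sigma$, let $(\eta^\sigma,\psi^\sigma)\in L^\infty([0,T^*_\sigma);H^{\sigma+\mez}\times H^\sigma)$ be the maximal solution of \eqref{ww} with that datum, $T^*_\sigma$ being defined by \eqref{def:T*}. By \eqref{BV}, the regularity \eqref{condition:reg} makes the velocity trace $(V,B)$ Lipschitz in $x$, so $(\eta,\psi)$ belongs to a uniqueness class for \eqref{ww} and therefore coincides with $(\eta^\sigma,\psi^\sigma)$ on $[0,\min(T,T^*_\sigma))$. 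If $T^*_\sigma\le T$, then $(\eta^\sigma,\psi^\sigma)=(\eta,\psi)$ on the whole of $[0,T^*_\sigma)$, so by the display above $P_\eps(T^*_\sigma)+\int_0^{T^*_\sigma}Q_\eps\,dt+1/h(T^*_\sigma)<+\infty$; as $T^*_\sigma$ is finite, this contradicts \eqref{intro:criter1}. Hence $T^*_\sigma>T$, so $(\eta,\psi)=(\eta^\sigma,\psi^\sigma)\in L^\infty([0,T];H^{\sigma+\mez}\times H^\sigma)$, and since $\sigma$ was arbitrary, $(\eta,\psi)\in L^\infty([0,T];H^\infty(\xR^d))^2$. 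Alternatively, one may skip the lifespan dichotomy and invoke Proposition~\ref{prop:grow} directly, which bounds $\|(\eta^\sigma,\psi^\sigma)\|_{L^\infty([0,T];H^{\sigma+\mez}\times H^\sigma)}$ by $\exp\big(e^{C(T)\int_0^T Q_\eps\,dt}\big)$ with $C(T)$ depending only on $P_\eps(T)$ — finite by the first step and, crucially, independent of $\sigma$.

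The Besov embeddings and the contrapositive of the blow-up criterion are routine; the substantive step is the identification of the given distributional solution with the smooth $H^\sigma$-flow. This requires a uniqueness statement for \eqref{ww} at the regularity \eqref{condition:reg} — a $C^{\frac{5}{2}+}$ free surface and a Lipschitz velocity trace, with the domain staying at positive distance from the bottom — which is exactly where the assumptions $\eps_0>0$ and $\inf_{[0,T]}\dist(\eta(t),\Gamma)>0$ (ensuring the particle flow is well-defined) enter; such uniqueness is provided by the well-posedness theory of \cite{ABZ1} and \cite{NgPo}. Once it is granted, the $\sigma$-independence of $P_\eps$ and $Q_\eps$ in Theorem~\ref{intro:theo:blowup} and Proposition~\ref{prop:grow} does the rest, propagating a single low-regularity a priori bound to all Sobolev orders.
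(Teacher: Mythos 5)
Your argument is correct and follows essentially the same route as the paper's proof: for each $\sigma>2+\frac d2$ you take the maximal $H^{\sigma+\mez}\times H^\sigma$ solution from the Cauchy theory of \cite{ABZ1}, identify it with the given solution by uniqueness, and rule out $T_\sigma\le T$ by the contrapositive of the blow-up criterion \eqref{intro:criter1}, using that \eqref{condition:reg} and $\inf_{[0,T]}\dist(\eta(t),\Gamma)>0$ keep $P_\eps$, $\int Q_\eps$ and $1/h$ finite on $[0,T_\sigma)$. Your explicit remark that the identification step rests on a uniqueness statement (which the paper invokes with the same brevity) and the alternative via Proposition \ref{prop:grow} are consistent with, not departures from, the paper's argument.
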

 Theorem \ref{intro:theo:blowup}  gives a partial answer to {\bf (Q2)}. Indeed, the criterion \eqref{intro:criter2} implies that the solution fails to exist if
\begin{itemize}
\item the Lipschitz norm of the velocity trace explodes, {\it i.e.}, $\sup_{[0, T^*)}\Vert (V, B)\Vert_{W^{1,\infty}}=+\infty$, or
\item  the bottom rises to the surface, {\it i.e.}, $h(T^*)=0$.
\end{itemize}
Some results are known about blow-up criteria for pure gravity water waves (without surface tension). Wang-Zhang \cite{WaZh} obtained a result stated in terms of the curvature $H(\eta)$ and the gradient of the velocity trace
\bq\label{criter:Wa}
\int_0^{T^*}\Vert (\nabla V, \nabla B)(t)\Vert^6_{L^\infty}dt+\sup_{t\in [0, T^*)}\Vert H(\eta(t))\Vert_{L^2\cap L^p}=+\infty,\quad p>2d.
\eq
 Thibault \cite{Thibault} showed, for highest regularities, 
\[
\int_0^{T^*}\big(\Vert \eta\Vert_{C^{\tdm+}}+\Vert (V, B)\Vert_{C^{1+}} \big)dt=+\infty;
\]
 the temporal integrability was thus improved.  In two space dimensions, using holomorphic coordinates, Hunter-Ifrim-Tataru \cite{HuIfTa} obtained a sharpened criterion with $\Vert (V, B)\Vert_{C^{1+}}$ replaced by $\Vert (\nabla V, \nabla B)\Vert_{BMO}$. Also in two space dimensions, Wu \cite{Wu2015} proved a blow-up criterion using the energy constructed by Kinsey-Wu \cite{KiWu}, which concerns water waves with angled crests, hence the surface is even not Lipschitz. Remark that all the above results but \cite{Thibault} consider the bottomless case. In a more recent paper, \cite{Wa2} considered rotational fluids and obtained
\[
\sup_{t\in [0, T^*)}\big(\Vert v(t)\Vert_{W^{1,\infty}}+\Vert H(\eta(t))\Vert_{L^2\cap L^p}\big)=+\infty\quad p>2d,
\]
$v$ being the Eulerian velocity. In order to obtain the sharp regularity for $\nabla \psi$ and $(V, B)$ in Theorem \ref{intro:theo:blowup}, we shall use a technical idea from \cite{Wa2}: deriving elliptic estimates in Chemin-Lerner type spaces.\\
\hk  Finally, we observe that the relation \eqref{intro:reg} exhibits a gap of $1/2$ derivative from $H^s$ to $W^{2,\infty}$ in terms of Sobolev's embedding. To fill up this gap we need to take into account the dispersive property of water waves to prove a Strichartz estimate with a gain of $1/2$ derivative. As remarked in \cite{NgPo} this gain can be achieved for the 3D linearized system ({\it i.e.} $d=2$) and corresponds to the so called {\it semiclassical Strichartz estimate}. The proof of Theorem \ref{theo:contraction} on the Lipschitz continuity of the solution map shows that if the semiclassical Strichartz estimate were proved, this theorem would hold with the gain $\mu=\mez$ in \eqref{gain:flowmap} (see Remark \ref{rema:contraction}). Then, applying Theorems \ref{intro:theo:apriori},  \ref{intro:theo:blowup} one would end up with an affirmative answer for ({\bf Q}) by implementing the standard method of regularizing initial data. Therefore, the problem boils down to studying Strichartz estimates for \eqref{ww}. As a first effort in this direction, we prove in the companion paper \cite{NgPo} Strichartz estimates with an intermediate gain $0<\mu<1/2$ which yields a Cauchy theory (see Theorem $1.6$, \cite{NgPo}) in which the initial velocity may fail to be Lipschitz (up to the boundary) but becomes Lipschitz at almost all later time; this is an analogue of the result in \cite{ABZ4} for pure gravity waves. \\
\hk The article is organized as follows. Section \ref{section:DN} is devoted to the  study of the Dirichlet-Neumann operator in Sobolev spaces, Besov spaces and Zygmund spaces. Next, in Section \ref{section:para} we adapt the method in \cite{ABZ1} to paralinearize and then symmetrize system \eqref{ww} at our level of regularity \eqref{intro:reg}. With this reduction, we use the standard energy method to derive an a priori estimate and a blow-up criterion in Section \ref{section:apriori}. Section \ref{section:contraction} is devoted to contraction estimates; more precisely, we establish the Lipschitz continuity of the solution map in weaker norms. Finally, we gather some basic features of the paradifferential calculus  and some technical results in Appendix \ref{Appendix}.
\section{Elliptic estimates and the Dirichlet-Neumann operator}\label{section:DN}
\subsection{Construction of the Dirichlet-Neumann operator}\label{section:constructDN}
Let ~$\eta \in W^{1, \infty}({\mathbf{R}}^d)$  and~$ f\in H^{\mez}({\mathbf{R}}^d)$.  In order to define the Dirichlet-Neumann operator $G(\eta)f$, we consider the boundary value problem
\begin{equation}\label{elliptic:prob}
\Delta_{x,y}\phi=0\text{ in }\Omega,\quad
\phi\arrowvert_{\Sigma}=f,\quad \partial_n \phi\arrowvert_{\Gamma}=0.
\end{equation}
For any $h'\in (0, h]$, define  the curved strip of width $h'$ below the free surface
\begin{equation}
   	\Omega_{h'}:=\lB(x,y):x\in\xR^d,\eta(x)-h'<y<\eta(x)\rB.
\end{equation}
We recall here the construction of the variational solution to \eqref{elliptic:prob} in \cite{ABZ3}. 
\begin{nota}\label{notaD}
Denote by~$\mathscr{D}$ the space of functions~$u\in C^{\infty}(\Omega)$ such that
$\nabla_{x,y}u\in L^2(\Omega)$. We then define~$\mathscr{D}_0$ 
as the subspace of functions~$u\in \mathscr{D}$ such that~$u$ is
equal to~$0$ in a neighborhood of  the top boundary~$\Sigma$.
\end{nota}
\begin{prop}[\protect{see \cite[Proposition 2.2]{ABZ1}}]\label{corog}
There exists a positive weight~$g\in L^\infty_{loc} ( \Omega)$, locally bounded from below, equal to~$1$
near the top boundary of~$\Omega$, say in $\Omega_h$, and a constant $C>0$ 
such that for all~$u\in \mathscr{D}_0$,
\begin{equation}\label{eq.Poinc}
\iint_\Omega g(x,y) |u(x,y)|^2 \,dx dy \leq  C \iint_{\Omega} |\partialyx u (x,y)|^2 \,dx dy.
\end{equation}
\end{prop}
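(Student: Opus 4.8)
The plan is to split $\Omega$ into the fixed strip $\Omega_h=\{(x,y):\eta(x)-h<y<\eta(x)\}$, where the inequality is elementary and forces $g\equiv1$, and the remaining part of $\Omega$, where the inequality comes from a standard weighted Poincaré construction that uses only that $\Omega$ is connected (each $\Omega_t$, hence $\Omega$, is a domain) and that $\Omega_h\subset\Omega$ (this is exactly the content of the separation assumption $(H_t)$).

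\emph{The strip.} Fix $u\in\mathscr{D}_0$. Since $u$ vanishes in a neighborhood of $\Sigma$, and since for $(x,y)\in\Omega_h$ the whole vertical segment $\{(x,z):y<z<\eta(x)\}$ lies in $\Omega_h\subset\Omega$, the fundamental theorem of calculus gives $u(x,y)=-\int_y^{\eta(x)}\partial_z u(x,z)\,dz$. By Cauchy--Schwarz, $|u(x,y)|^2\le h\int_{\eta(x)-h}^{\eta(x)}|\partial_z u(x,z)|^2\,dz$; integrating in $y$ over $(\eta(x)-h,\eta(x))$ and then in $x$ (Tonelli) yields
\[
\iint_{\Omega_h}|u|^2\,dxdy\ \le\ h^2\iint_{\Omega_h}|\partial_y u|^2\,dxdy\ \le\ h^2\iint_{\Omega}|\partialyx u|^2\,dxdy .
\]
So on $\Omega_h$ one may take $g\equiv1$.

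\emph{The rest of $\Omega$.} I would fix a locally finite countable cover $(B_k)_{k\ge1}$ of $\Omega$ by balls with $2B_k\subset\Omega$ (a Whitney cover), and one fixed ball $B_*\subset\Omega_h$. Since $\Omega$ is open and connected, each $B_k$ can be joined to $B_*$ by a \emph{finite} chain of balls from the cover, consecutive balls overlapping in a set of measure comparable to the smaller one (refining the chain to balls of comparable radii makes this automatic). Along such a chain: the local Poincaré inequality bounds, on a ball of radius $r$, the $L^2$-deviation of $u$ from its average by $r\|\partialyx u\|_{L^2}$; the overlaps bound the difference of the averages of $u$ on consecutive balls by $\|\partialyx u\|_{L^2}$; and on $B_*\subset\Omega_h$ the average of $u$ is bounded by $\|u\|_{L^2(\Omega_h)}\le h\|\partialyx u\|_{L^2(\Omega)}$ via the strip estimate. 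Telescoping gives a finite constant $C_k$ with $\iint_{B_k}|u|^2\le C_k\iint_\Omega|\partialyx u|^2$ for every $u\in\mathscr{D}_0$. One then sets
\[
g\ :=\ \mathbf{1}_{\Omega_h}\ +\ \sum_{k\ge1}\frac{2^{-k}}{1+C_k}\,\mathbf{1}_{B_k\setminus\Omega_h}.
\]
Local finiteness makes $g$ bounded above and below by positive constants on every compact subset of $\Omega$; $g=1$ on $\Omega_h$; and combining the two families of bounds gives $\iint_\Omega g|u|^2\le(h^2+1)\iint_\Omega|\partialyx u|^2$, i.e. the claim with $C=h^2+1$.

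The one genuinely delicate point is the chaining step: one must choose the chains finite and with substantial overlaps so that the mean-value comparison is legitimate, and this is precisely where $(H_t)$ enters, since it guarantees $\Omega$ is connected and contains the entire strip $\Omega_h$, so that \emph{every} point can be connected back to a region where the estimate is already known. No uniform choice $g\equiv\mathrm{const}$ is possible: in the infinite-depth or bottom-limited geometry the ratio $\|u\|_{L^2}/\|\partialyx u\|_{L^2}$ over $u\in\mathscr{D}_0$ is unbounded, so $g$ must be allowed to decay away from $\Sigma$.
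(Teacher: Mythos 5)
The paper does not prove this proposition at all: it is quoted verbatim from [ABZ1, Proposition 2.2], so there is nothing internal to compare against. Your argument is correct and is in the same spirit as the cited source: the vertical-integration estimate on the curved strip $\Omega_h$ (legitimate because $u\in\mathscr{D}_0$ vanishes near $\Sigma$ and the whole vertical segment up to the surface stays in $\Omega_h\subset\Omega$), followed by a chaining/covering construction of the weight, with the summable factors $2^{-k}/(1+C_k)$ ensuring $g\in L^\infty_{loc}$, $g$ locally bounded below, $g\equiv 1$ on $\Omega_h$, and the global bound with $C=h^2+1$. The telescoping of averages along finite chains of overlapping balls is standard, and your closing remark that no constant weight can work is accurate.

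One point deserves more than the parenthetical you give it: the connectedness of $\Omega$ itself. The assumption $(H_t)$ says that $\mathcal{O}$ is connected, and $\Omega=\{y<\eta(x)\}\cap\mathcal{O}$; an intersection of connected sets need not be connected, and if $\Omega$ had a component disjoint from a neighborhood of $\Sigma$ the inequality would be false (take $u$ a nonzero constant there). The fact that saves you is that the strip in \eqref{sepbot} is global in $x$: if $U$ is a connected component of $\Omega$, then $\partial U\cap\mathcal{O}\subset\Sigma$ (a boundary point of $U$ inside $\mathcal{O}$ lying strictly below the graph would belong to $\Omega$, hence to $U$), so either $U$ is open and closed in $\mathcal{O}$, forcing $U=\mathcal{O}$, which is impossible since $\mathcal{O}$ contains points above $\Sigma$, or $U$ contains points arbitrarily close to $\Sigma$, hence points of the open strip $\{\eta(x)-h<y<\eta(x)\}$; since that open strip is connected and contained in $\Omega$, every component contains it, so $\Omega$ is connected. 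With this one-line justification (or by simply invoking the convention of [ABZ1], where connectedness of $\Omega$ is part of the standing assumption), and with the routine care you already acknowledge in producing finite chains with substantial overlaps, your proof is complete.
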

\begin{defi}\label{defi:H10}
Denote by~$H^{1,0}( \Omega)$ the completion of $\mathscr{D}_0$ under the norm
\[
\|u\|_*:= \|u\|_{L^2( \Omega, g(x,y)dxdy)}+\|\nabla_{x,y}u\|_{L^2(\Omega, dxdy)}.
\]
Owing to the Poincar\'e inequality \eqref{eq.Poinc}, ~$H^{1,0}(\Omega)$ endowed with the norm  $\Vert u\Vert = \Vert \partialyx u \Vert_{L^2( \Omega)}$ is a Hilbert space, see Definition $2.6$~\cite{ABZ1}.
\end{defi}
Now, let $\chi_0\in C^\infty(\xR)$ be such that $\chi_0(z)=1$ if $z\ge -\frac{1}{4}$, $\chi_0(z)=0$ if $z\le -\mez$.  Then with $f\in H^\mez$, define
\[
f_1(x,z)=\chi_0(z)e^{z\langle D_x\rangle}f(x),\quad x\in \xR^d,~z\le 0.
\]
Next, define 
\bq\label{defi:underf}
\underline f(x,y)=f_1(x, \frac{y-\eta(x)}{h}),\quad(x,y)\in \Omega.
\eq
This "lifting" function satisfies $\underline f\rvert_{y=\eta(x)}=f(x)$, $\underline f\equiv 0$ in $\Omega\setminus \Omega_{h/2}$ and 
\bq\label{est:underf}
\Vert \underline f\Vert_{H^1(\Omega)}\le K(1+\Vert \eta\Vert_{W^{1,\infty}})\Vert f\Vert_{H^\mez(\xR^d)}.
\eq
The map 
$$
\varphi\mapsto - \int_{\Omega}  \partialyx  \underline{f} \cdot \partialyx \varphi \, dx dy
$$
is thus a bounded linear form on~$H^{1,0}(\Omega)$. The Riesz theorem then provides  a unique ${u}\in H^{1,0}(\Omega)$ such that
\begin{equation}\label{eq:variational}
\forall \varphi \in H^{1,0} ( \Omega), \qquad \iint_\Omega \partialyx  u \cdot \partialyx \varphi \, dxdy
= - \iint_{\Omega}  \partialyx  \underline{f} \cdot \partialyx \varphi \, dx dy.
\end{equation}
\begin{defi}
With $\underline f$ and $u$ constructed as above, the function $\phi:=u+\underline f$ is defined to be the variational solution of the problem \eqref{elliptic:prob}.  The Dirichlet-Neumann operator is defined formally by
\begin{equation}\label{def:DN}
  G(\eta)\psi  
 = \sqrt{1+|\nabla \eta|^2}\, \partial_n\phi \big\arrowvert_{y=\eta(x)}
=\big[ \partial_y \phi-\partialx \eta\cdot \partialx \phi \big] \, \big\arrowvert _{y=\eta(x)} .
\end{equation}
\end{defi}
As a consequence of \eqref{est:underf} and \eqref{eq:variational}, the variational solution $\phi$ satisfies
\bq\label{est:vari}
\Vert \nabla_{x,y}\phi\Vert_{L^2(\Omega)} \le K(1+\Vert \eta\Vert_{W^{1,\infty}})\Vert f\Vert_{H^\mez(\xR^d)}.
\eq
Moreover, it was proved in \cite{Thibault} the following maximum principle.
\begin{prop}[see \protect{\cite[Proposition~$2.7$]{Thibault}}]\label{max:prin}
Let $\eta\in W^{1,\infty}(\xR^d)$ and $f\in H^\mez(\xR^d)$. There exists a constant $C>0$ independent of $\eta$, $\psi$ such that 
\[
\Vert \phi\Vert_{L^\infty(\Omega)}\le  C\Vert f\Vert_{L^\infty(\xR^d)}.
\]
\end{prop}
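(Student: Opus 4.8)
The plan is to prove Proposition~\ref{max:prin} by the classical truncation argument of De Giorgi and Stampacchia, carried out directly in the variational space $H^{1,0}(\Omega)$; this will in fact yield the bound with $C=1$. We may assume $M:=\Vert f\Vert_{L^\infty(\xR^d)}<\infty$, otherwise there is nothing to prove. The first step is to record the weak equation satisfied by $\phi$ itself rather than by $u$: recalling $\phi=u+\underline f$ and rearranging \eqref{eq:variational}, one gets
\[
\iint_\Omega \partialyx\phi\cdot\partialyx\varphi\,\d x\d y=0\qquad\text{for every }\varphi\in H^{1,0}(\Omega).
\]

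The core of the argument is then to use $\varphi=(\phi-M)_+$ as a test function. Granting for the moment that $(\phi-M)_+\in H^{1,0}(\Omega)$, one has $\partialyx(\phi-M)_+=\mathbf{1}_{\{\phi>M\}}\partialyx\phi$, so the identity above gives
\[
0=\iint_\Omega \partialyx\phi\cdot\partialyx(\phi-M)_+\,\d x\d y=\iint_{\{\phi>M\}}|\partialyx\phi|^2\,\d x\d y=\iint_\Omega |\partialyx(\phi-M)_+|^2\,\d x\d y,
\]
hence $\partialyx(\phi-M)_+=0$ a.e. The Poincar\'e inequality \eqref{eq.Poinc}, extended from $\mathscr{D}_0$ to its completion $H^{1,0}(\Omega)$, then forces $\iint_\Omega g\,|(\phi-M)_+|^2\,\d x\d y=0$, and since $g>0$ a.e.\ in $\Omega$ we conclude $(\phi-M)_+\equiv 0$, i.e.\ $\phi\le M$ a.e. Applying the same reasoning to $-\phi$ --- which, by linearity of the lifting $f\mapsto\underline f$ and of \eqref{eq:variational}, is exactly the variational solution associated with the boundary datum $-f$, and $\Vert -f\Vert_{L^\infty}=M$ --- yields $\phi\ge -M$ a.e. Combining the two bounds gives $\Vert\phi\Vert_{L^\infty(\Omega)}\le M$.

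The one point that needs genuine care --- and which I expect to be the main obstacle --- is the membership $(\phi-M)_+\in H^{1,0}(\Omega)$, since this space is defined abstractly as a completion rather than as a concrete function space. The trace of $(\phi-M)_+$ on $\Sigma$ equals $(f-M)_+=0$ because $f\le M$ a.e., and its $(x,y)$-gradient is clearly in $L^2(\Omega)$; what remains is to see that a function with these two properties does lie in the completion of $\mathscr{D}_0$. I would obtain this from the decomposition $\phi-M=u+(\underline f-M)$: because $e^{z\jap}$ with $z\le 0$ is convolution with a nonnegative kernel of total mass $e^{z}\le 1$, one has $\Vert e^{z\jap}f\Vert_{L^\infty}\le\Vert f\Vert_{L^\infty}$ and $|\chi_0|\le1$, whence $|\underline f|\le M$ everywhere and therefore $0\le(\phi-M)_+\le u_+$ with $u\in H^{1,0}(\Omega)$. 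One then approximates $u$ in $H^{1,0}(\Omega)$ by a sequence in $\mathscr{D}_0$, truncates (using the lattice property of the space together with a standard mollification of $t\mapsto t_+$), and passes to the limit, the weighted $L^2$-norm being controlled throughout by \eqref{eq.Poinc}. It is worth noting that no regularity of the bottom $\Gamma$ enters this argument beyond what is already built into $H^{1,0}(\Omega)$, since the homogeneous Neumann condition on $\Gamma$ is encoded through the choice of admissible test functions.
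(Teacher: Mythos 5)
The paper itself gives no proof of this proposition---it is quoted from \cite[Proposition~2.7]{Thibault}---and the Stampacchia-type truncation argument you propose (test the variational identity with $(\phi-M)_+$, use the Poincar\'e inequality of Proposition \ref{corog} to conclude $(\phi-M)_+=0$, then repeat for $-\phi$) is essentially the standard proof of that cited variational maximum principle. You also correctly isolate the only delicate point, namely the admissibility of $(\phi-M)_+$ as a test function in the completion $H^{1,0}(\Omega)$, and your route to it is sound: $|\underline f|\le M$ holds because the kernel of $e^{z\langle D_x\rangle}$, $z\le 0$, is nonnegative with mass $e^{z}\le 1$ (subordination) and $0\le\chi_0\le 1$ may be assumed, so approximants of the form $F_\eps(u_n+\underline f-M)$, with $u_n\in\mathscr{D}_0$ and $F_\eps$ a smoothed positive part vanishing on $(-\infty,0]$, vanish near $\Sigma$ and converge in the $\Vert\cdot\Vert_*$ norm; the remaining details are routine.
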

The continuity of $G(\eta)$ in Sobolev spaces is given in the next theorem.
\begin{theo}[see \protect{\cite[Theorem~$3.12$]{ABZ3}}]\label{DN:ABZ}
Let~$d\ge 1$,~$s>\mez+\frac{d}{2}$ and~$\frac 1 2 \leq \sigma \leq s+ \frac 1 2$. For all $\eta\in H^{s+\mez}({\mathbf{R}}^d)$, the operator 
\[
G(\eta): \quad H^\sigma \to H^{\sigma-1}
\]
is continuous.  Moreover, there exists a nondecreasing function~$\mathcal{F}\colon\xR_+\rightarrow\xR_+$ 
such that, for all 
$\eta\in H^{s+\mez}({\mathbf{R}}^d)$ and all~$f\in H^{\sigma}(\xR^d)$, there holds
\begin{equation}\label{ests+2}
\Vert G(\eta)f \Vert_{H^{\sigma-1}}\le 
\mathcal{F}(\| \eta \|_{H^{s+\mez}})\Vert f\Vert_{H^\sigma}.
\end{equation}
\end{theo}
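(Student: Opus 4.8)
The plan is to flatten the fluid domain, reduce to a variable-coefficient elliptic problem on a flat slab, and bootstrap the regularity of the variational solution $\phi$ in half-derivative steps, keeping every estimate tame (linear in the highest Sobolev norm of $f$, and otherwise of the form $\cF(\|\eta\|_{H^{s+\mez}})$). Since the weight $g$ equals $1$ on $\Omega_h$ and $\underline f$ is supported in $\Omega_{h/2}$, it suffices to control $\phi$ on the curved strip $\Omega_h=\{\eta(x)-h<y<\eta(x)\}$, on which $\phi$ is genuinely harmonic. I would flatten $\Omega_h$ by the regularized diffeomorphism of Alazard--Burq--Zuily: a map $(x,z)\in\xR^d\times(-1,0)\mapsto(x,\rho(x,z))$ with $\rho(\cdot,0)=\eta$, $\rho(\cdot,-1)=\eta-h$, $\partial_z\rho\ge c(\|\eta\|_{W^{1,\infty}})>0$, and — thanks to a smoothing factor $e^{\delta z\jap}$ with $\delta$ small — such that $z\mapsto\rho(\cdot,z)$ is $C^\infty$ in $x$ for $z$ bounded away from $0$ and, as $z\to 0$, has $\nabla_{x,z}\rho$ bounded in $C^0_z H^{s-\mez}_x\hookrightarrow C^0_z L^\infty_x$. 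In the divergence form adapted to this change of variables, $v(x,z):=\phi(x,\rho(x,z))$ solves
\[
\partial_z\big(a\,\partial_z v+b\cdot\nabla_x v\big)+\nabla_x\cdot\big(b\,\partial_z v+c\,\nabla_x v\big)=0,
\]
where $a,b,c$ are explicit functions of $\nabla_{x,z}\rho$ — hence bounded in $C^0_z(H^{s-\mez}_x\cap L^\infty_x)$ by $\cF(\|\eta\|_{H^{s+\mez}})$, and equal at $z=0$ to functions of $\nabla_x\eta$ — and the operator is uniformly elliptic, $a\gtrsim1$. In these variables $G(\eta)f$ is, at $z=0$, a combination of $\partial_z v$ and $\nabla_x v$ with coefficients that are smooth functions of $\nabla_x\eta$.

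For a subinterval $I\subset[-1,0]$ set $X^\mu(I)=C^0(I;H^\mu(\xR^d))\cap L^2(I;H^{\mu+\mez}(\xR^d))$. The goal is to prove, for each $z_0\in(-1,0)$ and $\mez\le\sigma\le s+\mez$,
\[
\|\nabla_{x,z}v\|_{X^{\sigma-1}([z_0,0])}\le\cF(\|\eta\|_{H^{s+\mez}})\,\|f\|_{H^\sigma}.
\]
This yields \eqref{ests+2}: the endpoint trace $X^{\sigma-1}([z_0,0])\hookrightarrow C^0_zH^{\sigma-1}_x$ at $z=0$ and the product rule $H^{\sigma-1}\cdot H^{s-\mez}\hookrightarrow H^{\sigma-1}$ (valid since $s-\mez>d/2$ and $|\sigma-1|\le s-\mez$ for $\mez\le\sigma\le s+\mez$ and $d\ge1$) bound $G(\eta)f$ in $H^{\sigma-1}$, and continuity of $G(\eta)$ follows from linearity in $f$. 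The base case is the energy estimate \eqref{est:vari}: transported to the slab it gives $\nabla_{x,z}v\in L^2(\xR^d\times(-1,0))$ with the stated bound at $\sigma=\mez$, which launches the induction. Near $z=-1$, interior elliptic estimates in the $z$-variable fed by this $H^1$-bound control $v$ to any order, so a $z$-cutoff $\chi$ with $\chi\equiv1$ near $0$ and $\chi\equiv0$ near $-1$ reduces matters to an estimate for $\chi v$ on the slab whose only boundary datum is $f$ at $z=0$; no information on the bottom $\Gamma$ beyond the separation \eqref{sepbot} enters.

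The core is the induction step $\sigma-\mez\rightsquigarrow\sigma$ (with $\sigma-1\le s-\mez$): apply $\jap^{\mez}$ to the flattened equation for $\chi v$ and estimate $V:=\jap^{\mez}(\chi v)$. As $\jap^{\mez}$ is an $x$-Fourier multiplier, $V$ satisfies the same elliptic equation with a source built from commutators $[\jap^{\mez},a]\partial_z v$, $[\jap^{\mez},b]\nabla_x v$, and so on (plus harmless terms of $[\partial_z^2,\chi]$-type supported near $z=-1$). These are decomposed by Bony's formula: the pieces where low frequencies of $(a,b,c)$ meet high frequencies of $v$ are controlled using only the $C^0_zL^\infty_x$-norm of $(a,b,c)$ and the inductive bound on $\nabla_{x,z}v$, while the pieces where high frequencies of $(a,b,c)$ meet comparatively low frequencies of $v$ cost a derivative on the coefficient — paid by $\|(a,b,c)\|_{C^0_zH^{s-\mez}_x}\le\cF(\|\eta\|_{H^{s+\mez}})$ — against a \emph{strictly lower} norm of $v$; this asymmetry is exactly what keeps the dependence on $\|f\|_{H^\sigma}$ linear. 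One then runs a localized elliptic energy estimate for $V$ — test the equation against $\partial_z V$ against a sharp $z$-cutoff, integrate by parts, use $a\gtrsim1$ and uniform ellipticity to recover $\partial_z V$ and $\nabla_x V$, and absorb the source and the cutoff error on a slightly smaller slab, as in ABZ — which upgrades $\nabla_{x,z}v$ by half a derivative. Iterating finitely often from $\sigma=\mez$ reaches every $\sigma\le s+\mez$; feeding the final bound at $z=0$ into the trace formula for $G(\eta)f$ gives \eqref{ests+2}.

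I expect the main obstacle to be the paradifferential bookkeeping of this induction at the \emph{low} end of the $\eta$-regularity. Because $s+\mez>1+d/2$ only barely, $\eta$ — and hence the coefficients $a,b,c$ — is only slightly better than $C^1$, so classical product and commutator estimates are unavailable: every nonlinear interaction in the source and in the energy identity must be split into para- and remainder pieces, and one must check at each step that the high-frequency interactions land in a space to which the inductive hypothesis applies and that the top-order norm of $f$ appears exactly once. The subsidiary technical points — choosing $\delta$ small in terms of $\|\eta\|_{W^{1,\infty}}$ so that $\partial_z\rho$ stays positive while $\rho$ gains interior regularity, and tuning the $z$-dependence of the cutoffs so that the elliptic energy estimate genuinely absorbs the commutator loss on each slab — are routine once this decomposition is in place.
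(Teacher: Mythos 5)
First, note that the paper does not prove this statement at all: Theorem \ref{DN:ABZ} is imported verbatim from Alazard--Burq--Zuily \cite{ABZ3} (Theorem 3.12 there), and within the present paper it would follow in a few lines from the other imported tools, namely the elliptic regularity of Proposition \ref{elliptic:ABZ} (applied with $\sigma-1\in[-\mez,s-\mez]$), the $X^{-\mez}$ bound of Lemma \ref{est:base}, and the trace formula \eqref{formula:DN} together with product rules in $H^{\sigma-1}$. Your endgame (trace of $\nabla_{x,z}v$ at $z=0$ plus the product rule $H^{s-\mez}\cdot H^{\sigma-1}\subset H^{\sigma-1}$) is exactly this reduction, and the index bookkeeping there is correct.

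The genuine gap is in the core of your induction, i.e.\ in your re-proof of the elliptic regularity itself. You propose to commute $\jap^{\mez}$ with the divergence-form equation and then close by ``testing the equation against $\partial_z V$ \dots use $a\gtrsim 1$ and uniform ellipticity to recover $\partial_z V$ and $\nabla_x V$, as in ABZ''. Testing $\cnx_{x,z}(\mathcal{A}\nabla_{x,z}V)=F$ against $\partial_z V$ is a Rellich-type multiplier: after integration by parts the boundary contribution at a fixed $z$-slice is a \emph{difference} of quadratic forms (schematically $a|\partial_z V|^2-c|\nabla_x V|^2$), and the interior contribution involves $\partial_z\mathcal{A}$ without a sign; nothing in this identity lets you recover the full gradient coercively, so the step as stated does not close, and it is not what ABZ do. The two standard ways to repair it are: (i) the variational multiplier (test against $\jap^{2(\sigma-1)}$-weighted cutoffs of $v$ minus a lifting of $f$), which yields the $L^2_zH^{\sigma-\mez}$ bound for $\nabla_{x,z}v$, and then obtain the $C^0_zH^{\sigma-1}$ half of the $X^{\sigma-1}$ norm through Lions' interpolation (Lemma \ref{lemm:inter}), which forces you to estimate $\partial_z^2v$ from the equation --- precisely the place where, at the low end of the range, a naive product estimate loses half a derivative and one must exploit the divergence structure paradifferentially, as in Lemma \ref{est:base}; or (ii) the route actually taken in \cite{ABZ3} and mirrored by this paper's Besov analysis: paralinearize and factor the operator as $(\partial_z-T_a)(\partial_z-T_A)$ modulo acceptable remainders (Lemma \ref{lemm:factor}) and apply the forward/backward parabolic smoothing estimates (Theorem \ref{parabolic:Sob}) on nested $z$-intervals. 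Your paraproduct bookkeeping of commutators and the tame dependence on $\|\eta\|_{H^{s+\mez}}$ are the right concerns, but they sit on top of an energy mechanism that, as written, is not coercive; replace it by (i) or (ii) and the rest of your outline goes through.
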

\subsection{Elliptic estimates}\label{section:elliptic}
The Dirichlet-Neumann requires the regularity of $\nabla_{x,y}\phi$ at the free surface. We follow \cite{LannesJAMS} and \cite{ABZ3} straightening out $\Omega_h$ using the map
\bq\label{defi:rho}
\begin{aligned}
&\rho(x,z)= (1+z)e^{\delta z\L{D_x}}\eta(x)-z\lB e^{-(1+z)\delta\L{D_x}}\eta(x)-h\rB\\
& (x,z)\in S:=\xR^d\times (-1, 0).
\end{aligned}
\eq
According to Lemma $3.6$, \cite{ABZ3}, there exists an absolute constant $K>0$ such that if $\delta \Vert \eta\Vert_{W^{1,\infty}}\le K$ then 
\bq\label{drho:lowerbound}
\partial_z\rho\ge \frac{h}{2}
\eq
and the map $(x,z)\mapsto(x,\rho(x,z))$ is thus a Lipschitz diffeomorphism from~$S$ to~$\Omega_h$. Then if we call
\begin{equation}\label{defi:v}
v(x,z)=\phi(x,\rho(x,z))\quad \forall(x,z)\in S
\end{equation}
the image of~$\phi$ via this diffeomorphism, it solves 
\begin{equation}\label{eq:v}	\mathcal{L}v:=(\partial_z^2+\alpha\Delta_x+\beta\cdot\nabla_x\partial_z-\gamma\partial_z)v=0\quad\text{in}~S
\end{equation}
where
\begin{equation*}
	\begin{gathered}
		\alpha:=\frac{(\partial_z\rho)^2}{1+\la\nabla_x\rho\ra^2},
		  \quad\beta:=-2\frac{\partial_z\rho\nabla_x\rho}{1+\la\nabla_x\rho\ra^2},\quad
		\gamma:=\frac{1}{\partial_z\rho}(\partial_z^2\rho+\alpha\Delta_x\rho+\beta\cdot\nabla_x\partial_z\rho).
	\end{gathered}
\end{equation*}
\subsubsection{Sobolev estimates}
Define the following interpolation spaces
\bq
\begin{aligned}
	X^\mu(J)&=C_z(I;H^\mu(\R^d))\cap L^2_z(J;H^{\mu+\mez}(\R^d)),\\
	Y^\mu(J)&=L^1_z(I;H^\mu(\R^d))+L^2_z(J;H^{\mu-\mez}(\R^d)).
\end{aligned}
\eq
Remark that $\|\cdot\|_{Y^\mu(J)}\le \|\cdot\|_{X^{\mu-1}(J)}$ for any $\mu\in \xR$. We get started by providing estimates for the coefficients $\alpha, \beta,\gamma$. We refer the reader to Appendix \ref{Appendix} for a review of the paradifferential calculus and notations of functional spaces. 
\begin{nota}
We will denote $\cF$ any nondecreasing function from $\xR^+$ to $\xR^+$. $\cF$ may change from line to line but is independent of relevant parameters.
\end{nota}
\begin{lemm}
Denote  $I=[-1, 0]$.
\begin{enumerate}
\item[1.] For any $\sigma>\mez+\frac d2$ and $\eps>0$, there holds
\bq\label{alpha:S}
\Vert \alpha-h^2\Vert_{X^{\sigma-\mez}(I)}+\Vert \beta\Vert_{X^{\sigma-\mez}(I)}+\Vert\gamma\Vert_{X^{\sigma-\tdm}(I)}\le \cF(\Vert \eta\Vert_{\Cs^{1+\eps}})\Vert \eta\Vert_{H^{\sigma+\mez}}.
\eq
\item[2.] If $\mu >\tdm$ then
\begin{align}
&\Vert \alpha\Vert_{C(I; \Cs^{\mu-\mez})}+\Vert \beta\Vert_{C(I; \Cs^{\mu-\mez})}+\Vert\gamma\Vert_{C(I; \Cs^{\mu-\tmez})}\le \cF(\Vert \eta\Vert_{\Cs^{\mu+\mez}}),\label{alpha:C:0}\\
&\Vert \alpha\Vert_{\wL^2(I; \Cs^{\mu})}+\Vert \beta\Vert_{\wL^2(I; \Cs^{\mu})}+\Vert\gamma\Vert_{\wL^2(I; \Cs^{\mu-1})}\le \cF\big(\Vert \eta\Vert_{\Cs^{\mu+\mez}}+\Vert \eta\Vert_{L^2}\big).\label{alpha:C:2}
\end{align}
\end{enumerate}
\begin{proof}
These estimates stem from estimates for derivatives of $\rho$. For the proof of \eqref{alpha:S} we refer the reader to Lemmas $3.7$ and $3.19$ in \cite{ABZ3}. Concerning \eqref{alpha:C:0} we remark that $\alpha$ and $\beta$ involve merely derivatives up to order $1$ of $\eta$ while $\gamma$ involves second order derivatives of $\eta$. Finally, for \eqref{alpha:C:2} we use the following smoothing property of the Poison kernel in the high frequency regime (see Lemma $2.4$, \cite{BCD} and Lemma $3.2$, \cite{WaZh}): for all $\kappa>0$ and $p\in [1, \infty]$, there exists $C>0$ such that for all $j\ge 1$,
\[
\Vert e^{-\kappa \langle D_x\rangle}\Delta_ju\Vert_{L^p(\xR^d)}\le Ce^{-C2^j}\Vert \Delta_j u\Vert_{L^p(\xR^d)},
\] 
where, we recall the dyadic partition of unity in Definition \ref{defi:PL}: $
\text{Id}=\sum_{j=0}^\infty \Delta_j$. The low frequency part $\Delta_0$ can be trivially bounded by the $L^2$-norm using Bernstein's inequalities.
\end{proof}
\end{lemm}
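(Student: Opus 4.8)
The plan is to reduce all three families of estimates to bounds on the diffeomorphism $\rho$ from \eqref{defi:rho} together with its first- and second-order derivatives, and then to insert these into product and composition estimates in the appropriate scale of spaces. Rewriting $\rho(x,z)=(1+z)e^{\delta z\L{D_x}}\eta-z\,e^{-(1+z)\delta\L{D_x}}\eta+zh$, every derivative of $\rho$ is a finite sum of Poisson-type terms $P(z,D_x)\eta$, where for fixed $z\in[-1,0]$ the multiplier $P(z,\cdot)$ has order equal to the number of derivatives falling on $\eta$ and is exponentially smoothing on the $j$-th dyadic block with rate $e^{-c\delta|z|2^j}$. In particular $\partial_z\rho-h$ and $\nabla_x\rho$ cost one derivative of $\eta$, whereas $\partial_z^2\rho$, $\Delta_x\rho$ and $\nabla_x\partial_z\rho$ cost two. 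First I would record the corresponding bounds on $\rho$: for the Sobolev part these are exactly Lemmas $3.7$ and $3.19$ of \cite{ABZ3}, which give $\|\partial_z\rho-h\|_{X^{\sigma-\mez}(I)}+\|\nabla_x\rho\|_{X^{\sigma-\mez}(I)}\les\cF(\|\eta\|_{\Cs^{1+\eps}})\|\eta\|_{H^{\sigma+\mez}}$ and the analogous $X^{\sigma-\tdm}$ bound on the second derivatives; for the H\"older part one only gets $\|\nabla_x\rho\|_{C(I;\Cs^{\mu-\mez})}+\|\partial_z\rho-h\|_{C(I;\Cs^{\mu-\mez})}\les\cF(\|\eta\|_{\Cs^{\mu+\mez}})$ --- no gain is available in the $C_z$-norm, since at $z=0$ the multipliers may reduce to the identity --- and correspondingly a $\Cs^{\mu-\tmez}$ bound on the second derivatives.

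The key point is the half-derivative gain in the Chemin--Lerner norm $\wL^2(I;\cdot)$, and here I would use precisely the smoothing property quoted at the end of the statement: for $j\ge 1$ one has $\|e^{\delta z\L{D_x}}\Delta_j\eta\|_{L^\infty_x}\les e^{-c2^j|z|}\|\Delta_j\eta\|_{L^\infty_x}$ and $\int_{-1}^0 e^{-2c2^j|z|}\,dz\les 2^{-j}$, so that, after multiplying by $2^j$ for each derivative on $\eta$, $\|2^{j\mu}\Delta_j\nabla_x\rho\|_{L^2_z L^\infty_x}\les 2^{j(\mu+\mez)}\|\Delta_j\eta\|_{L^\infty_x}$, which is controlled by $\|\eta\|_{\Cs^{\mu+\mez}}$; the low-frequency block is absorbed by $\|\eta\|_{L^2}$ through Bernstein. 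This yields $\|\nabla_x\rho\|_{\wL^2(I;\Cs^{\mu})}+\|\partial_z\rho-h\|_{\wL^2(I;\Cs^{\mu})}\les\cF(\|\eta\|_{\Cs^{\mu+\mez}}+\|\eta\|_{L^2})$ together with the second derivatives in $\wL^2(I;\Cs^{\mu-1})$ carrying the same bound.

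It then remains to pass from $\rho$ to $\alpha,\beta,\gamma$. Using $1+|\nabla_x\rho|^2\ge 1$ and the lower bound $\partial_z\rho\ge h/2$ from \eqref{drho:lowerbound}, I would write $\frac1{1+|\nabla_x\rho|^2}=F(|\nabla_x\rho|^2)$ and $\frac1{\partial_z\rho}=G(\partial_z\rho)$ with $F,G$ smooth on the relevant ranges, and apply composition estimates of Moser type, $\|F(u)\|_{E}\les\cF(\|u\|_{L^\infty})(1+\|u\|_{E})$, for $E$ any of the algebras $X^{\sigma-\mez}(I)$ (an algebra since $\sigma>\mez+\frac d2$), $C(I;\Cs^{\mu-\mez})$, and $\wL^\infty(I;\Cs^{\mu-\mez})\cap\wL^2(I;\Cs^{\mu})$. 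Expanding $\alpha-h^2=[(\partial_z\rho)^2-h^2]F(|\nabla_x\rho|^2)-h^2\frac{|\nabla_x\rho|^2}{1+|\nabla_x\rho|^2}$ and $\beta=-2\,\partial_z\rho\,\nabla_x\rho\,F(|\nabla_x\rho|^2)$ exhibits each summand as a product one of whose factors ($\partial_z\rho-h$ or $\nabla_x\rho$) vanishes at $\eta=0$, so the tame product estimates of the $X^\mu$-scale yield the linear-in-$\|\eta\|_{H^{\sigma+\mez}}$ bound \eqref{alpha:S}, while the product estimates in $C(I;\Cs^{\cdot})$ and in the Chemin--Lerner spaces --- where one takes H\"older in $z$, $L^2_z\cdot L^\infty_z\hookrightarrow L^2_z$, pairing the $\wL^2$ factor with an $\wL^\infty(I;\Cs^{\mu-\mez})$ one --- give \eqref{alpha:C:0} and \eqref{alpha:C:2}. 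Since the parenthesis in $\gamma=G(\partial_z\rho)\big(\partial_z^2\rho+\alpha\Delta_x\rho+\beta\cdot\nabla_x\partial_z\rho\big)$ involves only second-order quantities, it lands in $X^{\sigma-\tdm}$, $C(I;\Cs^{\mu-\tmez})$ and $\wL^2(I;\Cs^{\mu-1})$, and multiplication by the bounded factor $G(\partial_z\rho)$ preserves these spaces.

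The main obstacle I anticipate is bookkeeping rather than a new idea: carrying the half-derivative gain of the second step intact through the nonlinear manipulations of the third. One must set up the product and composition estimates in the Chemin--Lerner spaces with the correct H\"older exponents in $z$ and with exactly one factor measured in $\wL^\infty(I;\Cs^{\mu-\mez})$, and check that composing with $F$ and $G$ demands no more regularity of $\nabla_x\rho$ and $\partial_z\rho$ than the Chemin--Lerner bound provides. Likewise, securing the prefactor $\cF(\|\eta\|_{\Cs^{1+\eps}})$ in \eqref{alpha:S} --- rather than the cruder $\cF(\|\eta\|_{H^{\sigma+\mez}})$ --- is the only genuinely delicate analytic point, and for it I would simply invoke Lemmas $3.7$ and $3.19$ of \cite{ABZ3}; the H\"older and Chemin--Lerner statements follow the same dyadic scheme.
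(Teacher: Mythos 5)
Your proposal is correct and follows essentially the same route as the paper: the Sobolev bound \eqref{alpha:S} is delegated to Lemmas $3.7$ and $3.19$ of \cite{ABZ3}, the estimate \eqref{alpha:C:0} comes from counting that $\alpha,\beta$ involve only first-order and $\gamma$ second-order derivatives of $\eta$, and the half-derivative gain in \eqref{alpha:C:2} is obtained from the exponential smoothing of the Poisson-type multipliers on each dyadic block (integrated in $z$ over $I$), with the low-frequency block controlled by $\Vert\eta\Vert_{L^2}$ via Bernstein. Your additional bookkeeping on passing from $\rho$ to $\alpha,\beta,\gamma$ through Moser-type composition and paraproduct estimates is exactly the standard material the paper leaves implicit.
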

We first use  the variational estimate \eqref{est:vari} to derive a regularity for $\nabla_{x,z}v$.
\begin{lemm}\label{est:base}
Let $f\in H^\mez$. Set  
\bq\label{defi:E}
E(\eta, f)=\Vert\nabla_{x,y}\phi\Vert_{L^2(\Omega_h)}.
\eq
\begin{enumerate}
\item[1.] If $\eta\in \Cs^{\tdm+\eps}$ with $\eps>0$ then $\nabla_{x,z}v\in C([-1,0 ]; H^{-\mez})$ and
\begin{align}
\label{est:base:Cx}
&\Vert \nabla_xv\Vert_{X^{-\mez}([-1, 0])}\le \cF(\Vert \eta\Vert_{\Cs^{1+\eps}})E(\eta, f)\\
\label{est:base:Cz}
&\Vert \nabla_zv\Vert_{X^{-\mez}([-1, 0])}\le \cF(\Vert \eta\Vert_{\Cs^{1+\eps}})\big(1+\Vert \eta\Vert_{\Cs^{\tdm+\eps}}\big)E(\eta, f).
\end{align}
\item[2.] If $\eta\in H^{s+\mez}$ with $s>\mez+\frac{d}{2}$ then  $\nabla_{x,z}v\in C([-1, 0 ]; H^{-\mez})$ and 
\bq\label{est:base:H}
\Vert \nabla_{x,z}v\Vert_{X^{-\mez}([-1, 0])}\le \cF(\Vert \eta\Vert_{H^{s+\mez}})E(\eta, f).
\eq
\end{enumerate}
\end{lemm}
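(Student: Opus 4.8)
The plan is to prove Lemma \ref{est:base} by testing the variational equation against a suitable family of test functions, exploiting that $v$ solves the elliptic equation $\mathcal{L}v=0$ in the fixed slab $S=\xR^d\times(-1,0)$ after straightening. The starting point is the bound $E(\eta,f)=\Vert\nabla_{x,y}\phi\Vert_{L^2(\Omega_h)}$, which controls $\nabla_{x,z}v$ in $L^2(S)$ by the change of variables $(x,z)\mapsto(x,\rho(x,z))$, using the lower bound \eqref{drho:lowerbound} and the coefficient estimates \eqref{alpha:S} (or \eqref{alpha:C:0}) to see that the Jacobian and its inverse are bounded in terms of $\cF(\Vert\eta\Vert_{\Cs^{1+\eps}})$ or $\cF(\Vert\eta\Vert_{H^{s+\mez}})$. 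So one has for free that $\nabla_{x,z}v\in L^2_z(I;L^2_x)\hookrightarrow L^2_z(I;H^{-\mez-\delta})$, and the real content is to upgrade the $z$-regularity to continuity in $z$ with values in $H^{-\mez}$ and to get the $L^2_z H^0_x$ endpoint, i.e. membership in $X^{-\mez}(I)$.

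First I would establish \eqref{est:base:Cx}, the estimate for $\nabla_x v$. Since $\nabla_x v\in L^2(S)$ already, the point is the trace/continuity in $z$. I would localize in frequency: apply $\Delta_j$ (dyadic blocks from Definition \ref{defi:PL}) to the equation \eqref{eq:v}, multiply by an appropriate power of $z$ or by $\Delta_j v$ itself, and integrate by parts in $(x,z)$ over $S$; the elliptic structure $\partial_z^2+\alpha\Delta_x+\cdots$ gives, after paraproduct decomposition of the variable coefficients $\alpha,\beta,\gamma$ (which are controlled in $C(I;\Cs^{\tq})$-type spaces and in $\wL^2(I;\Cs^{1-})$ by \eqref{alpha:C:0}--\eqref{alpha:C:2}), a differential inequality for $\Vert\Delta_j\nabla_x v(z)\Vert_{L^2_x}$ that closes after summing with weights $2^{-j}$. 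The commutators between $\Delta_j$ and $\alpha,\beta$ are handled by Bony's paraproduct and the fact that in \eqref{est:base:Cx} only first-order derivatives of $\eta$ enter $\alpha,\beta$, so $\Vert\eta\Vert_{\Cs^{1+\eps}}$ suffices; the bottom block $\Delta_0$ is absorbed trivially. For \eqref{est:base:Cz}, once $\nabla_x v$ is controlled one recovers $\partial_z v$ by solving for $\partial_z^2 v=-\alpha\Delta_x v-\beta\cdot\nabla_x\partial_z v+\gamma\partial_z v$ and integrating in $z$; here the coefficient $\gamma$ carries second-order derivatives of $\eta$, which is exactly why the extra factor $(1+\Vert\eta\Vert_{\Cs^{\tdm+\eps}})$ appears in the right-hand side, and why $\eps>0$ (rather than $\eps=0$) is needed so that $\Cs^{\tdm+\eps}\hookrightarrow$ the space where $\gamma\in C(I;L^\infty)$.

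For part 2, the strategy is the same but run in the $L^2$-based scale: with $\eta\in H^{s+\mez}$, $s>\mez+d/2$, the coefficient bound \eqref{alpha:S} places $\alpha-h^2,\beta\in X^{\sigma-\mez}$ and $\gamma\in X^{\sigma-\tdm}$ with $\sigma=s+\mez$, and since $s>\mez+d/2$ these spaces embed into $C(I;L^\infty_x)$-type spaces by Sobolev embedding, so the same energy argument — testing the frequency-localized equation, integrating by parts, controlling variable-coefficient terms by paraproducts — yields $\nabla_{x,z}v\in X^{-\mez}(I)$ with bound $\cF(\Vert\eta\Vert_{H^{s+\mez}})E(\eta,f)$; no loss as in \eqref{est:base:Cz} occurs because $\gamma$ is now controlled without an extra norm of $\eta$. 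I expect the main obstacle to be the bookkeeping of the variable-coefficient commutators at this very low regularity $H^{-\mez}$ for $\nabla v$: one must make sure that every paraproduct remainder and every commutator $[\Delta_j,\alpha\Delta_x]v$ is estimated tamely, i.e. linearly in the top norm of $\eta$, and that the $z$-dependence of the coefficients is handled via the Chemin--Lerner spaces $\wL^2_z$ rather than the rougher $L^2_zL^\infty_x$, exactly as in the smoothing-of-the-Poisson-kernel argument used to prove \eqref{alpha:C:2}. Closing the $j$-summation with the correct weights so as to land in $X^{-\mez}$ (and not merely $X^{-\mez-\delta}$) is the delicate endpoint point, and follows the scheme of Lemmas $3.7$--$3.19$ in \cite{ABZ3} adapted to the Hölder/Besov setting.
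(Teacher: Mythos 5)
Your reduction of \eqref{est:base:Cx} to an energy/dyadic argument is unnecessary (the paper gets it in two lines: $\nabla_xv\in L^2(I;L^2)$ and $\partial_z\nabla_xv=\nabla_x\partial_zv\in L^2(I;H^{-1})$ follow from the change of variables, and Lemma \ref{lemm:inter} then gives the $X^{-\mez}$ bound), but the genuine gap is in your treatment of \eqref{est:base:Cz}. You propose to recover $\partial_zv$ by reading $\partial_z^2v$ off the equation \eqref{eq:v} and integrating in $z$, attributing the extra factor $1+\Vert\eta\Vert_{\Cs^{\tdm+\eps}}$ to the fact that $\gamma$ contains two derivatives of $\eta$. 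This route does not give the stated estimate: to place $\alpha\Delta_xv$ and $\beta\cdot\nabla_x\partial_zv$ in $L^2(I;H^{-1})$ you must handle the Bony remainders $R(\alpha,\Delta_xv)$, $R(\beta,\nabla_x\partial_zv)$ with $\Delta_xv,\nabla_x\partial_zv$ only in $L^2(I;H^{-1})$, and \eqref{Bony2} then requires $\alpha,\beta\in\Cs^{1+}$, i.e. $\eta\in\Cs^{2+}$ — a loss of half a derivative beyond $\Cs^{\tdm+\eps}$. This is exactly the obstruction the paper points out before introducing the key idea you are missing: the cancellation coming from $\Delta_{x,y}\phi=0$, encoded in the quantity $U=\Lambda_1v-\nabla_x\rho\cdot\Lambda_2v$ (whose trace is $G(\eta)f$), which satisfies the divergence-form identity $\partial_zU=\nabla_x\cdot(\partial_z\rho\,\Lambda_2v)$. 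This gives $U\in C(I;H^{-\mez})$ with a constant depending only on $\Vert\eta\Vert_{\Cs^{1+\eps}}$, and then $\partial_zv=aU+b\cdot\nabla_xv$ with $a,b$ depending only on $\nabla_{x,z}\rho$; the factor $1+\Vert\eta\Vert_{\Cs^{\tdm+\eps}}$ comes solely from the remainder $R(a,U)$ at the $H^{-\mez}$ level (which needs $a\in\Cs^{\mez+\eps}$), not from $\gamma$, which never enters this argument.

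The same misdiagnosis affects your part 2: the absence of loss there is not because ``$\gamma$ is now controlled'', but because in the Sobolev setting one writes $aU=hU+T_{a-h}U+T_U(a-h)+R(a-h,U)$ and estimates the remainder by \eqref{Bony1} using $a-h\in C(I;H^{\frac d2+\eps})$ (available since $d/2\ge\mez$ and $\eta\in H^{s+\mez}$), again through the quantity $U$. Without the divergence-form identity for $U$ your scheme would, in both parts, either fail to close at the stated regularity or produce a non-tame constant, so the proposal as written does not prove \eqref{est:base:Cz} or \eqref{est:base:H}.
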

\begin{rema}\label{rema:E}
1. By \eqref{est:vari}, we have 
\[
E(\eta,f)\le K\big(1+\Vert \eta\Vert_{W^{1,\infty}})\Vert f\Vert_{H^\mez}.
\]
However, we keep in the estimates \eqref{est:base:Cx}-\eqref{est:base:Cz} the quantity $E(\eta, f)$ instead of $\Vert f\Vert_{H^\mez}$ because $E(\eta, f)$ is controlled by the Hamiltonian, which is conserved under the flow. Moreover, as we shall derive blow-up criteria involving only Holder norms of the solution, we avoid using $\Vert f\Vert_{H^\mez}$.\\
2. The estimates \eqref{est:base:Cx}, \eqref{est:base:Cz}, \eqref{est:base:H} were proved in Proposition $4.3$, \cite{WaZh} as {\it a priori estimates} (see the proof there). It is worth noting that we establish here a real regularity result.
\end{rema}
\begin{proof}
Denote $I=[-1, 0]$. \\
1. Observe first that by changing variables, 
\bq\label{est:base:1}
\Vert \nabla_{x,z}v\Vert_{L^2(I; L^2)}\le  \cF(\Vert \eta\Vert_{W^{1,\infty}})\Vert \nabla_{x,y}\phi\Vert_{L^2(\Omega_h)}= \cF(\Vert \eta\Vert_{W^{1,\infty}})E(\eta, f).
\eq
Applying the interpolation Lemma \ref{lemm:inter}, we obtain $\nabla_xv\in X^{-\mez}(I)$ and 
\bq\label{dxu:base}\begin{aligned}
\Vert\nabla_{x}v\Vert_{X^{-\mez}(I)}&\les \Vert\nabla_{x}v\Vert_{L^2(I; L^2)}+\Vert \partial_z\nabla_{x}v\Vert_{L^2(I; H^{-1})}\\
&\les \Vert \nabla_{x,z}v\Vert_{L^2(I; L^2)}\le \cF(\Vert \eta\Vert_{W^{1,\infty}})E(\eta, f).
\end{aligned}
\eq
We are left with \eqref{est:base:Cz}. Again, by virtue of Lemma \ref{lemm:inter} and \eqref{est:base:1}, it suffices to prove 
\[
\Vert \partial^2_zv\Vert_{L^2(I; H^{-1})}\le \cF(\Vert \eta\Vert_{\Cs^{1+\eps}})\big(1+\Vert \eta\Vert_{\Cs^{\tdm+\eps}}\big)E(\eta, f)
\]
A natural way is to compute $\partial_z^2v$ using \eqref{eq:v} 
\begin{align*}
\partial_z^2v&=-\alpha\Delta_xv-\beta\cdot\nabla_x\partial_zv+\gamma\partial_zv
\end{align*}
and then estimate the right-hand side. However, this will lead to a loss of $\mez$ derivative of $\eta$. To remedy this,  further cancellations coming from the structure of the equation need to be invoked. We have 
\begin{align*}
&(\partial_y\phi)(x,\rho(x,z))=\frac{1}{\partial_z\rho}\partial_zv(x,z)=:(\Lambda_1v)(x,z),\\
&(\nabla_x\phi)(x,\rho(x,z))=\big(\nabla_x-\frac{\nabla_x\rho}{\partial_z\rho}\partial_z\big)v(x,z)=:(\Lambda_2v)(x,z).
\end{align*}
Set $U:=\Lambda_1v-\nabla_x\rho\Lambda_2v$, whose trace at $z=0$ is actually equal to $G(\eta)f$. Then, using the equation $\Delta_{x,y}\phi=0$, it was proved in \cite{ABZ3} (see the formula (3.19)  there)  that $\partial_zU$ has the divergence form
\[
\partial_zU=\nabla_x\cdot\big(\partial_z\rho\Lambda_2v\big).
\]
Then, by the interpolation Lemma \ref{lemm:inter}, it is readily seen that $U\in C(I; H^{-\mez})$ and
\[
\Vert U\Vert_{C(I; H^{-\mez})}\les \cF(\Vert \eta\Vert_{\Cs^{1+\eps}})E(\eta, f).
\]
Now, from the definition of $\Lambda_{1,2}$ one can compute
\[
\partial_zv=\frac{(U+\nabla_x\rho\cdot\nabla_xv)\partial_z\rho}{1+|\nabla_x\rho|^2}=:Ua+\nabla_x v\cdot b
\]
with 
\[
a:=\frac{\partial_z\rho}{1+|\nabla_x\rho|^2},\quad b:=\frac{\partial_z\rho\nabla_x\rho}{1+|\nabla_x\rho|^2}.
\]
We write $Ua=T_aU+T_Ua+R(a, U)$. By Theorem \ref{theo:sc} (i),
\[
\Vert T_a U\Vert_{C(I; H^{-\mez})}\les \Vert a\Vert_{C(I; L^\infty)}\Vert U\Vert_{C(I; H^{-\mez})}\les \cF(\Vert \eta\Vert_{\Cs^{1+\eps}})E(\eta, f).
\]
The term $T_Ua$ can be estimated by means of Lemma \ref{pest:n} as
\[
\Vert T_Ua\Vert_{C(I; H^{-\mez})}\les \Vert U\Vert_{C(I; H^{-\mez})}\Vert a\Vert_{C(I; \Cs^\eps)}\les \cF(\Vert \eta\Vert_{\Cs^{1+\eps}})E(\eta, f).
\]
Finally, for the remainder $R(a, U)$ we use \eqref{Bony2}, which leads to a loss of $\mez$ derivative for $\eta$, to get
\[
\Vert R(U, a)\Vert_{C(I; H^{-\mez})}\les \Vert a\Vert_{C(I; \Cs^{\mez+\eps})}\Vert U\Vert_{C(I; H^{-\mez})}\les \cF(\Vert \eta\Vert_{\Cs^{1+\eps}})\big(1+\Vert \eta\Vert_{\Cs^{\tdm+\eps}}\big)E(\eta, f)
\]
where, we have used
\[
\Vert a\Vert_{C(I; \Cs^{\mez+\eps})}\les \cF(\Vert \eta\Vert_{\Cs^{1+\eps}})\big(1+\Vert \eta\Vert_{\Cs^{\tdm+\eps}}\big).
\]
Finally, the term $b\nabla_x v$ can be treated using the same argument as we have shown that $\nabla_xv\in C(I; H^{-\mez})$. The proof of \eqref{est:base:Cz} is complete.\\
2. We turn to prove \eqref{est:base:H}. Observe that by the embedding
\bq\label{embedding:HC}
\Vert \eta\Vert_{C^{1+\eps}}\le C\Vert \eta\Vert_{H^{s+\mez}}
\eq
with $0<\eps<s-\mez-\frac d2$, \eqref{est:base:Cx} implies the estimate of $\nabla_xv$ in \eqref{est:base:H}. For $\partial_zv$, we follow the above proof of \eqref{est:base:Cz}. It suffices to prove $aU\in C(I; H^{-\mez})$ with norm bounded by the right-hand side of \eqref{est:base:H}. To this end, we write $aU=hU+T_{(a-h)}U+T_U(a-h)+R(a-h, U)$. The proof of \eqref{est:base:Cz}, combined with \eqref{embedding:HC}, shows that 
\[
\Vert T_U(a-h)\Vert_{C(I; H^{-\mez})}+\Vert T_{(a-h)} U\Vert_{C(I; H^{-\mez})}\les \cF(\Vert \eta\Vert_{H^{s+\mez}})E(\eta, f).
\]
Finally, by applying \eqref{Bony1}  (notice that $\frac d2\ge \mez$) and using the estimate 
\[
\Vert a\Vert_{C(I; H^{\frac{d}{2}+\eps})}\les \Vert a\Vert_{C(I; H^{s-\mez})}\les \cF(\Vert \eta\Vert_{H^{s+\mez}}),
\]
we conclude that
\[
\Vert R(U, a)\Vert_{C(I; H^{-\mez})}\les\Vert U\Vert_{C(I; H^{-\mez})}\Vert a\Vert_{C(I; H^{\frac{d}{2}+\eps})}\cF(\Vert \eta\Vert_{H^{s+\mez}})E(\eta, f).
\]
\end{proof} 
According to the preceding lemma, the trace $\nabla_{x,z}v\rvert_{z=0}$ is well-defined and belongs to $H^{-\mez}$. Estimates in higher order Sobolev spaces are given in the next proposition.
\begin{prop}[see \protect{\cite[Proposition~$3.16$]{ABZ3}}]\label{elliptic:ABZ}
Let $s> \mez + \frac d 2,~- \frac 1 2 \leq \sigma \leq s - \frac 1 2$.  Assume that $\eta\in H^{s+\mez}$ and $f\in H^{\sigma+1}$ and for some  $z_0\in (-1, 0)$
\[
\Vert\nabla_{x,z}v\Vert_{X^{-\mez}([z_0, 0])}<+\infty.
\]
Then for any~$z_1\in (-1,0)$, $z_1>z_0$, we have $\nabla_{x,z}v \in X^{\sigma}([z_1,0])$ and
$$
\lA \nabla_{x,z} v\rA_{X^{\sigma}([z_1,0])}
\le \mathcal{F}(\| \eta \|_{H^{s+\mez}})\lB \lA f\rA_{H^{\sigma+1}}+\Vert\nabla_{x,z}v\Vert_{X^{-\mez}([z_0, 0])}\rB,
$$
where $\cF$ depends only on~$\sigma$ and $z_0, z_1$. 
\end{prop}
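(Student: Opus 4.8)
Since the statement is quoted from \cite[Proposition~3.16]{ABZ3}, the natural route is to reproduce the classical elliptic regularity bootstrap for the degenerate elliptic equation \eqref{eq:v}, in the style of Lannes \cite{LannesJAMS} and Alazard--Burq--Zuily. I would carry it out in three stages. \emph{First}, I record the properties of the coefficients: from estimate \eqref{alpha:S} and the embedding $H^{s+\mez}\hookrightarrow\Cs^{1+\eps}$ one has $\alpha-h^2,\beta\in X^{s-\mez}(I)$ and $\gamma\in X^{s-\tdm}(I)$, with all relevant norms controlled by $\cF(\|\eta\|_{H^{s+\mez}})$; and from the explicit formulas for $\alpha,\beta$ together with the lower bound \eqref{drho:lowerbound} for $\partial_z\rho$ one gets $\alpha\ge c>0$ and the uniform-in-$z$ bound $4\alpha|\xi|^2-(\beta\cdot\xi)^2\gtrsim|\xi|^2$. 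Consequently the characteristic polynomial $\zeta^2+i(\beta\cdot\xi)\zeta-\alpha|\xi|^2$ has, for $|\xi|\ge1$, two distinct roots $\lambda_\pm(z,x,\xi)$, elliptic of order one with $\pm\RE\lambda_\pm\gtrsim|\xi|$, and of the same $(x,z)$-regularity as $\alpha,\beta$.

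\emph{Second}, after paralinearizing the coefficients of $\mathcal{L}$ and using the symbolic calculus of Appendix \ref{Appendix} (Theorem \ref{theo:sc} and the associated composition and remainder estimates), one factors
\[
\mathcal{L}=(\partial_z-T_{\lambda_-})(\partial_z-T_{\lambda_+})+R,
\]
where $R$ collects the paralinearization and symbolic-calculus remainders; these are at worst of order one and are tame, being controlled by $\cF(\|\eta\|_{H^{s+\mez}})$ precisely because the coefficient regularity $X^{s-\mez},X^{s-\tdm}$ is enough against the target index $\sigma$ — this is where the hypothesis $\sigma\le s-\mez$ enters. Setting $w:=(\partial_z-T_{\lambda_+})v$, the equation $\mathcal{L}v=0$ becomes the triangular first-order system $(\partial_z-T_{\lambda_-})w=-Rv$ and $(\partial_z-T_{\lambda_+})v=w$.

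\emph{Third}, I treat the two scalar parabolic estimates and then iterate. Since $\RE\lambda_-\lesssim-|\xi|$, the equation for $w$ is forward-parabolic when integrated upward in $z$: conjugating by $\lDx{\mu}$, pairing with $w$, and using the paradifferential G\aa rding inequality of Appendix \ref{Appendix} — which gives $-\RE\big(T_{\lambda_-}u,u\big)_{L^2}\gtrsim\|u\|_{H^\mez}^2$ modulo lower order — one obtains $w\in X^{\mu}([z_0,0])$ controlled by the trace $w\arrowvert_{z=z_0}\in H^{\mu}$ and by the forcing $Rv$, with a gain over the forcing in the $X$-scale (recall $Y^{\mu}\supset X^{\mu-1}$). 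Symmetrically, since $\RE\lambda_+\gtrsim|\xi|$, the equation for $v$ is forward-parabolic when integrated \emph{downward} from $z=0$, where the Dirichlet datum $v\arrowvert_{z=0}=f$ is prescribed, and the same energy argument recovers $\nabla_{x,z}v$ in $X$-spaces on $[z_1,0]$ from the regularity of $w$ and of $f$. Two routine devices accompany these estimates: the low-frequency part $|\xi|\lesssim1$, where $\lambda_\pm$ fail to be elliptic, is split off and bounded directly by $\cF(\|\eta\|_{H^{s+\mez}})\,\Vert\nabla_{x,z}v\Vert_{X^{-\mez}([z_0,0])}$ via Bernstein's inequalities; and the boundary term at the bottom of each strip is turned into a forcing term supported in the previous, larger strip by inserting a cutoff $\chi(z)$ vanishing there. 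One then iterates, gaining half a derivative and slightly shrinking the interval at each step: from the hypothesis $\nabla_{x,z}v\in X^{-\mez}([z_0,0])$ one reaches successively $X^{0},X^{\mez},\dots,X^{\sigma}$ along a decreasing chain of strips $[z_0,0]\supset\cdots\supset[z_1,0]$, using at the step $\mu\to\mu+\mez$ that the current knowledge on $v$, together with $f\in H^{\sigma+1}$ and $\sigma\le s-\mez$, places the forcing $Rv$ in the right $Y$-space, so that the two parabolic estimates upgrade first $w$ and then $\nabla_{x,z}v$ by $\mez$. Since $\sigma+\mez$ is finite the procedure terminates after finitely many steps and yields $\nabla_{x,z}v\in X^{\sigma}([z_1,0])$ with the asserted bound; the nondecreasing $\cF$ and the dependence on $(\sigma,z_0,z_1)$ come out of this finite iteration.

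I expect the genuine difficulty to lie entirely in this last stage: verifying that each paralinearization and commutator term inside $R$ really allows the claimed half-derivative gain rather than being bottlenecked by the limited smoothness $X^{s-\mez},X^{s-\tdm}$ of $\alpha,\beta,\gamma$ — this is exactly what forces $\sigma\le s-\mez$ — and keeping every intermediate estimate tame, so that no norm of $\eta$ beyond $\|\eta\|_{H^{s+\mez}}$ ever appears.
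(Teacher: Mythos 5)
The paper itself gives no proof of this proposition (it is quoted from \cite[Proposition 3.16]{ABZ3}), and your reconstruction follows exactly the strategy of that reference and of the analogous arguments carried out in this paper (Lemma \ref{lemm:factor}, Theorem \ref{parabolic:Sob}, Propositions \ref{prop:est:v:C1:apriori} and \ref{est:v:C-mez}): paralinearize the coefficients, factor $\mathcal{L}$ into two paradifferential parabolic operators with elliptic first-order symbols, apply the parabolic smoothing estimate once upward and once downward from $z=0$ with a cutoff in $z$ to discard the lower boundary data, and bootstrap on shrinking strips — the only cosmetic difference being that you re-derive the parabolic estimate by a G\aa rding-type energy argument where the paper simply invokes Theorem \ref{parabolic:Sob}. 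The one claim to adjust is the clean half-derivative gain at every step: for $\mez+\frac d2<s\le 1+\frac d2$ the paraproduct and remainder terms such as $T_{\Delta_x v}(\alpha-h^2)$ and $R(\Delta_x v,\alpha-h^2)$ only permit a gain of order $s-\mez-\frac d2$ per iteration, so the induction must proceed in correspondingly smaller (still finitely many) increments, exactly as in the cited proof.
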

A combination of \eqref{est:base:H} and Remark \ref{rema:E} implies 
\[
\Vert \nabla_{x,z}v\Vert_{X^{-\mez}([-1, 0])}\le \cF(\Vert \eta\Vert_{H^{s+\mez}})\Vert f\Vert_{H^\mez}
\]
provided $s>\mez+\frac{d}{2}$. With the aid of Proposition \ref{elliptic:ABZ}, we prove the following identity, which will be used later in the proof of blow-up criteria.
\begin{prop}\label{identity}
Let $s>\mez+\frac{d}{2}$. Assume that $\eta \in H^{s+\mez}$ and $f\in H^\tdm$. Then $\phi\in H^2(\Omega_{3h/4})$ and the following identity holds
\[
\int_{\xR^d}fG(\eta)f=\Vert \nabla_{x,y}\phi\Vert^2_{L^2(\Omega)}.
\]
\end{prop}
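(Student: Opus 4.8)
The identity is Green's formula / integration by parts for the harmonic function $\phi$; the only real content is justifying it at low regularity, which requires first upgrading the variational solution $\phi$ to $H^2$ near the surface. So the plan splits into two parts: (i) prove $\phi\in H^2(\Omega_{3h/4})$, and (ii) use this regularity to justify the integration by parts $\int_{\xR^d} f\,G(\eta)f = \iint_\Omega |\nabla_{x,y}\phi|^2$.

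For (i), I would work in the straightened variables: recall $v(x,z)=\phi(x,\rho(x,z))$ solves $\mathcal{L}v=0$ in $S=\xR^d\times(-1,0)$. Combining \eqref{est:base:H} with Remark \ref{rema:E} gives $\Vert\nabla_{x,z}v\Vert_{X^{-\mez}([-1,0])}\le\cF(\Vert\eta\Vert_{H^{s+\mez}})\Vert f\Vert_{H^\mez}<\infty$ since $f\in H^\tmez\hookrightarrow H^\mez$. Now apply Proposition \ref{elliptic:ABZ} with $\sigma=\mez$ (legitimate because $f\in H^{\sigma+1}=H^\tmez$ and $-\mez\le\mez\le s-\mez$, using $s>\mez+d/2$): this yields $\nabla_{x,z}v\in X^{\mez}([z_1,0])=C_z H^\mez\cap L^2_z H^1$ for any $z_1\in(-1,0)$. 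In particular $\nabla_{x,z}v\in L^2_z([z_1,0];H^1)$, so $\nabla_x\nabla_{x,z}v\in L^2(S')$ for the relevant sub-strip. It remains to recover $\partial_z^2 v\in L^2$, which comes directly from the equation $\partial_z^2 v=-\alpha\Delta_x v-\beta\cdot\nabla_x\partial_z v+\gamma\partial_z v$: the coefficients $\alpha,\beta\in C_z\Cs^{\mu-\mez}$ and $\gamma\in C_z\Cs^{\mu-\tmez}$ (with $\mu=s+\mez-d/2>\tmez$, or just use \eqref{alpha:C:0}) are bounded multipliers on the spaces where $\Delta_x v,\nabla_x\partial_z v,\partial_z v$ live, so $\partial_z^2 v\in L^2_z([z_1,0];L^2)$. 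Hence $v\in H^2([z_1,0]\times\xR^d)$, and undoing the (Lipschitz, since $\partial_z\rho\ge h/2$ and $\rho$ is smoothing in $x$) diffeomorphism transfers this to $\phi\in H^2(\Omega_{3h/4})$ upon choosing $z_1$ so that the image of $[z_1,0]$ contains $\Omega_{3h/4}$.

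For (ii), since $\phi$ is harmonic in $\Omega$ and now $H^2$ in a neighborhood of $\Sigma$, I would integrate by parts on the truncated domain $\Omega_{h'}$ for $h'<3h/4$:
\[
\iint_{\Omega_{h'}}|\nabla_{x,y}\phi|^2 = \int_{\Sigma}\phi\,\partial_n\phi\,\d\sigma - \int_{\{y=\eta(x)-h'\}}\phi\,\partial_n\phi\,\d\sigma,
\]
the top boundary term being exactly $\int_{\xR^d} f\,G(\eta)f$ by the definition \eqref{def:DN} of $G(\eta)$ together with $\phi|_\Sigma=f$. Then I would let $h'\uparrow h$ (or exhaust $\Omega$ by an increasing sequence of Lipschitz subdomains that stay inside $\mathcal{O}$ and eventually satisfy the bottom condition $\partial_n\phi|_\Gamma=0$): the left side converges to $\iint_\Omega|\nabla_{x,y}\phi|^2$ by monotone convergence, using $\nabla_{x,y}\phi\in L^2(\Omega)$ from \eqref{est:vari}; the interior-boundary flux term must be shown to vanish in the limit. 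The clean way to handle the latter is to use the variational characterization directly: $\phi=u+\underline f$ with $u\in H^{1,0}(\Omega)$ and $\underline f\equiv 0$ in $\Omega\setminus\Omega_{h/2}$, so testing \eqref{eq:variational} against $\varphi=u$ gives $\iint_\Omega\nabla_{x,y}u\cdot\nabla_{x,y}u = -\iint_\Omega\nabla_{x,y}\underline f\cdot\nabla_{x,y}u$, i.e. $\iint_\Omega\nabla_{x,y}\phi\cdot\nabla_{x,y}u=0$; combined with the $H^2$-near-$\Sigma$ integration by parts giving $\iint_\Omega\nabla_{x,y}\phi\cdot\nabla_{x,y}\underline f = \int_{\xR^d} f\,G(\eta)f$ (the bottom contribution vanishes since $\underline f$ is supported away from $\Gamma$ and $\phi$ is $H^2$ there, and one approximates $\underline f$ by $\mathscr{D}$-functions while the surface term reproduces $G(\eta)f$), we get $\iint_\Omega|\nabla_{x,y}\phi|^2 = \iint_\Omega\nabla_{x,y}\phi\cdot\nabla_{x,y}(u+\underline f) = \int_{\xR^d} f\,G(\eta)f$.

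**Main obstacle.** The delicate point is the rigorous integration by parts at the free surface with merely $H^2$ regularity and a Lipschitz (not smooth) interface $\Sigma$: one needs the normal trace $\partial_n\phi|_\Sigma$ to make sense and to match the formula for $G(\eta)f$. This is exactly where the $H^2(\Omega_{3h/4})$ regularity from step (i) is used — it gives $\nabla_{x,y}\phi$ a well-defined $H^\mez$ trace on $\Sigma$ — and where the change of variables to $S$ is convenient, since there the surface becomes flat and the divergence-form structure of the equation (the identity $\partial_z U = \nabla_x\cdot(\partial_z\rho\,\Lambda_2 v)$ from the proof of Lemma \ref{est:base}) makes the trace manipulations transparent. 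Everything else is bookkeeping with the exhaustion of $\Omega$ and the support properties of $\underline f$.
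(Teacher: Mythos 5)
Your proposal is correct and follows essentially the same route as the paper: elliptic regularity from Proposition \ref{elliptic:ABZ} with $\sigma=\mez$ plus the equation to recover $\partial_z^2v$, giving $\phi\in H^2(\Omega_{3h/4})$, and then the variational identity tested with $\varphi=u$ combined with an integration by parts against $\underline f$ (supported in $\Omega_{h/2}$) to produce $\int_{\xR^d} fG(\eta)f$. The only cosmetic difference is that the paper skips your truncated-domain alternative entirely and also records explicitly, via the Poincar\'e inequality of Proposition \ref{corog}, that $\phi\in L^2(\Omega_h)$ so that the $H^2$ statement is complete.
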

\begin{proof}
We first recall from the construction in subsection \ref{section:constructDN} that $\phi=u+\underline f$, where $\underline f$ is defined by \eqref{defi:underf} and $u\in H^{1,0}(\Omega)$ is the unique solution of \eqref{eq:variational}. By the Poincar\'e inequality of Lemma \ref{corog} and \eqref{eq:variational}, \eqref{est:underf}
\[
\Vert u\Vert_{L^2(\Omega_h)}\le C\Vert \nabla_{x,y}u\Vert_{L^2(\Omega)}\le K(1+\Vert \eta\Vert_{W^{1,\infty}})\Vert f\Vert_{H^\mez}.
\]
Therefore, $\phi\in L^2(\Omega_h)$ and thus, by \eqref{est:vari}, $\phi\in H^1(\Omega_h)$. Now, applying Proposition \ref{elliptic:ABZ} we have that $v=\phi(x, \rho(x,z))$ satisfies for any $z_1\in (-1, 0)$
\[
\Vert \nabla_{x,z}v\Vert_{L^2([z_1, 0]; H^1)}\le \cF(\Vert \eta\Vert_{H^{s+\mez}})\Vert f\Vert_{H^\tdm}.
\]
Then using equation \eqref{eq:v} together with the product rules one can prove that
\[
\Vert \partial_z^2v\Vert_{L^2([z_1, 0]; L^2)}\le \cF(\Vert \eta\Vert_{H^{s+\mez}})\Vert f\Vert_{H^\tdm}.
\]
By a change of variables we obtain $\nabla_{x,y}\phi\in H^1(\Omega_{3h/4})$ and thus $\phi\in H^2(\Omega_{3h/4})$. \\
Now, taking $\varphi=u\in H^{1,0}(\Omega)$ in the variational equation \eqref{eq:variational} gives
\[
\int_{\Omega}\nabla_{x,z}\phi\nabla_{x,z}u=0.
\]
Consequently
\[
\int_{\Omega}|\nabla_{x,y}\phi|^2=\int_{\Omega}|\nabla_{x,y}\phi|^2-\int_{\Omega}\nabla_{x,z}\phi\nabla_{x,z}u=\int_{\Omega}\nabla_{x,y}\phi\nabla_{x,y}\underline f.
\]
Since $\underline f\equiv 0$ in $\Omega\setminus\Omega_{h/2}$, this implies
\[
\int_{\Omega}|\nabla_{x,y}\phi|^2=\int_{\Omega_{3h/4}}\nabla_{x,y}\phi\nabla_{x,y}\underline f.
\]
We have proved that in $\Omega_{3h/4}$, the harmonic function $\phi$ is $H^2$. Notice in addition that $\phi\equiv 0$ near $\{y=\eta-3h/4\}$. As $\partial \Omega_{3h/4}$ is Lipschitz ($\eta \in H^{1+\frac{d}{2}+}\subset W^{1, \infty}$), an integration by parts then yields
\[
\int_{\Omega}|\nabla_{x,y}\phi|^2=\int_\Sigma \underline f\partial_n\phi=\int_{\xR^d} fG(\eta)f,
\]
which is the desired identity.
\end{proof}
The next proposition is an impovement of Proposition \ref{elliptic:ABZ} in the sense that it gives tame estimates  with respect to the highest derivatives of $\eta$ and $f$, provided $\nabla_{x,z}v\in L^\infty_zL^\infty_x$.
\begin{prop}[see \protect{\cite[Proposition~$2.12$]{Thibault}}] \label{elliptic:Th}
	Let  $s>\mez+\frac d2$, $-\mez\le \sigma\le s-\mez$. Assume that $\eta\in H^{s+\mez}$, $f\in H^{\sigma+1}$ and 
\[
\nabla_{x,z}v\in L^\infty([z_0, 0]; L^\infty)
\]
 for some $z_0\in (-1, 0)$. Then for any~$z_1\in (z_0, 0)$ and $\eps\in (0, s-\mez-\frac{d}{2})$, there exists an increasing function $\cF$ depending only on $s, \sigma, z_0, \eps$ such that
\begin{multline}\label{est:elliptic}
\lA\nabla_{x,z}v\rA_{X^\sigma([z_1 ,0])}\leq\cF(\lA\eta\rA_{\Cs^{1+\eps}})\lB \Vert f\Vert_{H^{\sigma+1}}+\Vert \eta\Vert_{H^{s+\mez}}\Vert\nabla_{x,z}v\Vert_{L^\infty([z_0, 0];L^\infty)}\right.\\
\left.+\Vert\nabla_{x,z}v\Vert_{X^{-\mez}([z_0, 0])}\rB.
\end{multline}
\end{prop}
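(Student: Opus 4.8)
The plan is to bootstrap from Proposition \ref{elliptic:ABZ}, which already gives the non-tame estimate
\[
\lA \nabla_{x,z}v\rA_{X^\sigma([z_1,0])}\le \cF(\lA\eta\rA_{H^{s+\mez}})\lB \Vert f\Vert_{H^{\sigma+1}}+\Vert\nabla_{x,z}v\Vert_{X^{-\mez}([z_0,0])}\rB,
\]
and to track carefully where the non-tame dependence $\cF(\lA\eta\rA_{H^{s+\mez}})$ actually enters, replacing it by a tame product whenever it multiplies a top-order factor. First I would fix an intermediate level $z_0<z_2<z_1<0$ and paralinearize the equation $\mathcal{L}v=0$ from \eqref{eq:v}: write $\alpha\Delta_x v=T_\alpha\Delta_x v+(T_{\Delta_x v}\alpha+R(\alpha,\Delta_x v))$ and similarly for the $\beta$ and $\gamma$ terms, so that $v$ solves a paradifferential elliptic equation $\partial_z^2 v+T_\alpha\Delta_x v+T_\beta\cdot\nabla_x\partial_z v-T_\gamma\partial_z v=F$ with a source $F$ that is the sum of all the symmetric-remainder and ``low-high'' paraproduct pieces. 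The symbol of the principal operator is elliptic of order $2$ by \eqref{drho:lowerbound} and the coefficient estimate \eqref{alpha:S}, so standard elliptic regularity for paradifferential equations (as in \cite{ABZ3}, §3) propagates $X^\sigma$ regularity of $v$ from the interior to $[z_1,0]$ in terms of $\Vert F\Vert_{Y^{\sigma-1}}$, the interior data $\Vert\nabla_{x,z}v\Vert_{X^{-\mez}([z_0,0])}$, and a $\cF(\lA\eta\rA_{\Cs^{1+\eps}})$ prefactor only — because the paracoefficients $T_\alpha,T_\beta,T_\gamma$ are controlled by Hölder norms of the coefficients via \eqref{alpha:C:0}, hence by $\cF(\lA\eta\rA_{\Cs^{1+\eps}})$ using that $\eta\in H^{s+\mez}\hookrightarrow \Cs^{1+\eps}$.

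The crux is then estimating the source $F$ tamely. For the remainders $R(\alpha,\Delta_x v)$ etc., I would use Bony's estimate \eqref{Bony1}/\eqref{Bony2} together with \eqref{alpha:S} to bound $\Vert R(\alpha,\Delta_x v)\Vert_{Y^{\sigma-1}}\lesssim \cF(\lA\eta\rA_{\Cs^{1+\eps}})\Vert\eta\Vert_{H^{\sigma+\mez}}\,\Vert\nabla_x v\Vert_{L^\infty_{z,x}}$-type expressions; the point is that these terms are genuinely \emph{bilinear}, so one factor can always be thrown onto the $L^\infty([z_0,0];L^\infty)$ norm of $\nabla_{x,z}v$ and the other onto the Sobolev norm of the coefficients, which by \eqref{alpha:S} costs exactly one copy of $\Vert\eta\Vert_{H^{s+\mez}}$ (no $\cF$ of it). The dangerous pieces are the ``reverse'' paraproducts $T_{\Delta_x v}\alpha$, $T_{\nabla_x\partial_z v}\beta$ and $T_{\partial_z v}\gamma$: here the rough factor is $\nabla_{x,z}v$ (or its $x$-derivative) and the smooth factor a coefficient; using the paraproduct estimate that $T_u a$ maps boundedly with $u\in L^\infty$ and $a$ in a Sobolev/Zygmund space (Lemma \ref{pest:n} and \eqref{alpha:S}), one gets $\Vert T_{\Delta_x v}\alpha\Vert_{Y^{\sigma-1}}\lesssim \Vert\nabla_x v\Vert_{L^\infty_{z,x}}\Vert\alpha\Vert_{\text{(Sobolev of order }\sigma+\mez)}\lesssim \cF(\lA\eta\rA_{\Cs^{1+\eps}})\Vert\eta\Vert_{H^{s+\mez}}\Vert\nabla_{x,z}v\Vert_{L^\infty([z_0,0];L^\infty)}$, and similarly for the others after using the interpolation Lemma \ref{lemm:inter} to trade the $z$-derivatives against half a spatial derivative. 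Here one must be a little careful with the $\gamma$ term since $\gamma$ only lives in $X^{\sigma-\tdm}$, i.e. it carries second derivatives of $\eta$; but it is paired with $\partial_z v$ and the index arithmetic $(\sigma-\tdm)+(-\mez)\ge $ what is needed still closes because $\sigma-\tdm>-\tdm\ge$ the relevant negative threshold under $\sigma\ge-\mez$, $s>\mez+\tfrac d2$.

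Assembling, one obtains
\[
\Vert F\Vert_{Y^{\sigma-1}([z_2,0])}\le \cF(\lA\eta\rA_{\Cs^{1+\eps}})\lB \Vert f\Vert_{H^{\sigma+1}}+\Vert\eta\Vert_{H^{s+\mez}}\Vert\nabla_{x,z}v\Vert_{L^\infty([z_0,0];L^\infty)}+\Vert\nabla_{x,z}v\Vert_{X^{-\mez}([z_0,0])}\rB,
\]
the $\Vert f\Vert_{H^{\sigma+1}}$ coming from the contribution of the lifting $\underline f$ in $\phi=u+\underline f$ and its image under the flattening map, which is handled exactly as in \cite{ABZ3} and involves only $\cF(\lA\eta\rA_{\Cs^{1+\eps}})$ since $\underline f$ depends on $\eta$ through at most first derivatives (cf. \eqref{defi:underf}, \eqref{est:underf}). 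Feeding this into the paradifferential elliptic regularity on $[z_2,0]\supset[z_1,0]$ and using a finite iteration to pass from the interior $z_0$ through $z_2$ to $z_1$ gives \eqref{est:elliptic}. The main obstacle, as indicated, is precisely the bookkeeping for the reverse paraproducts $T_{\text{(rough)}}(\text{coefficient})$: one has to verify that in \emph{every} such term the coefficient can be placed in a Sobolev space whose index is $\le s+\mez$ in such a way that only one power of $\Vert\eta\Vert_{H^{s+\mez}}$ is spent and all the remaining dependence is through $\Cs^{1+\eps}$, which is where the extra hypothesis $\nabla_{x,z}v\in L^\infty([z_0,0];L^\infty)$ is used decisively. $\square$
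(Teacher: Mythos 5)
Your overall strategy is the right one, and it is in fact how this result is proved in the source the paper cites (the paper itself does not prove Proposition \ref{elliptic:Th}; it quotes \cite[Proposition 2.12]{Thibault}, and the same mechanism appears in the paper's Besov analogues, Propositions \ref{prop:est:v:C1:apriori} and \ref{est:v:C-mez}): Bony-paralinearize $\mathcal{L}v=0$, estimate the remainders and reverse paraproducts tamely by placing $\nabla_{x,z}v$ in $L^\infty$ and the coefficients in $L^2_zH^{s-1}$ via \eqref{alpha:S} (so exactly one factor $\Vert\eta\Vert_{H^{s+\mez}}$ appears, everything else through $\cF(\Vert\eta\Vert_{\Cs^{1+\eps}})$), then run the factorization and the parabolic estimates, whose constants depend only on symbol seminorms. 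Your index arithmetic for the worst terms ($T_{\Delta_x v}\alpha$, $R(\alpha,\Delta_x v)$, $T_{\partial_z v}\gamma$, all landing in $L^2_zH^{s-1}\subset L^2_zH^{\sigma-\mez}$ precisely because $\sigma\le s-\mez$) is correct and is the heart of the tame structure.

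The gap is in the step you treat as a black box: ``standard elliptic regularity for paradifferential equations \ldots with a $\cF(\Vert\eta\Vert_{\Cs^{1+\eps}})$ prefactor only, because $T_\alpha,T_\beta,T_\gamma$ are controlled via \eqref{alpha:C:0}''. This is not available in one shot at this regularity. First, \eqref{alpha:C:0} needs $\eta\in\Cs^{\mu+\mez}$ with $\mu>\tdm$, i.e. $\eta\in\Cs^{2+}$, to make $\gamma$ bounded; with only $\Cs^{1+\eps}$ control, $\gamma$ is not in $L^\infty$ (it is controlled only in spaces like $\wL^2_z\Cs^{\eps-\mez}$ or $L^2_zH^{s-1}$), so $T_\gamma\partial_z v$ cannot be absorbed as a harmless bounded paracoefficient: bounding it in $L^2_zH^{\sigma-\mez}$ via \eqref{pest1} or Lemma \ref{pest:n} consumes $\partial_z v$ at regularity roughly $H^{\sigma-\eps}$, which is not among your hypotheses ($X^{-\mez}$ plus $L^\infty L^\infty$). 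Second, after the factorization of Lemma \ref{lemm:factor} with symbols only $C^\eps$ in $x$, the errors ($T_{a^{(1)}}T_{A^{(1)}}-T_\alpha\Delta$, $T_{\partial_zA^{(1)}}$) lose up to $1-\eps$ derivatives, so they too are controlled only by $\nabla_{x}v$ at level $\sigma-\eps$. Hence the passage from $X^{-\mez}$ to $X^\sigma$ must be an induction on the regularity index (gaining at most $\eps$, or $\mez$, per step, shrinking the interval in $z$ each time and checking that the tame form of the bound propagates), exactly as in \cite[Proposition 3.16]{ABZ3} and in the cited proof; your ``finite iteration'' refers only to the $z$-intervals and does not supply this. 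Two smaller slips: the forcing must be measured in $Y^\sigma$ (e.g. $L^2_zH^{\sigma-\mez}$), not $Y^{\sigma-1}$, to output $\nabla_{x,z}v\in X^\sigma$; and $\Vert f\Vert_{H^{\sigma+1}}$ enters as the boundary datum $\nabla f\in H^\sigma$ of the backward parabolic factor $(\partial_z-T_A)$, not through the variational lifting $\underline f$.
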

\subsubsection{Besov estimates}
Our goal is to establish regularity results for $\nabla_{x,z}v$ in Besov spaces. In particular, we shall need such results in the Zygmund space with negative index $\Cs^{-\mez}$, which is one of the new technical issues compared to \cite{AM, ABZ1, ABZ3, ABZ4, WaZh}. To this end, we follow the general strategy in \cite{AM} by first paralinearizing equation \eqref{eq:v} and  then factorizing this second order elliptic operator into the product of a forward and a backward parabolic operator.  The study of $\nabla_{x,z}v$ in $\Cs^{-\mez}$ will make use of the maximum principle in Proposition \ref{max:prin}. The proof of the next lemma is straightforward.
\begin{lemm}\label{lemm:factor}
Set 
\bq\label{defi:R1}
R_1v=(\alpha-T_\alpha)\Delta_xv+(\beta-T_\beta)\cdot\nabla\partial_zv-(\gamma-T_\gamma)\partial_z v,\quad R_2v=T_\gamma\partial_z v.
\eq
Consider two symbols 
\begin{equation}\label{aA:1}
	\begin{aligned}
		a^{(1)}&=\mez\big(-i\beta\cdot\xi-\sqrt{4\alpha\la\xi\ra^2-(\beta\cdot\xi)^2}\big),\\
		A^{(1)}&=\mez\big(-i\beta\cdot\xi+\sqrt{4\alpha\la\xi\ra^2-(\beta\cdot\xi)^2}\big),
	\end{aligned}
\end{equation}
which satisfy $a+A=-i\beta\cdot\xi$, $aA=-\alpha |\xi|^2$. Next, set 
\bq\label{defi:R2}
R_3=-\big(T_{a^{(1)}}T_{A^{(1)}}-T_{\alpha}\Delta\big)+T_{\partial_zA^{(1)}}.
\eq
Then we have
\[
\mathcal{L}v=(\partial_z-T_{a^{(1)}})(\partial_z-T_{A^{(1)}})v+R_1v+R_2v+R_3v.
\]
\end{lemm}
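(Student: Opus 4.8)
The identity is purely algebraic --- ``straightforward'', as the statement says --- so the plan is simply to expand the right-hand side and collect terms, using nothing beyond the product rule in $z$, the two symbol relations built into $a^{(1)}$ and $A^{(1)}$, and Bony's decomposition of the rough coefficients $\alpha,\beta,\gamma$; no mapping property of the operators involved is needed at this stage.

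First I would expand the second-order factor. Since $\partial_z\circ T_{A^{(1)}}=T_{A^{(1)}}\circ\partial_z+T_{\partial_zA^{(1)}}$, one has
\[
(\partial_z-T_{a^{(1)}})(\partial_z-T_{A^{(1)}})v=\partial_z^2v-\big(T_{a^{(1)}}+T_{A^{(1)}}\big)\partial_zv-T_{\partial_zA^{(1)}}v+T_{a^{(1)}}T_{A^{(1)}}v.
\]
Next I would invoke the relations $a^{(1)}+A^{(1)}=-i\beta\cdot\xi$ and $a^{(1)}A^{(1)}=-\alpha|\xi|^2$ from \eqref{aA:1}, together with the elementary fact that a paraproduct whose symbol is a function of $x$ times a polynomial in $\xi$ factorizes \emph{exactly}: $T_{b(x)\,i\xi_j}=T_b\,\partial_{x_j}$ and $T_{-c(x)|\xi|^2}=T_c\,\Delta$, the low--high frequency cutoff defining $T$ being blind to the $\xi$-polynomial factor (same Fourier kernel on both sides). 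Hence $-(T_{a^{(1)}}+T_{A^{(1)}})\partial_zv=T_\beta\cdot\nabla\partial_zv$ and $T_{a^{(1)}A^{(1)}}=T_\alpha\Delta$, so $R_3$ as defined in \eqref{defi:R2}, namely $-(T_{a^{(1)}}T_{A^{(1)}}-T_\alpha\Delta)+T_{\partial_zA^{(1)}}$, is a genuine composition remainder; adding it cancels the parasitic term $T_{a^{(1)}}T_{A^{(1)}}v$, restores the paradifferential Laplacian $T_\alpha\Delta v$, and removes the stray $-T_{\partial_zA^{(1)}}v$, leaving
\[
(\partial_z-T_{a^{(1)}})(\partial_z-T_{A^{(1)}})v+R_3v=\partial_z^2v+T_\alpha\Delta v+T_\beta\cdot\nabla\partial_zv.
\]

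It then remains to match this against $\mathcal Lv$ from \eqref{eq:v} after Bony-decomposing each of its coefficients: $\alpha\Delta v=T_\alpha\Delta v+(\alpha-T_\alpha)\Delta v$, $\beta\cdot\nabla\partial_zv=T_\beta\cdot\nabla\partial_zv+(\beta-T_\beta)\cdot\nabla\partial_zv$ and $-\gamma\partial_zv=-(\gamma-T_\gamma)\partial_zv-T_\gamma\partial_zv$. The three ``rough'' pieces are exactly $R_1v$ of \eqref{defi:R1}, while the full paraproduct piece coming from the lowest-order coefficient is $R_2v=T_\gamma\partial_zv$; collecting these together with the previous display (reading off the signs from \eqref{eq:v}) yields the identity claimed in the statement.

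As the wording anticipates, there is no real obstacle here --- the computation is mechanical. The single point requiring care is that $\alpha,\beta,\gamma$ have only low (Hölder / $H^s$) regularity, so one must never commute $T$ past a coefficient nor replace a composition $T_\sigma T_\tau$ by $T_{\sigma\tau}$: every such discrepancy is kept exact and shunted into $R_1,R_2,R_3$. The substantive work is thereby deferred to the estimates on these three remainders and on the first-order parabolic factors $\partial_z-T_{a^{(1)}}$ and $\partial_z-T_{A^{(1)}}$ in the lemmas that follow.
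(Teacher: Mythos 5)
Your expansion is precisely the ``straightforward'' computation the paper has in mind (it gives no proof of this lemma), and the substantive mechanics are all correct: with the quantization \eqref{eq.para}, symbols of the form $b(x)\,i\xi_j$ and $-c(x)\,|\xi|^2$ do factor exactly as $T_b\partial_{x_j}$ and $T_c\Delta$, the Leibniz rule in $z$ produces the $T_{\partial_z A^{(1)}}$ term, and the relations $a^{(1)}+A^{(1)}=-i\beta\cdot\xi$, $a^{(1)}A^{(1)}=-\alpha|\xi|^2$ give $(\partial_z-T_{a^{(1)}})(\partial_z-T_{A^{(1)}})v+R_3v=\partial_z^2v+T_\alpha\Delta v+T_\beta\cdot\nabla\partial_zv$, exactly as in your display.

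One bookkeeping point, however: your own penultimate step, $-\gamma\partial_zv=-(\gamma-T_\gamma)\partial_zv-T_\gamma\partial_zv$, shows that the paraproduct piece of the zeroth-order term enters with a \emph{minus} sign, so what the computation actually yields is $\mathcal{L}v=(\partial_z-T_{a^{(1)}})(\partial_z-T_{A^{(1)}})v+R_1v-R_2v+R_3v$ with the paper's $R_2v=T_\gamma\partial_zv$; the identity as printed (with $+R_2v$) differs from $\mathcal{L}v$ by $2T_\gamma\partial_zv$, i.e.\ the statement carries a sign typo (equivalently one should set $R_2v=-T_\gamma\partial_zv$). Rather than asserting that ``reading off the signs yields the identity claimed,'' you should record the corrected identity and note the discrepancy. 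The slip is harmless downstream: in Proposition \ref{prop:est:v:C1:apriori} and its variants each $R_jv$ is estimated separately in norm, so only $\Vert R_2v\Vert$ enters and the sign is immaterial.
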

The next proposition provides a regularity bootstrap for $\nabla_{x,z}v$ in $\Bl^r$ with $r\ge 0$. Its proof is inspired by that of Proposition~$4.9$ in \cite{WaZh}.
\begin{prop}
\label{prop:est:v:C1:apriori}
Let $\eps_0>0$ and $r\in [0, 1+\eps_0)$. Assume that $\eta\in \Cs^{2+\eps_0}\cap L^2$, $f\in H^\mez$,  $\nabla f\in \Bl^r$ and for some $z_0\in (-1, 0)$
\bq\label{assume:C1}
\nabla_{x,z}v\in \wL^\infty([z_0, 0]; \Bl^{r-\mez})\cap \wL^\infty([z_0, 0]; \Bl^0) 
\eq
Then, for any $z_1\in (z_0, 0)$,  we have $\nabla_{x,z}v\in C([z_1, 0]; \Bl^r)$ and
\bq\label{est:v:C1:apriori}
\Vert \nabla_{x,z}v\Vert_{C([z_1, 0]; \Bl^r)}\le_{K_{\eta, \eps_0}}\Vert \nabla f\Vert_{\Bl^r}+E(\eta, f),
\eq
 where,  $K_{\eta, \eps_0}$ is a constant of the form 
\bq\label{defi:Keps}
\cF\big(\Vert \eta\Vert_{\Cs^{2+\eps_0}}+\Vert \eta\Vert_{L^2}\big)
\eq
with $\cF:\xR^+\to \xR^+$ nondecreasing.
\end{prop}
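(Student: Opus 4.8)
The plan is to exploit the factorization of $\mathcal{L}$ furnished by Lemma \ref{lemm:factor} and the smoothing of the two first‑order parabolic operators it produces. Since $\alpha$ is bounded below uniformly in $z$ (combine \eqref{drho:lowerbound} with \eqref{alpha:C:0} for $\mu=2+\eps_0$, which also keeps $\beta$ correspondingly small), the symbols in \eqref{aA:1} satisfy $\RE a^{(1)}(z,\cdot,\xi)\le -c\la\xi\ra$ and $\RE A^{(1)}(z,\cdot,\xi)\ge c\la\xi\ra$ for $|\xi|$ large, with $c>0$. Hence $\partial_z-T_{a^{(1)}}$ is parabolic forward in $z$ (well‑posed on $[z_0,0]$ with data at the bottom $z=z_0$) while $\partial_z-T_{A^{(1)}}$ is parabolic backward in $z$ (well‑posed with data at the top $z=0$, where $v\arrowvert_{z=0}=f$). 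I would therefore set $w:=(\partial_z-T_{A^{(1)}})v$, so that $\mathcal{L}v=0$ and Lemma \ref{lemm:factor} give
\[
(\partial_z-T_{a^{(1)}})w=-(R_1+R_2+R_3)v=:F,\qquad v\arrowvert_{z=0}=f.
\]

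The core of the argument is the estimate of the source $F$ in a Chemin--Lerner space. First, \eqref{assume:C1} yields $w\in\wL^\infty([z_0,0];\Bl^{r-\mez})$ (recall $A^{(1)}$ vanishes at $\xi=0$ and is of order one, so $T_{A^{(1)}}v$ is controlled by $\nabla_x v\in\Bl^{r-\mez}$, and $\partial_z v\in\Bl^{r-\mez}$). Next I would prove
\[
\Vert F\Vert_{\wL^1([z_0,0];\Bl^{r-\mez})}\les_{K_{\eta,\eps_0}}\Vert\nabla_{x,z}v\Vert_{\wL^\infty([z_0,0];\Bl^{r-\mez})}+\Vert\nabla_{x,z}v\Vert_{\wL^\infty([z_0,0];\Bl^0)},
\]
by decomposing $R_1$ (a sum of paraproducts and Bony remainders of the differences $\alpha-T_\alpha,\ \beta-T_\beta,\ \gamma-T_\gamma$, hence smoothing), treating $R_2=T_\gamma\partial_z v$ by the order‑zero paraproduct bound, and controlling $R_3$ by the symbolic calculus of Appendix \ref{Appendix} (it is of order $\le 1$), in each case feeding in the coefficient bounds \eqref{alpha:C:0}--\eqref{alpha:C:2} with $\mu=2+\eps_0$ (so $\alpha,\beta\in C_z\Cs^{\tdm+\eps_0}\cap\wL^2_z\Cs^{2+\eps_0}$ and $\gamma\in C_z\Cs^{\mez+\eps_0}\cap\wL^2_z\Cs^{1+\eps_0}$) together with \eqref{assume:C1}. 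The restriction $r<1+\eps_0$ is exactly what keeps these paraproducts and remainders in $\Bl^{r-\mez}$, the auxiliary hypothesis $\nabla_{x,z}v\in\wL^\infty\Bl^0\subset\wL^\infty L^\infty$ gives meaning to the paraproducts having $\nabla_{x,z}v$ as an argument when $r<\mez$ (so $\Bl^{r-\mez}$ has negative index), and the time integrability $\wL^1_z$ is recovered by pairing the $\wL^2_z$ part of the coefficient bounds with $\wL^\infty_z$ for $\nabla_{x,z}v$ over the bounded interval via Cauchy--Schwarz.

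With $F$ in hand, I would invoke the parabolic smoothing estimate for $\partial_z-T_{a^{(1)}}$ (in the spirit of \cite{AM,WaZh}): for each $z_2\in(z_0,0)$ one gets $w\in C([z_2,0];\Bl^r)\cap\wL^1([z_2,0];\Bl^{r+\mez})$ with norm bounded by $\Vert w\Vert_{\wL^\infty([z_0,0];\Bl^{r-\mez})}+\Vert F\Vert_{\wL^1([z_0,0];\Bl^{r-\mez})}$, the half‑derivative gain of the heat flow closing the gap. Choosing $z_2\in(z_0,z_1)$ and solving the backward parabolic equation $(\partial_z-T_{A^{(1)}})v=w$ from the top with datum $v\arrowvert_{z=0}=f$, the same estimate (run in the variable $-z$) gives $\nabla_x v\in C([z_1,0];\Bl^r)$ with norm $\les\Vert\nabla f\Vert_{\Bl^r}+\Vert w\Vert_{\wL^1([z_1,0];\Bl^r)}$; finally $\partial_z v=T_{A^{(1)}}v+w\in C([z_1,0];\Bl^r)$ as well. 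Chaining these inequalities produces the asserted regularity. To reduce the right‑hand side to $\Vert\nabla f\Vert_{\Bl^r}+E(\eta,f)$ one controls the a priori norms that appeared along the way: the $\Bl^{r-\mez}$ (equivalently $X^{-\mez}$‑type) level by Lemma \ref{est:base}, and the $\Bl^0$ level by the maximum principle of Proposition \ref{max:prin} together with interior elliptic regularity, after (if necessary) an induction on $r$ in steps smaller than $\mez$ starting from $r=0$.

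I expect the main obstacle to be the source estimate of the second paragraph: beyond keeping everything tame (linear in the top coefficient norm $\Vert\eta\Vert_{\Cs^{2+\eps_0}}$), one must recover $B_{\infty,1}$‑ rather than merely $B_{\infty,\infty}=\Cs$‑summability for the remainder terms, which is precisely the new technical point flagged in the text compared with \cite{AM,ABZ1,ABZ3}; this is where the sharp Hölder exponents of $\alpha,\beta,\gamma$ supplied by \eqref{alpha:C:0}--\eqref{alpha:C:2} and the Chemin--Lerner splitting of the $z$‑integration are indispensable.
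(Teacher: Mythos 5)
Your overall architecture (the factorization of Lemma \ref{lemm:factor}, a source estimate for $-(R_1+R_2+R_3)v$, then forward parabolic smoothing for $w=(\partial_z-T_{A^{(1)}})v$ and backward smoothing for $v$ from the top datum) is the same as the paper's, but two steps fail as written. First, the Chemin--Lerner bookkeeping: you estimate the source $F$ in $\wL^1([z_0,0];\Bl^{r-\mez})$ and then claim the forward parabolic estimate yields $w\in C([z_2,0];\Bl^r)$. It does not: in Theorem \ref{parabolic:Hol} a source in $\wL^q_z B^{\sigma}_{\infty,1}$ gives sup-in-$z$ control only at the spatial level $\sigma+1-\frac1q$, so an $\wL^1_z\Bl^{r-\mez}$ source produces only $C_z\Bl^{r-\mez}$; applying Cauchy--Schwarz to downgrade the natural $\wL^2_z$ bound to $\wL^1_z$ throws away exactly the half derivative you need. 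The correct input is $F\in\wL^2_z\Bl^{r-\mez}$ (which your coefficient bounds \eqref{alpha:C:0}--\eqref{alpha:C:2} give directly, and which is what the paper uses), and for the backward step the right source is $\nabla w+T_{\nabla_xA^{(1)}}v\in\wL^\infty([z_1,0];\Bl^{r-1})$, i.e.\ $q=\infty$; your chain $\lA\nabla_xv\rA_{C\Bl^r}\les\lA\nabla f\rA_{\Bl^r}+\lA w\rA_{\wL^1\Bl^r}$ is off by half a derivative for the same reason.

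Second, and more seriously, your closing step -- reducing the right-hand side to $\lA\nabla f\rA_{\Bl^r}+E(\eta,f)$ -- is wrong. Lemma \ref{est:base} controls $\nabla_{x,z}v$ only in $X^{-\mez}$, i.e.\ in $C_zH^{-\mez}$, and $H^{-\mez}$ embeds into $\Cs^{-\mez-\frac d2}$, not into $\Bl^{r-\mez}$ for $r\ge 0$; so ``the $\Bl^{r-\mez}$ level by Lemma \ref{est:base}'' is not available, and the parenthetical ``equivalently $X^{-\mez}$-type'' is precisely the misconception. Invoking the maximum principle for the $\Bl^0$ level fails twice over: Proposition \ref{max:prin} bounds $\phi$, not $\nabla_{x,z}v$, in $L^\infty$, and it would bring $\lA f\rA_{L^\infty}$ into the estimate, which this proposition deliberately excludes (see the remark after the statement; the maximum principle is reserved for Proposition \ref{est:v:C-mez}, whose right-hand side does contain $\lA f\rA_{L^\infty}$). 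Your induction on $r$ does not rescue the base case $r\in[0,\mez)$ either, since the negative-index norm $\Bl^{r-\mez}$ of $\nabla_{x,z}v$ still appears there. What actually closes the argument is the paper's absorption mechanism: split $[z_0,0]=[z_0,z_1]\cup[z_1,0]$; on $[z_0,z_1]$ bound $\nabla_{x,z}v$ by interior elliptic regularity of the harmonic $\phi$ (hence by $E(\eta,f)$ via \eqref{est:vari}); on $[z_1,0]$ interpolate the $\Bl^{r-\mez}\cap\Bl^{-\eps}$ norm between $\Bl^{-\delta}$ with $\delta>\frac d2+\mez$ and $\Bl^r$, absorb the $\Bl^r$ factor into the left-hand side (legitimate because the parabolic step has already shown that norm is finite), and control the remaining $\Bl^{-\delta}$ piece by $\lA\nabla_{x,z}v\rA_{\wL^\infty H^{-\mez}}\les E(\eta,f)$ via Sobolev embedding and Lemma \ref{est:base}. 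Without this step your final bound still contains the merely-finite hypothesis norms from \eqref{assume:C1}, and the proposition is not proved.
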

\begin{rema}
It is important for later applications that our estimate involves only the Besov norm of $\nabla f$ and not $f$ itself.\\
Proposition \ref{prop:est:v:C1:apriori} is a conditional regularity result. It assumes  weaker regularities of $\nabla_{x,z}v$ to derive the regularity in $C([z_1, 0]; \Bl^r)$. The later will allow us to estimate the trace $\nabla_{x,z}v\rvert_{z=0}$ in the same space.
\end{rema}
\begin{proof}
Recall the definitions of $R_j$ $j=1, 2, 3$ in Lemma \ref{lemm:factor}. Pick $\eps>0$ such that $2\eps<\min\big\{\mez,  1+\eps_0-r\}$. We shall frequently use the following fact: for all $s\in \xR$ and for all $\delta>0$, there exists $C>0$ such that
\bq\label{CBC}
\frac{1}{C}\Vert u\Vert_{\Cs^s}\le\Vert u\Vert_{\Bl^s}\le C\Vert u\Vert_{\Cs^{s+\delta}}.
\eq
{\bf Step 1.} In this step, we estimate $R_jv$ in $L^2(J;\Bl^{r-\mez})$ for any $J\subset [-1, 0]$. For $R_1$ we write using the Bony decomposition 
\[
(\alpha-T_\alpha)\Delta_x v=T_{\Delta_xv}\alpha+R(\Delta_xv, \alpha).
\]
Applying \eqref{pest:CL} and the assumption \eqref{assume:C1} (i) gives
\[
\Vert T_{\Delta_xv}\alpha \Vert_{\wL^2\Bl^{r-\mez}}\les \Vert\alpha\Vert_{\wL^2 \Bl^{r+\mez+\eps}}\Vert \Delta_xv\Vert_{\wL^\infty \Cs^{-1-\eps}}\les K_{\eta, \eps_0}\Vert \nabla_xv\Vert_{\wL^\infty\Bl^{-\eps}},
\]
where we have used the facts that $r+\mez+2\eps\le \tdm+\eps_0$ and (by \eqref{alpha:C:2})
\[
 \Vert\alpha\Vert_{\wL^2 \Bl^{r+\mez+\eps}}\les \Vert\alpha\Vert_{\wL^2 \Cs^{r+\mez+2\eps}} \les K_{\eta, \eps_0}.
\]
Next, noticing that $(\tdm+\eps_0)+(-1-\eps)>0$ and $\tdm+\eps_0-1-\eps\ge r-\mez$, we obtain by using  \eqref{Bony:CL}
\[
\Vert R(\Delta_xv,\alpha) \Vert_{\wL^2 \Bl^{r-\mez}}\les \Vert\alpha\Vert_{\wL^2\Cs^{\tdm+\eps_0}}\Vert \Delta_xv\Vert_{\wL^\infty \Bl^{-1-\eps}}\les K_{\eta,\eps_0}\Vert \nabla_xv\Vert_{\wL^\infty \Bl^{-\eps}}.
\]
The term $(\beta-T_\beta)\cdot \nabla_x\partial_zv$ can be treated in the same way.  Lastly, it holds that
\[
\Vert T_{\partial_zv}\gamma\Vert_{\wL^2 \Bl^{r-\mez}}\les \Vert\gamma\Vert_{\wL^2 \Bl^{r-\mez+\eps}}\Vert \partial_zv\Vert_{\wL^\infty \Cs^{-\eps}}\les K_{\eta,\eps_0}\Vert \partial_zv\Vert_{\wL^\infty \Bl^{-\eps}}
\]
and 
\[
\Vert R(\partial_zv, \gamma)\Vert_{\wL^2 \Bl^{r-\mez}}\les \Vert\gamma\Vert_{\wL^2 \Cs^{\mez+\eps_0}}\Vert \partial_zv\Vert_{\wL^\infty \Bl^{-\eps}}\les K_{\eta,\eps_0}\Vert \partial_zv\Vert_{L^\infty\Bl^{-\eps}}.
\]
Gathering the above estimates leads to 
\[
\Vert R_1v\Vert_{\wL^2(J; \Bl^{r-\mez})}\les K_{\eta, \eps_0}\Vert \nabla_{x,z}v\Vert_{\wL^\infty(J; \Bl^{-\eps})}.
\]
On the other hand, $R_2v$ satisfies (using \eqref{pest:CL})
\begin{align*}
\Vert R_2v\Vert_{\wL^2(J; \Bl^{r-\mez})}& =\Vert T_\gamma\partial_zv\Vert_{\wL^2(J; \Bl^{r-\mez})}\\
& \les \Vert\gamma\Vert_{\wL^2(J; L^\infty)}\Vert \partial_zv\Vert_{L^\infty(J; \Bl^{r-\mez})}\les K_{\eta,\eps_0}\Vert \partial_zv\Vert_{\wL^\infty(J; \Bl^{r-\mez})},
\end{align*}
which is finite due to the assumption \eqref{assume:C1}.\\
Next, noticing that (see Notation \ref{symbolnorm:z})
\[
\mathcal{M}^1_1(a^{(1)})+\mathcal{M}^1_1(A^{(1)})+\mathcal{M}^1_0(\partial_zA^{(1)})\les K_{\eta,\eps_0}
\]
we can apply Lemma \ref{sc:CL} to deduce that $R_3$ is of order $1$ and 
\[
\Vert R_3v\Vert_{\wL^2(J; \Bl^{r-\mez})}\le \Vert R_3v\Vert_{\wL^\infty(J; \Bl^{r-\mez})} \les K_{\eta, \eps_0} \Vert \nabla_xv\Vert_{\wL^\infty(J; \Bl^{r-\mez})}.
\]
In view of Lemma \ref{lemm:factor}, we have proved that 
\[
(\partial_z-T_{a^{(1)}})(\partial_z-T_{A^{(1)}})v=F
\]
with
\[
\Vert F\Vert_{\wL^2(J; \Bl^{r-\mez})} \les_{K_{\eta,\eps_0}} \Vert \nabla_{x,z}v\Vert_{\wL^\infty(J; \Bl^{r-\mez}\cap \Bl^{-\eps})}.
\]
{\bf Step 2.}  Fix $-1<z_0<z_1<0$ and introduce $\kappa$  a cut-off function satisfying $\kappa\arrowvert_{z<z_0}=0,~\kappa\arrowvert_{z>z_1}=1$. Setting $w=\kappa(z)(\partial_z-T_{A^{(1)}})v$, then 
\[
(\partial_z-T_{a^{(1)}})w=G:=\kappa(z)F+\kappa'(z)(\partial_z-T_{A^{(1)}})v.
\]
As $w\rvert_{z=z_0}=0$, applying Theorem \ref{parabolic:Hol} yields for  sufficiently large $\delta>0$ to be chosen, that $ w\in C([z_0, 0]; \Bl^r)$ and 
\begin{multline*}
\Vert w\Vert_{C([z_0, 0]; \Bl^{r})}\les \Vert \kappa(z)F\Vert_{\wL^2([z_0, 0]; \Bl^{r-\mez})}+\cr
\Vert\kappa'(z)(\partial_z-T_{A^{(1)}})v\Vert_{\wL^2([z_0, 0]; \Bl^{r-\mez})}
+\Vert w\Vert_{L^\infty([z_0, 0]; \Cs^{-r_0})}.
\end{multline*}
Choosing $r_0>\eps$ and using \eqref{CBC} we deduce 
\[
\Vert w\Vert_{C([z_0, 0]; \Bl^{r})}\les_{K_{\eta,\eps_0}} \Vert \nabla_{x,z}v\Vert_{\wL^\infty([z_0, 0]; \Bl^{r-\mez}\cap \Bl^{-\eps})}.
\]
Now, on $[z_1, 0]$, $v$ satisfies 
\[
(\partial_z-T_{A^{(1)}})\nabla_xv=\nabla w+T_{\nabla_x A^{(1)}}v, \quad\nabla_x v\rvert_{z=0}=\nabla f.
\]
After changing $z\mapsto -z$,  Theorem \ref{parabolic:Hol} gives for sufficiently large $\delta>0$
\bq\label{est:C0:1}
\begin{aligned}
\Vert \nabla_xv\Vert_{C([z_1, 0]; \Bl^r)}&\les_{K_{\eta,\eps_0}} \Vert \nabla f\Vert_{ \Bl^r}+\Vert \nabla w\Vert_{\wL^\infty([z_1, 0]; \Bl^{r-1})}\\
&\quad +\Vert T_{\nabla_x A^{(1)}}v\Vert_{\wL^\infty([z_1, 0]; \Bl^{r-1})}+\Vert \nabla_xv\Vert_{\wL^\infty([z_1, 0]; \Cs^{-\delta})}\\
&\les_{K_{\eta,\eps_0}}  \Vert \nabla f\Vert_{ \Bl^r}+\Vert \nabla_{x,z}v\Vert_{\wL^\infty([z_0, 0]; \Bl^{r-\mez+\eps}\cap \Bl^{-\eps})}.
\end{aligned}
\eq
Then, from the equation $\partial_z v=w+T_{A^{(1)}}v$ we see that $ \partial_zv\in C([z_1, 0];\Bl^r)$ with norm bounded by the right-hand side of \eqref{est:C0:1}. We split 
\[
\Vert \nabla_{x,z}v\Vert_{\wL^\infty([z_0, 0]; \Bl^{r-\mez}\cap \Bl^{-\eps})}
\]
into two norms, one is over $[z_0, z_1]$ and the other is over $[z_1, 0]$. The one over $[z_0, z_1]$ can be bounded by $\Vert f\Vert_{H^\mez}$ using the estimate \eqref{est:vari}. Indeed, the fluid domain corresponding to $[z_0, z_1]$ belongs to the interior of $\Omega_t$, where $\phi$ is analytic, and thus the result follows from the standard elliptic theory (see for instance the proof of Lemma 2.9, \cite{ABZ1}). On the other hand, by choosing a large $\delta>0$ and interpolating between $\Bl^{-\delta}$ and $\Bl^r$, the term 
\[
\Vert \nabla_{x,z}v\Vert_{\wL^\infty([z_1, 0];\Bl^{r-\mez}\cap \Bl^{-\eps})}
\]
 appearing on the right-hand side of \eqref{est:C0:1}, can be absorbed by $\Vert \nabla_{x,z}v\Vert_{\wL^\infty([z_1, 0]; \Bl^r)}$ on the left-hand side, leaving a term bounded by $\Vert \nabla_{x,z}v\Vert_{\wL^\infty([z_1, 0]; \Bl^{-\delta})}$. Finally, choosing $\delta>\frac d2+\mez$, we conclude by \eqref{CBC}, Sobolev's embedding and \eqref{est:base:Cx}-\eqref{est:base:Cz} that
\[
\Vert \nabla_{x,z}v\Vert_{\wL^\infty([z_1, 0]; \Bl^{-\delta})}\les \Vert \nabla_{x,z}v\Vert_{\wL^\infty([z_1, 0]; H^{-\mez})}\les K_{\eta, \eps_0}E(\eta, f).
\]
\end{proof}
\begin{coro}\label{est:v:C1}
Let $s>\tdm+\frac{d}{2}$, $\eps_0\in (0, s-\tdm-\frac{d}{2})$ and $r\in [0, 1+\eps_0)$. Assume that $\eta\in H^{s+\mez}$ and $f\in H^s$, $\nabla f\in \Bl^r$. Then for any $z\in (-1, 0)$,  we have $\nabla_{x,z}v\in C([z, 0]; \Bl^r)$ and
\[
\Vert \nabla_{x,z}v\Vert_{C([z, 0]; \Bl^r)}\les_{K_{\eta, \eps_0}}\Vert \nabla f\Vert_{\Bl^r}+E(\eta, f).
\]
\end{coro}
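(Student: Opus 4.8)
The plan is to read off Corollary~\ref{est:v:C1} from the conditional estimate of Proposition~\ref{prop:est:v:C1:apriori}: the only task is to check that, under the stronger hypotheses $\eta\in H^{s+\mez}$, $f\in H^s$, $\nabla f\in\Bl^r$, all the assumptions of that proposition hold --- in particular the a priori regularity \eqref{assume:C1} of $\nabla_{x,z}v$. The assumptions on $(\eta,f)$ are immediate: since $s>\tdm+\frac d2$ and $\eps_0<s-\tdm-\frac d2$, Sobolev embedding gives $\eta\in H^{s+\mez}\hookrightarrow\Cs^{s+\mez-\frac d2}\hookrightarrow\Cs^{2+\eps_0}$ (the last inclusion because $s+\mez-\frac d2>2+\eps_0$), and trivially $\eta\in L^2$; likewise $f\in H^s\hookrightarrow H^\mez$, while $\nabla f\in\Bl^r$ by assumption. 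Moreover $K_{\eta,\eps_0}=\cF\big(\Vert\eta\Vert_{\Cs^{2+\eps_0}}+\Vert\eta\Vert_{L^2}\big)\le\cF(\Vert\eta\Vert_{H^{s+\mez}})$.

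The substance of the proof is the verification of \eqref{assume:C1}, i.e. the existence of some $z_0\in(-1,0)$ with $\nabla_{x,z}v\in\wL^\infty([z_0,0];\Bl^{r-\mez})\cap\wL^\infty([z_0,0];\Bl^0)$, and here I would invoke the Sobolev elliptic theory already at our disposal. By part~2 of Lemma~\ref{est:base} (valid since $s>\mez+\frac d2$) one has $\nabla_{x,z}v\in X^{-\mez}([-1,0])$, finite by Remark~\ref{rema:E}; hence Proposition~\ref{elliptic:ABZ}, applied with $\sigma=s-1$ --- admissible since $-\mez\le s-1\le s-\mez$ and $f\in H^{\sigma+1}=H^s$ --- yields, for every $z_0\in(-1,0)$,
\[
\nabla_{x,z}v\in X^{s-1}([z_0,0])\hookrightarrow C\big([z_0,0];H^{s-1}\big)\hookrightarrow C\big([z_0,0];\Cs^{s-1-\frac d2}\big),
\]
the last embedding because $s-1-\frac d2>\mez>0$. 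It then remains to pass from $\Cs^{s-1-\frac d2}$ to $\Bl^{r-\mez}$ and $\Bl^0$ in the Chemin--Lerner scale, which is harmless thanks to the gain of regularity: one has the elementary bound $\Vert u\Vert_{\wL^\infty_z\Bl^{\mu'}}\les C_{\mu,\mu'}\Vert u\Vert_{L^\infty_z\Cs^\mu}$ whenever $\mu'<\mu$, obtained by summing the geometric series $\sum_j 2^{j(\mu'-\mu)}$ over the Littlewood--Paley index $j$ and using $\Vert\Delta_j u\Vert_{L^\infty_z L^\infty_x}\les 2^{-j\mu}\Vert u\Vert_{L^\infty_z\Cs^\mu}$. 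Since $r<1+\eps_0<s-\mez-\frac d2$ we get $r-\mez<s-1-\frac d2$, and also $0<s-1-\frac d2$; hence $\nabla_{x,z}v\in\wL^\infty([z_0,0];\Bl^{r-\mez})\cap\wL^\infty([z_0,0];\Bl^0)$, which is exactly \eqref{assume:C1}.

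Finally I would apply Proposition~\ref{prop:est:v:C1:apriori} on $[z_0,0]$: for any $z\in(z_0,0)$ it gives $\nabla_{x,z}v\in C([z,0];\Bl^r)$ together with $\Vert\nabla_{x,z}v\Vert_{C([z,0];\Bl^r)}\les_{K_{\eta,\eps_0}}\Vert\nabla f\Vert_{\Bl^r}+E(\eta,f)$; and since $z_0\in(-1,0)$ was arbitrary, this holds for every $z\in(-1,0)$, which is the assertion. I do not anticipate a genuine obstacle: the only mildly delicate point is that Proposition~\ref{elliptic:ABZ} produces a Sobolev-valued statement whereas \eqref{assume:C1} is phrased in Chemin--Lerner--Besov spaces, and this mismatch is absorbed by the strict inequality $r<s-\mez-\frac d2$ between the required and the available regularity.
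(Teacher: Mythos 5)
Your proposal is correct and follows essentially the same route as the paper: combine the base regularity \eqref{est:base:H} with Proposition \ref{elliptic:ABZ} (at $\sigma=s-1$), embed the resulting $C([z,0];H^{s-1})$ control into a H\"older/Besov space to verify hypothesis \eqref{assume:C1}, and then invoke Proposition \ref{prop:est:v:C1:apriori}. The only (harmless) difference is that, by keeping the full index $\Cs^{s-1-\frac d2}$ and using $r-\mez<s-1-\frac d2$, you get away with a single application of Proposition \ref{prop:est:v:C1:apriori}, whereas the paper phrases this last step as a bootstrap after embedding only into $\Bl^{\mez}$.
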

\begin{proof}
Under the assumptions on the Sobolev regularity of $\eta$ and $f$, we can apply Proposition \ref{elliptic:ABZ} in conjunction with \eqref{est:base:H} to get for any $z\in (-1, 0)$,
\[
\nabla_{x,z}v\in C([z, 0]; H^{s-1})\hookrightarrow  C([z, 0]; \Cs^{\mez+\eps_0})\hookrightarrow  C([z, 0]; \Bl^{\mez}).
\]
Notice that $\eta\in H^{s+\mez}\hookrightarrow \Cs^{2+\eps_0}$ and $\nabla f\in H^{s-1}\hookrightarrow \Bl^{\mez}$. Then the bootstrap provided by Proposition \ref{prop:est:v:C1:apriori} concludes the proof.
\end{proof}
Considering the case $r=-\mez$, we first establish an a priori estimate.
\begin{prop}\label{est:v:C-mez:a}
Assume that $\eta\in \Cs^{2+\eps_0}\cap L^2$ for some  $\eps_0>0$,   and $f\in L^\infty$, $\nabla f\in \Cs^{-\mez}$.  If $\nabla_{x,z}v\in C([z, 0]; \Cs^{-\mez})$ for some $z\in (-1, 0)$ then 
\bq\label{est:elliptic:Z}
\Vert \nabla_{x,z}v\Vert_{C([z, 0]; \Cs^{-\mez})}\les_{K_{\eta, \eps_0}} \Vert \nabla f\Vert_{\Cs^{-\mez}}+E(\eta, f).
\eq
\end{prop}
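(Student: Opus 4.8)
The plan is to follow the same factorization–parabolic regularization scheme used in the proof of Proposition~\ref{prop:est:v:C1:apriori}, but now carrying the analysis through in the Zygmund space $\Cs^{-\mez}$ rather than in $\Bl^r$ with $r\ge 0$. First I would recall the decomposition of Lemma~\ref{lemm:factor}, namely $\mathcal{L}v=(\partial_z-T_{a^{(1)}})(\partial_z-T_{A^{(1)}})v+R_1v+R_2v+R_3v$, and estimate each remainder $R_jv$ in $\wL^2(J;\Cs^{-\mez-\mez})=\wL^2(J;\Cs^{-1})$ (or an appropriate space one would plug into the parabolic estimate Theorem~\ref{parabolic:Hol}). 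The key point is that all the Bony-type paraproduct and remainder bounds needed here involve only derivatives of order at most $2$ of $\eta$, hence are controlled by $K_{\eta,\eps_0}=\cF(\Vert\eta\Vert_{\Cs^{2+\eps_0}}+\Vert\eta\Vert_{L^2})$, and they are linear in $\Vert\nabla_{x,z}v\Vert_{C([z,0];\Cs^{-\mez})}$; I would pick a small $\eps>0$ with $2\eps<\mez$ and $2\eps<\eps_0$ exactly as in the earlier proof, and use \eqref{CBC} to trade the $\Cs$ and $\Bl$ scales whenever a logarithmic loss would otherwise appear. So the outcome of this first step is an identity $(\partial_z-T_{a^{(1)}})(\partial_z-T_{A^{(1)}})v=F$ with $\Vert F\Vert_{\wL^2(J;\Cs^{-1})}\les_{K_{\eta,\eps_0}}\Vert\nabla_{x,z}v\Vert_{C(J;\Cs^{-\mez})}$.

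Next I would run the two-step parabolic argument: introduce a cutoff $\kappa$ with $\kappa|_{z<z_0}=0$, $\kappa|_{z>z_1}=1$, set $w=\kappa(z)(\partial_z-T_{A^{(1)}})v$ so that $(\partial_z-T_{a^{(1)}})w=\kappa F+\kappa'(\partial_z-T_{A^{(1)}})v$ with $w|_{z=z_0}=0$, apply the forward parabolic estimate (Theorem~\ref{parabolic:Hol}) with a sufficiently large spectral shift $\delta$ to get $w\in C([z_0,0];\Cs^{-\mez})$, then solve the backward transport-parabolic equation $(\partial_z-T_{A^{(1)}})\nabla_x v=\nabla w+T_{\nabla_xA^{(1)}}v$ with data $\nabla_x v|_{z=0}=\nabla f\in\Cs^{-\mez}$ after reversing $z$. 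Recovering $\partial_z v$ from $\partial_z v=w+T_{A^{(1)}}v$ then closes the loop on $\nabla_{x,z}v$. As in the earlier proof, the contribution over the interior slab $[z_0,z_1]$ is handled by interior elliptic regularity and is bounded by $E(\eta,f)$ (using the variational estimate \eqref{est:vari} and Proposition~\ref{max:prin}), and any remaining low-regularity tail $\Vert\nabla_{x,z}v\Vert_{\wL^\infty([z_1,0];\Cs^{-\delta})}$ with $\delta>\tfrac d2+\mez$ is absorbed via Sobolev embedding and \eqref{est:base:Cx}--\eqref{est:base:Cz} into $K_{\eta,\eps_0}E(\eta,f)$.

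The main obstacle, and the reason this is singled out as a new technical issue, is the negative index $-\mez$: the paraproduct remainders $R(\cdot,\cdot)$ now land at very low regularity, so I must be careful that the parabolic smoothing in Theorem~\ref{parabolic:Hol} gains exactly the right half-derivative and that the source term $F$ is measured in a space ($\wL^2$ in $z$ with $\Cs^{-1}$ in $x$) that the parabolic estimate can actually digest — there is essentially no room to spare. In particular the term $T_{A^{(1)}}v$ and the commutator $T_{\nabla_xA^{(1)}}v$ must be bounded using only $\Cs^{-\mez}$ control of $\nabla_{x,z}v$ together with the symbol bounds $\mathcal{M}^1_1(a^{(1)})+\mathcal{M}^1_1(A^{(1)})+\mathcal{M}^1_0(\partial_zA^{(1)})\les K_{\eta,\eps_0}$, which requires invoking the maximum principle of Proposition~\ref{max:prin} to control $v$ itself (not just its gradient) in $L^\infty$. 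Once the bookkeeping of indices is done correctly, the structure of the estimate is identical to the $r\ge0$ case, and the constants are of the announced form \eqref{defi:Keps}.
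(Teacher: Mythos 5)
Your overall route --- the factorization of Lemma \ref{lemm:factor}, remainder estimates in $\wL^2(J;\Cs^{-1})$, two applications of Theorem \ref{parabolic:Hol}, the interior slab plus a low-norm tail controlled by $E(\eta,f)$ --- is indeed the paper's route, but as written your argument does not close. You bound the source by $\Vert F\Vert_{\wL^2(J;\Cs^{-1})}\les_{K_{\eta,\eps_0}}\Vert\nabla_{x,z}v\Vert_{C(J;\Cs^{-\mez})}$, i.e.\ by the very norm you are trying to estimate, with a constant $K_{\eta,\eps_0}$ that is in no way small; after the two parabolic steps this produces an inequality of the form $\Vert\nabla_{x,z}v\Vert_{C(J;\Cs^{-\mez})}\le K\big(\Vert\nabla f\Vert_{\Cs^{-\mez}}+\Vert\nabla_{x,z}v\Vert_{C(J;\Cs^{-\mez})}+E(\eta,f)\big)$, which yields nothing, and the only absorption you describe is of the $\Cs^{-\delta}$ tail. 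The essential point of the paper's proof is that each $R_jv$ is estimated so that only the \emph{strictly weaker} norm $\Vert\nabla_{x,z}v\Vert_{\wL^\infty(J;\Cs^{-\mez-\eps})}$ appears, e.g.\ $\Vert(\alpha-T_\alpha)\Delta_xv\Vert_{\wL^2(J;\Cs^{-1})}\les\Vert\alpha\Vert_{\wL^2(J;\Cs^{\tdm+\eps_0})}\Vert\Delta_xv\Vert_{\wL^\infty(J;\Cs^{-\tdm-\eps})}$ using \eqref{alpha:C:2}; one then interpolates $\Cs^{-\mez-\eps}$ between $\Cs^{-\mez}$ and $\Cs^{-\delta}$ (large $\delta$) and applies Young's inequality, so that the $\Cs^{-\mez}$ piece is absorbed into the left-hand side and the $\Cs^{-\delta}$ piece is controlled by $E(\eta,f)$ through Sobolev embedding and \eqref{est:base:Cx}--\eqref{est:base:Cz}. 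Without building this $\eps$ gap into the remainder estimates, the scheme is circular even though the finiteness of $\Vert\nabla_{x,z}v\Vert_{C(J;\Cs^{-\mez})}$ is assumed.

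Second, your appeal to the maximum principle (Proposition \ref{max:prin}) to control $v$ itself in $L^\infty$ when handling $T_{A^{(1)}}v$ and $T_{\nabla_xA^{(1)}}v$ is both unnecessary and damaging for this particular statement: unnecessary because the low-frequency cut-off in the paradifferential quantization (Remarks \ref{TwT} and \ref{rema:low}) lets one bound these terms by norms of $\nabla_xv$ alone, exactly as in the proof of Proposition \ref{prop:est:v:C1:apriori}; damaging because it would introduce $\Vert f\Vert_{L^\infty}$ on the right-hand side, whereas the estimate \eqref{est:elliptic:Z} you must prove allows only $\Vert\nabla f\Vert_{\Cs^{-\mez}}+E(\eta,f)$. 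The $\Vert f\Vert_{L^\infty}$ contribution is the mechanism of the \emph{next} proposition (Proposition \ref{est:v:C-mez}, where $\eta\in\Cs^{\frac52+\eps_0}$ and Theorem \ref{parabolic:Hol} is applied once with $q=1$), not of this a priori estimate, whose proof in the paper never invokes the maximum principle.
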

\begin{proof}
We follow the proof of Proposition \ref{est:v:C1}. The first step consists in estimating $R_jv$ in $\wL^2\Cs^{-1}$. Fix $0<\eps<\min\{\mez, \eps_0\}$. For $R_1v$, a typical term can be treated as 
\begin{align*}
\Vert (\alpha-T_\alpha)\Delta_xv\Vert_{\wL^2(J; \Cs^{-1})}&\les \Vert\alpha\Vert_{\wL^2(J; \Cs^{\tdm+\eps_0})}\Vert \Delta_xv\Vert_{\wL^\infty(J; \Cs^{-\tdm-\eps})}\\
&\les K_{\eta, \eps_0} \Vert \nabla_xv\Vert_{\wL^\infty(J; \Cs^{-\mez-\eps})}.
\end{align*}
On the other hand, $R_2v$ satisfies
\[
\Vert R_2v\Vert_{\wL^2(J; \Cs^{-1})} \les \Vert\gamma\Vert_{\wL^2(J; \Cs^{\mez+\eps})}\Vert \partial_zv\Vert_{\wL^\infty(J; \Cs^{-\mez+\eps})}\les K_{\eta, \eps_0}\Vert \partial_zv\Vert_{\wL^\infty(J; \Cs^{-\mez-\eps})}.
\]
Since $R_3$ is of order $1$ with norm bounded by $K_{\eta, \eps_0}$, it holds that 
\[
\Vert R_3v\Vert_{\wL^2(J; \Cs^{-1})} \les K_{\eta, \eps_0}\Vert \nabla_xv\Vert_{\wL^\infty(J; \Cs^{-1})}.
\]
Consequently, we obtain 
\[
(\partial_z-T_{a^{(1)}})(\partial_z-T_{A^{(1)}})v=F
\]
with 
\[
\Vert F\Vert_{\wL^2(J; \Cs^{-1})} \les K_{\eta, \eps_0}\Vert \nabla_{x,z}v\Vert_{\wL^\infty(J; \Cs^{-\mez-\eps})}.
\]
Now, arguing as in the proof of Proposition \ref{est:v:C1}, one concludes the proof by applying twice Theorem \ref{parabolic:Hol}, then interpolating $\Vert\nabla_{x,z}v\Vert_{\wL^\infty \Cs^{-\mez-\eps}}$ between $\Vert\nabla_{x,z}v\Vert_{\wL^\infty \Cs^{-\mez}}$ and $\Vert\nabla_{x,z}v\Vert_{\wL^\infty \Cs^{-\delta}}$ with large $\delta>0$, where the later can be controlled by $E(\eta; f)$ via Sobolev's embedding.
\end{proof}
Next, we prove a regularity result, assuming $1/2$ more derivative of $\eta$.
\begin{prop}\label{est:v:C-mez}
Assume that $\eta\in \Cs^{\frac{5}{2}+\eps_0}\cap L^2$ for some  $\eps_0>0$,   and $f\in L^\infty\cap H^\mez$, $\nabla f\in \Cs^{-\mez}$.  Then, for any $z\in (-1, 0)$ we have $\nabla_{x,z}v\in C([z, 0]; \Cs^{-\mez})$ and 
\bq\label{est:elliptic:Z}
\Vert \nabla_{x,z}v\Vert_{C([z, 0]; \Cs^{-\mez})}\le K_{\eta, \eps_0}\lB \Vert \nabla f\Vert_{\Cs^{-\mez}}+\big(1+\Vert \eta\Vert_{\Cs^{\frac{5}{2}+\eps_0}}\big)\Vert f\Vert_{L^\infty}\rB.
\eq
\end{prop}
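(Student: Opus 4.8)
Here is how I would attack Proposition~\ref{est:v:C-mez}.

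The plan is to run the factorization--parabolic scheme already used for the a priori bound in Proposition~\ref{est:v:C-mez:a} --- Lemma~\ref{lemm:factor} together with the parabolic regularity Theorem~\ref{parabolic:Hol} --- but to replace, at the final absorption step, the Sobolev quantity $E(\eta,f)$ by $\|f\|_{L^\infty}$; this is where the maximum principle (Proposition~\ref{max:prin}) and the extra half-derivative $\eta\in\Cs^{\frac52+\eps_0}$ will enter. The hypothesis $f\in H^{\mez}$ is used only qualitatively: by Lemma~\ref{est:base} it gives $\nabla_{x,z}v\in X^{-\mez}([-1,0])\hookrightarrow C([-1,0];H^{-\mez})\hookrightarrow C([-1,0];\Cs^{-N})$ for $N$ large, and the same scheme, run as a bootstrap climbing in steps of $\mez$ from $\Cs^{-N}$ up to $\Cs^{-\mez}$ (each pass being limited above by the regularity $\Cs^{-\mez}$ of the datum $\nabla f$), upgrades this to the asserted membership $\nabla_{x,z}v\in C([z,0];\Cs^{-\mez})$; none of it enters the final constant.

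First I would assemble the crude bounds coming from the boundedness of $\phi$. By Proposition~\ref{max:prin} and the change of variables $(x,z)\mapsto(x,\rho(x,z))$ one has $\|v\|_{C([-1,0];L^\infty)}\les\|f\|_{L^\infty}$, hence $\|\nabla_x v\|_{C([-1,0];\Cs^{-1})}\les\|f\|_{L^\infty}$; and on any interior slab $[z_0,z_1]\Subset(-1,0)$ interior elliptic estimates for the harmonic $\phi$ give $\|\nabla_{x,z}v\|_{L^\infty([z_0,z_1]\times\R^d)}\les_h\|f\|_{L^\infty}$. The only object still uncontrolled is $\partial_z v$ near $\{z=0\}$; for this I would use the divergence identity $\partial_zU=\nabla_x\cdot(\partial_z\rho\,\Lambda_2v)$ from the proof of Lemma~\ref{est:base}. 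Writing $\partial_z\rho\,\Lambda_2v=\partial_z\rho\,\nabla_xv-\nabla_x\rho\,\partial_zv$ and $\partial_zv=Ua+\nabla_xv\cdot b$ turns this into a first-order equation in $z$ for $U$ whose right-hand side only involves $\nabla_xv\in\Cs^{-1}$ (hence $\les\|f\|_{L^\infty}$) and whose coefficients are smooth enough when $\eta\in\Cs^{\frac52+\eps_0}$; together with the data $U|_{z=z_0}$ (interior, $\les\|f\|_{L^\infty}$) and $U|_{z=0}=G(\eta)f$, this yields for $N$ large
\[
\|\nabla_{x,z}v\|_{\wL^\infty([z_1,0];\Cs^{-N})}\les_{K_{\eta,\eps_0}}\|\nabla f\|_{\Cs^{-\mez}}+\|f\|_{L^\infty}.
\]

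Next comes the source estimate. With $\mathcal{L}v=(\partial_z-T_{a^{(1)}})(\partial_z-T_{A^{(1)}})v+(R_1+R_2+R_3)v=0$, i.e. $(\partial_z-T_{a^{(1)}})(\partial_z-T_{A^{(1)}})v=F:=-(R_1+R_2+R_3)v$, I would prove that for every slab $J\subset[-1,0]$ and some small $\eps>0$,
\[
\|F\|_{\wL^2(J;\Cs^{-1})}\les_{K_{\eta,\eps_0}}\|\nabla_{x,z}v\|_{\wL^\infty(J;\Cs^{-\mez-\eps})}+\big(1+\|\eta\|_{\Cs^{\frac52+\eps_0}}\big)\|f\|_{L^\infty},
\]
the new feature being that the $\Delta_xv$ and $\nabla_x\partial_zv$ pieces of $R_1$ are estimated, after the Bony decomposition, using merely $v\in\Cs^0$ (so $\Delta_xv\in\Cs^{-2}$), which forces $\alpha,\beta\in\wL^2\Cs^{2+\eps_0}$ --- exactly what \eqref{alpha:C:0}--\eqref{alpha:C:2} provide from $\eta\in\Cs^{\frac52+\eps_0}$ --- while $R_2=T_\gamma\partial_zv$ is of order $0$ and $R_3$ of order $1$; carrying the top-order part of $\alpha,\beta,\gamma$ linearly produces the tame factor $1+\|\eta\|_{\Cs^{\frac52+\eps_0}}$. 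Then, as in the proof of Proposition~\ref{est:v:C-mez:a}, with $z_0<z_1<0$ and a cut-off $\kappa$ ($\kappa\equiv0$ for $z<z_0$, $\kappa\equiv1$ for $z>z_1$), I would put $w=\kappa(\partial_z-T_{A^{(1)}})v$, solve $(\partial_z-T_{a^{(1)}})w=\kappa F+\kappa'(\partial_z-T_{A^{(1)}})v$ with $w|_{z=z_0}=0$, then solve $(\partial_z-T_{A^{(1)}})\nabla_xv=\nabla_xw+T_{\nabla_xA^{(1)}}v$ with $\nabla_xv|_{z=0}=\nabla f\in\Cs^{-\mez}$ (after $z\mapsto-z$), and recover $\partial_zv=w+T_{A^{(1)}}v$. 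Two applications of Theorem~\ref{parabolic:Hol}, in Chemin--Lerner spaces so that each pass gains $\mez$ derivative, give
\[
\|\nabla_{x,z}v\|_{C([z_1,0];\Cs^{-\mez})}\les_{K_{\eta,\eps_0}}\|\nabla f\|_{\Cs^{-\mez}}+\big(1+\|\eta\|_{\Cs^{\frac52+\eps_0}}\big)\|f\|_{L^\infty}+\|\nabla_{x,z}v\|_{\wL^\infty([z_1,0];\Cs^{-\mez-\eps})}.
\]
Interpolating $\|\cdot\|_{\Cs^{-\mez-\eps}}\le\|\cdot\|_{\Cs^{-\mez}}^{1-\theta}\|\cdot\|_{\Cs^{-N}}^{\theta}$ with $\theta=\eps/(N-\mez)$, absorbing the $\Cs^{-\mez}$ factor into the left-hand side, and bounding the $\Cs^{-N}$ factor by the estimate of the previous paragraph closes the proof; since $z_1\in(-1,0)$ was arbitrary, the bound holds on $[z,0]$ for every $z\in(-1,0)$.

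The main obstacle is the second paragraph. In Proposition~\ref{est:v:C-mez:a} the leftover very-negative Zygmund norm was killed for free via $H^{-\mez}\hookrightarrow\Cs^{-N}$ and $\|\nabla_{x,z}v\|_{X^{-\mez}}\les E(\eta,f)$, whereas here $E(\eta,f)$ (equivalently $\|f\|_{H^{\mez}}$) is forbidden in the conclusion, so $\partial_zv$ near the free surface must be tamed by $\|f\|_{L^\infty}$ and $\|\nabla f\|_{\Cs^{-\mez}}$ alone. Extracting that bound from the divergence structure of $\partial_zU$ and the maximum principle --- rather than from the parabolic machinery, which is exactly that of Proposition~\ref{est:v:C-mez:a} --- is what forces the passage from $\eta\in\Cs^{2+\eps_0}$ to $\eta\in\Cs^{\frac52+\eps_0}$ and the appearance of the linear factor $1+\|\eta\|_{\Cs^{\frac52+\eps_0}}$.
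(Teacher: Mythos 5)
Your overall architecture is the paper's: factorize $\mathcal{L}$ as in Lemma \ref{lemm:factor}, estimate the sources $R_jv$ in $\wL^2\Cs^{-1}$ at a level where only $v\in L^\infty$ is needed for the $\Delta_x v$ and $\nabla_x\partial_z v$ contributions (this is what consumes the extra half derivative $\eta\in\Cs^{\frac52+\eps_0}$ and produces the tame factor), and close with two passes of Theorem \ref{parabolic:Hol}, replacing the energy $E(\eta,f)$ by $\Vert f\Vert_{L^\infty}$ through Proposition \ref{max:prin}. The genuine gap is your second paragraph, i.e. the crude bound $\Vert \nabla_{x,z}v\Vert_{\wL^\infty([z_1,0];\Cs^{-N})}\les_{K_{\eta,\eps_0}}\Vert\nabla f\Vert_{\Cs^{-\mez}}+\Vert f\Vert_{L^\infty}$ that your interpolation/absorption step requires. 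After substituting $\partial_z v=Ua+\nabla_x v\cdot b$ into $\partial_z U=\nabla_x\cdot(\partial_z\rho\,\Lambda_2 v)$, the equation is not one ``whose right-hand side only involves $\nabla_x v$'': it reads $\partial_z U+\nabla_x\cdot\big(\nabla_x\rho\,a\,U\big)=\nabla_x\cdot\big(c\,\nabla_x v\big)$, so the unknown $U$ reappears under an $x$-divergence. This zero-order-in-$z$ term is an operator of order one in $x$ with no ellipticity or sign structure, so integrating in $z$ does not close any estimate in a fixed $\Cs^{-N}$: Gr\"onwall is unavailable because $\nabla_x\cdot(\nabla_x\rho\,a\,\cdot)$ is unbounded on every Zygmund space, and a Picard/Volterra iteration loses one $x$-derivative per step and hence never closes. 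In the paper this divergence identity is only used (Lemma \ref{est:base}) to get continuity in $z$ with values in $H^{-\mez}$ via Lemma \ref{lemm:inter}, i.e. in the $L^2$ setting where the variational energy is available; it does not yield an $L^\infty$-based bound free of $E(\eta,f)$, which is exactly the quantity you are not allowed to use.

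This is precisely the point where the paper's proof takes a different turn: the low-level input it needs is not a $\Cs^{-N}$ bound for $\partial_z v$ near the surface but the bound $\Vert\nabla_{x,z}v\Vert_{\wL^\infty(J;\Cs^{-1})}\les_{K_{\eta,\eps_0}}\Vert f\Vert_{L^\infty}$, and this is extracted from the factorized parabolic system itself by applying Theorem \ref{parabolic:Hol} once more with $q=1$ and $\delta=1$: the choice $\delta=1$ makes the leftover term $\Vert\nabla_x v\Vert_{\wL^\infty(J;\Cs^{-1})}$, which the maximum principle controls by $\Vert v\Vert_{L^\infty}\les\Vert f\Vert_{L^\infty}$ (your first paragraph), while $\partial_z v=w+T_{A^{(1)}}v$ with $\Vert T_{A^{(1)}}v\Vert_{\Cs^{-1}}\les\Vert v\Vert_{\Cs^{0}}\les\Vert f\Vert_{L^\infty}$ and $w$ estimated from its forward equation with zero data at $z=z_0$, the cut-off term $\kappa'(\partial_z-T_{A^{(1)}})v$ being interior and hence bounded by $\Vert f\Vert_{L^\infty}$ by interior elliptic regularity. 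Once this $\Cs^{-1}$-level bound is known, the source estimate (taken, as in the paper, with $\Vert\nabla_{x,z}v\Vert_{\wL^\infty(J;\Cs^{-1})}$ on the right rather than your $\Cs^{-\mez-\eps}$ norm) gives $\Vert F\Vert_{\wL^2(J;\Cs^{-1})}\les_{K_{\eta,\eps_0}}(1+\Vert\eta\Vert_{\Cs^{\frac52+\eps_0}})\Vert f\Vert_{L^\infty}$, and the final pass with data $\nabla f\in\Cs^{-\mez}$ yields the desired estimate with no interpolation or absorption at all. So either adopt this route, or give a genuinely different justification of your crude $\Cs^{-N}$ bound; as written, that step fails.
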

\begin{proof}
We still follow the proof of Proposition \ref{est:v:C1}. The first step consists in estimating $R_jv$ in $\wL^2\Cs^{-1}$. For $R_1v$, a typical term can be treated as 
\begin{align*}
\Vert (\alpha-T_\alpha)\Delta_xv\Vert_{\wL^2(J; \Cs^{-1})}&\les \Vert\alpha\Vert_{\wL^2(J; \Cs^{2+\eps_0})}\Vert \Delta_xv\Vert_{\wL^\infty(J; \Cs^{-2})}\\
&\les K_{\eta, \eps_0} \big(1+\Vert \eta\Vert_{\Cs^{\frac{5}{2}+\eps_0}}\big)\Vert \nabla_xv\Vert_{\wL^\infty(J; \Cs^{-1})}.
\end{align*}
On the other hand, $R_2v$ satisfies
\[
\Vert R_2v\Vert_{\wL^2(J; \Cs^{-1})} \les \Vert\gamma\Vert_{\wL^2(J; L^\infty)}\Vert \partial_zv\Vert_{\wL^\infty(J; \Cs^{-1})}\les K_{\eta, \eps_0}\Vert \partial_zv\Vert_{\wL^\infty(J; \Cs^{-1})}.
\]
Since $R_3$ is of order $1$ with norm bounded by $K_{\eta, \eps_0}$, it holds that 
\[
\Vert R_3v\Vert_{\wL^2(J; \Cs^{-1})} \les K_{\eta, \eps_0}\Vert \nabla_xv\Vert_{\wL^\infty(J; \Cs^{-1})}.
\]
Consequently, we obtain 
\[
(\partial_z-T_{a^{(1)}})(\partial_z-T_{A^{(1)}})v=F
\]
with 
\[
\Vert F\Vert_{\wL^2(J; \Cs^{-1})} \les K_{\eta, \eps_0}\big(1+\Vert \eta\Vert_{\Cs^{\frac{5}{2}+\eps_0}}\big)\Vert \nabla_{x,z}v\Vert_{\wL^\infty(J; \Cs^{-1})}.
\]
Then, arguing as in the proof of Proposition \ref{est:v:C1}, one concludes the proof by applying twice Theorem \ref{parabolic:Hol}: once with  $q=2$, $\delta \gg 1$  and once with $q=1$ and $\delta=1$ so that  Proposition \ref{max:prin} can be invoked to have 
\[
\Vert \nabla_{x,z}v\Vert_{\wL^\infty(J; \Cs^{-1})}\les_{K_{\eta,\eps_0}}\Vert f\Vert_{L^\infty}.
\]
\end{proof} 
\subsection{Estimates for the Dirichlet-Neumann operator}
 We now apply the elliptic estimates in the previous subsection to study the continuity  of the Dirichlet-Neumann operator. Put  
\[
\zeta_1=\frac{1+\la\nabla_x\rho\ra^2}{\partial_z\rho},\quad \zeta_2=\nabla_x\rho.
\] 
By the definition \eqref{def:DN}, the Dirichlet-Neumann operator is given by
	\bq\label{formula:DN}
\begin{aligned}
G(\eta)f&=\zeta_1\partial_zv-\zeta_2\cdot\nabla_xv\big\rvert_{z=0}\\
&=h^{-1}\partial_z v+(\zeta_1-h^{-1})\partial_zv-\zeta_2\cdot\nabla_xv\big\rvert_{z=0},
\end{aligned}
\eq
where $v$ is the solution to \eqref{elliptic:prob}.
\begin{prop}	\label{est:DN:Sob}
		Let $s>\tmez+\frac d2$, $\eta\in H^{s+\mez}$ and  $f\in H^s$. Then we have
\bq\label{est:DN}
\lA G(\eta)f\rA_{H^{s-1}}\leq_{K_{\eta, \eps_0}}\Vert f\Vert_{H^s}+\Vert \eta\Vert_{H^{s+\mez}}\lB\Vert \nabla f\Vert_{\Bl^{0}}+E(\eta, f)\rB.
\eq
\end{prop}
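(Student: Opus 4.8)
My plan is to start from the formula~\eqref{formula:DN}, which realizes $G(\eta)f$ as the trace at $z=0$ of $h^{-1}\partial_z v+(\zeta_1-h^{-1})\partial_z v-\zeta_2\cdot\nabla_x v$; the proof then reduces to two tasks: (a) to bound the trace $\nabla_{x,z}v\rvert_{z=0}$ in $H^{s-1}$ \emph{and} in $L^\infty$, with tame dependence on $\eta$ and $f$, and (b) to bound the coefficients $\zeta_1-h^{-1}$ and $\zeta_2$ in $H^{s-\mez}$ (tamely) and in $L^\infty$. Throughout I fix $\eps_0\in(0,s-\tdm-\frac d2)$ and a small $\eps>0$; every constant below will be of the form $K_{\eta,\eps_0}=\cF\big(\Vert\eta\Vert_{\Cs^{2+\eps_0}}+\Vert\eta\Vert_{L^2}\big)$, so that in particular $\cF(\Vert\eta\Vert_{\Cs^{1+\eps}})\les K_{\eta,\eps_0}$.

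The heart of the matter is a two-step bootstrap for task~(a). First, since $f\in H^s$ with $s-1>\frac d2$ we have $\nabla f\in\Bl^0$, so Corollary~\ref{est:v:C1} with $r=0$ gives $\nabla_{x,z}v\in C([z_0,0];\Bl^0)\hookrightarrow L^\infty([z_0,0];L^\infty)$ together with $\Vert\nabla_{x,z}v\Vert_{L^\infty([z_0,0];L^\infty)}\les_{K_{\eta,\eps_0}}\Vert\nabla f\Vert_{\Bl^0}+E(\eta,f)$ for any $z_0\in(-1,0)$. I then feed this $L^\infty$ bound, together with the base estimate $\Vert\nabla_{x,z}v\Vert_{X^{-\mez}([z_0,0])}\les_{K_{\eta,\eps_0}}E(\eta,f)$ provided by~\eqref{est:base:Cx}--\eqref{est:base:Cz} and Remark~\ref{rema:E}, into the \emph{tame} elliptic estimate of Proposition~\ref{elliptic:Th} with $\sigma=s-1$ (which is admissible since $-\mez\le s-1\le s-\mez$ and $f\in H^{\sigma+1}=H^s$); after absorbing $E(\eta,f)\les_{K_{\eta,\eps_0}}\Vert f\Vert_{H^\mez}\le\Vert f\Vert_{H^s}$, this yields
\[
\Vert\nabla_{x,z}v\Vert_{X^{s-1}([z_1,0])}\les_{K_{\eta,\eps_0}}\Vert f\Vert_{H^s}+\Vert\eta\Vert_{H^{s+\mez}}\big(\Vert\nabla f\Vert_{\Bl^0}+E(\eta,f)\big).
\]
Because $X^{s-1}([z_1,0])\hookrightarrow C_z([z_1,0];H^{s-1})$, the trace at $z=0$ inherits the same $H^{s-1}$ bound, while its $L^\infty$ bound was already obtained in the first step.

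For task~(b), recall that $\zeta_2=\nabla_x\rho$ and $\zeta_1=\partial_z\rho/\alpha$, both smooth functions of $(\nabla_x\rho,\partial_z\rho)$ with $\partial_z\rho\ge h/2$ by~\eqref{drho:lowerbound}. Combining the $\rho$-derivative estimates from~\cite{ABZ3} that underlie~\eqref{alpha:S} with the standard tame composition and product estimates, I get $\Vert\zeta_1-h^{-1}\Vert_{X^{s-\mez}([-1,0])}+\Vert\zeta_2\Vert_{X^{s-\mez}([-1,0])}\les_{K_{\eta,\eps_0}}\Vert\eta\Vert_{H^{s+\mez}}$ and $\Vert\zeta_1-h^{-1}\Vert_{L^\infty}+\Vert\zeta_2\Vert_{L^\infty}\les_{K_{\eta,\eps_0}}1$; restricting to $z=0$ (and using $H^{s-\mez}\hookrightarrow H^{s-1}$) preserves these bounds. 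Now I assemble~\eqref{formula:DN}: the first summand satisfies $\Vert h^{-1}\partial_z v\rvert_{z=0}\Vert_{H^{s-1}}\le h^{-1}\Vert\nabla_{x,z}v\Vert_{X^{s-1}}$, which is already of the desired form; and for each bilinear term $cw$ with $c\in\{(\zeta_1-h^{-1})\rvert_{z=0},\zeta_2\rvert_{z=0}\}$ and $w\in\{\partial_z v\rvert_{z=0},\nabla_x v\rvert_{z=0}\}$, the tame product estimate $\Vert cw\Vert_{H^{s-1}}\les\Vert c\Vert_{L^\infty}\Vert w\Vert_{H^{s-1}}+\Vert c\Vert_{H^{s-1}}\Vert w\Vert_{L^\infty}$ (valid since $s-1>0$) bounds the first piece by $\les_{K_{\eta,\eps_0}}\Vert\nabla_{x,z}v\Vert_{X^{s-1}}$ and the second by $\les_{K_{\eta,\eps_0}}\Vert\eta\Vert_{H^{s+\mez}}\big(\Vert\nabla f\Vert_{\Bl^0}+E(\eta,f)\big)$, using (a) and (b). Summing these estimates gives~\eqref{est:DN}.

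The main obstacle is the circularity in task~(a): Proposition~\ref{elliptic:Th} is the only tame Sobolev elliptic estimate at hand, yet it requires an a priori $L^\infty$ control of $\nabla_{x,z}v$ with a tame constant — which is exactly what the Besov-space analysis (Proposition~\ref{prop:est:v:C1:apriori} and Corollary~\ref{est:v:C1}) supplies. This is why those Besov estimates are not ancillary but indispensable at this level of regularity. A secondary point of care is the tame bookkeeping in the final product estimates: the top-order norm $\Vert\eta\Vert_{H^{s+\mez}}$ must enter \emph{linearly}, which forces me to split every product with one factor in $L^\infty$ and the other at the $H^{s-1}$ level, never estimating both factors at the critical Sobolev regularity (which would produce an inadmissible $\Vert\eta\Vert_{H^{s+\mez}}^2$).
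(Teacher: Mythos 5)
Your proposal is correct and follows essentially the same route as the paper: starting from \eqref{formula:DN}, splitting the products tamely ($L^\infty$ factor against $H^{s-1}$ factor), controlling the trace in $L^\infty$ via Corollary \ref{est:v:C1} with $r=0$ and the embedding $\Bl^0\hookrightarrow L^\infty$, and obtaining the $H^{s-1}$ trace bound from the tame elliptic estimate of Proposition \ref{elliptic:Th} with $\sigma=s-1$ fed by that $L^\infty$ control and the base $X^{-\mez}$ estimate. The only difference is that you spell out the coefficient bounds and the bootstrap more explicitly than the paper's compressed proof, which leaves the appeal to Proposition \ref{elliptic:Th} implicit (it is made explicit in \eqref{dv:H}).
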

\begin{proof}
Notice first that by the Sobolev embedding, $\eta\in \Cs^{2+\eps_0}$. Using the formula \eqref{formula:DN} and the tame estimate \eqref{tame:H} we obtain 
\[
\Vert G(\eta)f\Vert_{H^{s-1}}\les K_{\eta,\eps_0}\Vert \nabla_{x,z}v\rvert_{z=0}\Vert_{H^{s-1}}+\Vert\eta\Vert_{H^s}\Vert\nabla_{x,z}v\rvert_{z=0}\Vert_{L^\infty}.\]
Under the hypotheses, Corollary \ref{est:v:C1} is applicable with $r=0$. Hence, in view of \eqref{CBC}, it holds that
\[
\forall z\in (-1, 0),\quad \Vert \nabla_{x,z} v \Vert_{ C^0([z, 0]; \Bl^0)}\les_{K_{\eta, \eps_0}}\Vert \nabla f\Vert_{\Bl^{0}}+E(\eta, f).
\]
Noticing embedding $\Bl^0\hookrightarrow L^\infty$, we deduce
\[
\Vert G(\eta)f\Vert_{H^{s-1}}\les_{K_{\eta, \eps_0}} \Vert f\Vert_{H^s}+\Vert \eta\Vert_{H^{s+\mez}}\lB\Vert \nabla f\Vert_{\Bl^{0}}+E(\eta, f)\rB,
\]
which is the desired estimate.
\end{proof}
\begin{prop}\label{prop:estDN:Z}
We have the following estimates for the Dirichlet-Neumann operator in Zygmund spaces.
\begin{enumerate}
\item[1.] Let $s>\tdm+\frac{d}{2}$, $\eps_0\in (0, s-\tdm-\frac{d}{2})$ and $r\in (0, 1+\eps_0)$. Assume that $\eta\in H^{s+\mez}$ and $f\in H^s$, $\nabla f\in \Bl^r$. Then we have 
\bq
\label{est:DN:C1}
\Vert G(\eta)f\Vert_{\Bl^r}\les_{K_{\eta, \eps_0}}\Vert \nabla f\Vert_{\Bl^r}+E(\eta, f),
\eq
where recall that $K_{\eta, \eps_0}$ is defined by \eqref{defi:Keps}.
\item[2.] Let $\eps_0>0$. Assume that $\eta\in \Cs^{\frac{5}{2}+\eps_0}$,  $f\in L^\infty\cap H^\mez$ and $\nabla f\in \Cs^{-\mez}$, then
\bq
\label{est:DN:C-mez} 
\Vert G(\eta)f\Vert_{\Cs^{-\mez}}\les_{K_{\eta, \eps_0}} \Vert \nabla f\Vert_{\Cs^{-\mez}}+\big(1+\Vert \eta\Vert_{\Cs^{\frac{5}{2}+\eps_0}}\big)\Vert f\Vert_{L^\infty}.
\eq
\item[3.] Let $\eps_0>0$. Assume that $\eta\in \Cs^{2+\eps_0}\cap H^{1+\frac{d}{2}+}$ and $f\in H^{\mez+\frac{d}{2}}$, $\nabla f\in \Cs^{-\mez}$, then 
\bq
\label{est:DN:C-mez:l} 
\Vert G(\eta)f\Vert_{\Cs^{-\mez}}\les_{K_{\eta, \eps_0}} \Vert \nabla f\Vert_{\Cs^{-\mez}}+E(\eta, f).
\eq
\end{enumerate}
\end{prop}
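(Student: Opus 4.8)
The plan is to reduce all three bounds to the elliptic regularity estimates for the straightened potential $v$ obtained in the preceding subsection, combined with the formula \eqref{formula:DN} expressing $G(\eta)f$ through the boundary trace $\nabla_{x,z}v\rvert_{z=0}$ and the coefficients $\zeta_1,\zeta_2$. The scheme is always the same: first invoke the relevant elliptic estimate to control $\nabla_{x,z}v$ on a strip $[z_1,0]$ in the target space; then restrict to $z=0$, which is legitimate because those estimates carry continuity in $z$; finally insert into \eqref{formula:DN} and estimate the three terms $h^{-1}\partial_z v$, $(\zeta_1-h^{-1})\partial_z v$ and $\zeta_2\cdot\nabla_x v$ at $z=0$. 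For this last step the only structural fact needed is that $\zeta_1,\zeta_2$, being built from $\nabla_x\rho$ and $\partial_z\rho$ and hence losing one derivative relative to $\eta$ (as for $\alpha,\beta$ in \eqref{alpha:C:0}), satisfy $\zeta_1-h^{-1},\,\zeta_2\in C([z_1,0];\Cs^{m-1})$ whenever $\eta\in\Cs^{m}$ --- so they lie in $\Cs^{1+\eps_0}$ under the hypotheses of items 1 and 3, and in $\Cs^{\tdm+\eps_0}$ under that of item 2 --- the denominator $\partial_z\rho\ge h/2$ causing no trouble.

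For item 1 the input is Corollary \ref{est:v:C1}, which under the stated regularity gives $\nabla_{x,z}v\in C([z,0];\Bl^r)$ with $\Vert\nabla_{x,z}v\Vert_{C([z,0];\Bl^r)}\les_{K_{\eta,\eps_0}}\Vert\nabla f\Vert_{\Bl^r}+E(\eta,f)$. Since $r\in(0,1+\eps_0)$ the space $\Bl^r$ is a Banach algebra and $\zeta_1-h^{-1},\zeta_2\in\Cs^{1+\eps_0}\hookrightarrow\Bl^r$ (by \eqref{CBC}), so at $z=0$ the products, and trivially $h^{-1}\partial_z v$, are bounded in $\Bl^r$ by $\les_{K_{\eta,\eps_0}}\Vert\nabla_{x,z}v\rvert_{z=0}\Vert_{\Bl^r}$; this gives \eqref{est:DN:C1}. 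Item 2 has the same structure with $\Bl^r$ replaced by $\Cs^{-\mez}$ and Corollary \ref{est:v:C1} replaced by Proposition \ref{est:v:C-mez}, which yields $\nabla_{x,z}v\in C([z,0];\Cs^{-\mez})$ with the bound $K_{\eta,\eps_0}\big(\Vert\nabla f\Vert_{\Cs^{-\mez}}+(1+\Vert\eta\Vert_{\Cs^{\frac52+\eps_0}})\Vert f\Vert_{L^\infty}\big)$. The only difference is that the product $\zeta w$ of $\zeta\in\Cs^{\tdm+\eps_0}$ with $w\in\Cs^{-\mez}$ must be handled through Bony's decomposition $\zeta w=T_\zeta w+T_w\zeta+R(\zeta,w)$ and the paraproduct estimates of Appendix \ref{Appendix}: $T_\zeta w\in\Cs^{-\mez}$ because $\zeta\in L^\infty$, $T_w\zeta\in\Cs^{1+\eps_0}$ (gain from the negative index of $w$), and $R(\zeta,w)\in\Cs^{1+\eps_0}$ since $(\tdm+\eps_0)+(-\mez)>0$; taking the trace in \eqref{formula:DN} then gives \eqref{est:DN:C-mez}.

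Item 3 needs a two-step argument, qualitative regularity followed by a conditional a priori estimate, since $\eta\in\Cs^{2+\eps_0}$ alone is too weak for Proposition \ref{est:v:C-mez}. Step one: from the Sobolev hypotheses $\eta\in H^{1+\frac d2+}$ and $f\in H^{\mez+\frac d2}$, Proposition \ref{elliptic:ABZ} applied with $\sigma=\frac d2-\mez$ --- admissible because $f\in H^{\sigma+1}=H^{\mez+\frac d2}$ and $-\mez\le\sigma\le s-\mez$ for $s$ slightly above $\mez+\frac d2$ --- together with \eqref{est:base:H} gives $\nabla_{x,z}v\in C([z_1,0];H^{\frac d2-\mez})\hookrightarrow C([z_1,0];\Cs^{-\mez})$. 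The choice $\sigma=\frac d2-\mez$ (one derivative of $f$ beyond $H^\mez$) is precisely what makes this Sobolev embedding land in $\Cs^{-\mez}$ with no $d/2$-loss, which is why this item carries the hypotheses $\eta\in H^{1+\frac d2+}$, $f\in H^{\mez+\frac d2}$. Step two: knowing now that $\nabla_{x,z}v\in C([z,0];\Cs^{-\mez})$, and since moreover $\eta\in\Cs^{2+\eps_0}\cap L^2$, $f\in L^\infty$ (from $H^{\mez+\frac d2}\hookrightarrow L^\infty$) and $\nabla f\in\Cs^{-\mez}$, the \emph{conditional} estimate of Proposition \ref{est:v:C-mez:a} upgrades this to $\Vert\nabla_{x,z}v\Vert_{C([z,0];\Cs^{-\mez})}\les_{K_{\eta,\eps_0}}\Vert\nabla f\Vert_{\Cs^{-\mez}}+E(\eta,f)$, the $\eta$-dependence now confined to $K_{\eta,\eps_0}$. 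The product step is as in item 2 but with $\zeta_1-h^{-1},\zeta_2\in\Cs^{1+\eps_0}$ in place of $\Cs^{\tdm+\eps_0}$ (still fine since $(1+\eps_0)+(-\mez)>0$), which bounds $(\zeta_1-h^{-1})\partial_z v$ and $\zeta_2\cdot\nabla_x v$ in $\Cs^{-\mez}$ by $K_{\eta,\eps_0}\Vert\nabla_{x,z}v\rvert_{z=0}\Vert_{\Cs^{-\mez}}$ and yields \eqref{est:DN:C-mez:l}. I expect the main obstacle to be precisely this bookkeeping in item 3 --- lining up the indices so that the Sobolev-based qualitative step lands exactly in $\Cs^{-\mez}$ and so that only $E(\eta,f)$, rather than a Sobolev norm of $f$, survives on the right --- whereas items 1 and 2 are essentially routine once the elliptic estimates of the previous subsection are in hand.
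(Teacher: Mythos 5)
Your argument is the paper's own proof: items 1 and 2 are exactly ``elliptic estimate (Corollary \ref{est:v:C1}, resp.\ Proposition \ref{est:v:C-mez}) plus the formula \eqref{formula:DN} and paraproduct/product rules'', and item 3 is the same two-step reasoning as in the paper (qualitative $\Cs^{-\mez}$ regularity from Proposition \ref{elliptic:ABZ} with $\sigma=\frac d2-\mez$ and the Sobolev embedding, then the conditional a priori bound of Proposition \ref{est:v:C-mez:a}, then the product step).

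One caveat on item 2: as written, you estimate the products $(\zeta_1-h^{-1})\partial_z v$ and $\zeta_2\cdot\nabla_x v$ using the $\Cs^{\tdm+\eps_0}$ norm of $\zeta_j$, whose bound involves $\Vert\eta\Vert_{\Cs^{\frac52+\eps_0}}$. Since $\Vert\partial_z v\Vert_{\Cs^{-\mez}}$ already carries the factor $\big(1+\Vert\eta\Vert_{\Cs^{\frac52+\eps_0}}\big)\Vert f\Vert_{L^\infty}$, this produces a bound quadratic in the high H\"older norm, i.e.\ strictly weaker than \eqref{est:DN:C-mez}, whose constant is only $K_{\eta,\eps_0}$ (lower norm) with the high norm appearing linearly; this tame structure is what gets used later (e.g.\ in Lemma \ref{lem:derB}). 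The fix is the one you already use in items 1 and 3, and which the paper makes explicit at the start of its proof: measure $\zeta_j\rvert_{z=0}$ only in $\Cs^{1+\eps_0}$, so that $\Vert\zeta_j\rvert_{z=0}\Vert_{\Cs^{1+\eps_0}}\les K_{\eta,\eps_0}$, and apply \eqref{tame:H<0} with $\beta=1+\eps_0>\alpha=\mez$; then the constant stays $K_{\eta,\eps_0}$ and the stated estimate follows.
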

\begin{proof} We first notice that $\Vert \zeta_j\rvert_{z=0}\Vert_{\Cs^{1+\eps_0}}\les K_{\eta, \eps_0}$. \\
1. Using the Bony decomposition for the right-hand side of \eqref{formula:DN}, we see that \eqref{est:DN:C1} is a consequence of Corollary \ref{est:v:C1}, \eqref{pest:B}, \eqref{Bony:B} and the embedding $\Bl^0\hookrightarrow L^\infty$.\\
2. For \eqref{est:DN:C-mez} one applies the product rule \eqref{tame:H<0} and  Proposition \ref{est:v:C-mez}. \\
3. For \eqref{est:DN:C-mez:l}  we first remark that owing to Proposition \ref{elliptic:ABZ}, the assumptions $\eta\in H^{1+\frac d2+}$, $f\in H^{\mez+\frac{d}{2}}$ imply 
\[
z\in (-1, 0),\quad \nabla_{x,z}v\in C([z, 0]; H^{-\mez+\frac d2})\hookrightarrow C([z, 0]; \Cs^{-\mez}).
\]
Therefore, the a priori estimate of  Proposition \ref{est:v:C-mez:a} yields
\[
\Vert \nabla_{x,z} v\Vert_{C([z, 0]; \Cs^{-\mez})}\les_{K_{\eta, \eps_0}}\Vert f\Vert_{H^\mez} +E(\eta, f),
\]
which, combined with \eqref{tame:H<0}, concludes the proof.
\end{proof}
To conclude this section, let us recall the following result on the {\it shape derivative} of the Dirichlet-Neumann operator.
\begin{theo}[\protect{see \cite[Theorem~3.21]{LannesLivre}}]\label{shape}
Let  $s>\mez+\frac d2,~d\ge 1$ and $\psi\in H^{\tmez}$. Then the map 
\[
G(\cdot)\psi: H^{s+\mez}\to H^{\mez}
\]
is differentiable and  for any $f\in H^{s+\mez}$,
\[
d_\eta G(\eta)\psi\cdot f:=\lim_{\eps\to 0}\frac{1}{\eps}\big(G(\eta+\eps f)\psi-G(\eta)f\big)=-G(\eta)(Bf)-\div(Vf)
\]
where $B$ and $V$ are functions of $(\eta, \psi)$ as in \eqref{BV}.
\end{theo}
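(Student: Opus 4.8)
We recall the structure of the argument (\cite[Theorem~3.21]{LannesLivre}); the plan is to differentiate the harmonic potential with respect to the free surface. Write $\eta_\eps=\eta+\eps f$ and let $\phi_\eps$ be the variational solution of $\Delta_{x,y}\phi_\eps=0$ below $\{y=\eta_\eps(x)\}$ with $\phi_\eps|_{y=\eta_\eps}=\psi$ and $\partial_n\phi_\eps|_\Gamma=0$; thus $\phi_0=\phi$ is the potential of $(\eta,\psi)$. Since $f\in H^{s+\mez}$, for small $|\eps|$ the perturbed domains still satisfy the separation assumption $(H_t)$ with a uniform strip, and $f$ may be arranged to affect only a fixed neighbourhood of $\Sigma$. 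First I would reduce to a fixed domain: straightening the strip $\Omega_h$ to the slab $S=\xR^d\times(-1,0)$ by the diffeomorphism $\rho_\eps$ of \eqref{defi:rho} associated with $\eta_\eps$, the image $v_\eps(x,z)=\phi_\eps(x,\rho_\eps(x,z))$ solves $\mathcal L_\eps v_\eps=0$ in $S$ with the $\eps$-independent data $v_\eps|_{z=0}=\psi$, the coefficients of $\mathcal L_\eps$ depending smoothly on $\eps$; the elliptic estimates of Section~\ref{section:elliptic} together with the implicit function theorem then give that $\eps\mapsto v_\eps$ is $C^1$ into a suitable space $X^\sigma$. Transferring back, $\dot\phi:=\partial_\eps\phi_\eps|_{\eps=0}$ is harmonic in $\Omega$, has vanishing Neumann trace on $\Gamma$, and belongs to $H^1(\Omega_h)$.

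Next I would read off the Dirichlet trace of $\dot\phi$: differentiating $\phi_\eps(x,\eta_\eps(x))=\psi(x)$ at $\eps=0$ gives $\dot\phi|_\Sigma=-f\,(\partial_y\phi)|_\Sigma=-fB$, with $B$ as in \eqref{BV}; hence $\dot\phi$ is exactly the variational solution with Dirichlet data $-fB$, so that $(\partial_y\dot\phi-\nabla\eta\cdot\nabla_x\dot\phi)|_\Sigma=-G(\eta)(Bf)$. Then I would differentiate $G(\eta_\eps)\psi=\big(\partial_y\phi_\eps-\nabla\eta_\eps\cdot\nabla_x\phi_\eps\big)(x,\eta_\eps(x))$ in $\eps$ by the chain rule, taking into account that the evaluation point moves. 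At $\eps=0$ this yields three terms: the $\dot\phi$-part $(\partial_y\dot\phi-\nabla\eta\cdot\nabla_x\dot\phi)|_\Sigma=-G(\eta)(Bf)$; the term $-\nabla f\cdot(\nabla_x\phi)|_\Sigma=-\nabla f\cdot V$, using $(\nabla_x\phi)|_\Sigma=V$; and, coming from $\partial_\eps\eta_\eps=f$, the term $f\,[\partial_y^2\phi-\nabla\eta\cdot\nabla_x\partial_y\phi]|_\Sigma$. For this last one I would use harmonicity, $\partial_y^2\phi=-\Delta_x\phi$, together with the identity $\div V=\div[(\nabla_x\phi)(x,\eta(x))]=(\Delta_x\phi)|_\Sigma+\nabla\eta\cdot(\nabla_x\partial_y\phi)|_\Sigma$ (chain rule, since $V=(\nabla_x\phi)(x,\eta(x))$), which gives $[\partial_y^2\phi-\nabla\eta\cdot\nabla_x\partial_y\phi]|_\Sigma=-\div V$. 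Summing the three terms,
\[
d_\eta G(\eta)\psi\cdot f=-G(\eta)(Bf)-f\,\div V-\nabla f\cdot V=-G(\eta)(Bf)-\div(Vf),
\]
which is the announced formula.

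The formal computation above is routine; the hard part is to justify it at the low regularity $s>\mez+\frac d2$, $\psi\in H^{\tmez}$ — namely the $C^1$-dependence of $\phi_\eps$ on $\eps$ and the precise meaning of the boundary traces involved, the individual quantities such as $\partial_y^2\phi|_\Sigma$ being only formal while the combination $G(\eta)(Bf)+\div(Vf)$ is well defined in $H^{\mez}$. This is exactly where the elliptic analysis of Section~\ref{section:DN} enters, and is what \cite[Theorem~3.21]{LannesLivre} carries out.
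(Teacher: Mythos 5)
Your computation is correct and follows essentially the same route as the result the paper relies on: the paper gives no proof of this theorem, citing \cite[Theorem~3.21]{LannesLivre}, and your sketch reproduces that standard shape-derivative argument. In particular the trace identity $\dot\phi\arrowvert_\Sigma=-fB$, the identification $(\nabla_x\phi)\arrowvert_\Sigma=V$, and the use of harmonicity together with the chain-rule identity $\div V=(\Delta_x\phi)\arrowvert_\Sigma+\nabla\eta\cdot(\nabla_x\partial_y\phi)\arrowvert_\Sigma$ to convert the moving-boundary term into $-f\div V$ are exactly the ingredients of the cited proof, with the low-regularity justification correctly deferred to the elliptic theory as in Lannes.
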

\section{Paralinearization and symmetrization of the system}\label{section:para}
Throughout this section,  we assume that $(\eta, \psi)$ is a solution to  \eqref{ww} on a time interval $I=[0, T]$ and
\bq\label{assu:reg}
\left\{
\begin{aligned}
& \eta\in L^\infty(I; H^{s+\mez})\cap L^1(I; \Cs^{\frac{5}{2}+\eps_*}),\\
&\psi\in L^\infty(I; H^s), \nabla_x\psi\in L^1(I; \Bl^1)\\
& s>\tdm+\frac{d}{2}, \eps_*>0\\
&\inf_{t\in I}\dist(\eta(t), \Gamma)\ge h>0.
\end{aligned}
\right.
\eq
We fix from now on
\[
0<\eps<\min\{\eps_*, \mez\}
\]
and define the quantities 
\bq
\begin{aligned}
&\A=\Vert \eta\Vert_{\Cs^{2+\eps_*}}+\Vert \eta\Vert_{L^2}+\Vert \nabla_x \psi\Vert_{\Bl^0}+E(\eta, \psi),\\
&\B=\Vert \eta\Vert_{\Cs^{\frac{5}{2}+\eps_*}}+\Vert \nabla_x\psi\Vert_{\Bl^1}+1.
\end{aligned}
\eq
 Our goal is to derive estimates for $(\eta, \psi)$ in $L^\infty(I; H^{s+\mez}\times H^s)$ by means of $\A$ and $\B$ and keep them {\it linear in $\B$}.
\subsection{Paralinearization of the Dirichlet-Neumann operator}
Our goal is to obtain error estimates for $G(\eta)\psi$ when expanding it in paradifferential operators. More precisely, as in Proposition $3.14$, \cite{ABZ1}, we will need such expansion in terms of the first two symbols defined by
\begin{equation} 
\begin{aligned}
&\lambda^{(1)}:=\sqrt{(1+\la\nabla\eta\ra^2)\la\xi\ra^2-(\nabla\eta\cdot\xi)^2},\\	
&\lambda^{(0)}:=\frac{1+\la\nabla\eta\ra^2}{2\lambda^{(1)}}\lb\dive(\alpha^{(1)}\nabla\eta)+i\partial_\xi\lambda^{(1)}\cdot\nabla\alpha^{(1)}\rb
\end{aligned}
\end{equation}
with 
\[
\alpha^{(1)}:=\frac{1}{1+\la\nabla\eta\ra^2}(\lambda^{(1)}+i\nabla\eta\cdot\xi).
\]
Set  $\lambda:=\lambda^{(1)}+\lambda^{(0)}$.\\
To study $G(\eta)\psi$, we reconsider  the elliptic problem \eqref{elliptic:prob}, {\it i.e.}, 
\bq\label{elliptic:DN}
\Delta_{x,y}\phi=0\text{ in }\Omega,\quad
\phi\arrowvert_{\Sigma}=\psi,\quad \partial_n \phi\arrowvert_{\Gamma}=0.
\eq
Let 
\[
v(x,z)=\phi(x, \rho(x,z)\quad (x,z)\in S=\xR^d\times (-1, 0)
\]
 as in Section \ref{section:elliptic}. Then, by \eqref{eq:v}, $v$ satisfies $\mathcal{L}v=0$ in $S$. Applying Proposition \ref{elliptic:Th} with $\sigma=s-1$ and Corollary \ref{est:v:C1} with $r=0$ we obtain for any $z\in (-1, 0)$
\bq\label{dv:H}
\begin{aligned}
\lA\nabla_{x,z}v\rA_{X^{s-1}([z ,0])}&\les_A \Vert \psi\Vert_{H^s}+\Vert \eta\Vert_{H^{s+\mez}}\lB\Vert \nabla \psi\Vert_{\Bl^0}+E(\eta, f)\rB,\\
&\les_\A\Vert \psi\Vert_{H^s}+\Vert \eta\Vert_{H^{s+\mez}}.
\end{aligned}
\eq
On the other hand, Corollary \ref{est:v:C1} with $r=1$ yields for any $z\in (-1, 0)$
\bq\label{dv:C1}
\Vert \nabla_{x,z}v\Vert_{C([z, 0]; \Bl^1)}\les_\A\Vert \nabla \psi\Vert_{\Bl^1}+E(\eta, f)\les_\A \B.
\eq 
\begin{lemm}\label{lemm:eq:v}
We have 
\[ 
\partial^2_zv+T_\alpha\Delta_xv+T_\beta\cdot\nabla_x\partial_zv-T_\gamma\partial_zv-T_{\partial_zv}\gamma=F_1,
\]
where, for all $I\Subset (-1, 0]$, $F_1$ satisfies
\[
\Vert F_1\Vert_{Y^{s+\mez}(I)}\les_\A\B\lB \Vert \eta\Vert_{H^{s+\mez}}+\Vert \psi\Vert_{H^{s}}\rB.
\]
\end{lemm}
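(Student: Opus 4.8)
The plan is to read the identity off the exact elliptic equation \eqref{eq:v}, $\mathcal{L}v=0$, and then to recognize $F_1$ as a combination of paraproduct remainders — precisely the objects that admit tame bounds. Writing \eqref{eq:v} as $\partial_z^2 v=-\alpha\Delta_x v-\beta\cdot\nabla_x\partial_z v+\gamma\partial_z v$ and substituting into the left-hand side of the claimed identity kills the term $\partial_z^2 v$ and leaves
\[
F_1=(T_\alpha-\alpha)\Delta_x v+(T_\beta-\beta)\cdot\nabla_x\partial_z v+(\gamma-T_\gamma)\partial_z v-T_{\partial_z v}\gamma .
\]
Expanding each product by Bony's decomposition $ab=T_ab+T_ba+R(a,b)$, the paraproducts $T_\alpha\Delta_x v$, $T_\beta\cdot\nabla_x\partial_z v$, $T_\gamma\partial_z v$ cancel and one is left with
\[
F_1=-T_{\Delta_x v}\alpha-R(\alpha,\Delta_x v)-\sum_j\big(T_{\partial_{x_j}\partial_z v}\beta_j+R(\beta_j,\partial_{x_j}\partial_z v)\big)+R(\gamma,\partial_z v).
\]
The reason the symmetrized equation keeps the extra term $-T_{\partial_z v}\gamma$ is exactly that it cancels the low--high paraproduct coming from $\gamma\partial_z v$, so that the only $\gamma$-contribution to $F_1$ is the remainder $R(\gamma,\partial_z v)$. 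This is essential: $\gamma$ carries two derivatives of $\eta$ and lies only in $L^2_z(I;H^{s-1})$, so $T_{\partial_z v}\gamma$ alone would be one derivative short of the target space, whereas the remainder gains a full derivative from the $\Cs^1$-regularity of $\partial_z v$.

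For the estimates the only ingredients needed are \eqref{alpha:S} (with $\sigma=s$) and \eqref{dv:C1}. After subtracting the constant $h^2$ from $\alpha$ — the coefficients $\beta$ and $\gamma$ already decay at spatial infinity, so they need no renormalization — \eqref{alpha:S} gives $\Vert\alpha-h^2\Vert_{X^{s-\mez}(I)}+\Vert\beta\Vert_{X^{s-\mez}(I)}\les_\A\Vert\eta\Vert_{H^{s+\mez}}$ and $\Vert\gamma\Vert_{X^{s-\tdm}(I)}\les_\A\Vert\eta\Vert_{H^{s+\mez}}$; in particular $\alpha-h^2,\beta\in L^2_z(I;H^s)$ and $\gamma\in L^2_z(I;H^{s-1})$ with those bounds. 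On the other hand \eqref{dv:C1} gives $\Vert\nabla_{x,z}v\Vert_{C([z,0];\Bl^1)}\les_\A\B$, so $\Delta_x v$ and $\nabla_x\partial_z v$ lie in $\Bl^0\hookrightarrow L^\infty$ and $\partial_z v$ in $\Bl^1\hookrightarrow\Cs^1$, uniformly in $z$, with norms $\les_\A\B$. Now at each step one places the rough factor ($\Delta_x v$, $\nabla_x\partial_z v$) in $L^\infty$ and the regular factor ($\alpha-h^2$, $\beta$) in the Sobolev slot; by the paraproduct and remainder estimates of Appendix \ref{Appendix} (which give $\Vert T_fg\Vert_{H^s}\les\Vert f\Vert_{L^\infty}\Vert g\Vert_{H^s}$ and $\Vert R(f,g)\Vert_{H^s}\les\Vert f\Vert_{L^\infty}\Vert g\Vert_{H^s}$ for $s>0$), integrating in $z$ shows that $T_{\Delta_x v}(\alpha-h^2)$, $R(\alpha-h^2,\Delta_x v)$ and the corresponding $\beta$-terms all lie in $L^2_z(I;H^s)$ with norm $\les_\A\B\Vert\eta\Vert_{H^{s+\mez}}$; the finitely many contributions of the constant $h^2$ inside the $\alpha$-terms are purely low frequency and harmless. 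For the last term one uses $\gamma\in L^2_z(I;H^{s-1})$, $\partial_z v\in\Cs^1$ and the remainder estimate $\Vert R(f,g)\Vert_{H^s}\les\Vert f\Vert_{H^{s-1}}\Vert g\Vert_{\Cs^1}$ (valid since $s>0$) to get $R(\gamma,\partial_z v)\in L^2_z(I;H^s)$ with the same bound. Summing, $F_1\in L^2_z(I;H^s)\subset Y^{s+\mez}(I)$ with $\Vert F_1\Vert_{Y^{s+\mez}(I)}\les_\A\B\Vert\eta\Vert_{H^{s+\mez}}$, which is slightly stronger than the stated estimate; the $\Vert\psi\Vert_{H^s}$ term only appears if one prefers to control some factor through \eqref{dv:H} rather than \eqref{dv:C1}.

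I do not anticipate a serious difficulty: given \eqref{alpha:S}, \eqref{dv:H}, \eqref{dv:C1} and the paradifferential calculus collected in Appendix \ref{Appendix}, the lemma is essentially bookkeeping. The only points needing care are tracking the renormalization constant $h^2$ so that the Sobolev norms of the coefficients are finite, ensuring at every step that the rough factor is put in $L^\infty$ (resp. $\Cs^1$) while the smooth factor carries the top derivatives — so that the bound stays linear in $\B$ — and verifying the mild index constraints in the remainder estimates, all of which hold comfortably under $s>\tdm+\frac d2$.
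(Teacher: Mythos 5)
Your argument is essentially the paper's own proof: the same algebraic reduction (keeping $-T_{\partial_z v}\gamma$ on the left so that only the remainder $R(\gamma,\partial_z v)$ lands in $F_1$), the same renormalization $\alpha=(\alpha-h^2)+h^2$, and the same pairing of \eqref{alpha:S} (with $\sigma=s$) against the $L^\infty$/$\Cs^1$ bounds of \eqref{dv:C1}, so the stated estimate does follow along your lines. The one claim that is not correct is that the $h^2$-contributions are ``harmless'' and that one obtains the stronger bound $\les_\A\B\Vert\eta\Vert_{H^{s+\mez}}$ without the $\Vert\psi\Vert_{H^s}$ term: with the paradifferential convention $T_{\Delta_x v}h^2=0$, but the leftover piece is $(h^2-T_{h^2})\Delta_x v$, i.e.\ (a constant times) the low-frequency part of $\Delta_x v$, and a low-frequency projection cannot be placed in the $L^2$-based space $L^2_z(I;H^s)$ using only the $L^\infty$-type information of \eqref{dv:C1}; one needs an $L^2_x$ bound on $\nabla_x v$, which the paper gets from the variational estimate of Remark \ref{rema:E}, $\Vert\nabla_x v\Vert_{L^2_zL^2_x}\les_\A\Vert\psi\Vert_{H^\mez}\le\Vert\psi\Vert_{H^s}$. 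This is exactly where the $\Vert\psi\Vert_{H^s}$ term of the lemma enters (it is not an artifact of preferring \eqref{dv:H} over \eqref{dv:C1}, and it cannot be absorbed into $\B\Vert\eta\Vert_{H^{s+\mez}}$). Once that single piece is bounded this way, your proof coincides with the paper's.
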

\begin{proof}
From equation \eqref{eq:v} and the Bony decomposition, we see that 
\[
F_1=-R_1v=-(\alpha-T_\alpha)\Delta_xv-(\beta-T_\beta)\cdot\nabla\partial_zv+R(\gamma, \partial_z v).
\]
 Writing $(\alpha-T_\alpha)\Delta_xv=(\alpha-h^2-T_{\alpha-h^2})\Delta_xv+(h^2-T_{h^2})\Delta_x v$, we estimate using \eqref{dv:C1}
\begin{align*}
\Vert (\alpha-h^2-T_{\alpha-h^2})\Delta_xv\Vert_{L^2H^s}&\les \Vert T_{\Delta_x v}(\alpha-h^2)\Vert_{L^2 H^s}+\Vert R(T_{\Delta_x v}, \alpha-h^2)\Vert_{L^2H^s}\\
&\les \Vert \Delta_x v\Vert_{L^2L^\infty}\Vert (\alpha-h^2)\Vert_{L^2H^s}\\
&\les_\A \B\Vert \eta\Vert_{H^{s+\mez}}.
\end{align*}
Since $(h^2-T_{h^2})$ is a smoothing operator, there holds by Remark \ref{rema:E}
\[
\Vert (h^2-T_{h^2})\Delta_x v\Vert_{L^2H^s}\les \Vert \nabla_x v\Vert_{L^2L^2}\les (1+\Vert \eta\Vert_{W^{1,\infty}})\Vert \psi\Vert_{H^\mez}\les_A \Vert \psi\Vert_{H^s}
\]
The other terms of $F_1$ can be treated similarly.
\end{proof}
The next step consists in studying the paradifferential equation satisfied by the {\it good-unknown} (see \cite{AM} and the reference therein)
\[
u:=v-T_b\rho\quad\text{with}~b:=\frac{\partial_z v}{\partial_z\rho}.
\]
Notice that $b\rvert_{z=0}=B$. Estimates for $b$ is now provided.
\begin{lemm}\label{est:b}
For any $I\Subset (-1, 0]$, we have 
\begin{align}
\label{est:b:C0}
&\Vert b\Vert_{L^\infty(I; L^\infty)}\les_\A 1,\\
\label{est:db:C0}
&\Vert \nabla_{x,z}b\Vert_{L^\infty(I; L^\infty)}\les_\A\B,\\
\label{est:d2b:C-1}
&\Vert \nabla^2_{x,z}b\Vert_{L^\infty(I; \Cs^{-1})}\les_\A\B.
\end{align}
\end{lemm}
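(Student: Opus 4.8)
The plan is to write $b=(\partial_z\rho)^{-1}\,\partial_z v$ and to estimate the two factors separately, recombining them at the end by Leibniz' rule together with the product rules of paradifferential calculus. The bounds for $(\partial_z\rho)^{-1}$ --- and more generally for $\alpha,\beta,\gamma$ and their $z$-derivatives --- come from the explicit formula \eqref{defi:rho}, from the lower bound $\partial_z\rho\ge h/2$ of \eqref{drho:lowerbound}, and from the smoothing of the multipliers $e^{cz\langle D_x\rangle}$ ($z\le0$, $c>0$) already used in the proof of \eqref{alpha:S}--\eqref{alpha:C:2}; since $t\mapsto1/t$ is smooth on the range of $\partial_z\rho$, one obtains on any $I\Subset(-1,0]$ that $\Vert(\partial_z\rho)^{-1}\Vert_{L^\infty(I;L^\infty)}\le 2/h$, while
\[
\Vert(\partial_z\rho)^{-1}\Vert_{L^\infty(I;\Cs^{1+\eps})}+\Vert\partial_z(\partial_z\rho)^{-1}\Vert_{L^\infty(I;L^\infty)}+\Vert\partial_z^2(\partial_z\rho)^{-1}\Vert_{L^\infty(I;\Cs^{\eps-1})}\les_\A1,
\]
and, by \eqref{alpha:C:0} with $\mu=\tdm+\eps$ (recalling $\eps<\eps_*$), $\Vert\partial_z\alpha\Vert_{L^\infty(I;L^\infty)}+\Vert\partial_z\beta\Vert_{L^\infty(I;L^\infty)}+\Vert\partial_z\gamma\Vert_{L^\infty(I;\Cs^{\eps-1})}+\Vert(\alpha,\beta)\Vert_{L^\infty(I;\Cs^{1+\eps})}+\Vert\gamma\Vert_{L^\infty(I;\Cs^\eps)}\les_\A1$. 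Each of these quantities involves $\eta$ only through $\Vert\eta\Vert_{\Cs^{2+\eps_*}}+\Vert\eta\Vert_{L^2}\le\A$, because at $z=0$ each $\partial_z$ costs exactly one power of $\langle D_x\rangle$ on $\eta$, with no smoothing, whereas for $z<0$ the Poisson-type smoothing only improves the estimates.

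For the factor $\partial_z v$ I would use \eqref{dv:C1}, $\Vert\nabla_{x,z}v\Vert_{C(I;\Bl^1)}\les_\A\B$, which gives $\partial_z v,\ \Delta_x v,\ \nabla_x\partial_z v\in L^\infty(I;L^\infty)$ and $\Delta_x\partial_z v,\ \nabla_x^2\partial_z v,\ \nabla_x\Delta_x v\in L^\infty(I;\Cs^{-1})$, all with norm $\les_\A\B$, and which by Corollary~\ref{est:v:C1} with $r=0$ can be sharpened to $\Vert\partial_z v\Vert_{L^\infty(I;L^\infty)}\les_\A1$. Higher $z$-derivatives are recovered from the elliptic equation \eqref{eq:v},
\[
\partial_z^2 v=-\alpha\Delta_x v-\beta\cdot\nabla_x\partial_z v+\gamma\partial_z v,
\]
which immediately gives $\Vert\partial_z^2 v\Vert_{L^\infty(I;L^\infty)}\les_\A\B$; differentiating it once in $x$ and then once in $z$ and invoking the Zygmund product rules (Theorem~\ref{theo:sc}, \eqref{Bony1}--\eqref{Bony2}; a product such as $\alpha\cdot\Delta_x\partial_z v$ with $\alpha\in\Cs^{1+\eps}$ and $\Delta_x\partial_z v\in\Cs^{-1}$ is legitimate since $(1+\eps)+(-1)>0$) then yields $\Vert\nabla_x\partial_z^2 v\Vert_{L^\infty(I;\Cs^{-1})}+\Vert\partial_z^3 v\Vert_{L^\infty(I;\Cs^{-1})}\les_\A\B$.

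Granting all this, the three bounds follow. Estimate \eqref{est:b:C0} is $\Vert b\Vert_{L^\infty L^\infty}\le\Vert(\partial_z\rho)^{-1}\Vert_{L^\infty}\,\Vert\partial_z v\Vert_{L^\infty}\les_\A1$; \eqref{est:db:C0} follows by expanding $\nabla_{x,z}b=(\partial_z\rho)^{-1}\nabla_{x,z}\partial_z v+\big(\nabla_{x,z}(\partial_z\rho)^{-1}\big)\partial_z v$ and bounding both terms in $L^\infty$. For \eqref{est:d2b:C-1} I would first observe that every component of $\nabla_{x,z}^2 b$ carrying at least one horizontal derivative is of the form $\partial_{x_i}(\partial_j b)$ with $\partial_j b\in L^\infty\hookrightarrow\Cs^0$ by \eqref{est:db:C0}, hence automatically lies in $\Cs^{-1}$ with norm $\les_\A\B$; thus only
\[
\partial_z^2 b=(\partial_z\rho)^{-1}\partial_z^3 v+2\big(\partial_z(\partial_z\rho)^{-1}\big)\partial_z^2 v+\big(\partial_z^2(\partial_z\rho)^{-1}\big)\partial_z v
\]
requires genuine work, its three terms being controlled in $\Cs^{-1}$ by the estimates above and the product rules. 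This last step is the main obstacle: one cannot afford to lose any fraction of a derivative on $\eta$ or $v$, so \eqref{eq:v} must be used to trade the dangerous $\partial_z^2$ (hence $\partial_z^3$) for horizontal derivatives, at the cost of the merely $\Cs^{1+\eps}$ coefficients $\alpha,\beta$ (resp.\ the merely $\Cs^\eps$ coefficient $\gamma$); and one must check carefully that every resulting product of a negative-regularity factor --- such as $\partial_z^3\rho\in\Cs^{\eps-1}$ or $\partial_z^3 v\in\Cs^{-1}$ --- with a positive-regularity factor --- such as $\partial_z v\in\Cs^1$ or $(\partial_z\rho)^{-1}\in\Cs^{1+\eps}$ --- lands back in $\Cs^{-1}$, which works only because the regularity indices always sum to something strictly positive.
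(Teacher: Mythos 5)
Your proposal is correct and follows essentially the same route as the paper: bounds on $\nabla_{x,z}^2\rho$, $\nabla_{x,z}^3\rho$ and on $(\partial_z\rho)^{-1}$ coming from \eqref{defi:rho} and \eqref{drho:lowerbound}, the estimates $\Vert\nabla_{x,z}v\Vert_{C(I;\Bl^0)}\les_\A 1$ and $\Vert\nabla_{x,z}v\Vert_{C(I;\Bl^1)}\les_\A\B$ from Corollary \ref{est:v:C1}, the equation \eqref{eq:v} (and its $\partial_z$-derivative) to control $\partial_z^2v$ in $\Bl^0$ and $\partial_z^3v$ in $\Cs^{-1}$, and finally the Leibniz rule with the product estimates \eqref{tame:H<0}. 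Your explicit remark that the mixed and horizontal second derivatives of $b$ follow for free from \eqref{est:db:C0}, so that only $\partial_z^2b$ needs the $\Cs^{-1}$ product analysis, is just a more detailed rendering of the paper's concluding appeal to the Leibniz rule.
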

\begin{proof}
We first recall the lower bound \eqref{drho:lowerbound}
\bq\label{drho:lowerbound:1}
\partial_z\rho\ge \frac{h}{2}.
\eq
Observe that with respect to the $L^\infty$-norm in $z$, $\rho$ and $\eta$ have the same Zygmund regularity, hence 
\bq\label{est:drho}
\Vert \nabla_{x,z}^2\rho\Vert_{L^\infty(I; \Cs^{\eps_*})}+\Vert \nabla_{x,z}^3\rho\Vert_{L^\infty(I; \Cs^{-1+\eps_*})}\les_\A 1.
\eq

Next, applying Corollary \ref{est:v:C1} with $r=0$ yields
\bq\label{du:C0}
\Vert \nabla_{x,z}v\Vert_{C(I; \Bl^0)}\les_\A 1.
\eq
On the other hand, recall from \eqref{dv:C1} that 
\bq\label{du:C1:1}
\Vert \nabla_{x,z}v\Vert_{C(I; \Bl^1)}\les_\A\B.
\eq
Using equation \eqref{eq:v}, $\partial^2_zv$ can be expressed in terms of $(\alpha, \beta, \gamma)$ and $(\Delta_xv, \nabla_x\partial_zv, \partial_zv)$. It then follows from \eqref{du:C1:1}, \eqref{est:drho} and Lemma \ref{pr:B} that
\bq\label{d2u:C0}
\Vert \partial_z^2v\Vert_{C(I; \Bl^0)}\les_\A \B.
\eq
Let us now consider 
\[
\partial_z^3v=-\alpha\Delta_x\partial_zv-\partial_z\alpha\Delta_xv-\beta\cdot\nabla_x\partial_z^2v-\partial_z\beta\cdot\nabla_x\partial_zv+\gamma\partial_z^2v+\partial_z\gamma\partial_zv.
\]
We notice the following bounds 
\[
\Vert \partial_z\alpha\Vert_{C(I; \Cs^{\mez})}+\Vert \partial_z\beta\Vert_{C(I; \Cs^{\mez})}+\Vert \partial_z\gamma\Vert_{C(I; \Cs^{-\mez})}\les_\A 1,
\]
which can be proved along the same lines as the proof of \eqref{alpha:C:2}.\\ 
Then using the above estimates and \eqref{tame:H<0} one can derive
\bq\label{d3u:C-1}
\Vert \partial_z^3v\Vert_{C(I; \Cs^{-1})}\les_\A \B.
\eq
The estimates \eqref{est:b:C0}, \eqref{est:db:C0}, \eqref{est:d2b:C-1} are consequences of the above estimates and the Leibniz rule.
\end{proof}
\begin{lemm}\label{paralin:eq:u}
We have  
\[
Pu:=\partial^2_zu+T_\alpha\Delta_xu+T_\beta\cdot\nabla_x\partial_zu-T_\gamma\partial_zu=F_2,
\]
 where, for all $I\Subset (-1, 0]$, $F_2$ satisfies
\[
\Vert F_2\Vert_{L^2(I; H^s)}\les_\A\B \lB\Vert \eta\Vert_{H^{s+\mez}}+\Vert \psi\Vert_{H^{s}}\rB.
\]
\end{lemm}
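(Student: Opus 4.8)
The plan is to obtain the equation for the good unknown $u=v-T_b\rho$ by subtracting $P(T_b\rho)$ from the equation for $v$ furnished by Lemma \ref{lemm:eq:v}, and to check that the good-unknown substitution is arranged precisely so that $P(T_b\rho)$ reproduces, up to admissible errors, the bad term $T_{\partial_z v}\gamma$ on the right-hand side of Lemma \ref{lemm:eq:v}. Recording that lemma as
\[
\partial_z^2 v+T_\alpha\Delta_x v+T_\beta\cdot\nabla_x\partial_z v-T_\gamma\partial_z v=F_1+T_{\partial_z v}\gamma,
\]
and observing from its proof that $F_1$ is in fact bounded in $L^2(I;H^s)$ (not merely in $Y^{s+\mez}(I)$), with the same bound $\les_\A\B(\|\eta\|_{H^{s+\mez}}+\|\psi\|_{H^s})$, it suffices to establish
\[
P(T_b\rho)=T_{\partial_z v}\gamma+\widetilde F,\qquad \|\widetilde F\|_{L^2(I;H^s)}\les_\A\B\big(\|\eta\|_{H^{s+\mez}}+\|\psi\|_{H^s}\big),
\]
and then set $F_2:=F_1-\widetilde F$.

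To compute $P(T_b\rho)$ I would move the $x$- and $z$-derivatives through the paraproduct by the Leibniz rule $\partial(T_b\rho)=T_{\partial b}\rho+T_b\partial\rho$, which organizes $P(T_b\rho)$ into three groups. \emph{(a)} Terms in which $T_b$ sits outermost, namely $T_b\partial_z^2\rho+T_\alpha T_b\Delta_x\rho+T_\beta\cdot T_b\nabla_x\partial_z\rho-T_\gamma T_b\partial_z\rho$: using the symbolic calculus $T_\alpha T_b=T_{\alpha b}+(\text{l.o.})$ (Theorem \ref{theo:sc}) and the Bony decomposition of the products $\alpha\Delta_x\rho$, $\beta\cdot\nabla_x\partial_z\rho$, $\gamma\,\partial_z\rho$, this group equals $T_b(\mathcal L\rho)+T_{\partial_z v}\gamma$ plus remainders — here the decomposition $\gamma\,\partial_z\rho=T_\gamma\partial_z\rho+T_{\partial_z\rho}\gamma+R(\gamma,\partial_z\rho)$ is exactly what produces $T_{b\partial_z\rho}\gamma=T_{\partial_z v}\gamma$, while $\mathcal L\rho=0$ by the very definition of $\gamma$, so $T_b(\mathcal L\rho)=0$. \emph{(b)} Mixed terms carrying one derivative on $b$, of the form $T_{2\partial_z b+\beta\cdot\nabla_x b}\,\partial_z\rho+T_{2\alpha\nabla_x b+\beta\,\partial_z b}\cdot\nabla_x\rho$ (modulo triple-product errors). \emph{(c)} Terms carrying two derivatives on $b$: $T_{\partial_z^2 b}\rho$, $T_{\alpha\Delta_x b}\rho$, $T_{\beta\cdot\nabla_x\partial_z b}\rho$, $-T_{\gamma\partial_z b}\rho$. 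Groups (b) and (c), together with all the remainders from (a), make up $\widetilde F$. This is the good-unknown computation of \cite{AM,ABZ1}; I would carry it out keeping every Bony remainder and commutator.

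The estimates for $\widetilde F$ in $L^2(I;H^s)$, linearly in $\B$, use: Lemma \ref{est:b} (in particular $\|\nabla_{x,z}b\|_{L^\infty L^\infty}\les_\A\B$ for group (b) and $\|\nabla^2_{x,z}b\|_{L^\infty\Cs^{-1}}\les_\A\B$ for group (c)); the coefficient estimates \eqref{alpha:S}, \eqref{alpha:C:0}, \eqref{alpha:C:2}; the Sobolev smoothing of $\rho$ in $z$ inherited from the operator $e^{\delta z\langle D_x\rangle}$ in \eqref{defi:rho}, which gives, for $I\Subset(-1,0]$, $\rho\in L^2_z(I;H^{s+1})$, $\nabla_x\rho\in L^2_z(I;H^s)$ and $\partial_z\rho-h\in L^2_z(I;H^s)$, each bounded by $\|\eta\|_{H^{s+\mez}}$; the elliptic bounds \eqref{dv:H}--\eqref{dv:C1} on $\nabla_{x,z}v$; and the paraproduct toolbox \eqref{Bony1}, \eqref{Bony2}, \eqref{tame:H}, \eqref{tame:H<0}, Lemmas \ref{pest:n}, \ref{pr:B}. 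For instance $T_{\partial_z^2 b}\rho$ is estimated by pairing $\partial_z^2 b\in L^\infty_z\Cs^{-1}$ against $\rho\in L^2_z H^{s+1}$, landing in $L^2_z H^s$ with norm $\les_\A\B\|\eta\|_{H^{s+\mez}}$; the group-(b) terms by pairing $\nabla_{x,z}b\in L^\infty_z L^\infty$ against $\partial_z\rho-h$ or $\nabla_x\rho\in L^2_z H^s$ (the constant $h$ being annihilated by the paraproduct); and the group-(a) remainders such as $T_b T_{\Delta_x\rho}\alpha$ by using $\|\Delta_x\rho\|_{L^\infty}\les_\A1$ together with $\alpha-h^2\in L^2_z H^s$. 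Throughout, as in Lemma \ref{lemm:eq:v}, I would split $\alpha=h^2+(\alpha-h^2)$ and isolate the constant parts of $\beta,\gamma$, so that the constant-coefficient pieces are honestly smoothing and only cost $\|\psi\|_{H^\mez}\le\|\psi\|_{H^s}$.

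The main obstacle is the tame bookkeeping at this low regularity: each term of $\widetilde F$ must come out linear in $\B$, with the single Sobolev factor $\|\eta\|_{H^{s+\mez}}$ or $\|\psi\|_{H^s}$ appearing exactly once. The two genuinely tight spots are the group-(c) terms — $b$ being only Lipschitz, $\nabla^2_{x,z}b$ only $\Cs^{-1}$, so one is forced to exploit the $L^2_z$-smoothing of $\rho$ rather than its $L^\infty_z$ regularity — and the $\gamma$-dependent commutators and remainders ($[T_\gamma,T_b]\partial_z\rho$, $R(\gamma,\partial_z v)$, etc.), since $\gamma$ carries only the regularity of $\nabla^2\eta$ and the symbolic calculus gains only half a derivative there; these force one to trade derivatives onto $\rho$ and, at a couple of points, to invoke the Besov control $\nabla_{x,z}v\in\Bl^1$ from \eqref{dv:C1} rather than the Sobolev one. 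None of this is conceptually new beyond \cite{ABZ1}, but the tameness constraint makes every splitting matter.
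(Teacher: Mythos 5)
Your proposal is correct and follows essentially the same route as the paper's proof: reduce to showing $P(T_b\rho)\sim T_{\partial_z v}\gamma$ in $L^2(I;H^s)$, discard every term in which a derivative falls on $b$ using Lemma \ref{est:b} together with the smoothing of $\rho$ in $z$, and exploit $\mathcal{L}\rho=0$ plus the Bony decomposition of $\gamma\,\partial_z\rho$, whose paraproduct piece yields $T_bT_{\partial_z\rho}\gamma\sim T_{b\partial_z\rho}\gamma=T_{\partial_z v}\gamma$. The only difference is organizational (the paper paralinearizes $\mathcal{L}\rho=0$ first and then applies $T_b$, whereas you reassemble the $T_b$-outermost group into $T_b(\mathcal{L}\rho)$ directly), which amounts to the same computation and the same error estimates.
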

\begin{rema}
Compared with the equation satisfied by $v$ in Lemma \ref{lemm:eq:v}, the introduction of the good-unknown $u$ helps eliminate the bad term $T_{\partial_zv}\gamma$, which is not controlled in $L^2H^s$. 
\end{rema}
\begin{proof}
We will write $A\sim B$ if 
\[
\Vert A-B\Vert_{L^2(I; H^s)}\les_\A\B \lB\Vert \eta\Vert_{H^{s+\mez}}+\Vert \psi\Vert_{H^{s}}\rB.
\]
From Lemma \ref{lemm:eq:v}, we see that
\bq\label{Pu:1}
Pu=Pv-PT_b\rho=T_{\partial_zv}\gamma-PT_b\rho+F_1
\eq
and $F_1\sim 0$. Therefore, it suffices to prove that $PT_b\rho\sim T_\gamma\partial_zv$.\\
In the expression of $PT_b\rho$, we observe that owing to Lemma \ref{est:b}, all the terms containing $\rho$ and $\nabla_{x,z}\rho$ are $\sim 0$, hence
\bq\label{PTrho}
PT_b\rho\sim T_b\partial^2_z\rho+T_\alpha T_b\Delta \rho+T_\beta\cdot T_b\nabla \partial_z\rho.
\eq
Next, we find an elliptic equation satisfied by $\rho$. Remark that $w(x,y):=y$ is  a harmonic function in $\Omega$. Then, under the change of variables $(x,z)\mapsto (x, \rho(x,z))$, $(x,z)\in S=\xR^d\times (-1, 0)$,
\[
\widetilde w(x,z):=w(x,\rho(x,z))=\rho(x,z)
\]
satisfies \[
\mathcal{L}\rho=(\partial_z^2+\alpha\Delta_x+\beta\cdot\nabla_x\partial_z-\gamma\partial_z)\rho=0.
\] 
Then, by paralinearizing as in Lemma \ref{lemm:eq:v} we obtain 
\[
\partial^2_z\rho+T_\alpha\Delta_x\rho+T_\beta\cdot\nabla_x\partial_z\rho-T_{\partial_z\rho}\gamma\sim 0,
\]
where we have used the fact that $T_\gamma \partial_z\rho\sim 0$. Consequently,
\[
T_b\partial^2_z\rho+T_bT_\alpha\Delta_x\rho+T_bT_\beta\cdot\nabla_x\partial_z\rho-T_bT_{\partial_z\rho}\gamma\sim 0.
\]
Comparing with \eqref{PTrho} leads to
\[
PT_b\rho\sim [T_\alpha, T_b]\Delta \rho+[T_\beta,T_b]\nabla \partial_z\rho+T_bT_{\partial_z\rho}\gamma.
\]
By Lemma \ref{est:b}, it is easy to check that $[T_\alpha, T_b]$ is of order $-1$ and
\[
\Vert [T_\alpha, T_b]\Delta \rho\Vert_{L^2H^s}\les_\A \B \Vert \Delta_x\rho\Vert_{L^2H^{s-1}}\les_\A \B \Vert \eta\Vert_{H^{s+\mez}}.
\]
In other words, $[T_\alpha, T_b]\Delta \rho\sim 0$. By the same argument, we get $[T_\beta,T_b]\nabla \partial_z\rho\sim 0$. Finally, since 
\[
T_bT_{\partial_z\rho}\gamma\sim T_{b\partial_z\rho}\gamma=T_{\partial_zv}\gamma
\]
we conclude that $PT_b\rho\sim T_\gamma\partial_zv$.
\end{proof}
Next, in the spirit of Lemma \ref{lemm:factor}, we factorize $P$ into two parabolic operators.
\begin{lemm}\label{factor2}
Define 
	\begin{equation*}
	\begin{aligned}
		a^{(0)}&=\frac1{A^{(1)}-a^{(1)}}\lp i\partial_\xi a^{(1)}\partial_xA^{(1)}-\gamma a^{(1)}\rp,\\
		A^{(0)}&=\frac1{a^{(1)}-A^{(1)}}\lp i\partial_\xi a^{(1)}\partial_xA^{(1)}-\gamma A^{(1)}\rp
	\end{aligned}
	\end{equation*}
so that 
\bq\label{rela:aA}
a^{(1)}+A^{(1)}=-i\beta \cdot \xi, \quad a^{(1)}A^{(1)}=-\alpha|\xi|^2.
\eq
Set $a=a^{(1)}+a^{(0)}$, $A=A^{(1)}+A^{(0)}$ and $R=T_aT_A-T_{\alpha}\Delta$. Then we have 
\[
P=(\partial_z-T_a)(\partial_z-T_A)+R
\]
and for any $I\Subset (-1, 0]$,
\[
\Vert Ru\Vert_{L^2(I; H^s)}\les_\A\B \lB\Vert \eta\Vert_{H^{s+\mez}}+\Vert \psi\Vert_{H^{s}}\rB.
\]
\end{lemm}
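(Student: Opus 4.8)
The plan is to split the proof into the operator identity $P=(\partial_z-T_a)(\partial_z-T_A)+R$ and the tame bound for $Ru$. The identity is a symbolic computation in the spirit of Lemma~\ref{lemm:factor}, now with the subprincipal symbols $a^{(0)},A^{(0)}$ inserted; the bound for $Ru$ then rests on the elliptic estimates already established for $v$ together with the paradifferential calculus of Appendix~\ref{Appendix}.

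For the identity, I would expand
\[
(\partial_z-T_a)(\partial_z-T_A)=\partial_z^2-T_{\partial_z A}-(T_a+T_A)\partial_z+T_aT_A
\]
and compare with $P=\partial_z^2+T_\alpha\Delta_x+T_\beta\cdot\nabla_x\partial_z-T_\gamma\partial_z$. Subtracting the two defining formulas for $a^{(0)}$ and $A^{(0)}$ gives $a^{(0)}+A^{(0)}=\gamma$, which together with $a^{(1)}+A^{(1)}=-i\beta\cdot\xi$ fixes the $\partial_z$-terms up to an operator of order $0$, and $a^{(1)}A^{(1)}=-\alpha|\xi|^2$ makes the order-two part of $T_aT_A$ coincide with that of $T_\alpha\Delta_x$. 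The only point needing a line of algebra is that, with the given formulas, $a^{(1)}A^{(0)}+a^{(0)}A^{(1)}=i\,\partial_\xi a^{(1)}\cdot\partial_x A^{(1)}$, which is exactly what cancels the order-one symbol $-i\,\partial_\xi a^{(1)}\cdot\partial_x A^{(1)}$ produced by the composition rule for $T_{a^{(1)}}T_{A^{(1)}}$. Collecting the leftovers defines $R$, built from $T_aT_A-T_\alpha\Delta_x$, the composition remainders of $T_{a^{(i)}}T_{A^{(j)}}$, the term $T_{\partial_z A}$, and the error between $T_\beta\cdot\nabla_x$ and $T_{i\beta\cdot\xi}$. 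The role of $a^{(0)},A^{(0)}$ is precisely to remove the first-order symbol $-i\,\partial_\xi a^{(1)}\cdot\partial_x A^{(1)}$ that would otherwise appear (compare $R_3$ in Lemma~\ref{lemm:factor}); what survives of $R$ at first order comes only from $T_{\partial_z A^{(1)}}$ and from the crude part of the composition error, and there the coefficients are the top derivatives of $\eta$, namely $\partial_z\alpha$ and $\gamma$, which carry the factor $\B$.

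For the estimate, I would first fix the inputs on the good unknown: by Proposition~\ref{elliptic:Th} with $\sigma=s-1$, Corollary~\ref{est:v:C1} and Lemma~\ref{est:b}, one has $u=v-T_b\rho\in X^s(I)$ with $\|u\|_{X^s(I)}\les_\A\|\psi\|_{H^s}+\|\eta\|_{H^{s+\mez}}$ (by \eqref{dv:H}); in particular $u\in L^2_z H^{s+\mez}$, a half-derivative above $C_zH^s$ — this is the margin the argument consumes. Next, using the coefficient bounds \eqref{alpha:S}, \eqref{alpha:C:0}, \eqref{alpha:C:2}, I would estimate the symbol seminorms of $a^{(1)},A^{(1)},a^{(0)},A^{(0)}$ and $\partial_z A$ in two regimes: an ``$\A$-regime'' where $\nabla\eta,\alpha,\beta$ sit in $L^\infty_z\Cs^{1+\eps}$, all bounded by $\cF(\A)$, and a ``$\B$-regime'' carrying $\gamma,\partial_z\alpha,\nabla^2\eta$ in the Chemin--Lerner spaces $\wL^2$, occurring exactly once. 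Each block of $R$ is then treated by the calculus of Appendix~\ref{Appendix} (action of paradifferential operators on Sobolev spaces, composition rule, Bony's decomposition of the coefficients): the smooth low-frequency part of each coefficient — needing only an $L^\infty$ bound, hence controlled by $\cF(\A)$ — absorbs the $\xi$-weights required to lower the order, whereas the rough high-frequency part, which carries $\B$, is never differentiated and is paired directly against the Sobolev regularity of $u$. The residual first-order contributions land in $L^2(I;H^s)$ because of the $\wL^2$-margin on $u$ together with the $\wL^2$ gain on the coefficients in \eqref{alpha:C:2}, exactly as in the proofs of Lemma~\ref{est:base} and Proposition~\ref{prop:est:v:C1:apriori}.

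The main obstacle will be tameness. A blunt use of the composition rule forces a choice between losing too many derivatives (if one invokes only the $C^{1+\eps}$ regularity of $\alpha,\beta$) and producing a quadratic dependence on $\|\eta\|_{H^{s+\mez}}$ (if one puts their Sobolev regularity on both factors of a composition). The remedy, as in Section~\ref{section:elliptic}, is to decide in each Bony paraproduct which factor absorbs the $\B$-norm and which absorbs the $\xi$-weights; this bookkeeping, rather than any single inequality, is where the work lies.
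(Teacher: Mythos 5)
Your algebraic groundwork coincides with the paper's: the identities $a^{(0)}+A^{(0)}=\gamma$, $a^{(1)}A^{(1)}=-\alpha|\xi|^2$ and $a^{(1)}A^{(0)}+a^{(0)}A^{(1)}=i\partial_\xi a^{(1)}\cdot\partial_xA^{(1)}$ are exactly the relations \eqref{relation:aA} on which the proof rests (the last one is what kills the Poisson-bracket term $\tfrac1i\partial_\xi a^{(1)}\cdot\partial_x A^{(1)}$ coming from the composition), and your final reduction of $\lA u\rA_{L^2H^{s+\mez}}$, equivalently $\lA\nabla_x u\rA_{L^2H^{s-\mez}}$, to $\lA\psi\rA_{H^s}+\lA\eta\rA_{H^{s+\mez}}$ via \eqref{dv:H} and Lemma \ref{est:b} is also how the paper closes.

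The genuine gap is in what you put into $R$ and how you propose to bound it. In the lemma, $R$ is by definition the single operator $T_aT_A-T_\alpha\Delta$, and the content of the estimate is that this operator is of order $\mez$ with tame norm: one splits $T_aT_A$ into the four products, bounds $\cM^1_{\tdm}(a^{(1)}),\cM^1_{\tdm}(A^{(1)}),\cM^0_{\mez}(a^{(0)}),\cM^0_{\mez}(A^{(0)})$ by $\cF(\lA\eta\rA_{\Cs^{2+\eps}})(1+\lA\eta\rA_{\Cs^{5/2}})\les_\A\B$ (and the weaker seminorms by $\cF(\lA\eta\rA_{\Cs^{2+\eps}})$), applies Theorem \ref{theo:sc} (ii) with $\rho=\tdm$, $\mez$, $0$ respectively, and uses \eqref{relation:aA} so that the sum of the resulting symbols is exactly $-\alpha|\xi|^2$; the half-derivative margin of $u$ in $L^2_z$ then gives the stated $L^2(I;H^s)$ bound. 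You instead declare $R$ to be ``the leftovers'', adjoining the commutator $T_{\partial_zA}$ and first-order composition crumbs, and claim these land in $L^2(I;H^s)$ ``because of the $\wL^2$-margin on $u$ together with the $\wL^2$ gain on the coefficients in \eqref{alpha:C:2}''. That mechanism cannot work: extra H\"older or Chemin--Lerner regularity of the coefficients of a symbol never lowers the differential order of the paradifferential operator, so $T_{\partial_zA^{(1)}}$ maps $H^\mu$ to $H^{\mu-1}$ and no better, and since $u$ lies only in $C_zH^s\cap L^2_zH^{s+\mez}$, the term $T_{\partial_zA^{(1)}}u$ lives in $L^2_zH^{s-\mez}$, half a derivative short of what you assert; nor can you pair an $L^2_z$ bound on a coefficient with the $L^2_z$ half of the $X^s$ norm of $u$ and still land in $L^2_z$. (Also, there is no ``error between $T_\beta\cdot\nabla_x$ and $T_{i\beta\cdot\xi}$'': with the quantization \eqref{eq.para} these operators coincide, which is precisely why the coefficients of $\partial_z$ cancel exactly once $a+A=-i\beta\cdot\xi+\gamma$.) So, as written, your estimate fails for the order-one pieces you have folded into $R$. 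For the lemma as stated you should keep $R=T_aT_A-T_\alpha\Delta$ and prove the order-$\mez$ tame bound as above — note that the paper's own computation of the factorization carries no $T_{\partial_zA}$ term in $R$ (contrast with $R_3$ in Lemma \ref{lemm:factor}, where an order-one remainder is harmless) — rather than asserting an $L^2H^s$ bound for a first-order operator acting on $u$, which is not available at this regularity.
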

\begin{proof}
From the definitions of $a, A$, we can check that
\bq\label{relation:aA}
\begin{aligned}
&a^{(1)}A^{(1)}+\frac1i\partial_\xi a^{(1)}\cdot\partial_xA^{(1)}+a^{(1)}A^{(0)}+a^{(0)}A^{(1)}=-\alpha\la\xi\ra^2,\\
& a+A=-i\beta\cdot\xi+\gamma.
\end{aligned}
\eq
A direct computation shows that
\[
R=\lp T_aT_A-T_\alpha\Delta\rp+\lp(T_a+T_A)+(T_\beta\cdot\nabla-T_\gamma)\rp\partial_z=T_aT_A-T_\alpha\Delta
\]
by the second equation of \eqref{relation:aA}. Now, we write
\[
T_aT_A=T_{a^{(1)}}T_{A^{(1)}}+T_{a^{(1)}}T_{A^{(0)}}+T_{a^{(0)}}T_{A^{(1)}}+T_{a^{(0)}}T_{A^{(0)}}.
\]
We have the following bounds
\begin{align*}
&\cM^1_{\tdm}(a^{(1)})+\cM^1_{\tdm}(A^{(1)})\les \cF(\Vert \eta\Vert_{\Cs^{2+\eps}})(1+\Vert \eta\Vert_{\Cs^{\frac 52}}),\\
&\cM^1_{\mez}(a^{(1)})+\cM^1_{\mez}(A^{(1)})\les \cF(\Vert\eta\Vert_{\Cs^{2+\eps}}),\\
&\cM^0_{\mez}(a^{(0)})+\cM^0_{\mez}(A^{(0)})\les\cF(\Vert \eta\Vert_{\Cs^{2+\eps}})(1+\Vert \eta\Vert_{\Cs^{\frac 52}}),\\
&\cM^0_{0}(a^{(0)})+\cM^0_0(A^{(0)})\les \cF(\Vert\eta\Vert_{\Cs^{2+\eps}}).
\end{align*}
Then, applying Theorem \ref{theo:sc} (ii) we obtain
\bq\label{TaTA}
\begin{aligned}
&\Vert T_{a^{(0)}}T_{A^{(0)}}-T_{a^{(0)}A^{(0)}}\Vert_{H^{\mu-\mez}\to H^{\mu}}\les \Xi,\\
&\Vert T_{a^{(0)}}T_{A^{(1)}}-T_{a^{(0)}A^{(1)}}\Vert_{H^{\mu+\mez}\to H^{\mu}}\les \Xi,\\
&\Vert T_{a^{(1)}}T_{A^{(0)}}-T_{a^{(1)}A^{(0)}}\Vert_{H^{\mu+\mez}\to H^{\mu}}\les\Xi,\\
&\Vert T_{a^{(1)}}T_{A^{(1)}}-T_{a^{(1)}A^{(0)}}-T_{\frac1i\partial_\xi a^{(1)}\cdot\partial_xA^{(1)}}\Vert_{H^{\mu+\mez}\to H^{\mu}}\les\Xi,
\end{aligned}
\eq
where $\Xi$ denotes any constant of the form
\[
\cF(\Vert \eta\Vert_{\Cs^{2+\eps}})(1+\Vert \eta\Vert_{\Cs^{\frac 52}}).
\]
Therefore, the first equation of \eqref{relation:aA} implies 
\[
\Vert Ru\Vert_{L^2H^s}\les_\A \B\Vert\nabla_x u\Vert_{L^2H^{s-\mez}}
\]
where, we have replaced $\|u\|_{L^2H^{s+\mez}}$ by $\Vert\nabla_x u\Vert_{L^2H^{s-\mez}}$ according to Remark \ref{rema:low}. Finally, writing $\nabla_xu=\nabla_xv-T_{\nabla_xb}\rho-T_b\nabla_x\rho$ we conclude by means of \eqref{dv:H} and \eqref{est:b:C0} that
\bq
 \Vert\nabla_x u\Vert_{L^2H^{s-\mez}}\les_\A\B \lB  \Vert \psi\Vert_{H^s}+\Vert \eta\Vert_{H^{s+\mez}}\rB.
\eq
\end{proof}
\begin{prop}\label{prop:paralin:DN}
It holds that 
\[
G(\eta)\psi= T_\lambda (\psi-T_B\eta)+T_V\cdot\nabla \eta+F
\]
with $F$ satisfying
\[
\Vert F\Vert_{H^{s+\mez}}\les_\A\B \lB  \Vert \psi\Vert_{H^s}+\Vert \eta\Vert_{H^{s+\mez}}\rB.
\]
\end{prop}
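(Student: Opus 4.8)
The plan is to follow the scheme of Proposition 3.14 of \cite{ABZ1}, feeding in the low-regularity ingredients built above: the equation $Pu=F_2$ for the good unknown $u=v-T_b\rho$ (Lemma~\ref{paralin:eq:u}), the factorization $P=(\partial_z-T_a)(\partial_z-T_A)+R$ with tame remainder $Ru$ (Lemma~\ref{factor2}), the elliptic bounds \eqref{dv:H}, \eqref{dv:C1}, and Lemma~\ref{est:b}. Recall $\underline u:=u|_{z=0}=\psi-T_B\eta$ and, by \eqref{formula:DN}, $G(\eta)\psi=(\zeta_1\partial_z v-\zeta_2\cdot\nabla_x v)|_{z=0}$, with $\|\zeta_j|_{z=0}\|_{\Cs^{1+\eps_0}}\lesssim_\A\B$, $\|\zeta_j|_{z=0}\|_{L^\infty}\lesssim_\A 1$, and the non-constant part of $\zeta_j|_{z=0}$ bounded in $H^{s-\mez}$ by $\cF(\A)\|\eta\|_{H^{s+\mez}}$.

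First I would decouple the factorization: by Lemmas~\ref{paralin:eq:u} and \ref{factor2}, $w:=(\partial_z-T_A)u$ solves $(\partial_z-T_a)w=F_2-Ru=:G$ with $\|G\|_{L^2([z_0,0];H^s)}\lesssim_\A\B(\|\eta\|_{H^{s+\mez}}+\|\psi\|_{H^s})$ for every $z_0\in(-1,0)$. Since $\partial_z\rho\ge h/2$ and $4\alpha\langle\xi\rangle^2-(\beta\cdot\xi)^2\gtrsim\langle\xi\rangle^2$, the symbol satisfies $\RE(-a^{(1)})\gtrsim\langle\xi\rangle$, so $\partial_z-T_a$ is forward-parabolic. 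Integrating it upward from an interior slice $z=z_0$, on which $v$ (hence $w$) is smooth with $\|w|_{z=z_0}\|_{H^{s+\mez}}\lesssim_\A E(\eta,\psi)$ by interior elliptic regularity, and invoking the Sobolev analogue of the parabolic smoothing estimate of Theorem~\ref{parabolic:Hol} (a gain of $1/2$ derivative, $L^2_zH^s\to C_zH^{s+\mez}$), I get for every $z_1\in(z_0,0)$
\[
\|w\|_{C([z_1,0];H^{s+\mez})}\lesssim_\A\B\,(\|\eta\|_{H^{s+\mez}}+\|\psi\|_{H^s}).
\]
In particular the trace $w|_{z=0}\in H^{s+\mez}$ with this bound, and $\partial_z u|_{z=0}=T_{A|_{z=0}}\underline u+w|_{z=0}$.

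Next I would substitute into the Dirichlet--Neumann formula, writing $\partial_z v=\partial_z u+T_{\partial_z b}\rho+T_b\partial_z\rho$ and $\nabla_x v|_{z=0}=\nabla_x\psi=\nabla_x\underline u+T_{\nabla B}\eta+T_B\nabla\eta$, then converting each multiplication by a coefficient ($\zeta_j|_{z=0}$, $\partial_z\rho|_{z=0}$, $b|_{z=0}=B$) into a paraproduct and composing symbols via Theorem~\ref{theo:sc}. The leading contributions are $\zeta_1|_{z=0}T_{A|_{z=0}}\underline u$ and $-T_{i\zeta_2|_{z=0}\cdot\xi}\underline u$; the algebraic identities of \cite{ABZ1} --- matching $A^{(1)},A^{(0)}$, expressed through $\rho$, with $\lambda^{(1)},\lambda^{(0)}$, expressed through $\eta$, via $\rho|_{z=0}=\eta$ --- show that all terms acting to order $\ge-\mez$ on $\underline u$ assemble into $T_\lambda\underline u$, whereas the remaining order-$1$ terms on $\eta$ (from $-\zeta_2\cdot(T_{\nabla B}\eta+T_B\nabla\eta)$, from $\zeta_1T_b\partial_z\rho|_{z=0}$, and from the low--high paraproduct remainders $T_{(\cdot)}\zeta_j$ --- recall $\zeta_j-\mathrm{const}\sim\mu\langle D\rangle\eta$ at top order) assemble into $T_V\cdot\nabla\eta$ with $V=\nabla\psi-B\nabla\eta$. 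Everything left over --- $\zeta_1|_{z=0}w|_{z=0}$, the $T_{\partial_z b}\eta$ term, the symbol-composition errors of order $\le-\mez$, and the paraproduct remainders $R(\cdot,\cdot)$ --- is put into $F$.

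The hard part will be the remainder estimate $\|F\|_{H^{s+\mez}}\lesssim_\A\B(\|\psi\|_{H^s}+\|\eta\|_{H^{s+\mez}})$ with the bound \emph{linear} in $\B$. I would route each term so that the single factor carrying the top H\"older norm (the $\Cs^{5/2+\eps_*}$-norm of $\eta$ or the $\Bl^1$-norm of $\nabla\psi$, i.e.\ $\B$) appears once, the others being estimated by $\A$, leaving one Sobolev factor $\|\eta\|_{H^{s+\mez}}+\|\psi\|_{H^s}$; e.g.\ $R(\zeta_1|_{z=0},\partial_z v)\in H^{s+\mez}$ follows from $\|\zeta_1|_{z=0}\|_{\Cs^{3/2}}\lesssim_\A\B$ and $\|\partial_z v\|_{H^{s-1}}\lesssim_\A\|\psi\|_{H^s}+\|\eta\|_{H^{s+\mez}}$. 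This forces systematic use of the Chemin--Lerner/Besov bounds $\nabla_{x,z}v\in C_z\Bl^0\cap C_z\Bl^1$ (Corollary~\ref{est:v:C1}, $r=0,1$), the bounds on $b$ (Lemma~\ref{est:b}), the coefficient estimates \eqref{alpha:S}--\eqref{alpha:C:2} and \eqref{dv:H}; and since $A^{(1)},A^{(0)},\zeta_j|_{z=0}$ carry only limited H\"older regularity (between $\Cs^{1/2+\eps_*}$ and $\Cs^{3/2+\eps_*}$), Theorem~\ref{theo:sc} and the paraproduct rules must be applied at exactly the right indices --- this is the sharpening of \cite{ABZ1} that the statement needs. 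The parabolic step is the other delicate point: it gives a tame bound only because $F_2-Ru$ is tame in $L^2_zH^s$, which is the payoff of the good unknown and the tame factorization.
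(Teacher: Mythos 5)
Your plan coincides with the paper's proof: Lemma \ref{paralin:eq:u} and Lemma \ref{factor2} give the tame factorized equation for the good unknown, a forward parabolic estimate (Theorem \ref{parabolic:Sob}, started from an interior slice controlled by interior elliptic regularity — the paper does this via a cutoff $\kappa$ rather than prescribing data at $z=z_0$, which is equivalent) yields $\partial_z u - T_A u \in X^{s+\mez}$ near $z=0$ with the tame bound, and then paralinearizing the formula \eqref{formula:DN}, substituting $v=u+T_b\rho$ and replacing $\partial_z u$ by $T_Au$ identifies the principal part $T_\Lambda u$ with $\Lambda|_{z=0}=\lambda$ and the transport part $T_V\cdot\nabla\eta$, exactly as in Steps 1–2 of the paper. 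The tame routing of remainders through Corollary \ref{est:v:C1}, Lemma \ref{est:b} and \eqref{dv:H} is also the same, so the proposal is correct and essentially identical to the paper's argument.
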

\begin{proof} A combination of Lemma \ref{paralin:eq:u} and Lemma \ref{factor2} yields
\[
(\partial_z-T_a)(\partial_z-T_A)u=F_2,
\]
where, $F_2$ satisfies for all $I\Subset (-1, 0]$,
\bq\label{paralin:gain:0}
\Vert F_2\Vert_{L^2(I; H^s)}\les_\A \B\lB  \Vert \psi\Vert_{H^s}+\Vert \eta\Vert_{H^{s+\mez}}\rB.
\eq
The proof proceeds in two steps.\\
{\bf Step 1.} As in the proof of Proposition \ref{est:v:C1:apriori}, we fix $-1<z_0<z_1<0$ and introduce $\kappa$  a cut-off function satisfying $\kappa\arrowvert_{z<z_0}=0,~\kappa\arrowvert_{z>z_1}=1$ . Setting $w=\kappa(z)(\partial_z-T_A)u$, then 
\[
(\partial_z-T_a)w=G:=\kappa(z)F_2u+\kappa'(z)(\partial_z-T_A)u.
\]
We now bound $G$ in $L^2([z_0, 0]; H^s)$. First, it follows directly from \eqref{paralin:gain:0} that
\bq\label{paralin:gain:1}
\Vert \kappa(z)F_2u\Vert_{Y^{s+\mez}([z_0, 0])}\les \Vert \kappa(z)Ru\Vert_{L^2([z_0, 0]; H^s)} \les_\A\B \lB  \Vert \psi\Vert_{H^s}+\Vert \eta\Vert_{H^{s+\mez}}\rB=:\Pi.
\eq
Next, notice that $p:=\kappa'(z)(\partial_z-T_A)u$ is non vanishing only for $z\in I:=[z_0, z_1]$. In the light of Lemma \ref{est:b},
\[
\Vert \nabla_{x,z}u\Vert_{L^2(I; H^s)}\les_\A \B\Vert\eta\Vert_{H^{s+\mez}}+\Vert \nabla_{x,z}v\Vert_{L^2(I;H^s)}.
\]
Hence
\[
\Vert (\partial_z-T_A)u\Vert_{L^2(I; H^s)}\les_{\A} \Vert \nabla_{x,z}u\Vert_{L^2(I; H^s)}\les_{\A} \B\Vert\eta\Vert_{H^{s+\mez}}+\Vert \nabla_{x,z}v\Vert_{L^2(I;H^s)}.
\]
 The fluid domain corresponds to $[z_0, z_1]$ is a strip lying in the interior of $\Omega_h$, where the harmonic function $\phi$ is smooth by the standard elliptic theory. In particular, there holds (see for instance the proof of Lemma 2.9, \cite{ABZ1})
\[
\Vert \nabla_{x,z}v\Vert_{L^2(I;H^s)}\les_A\Vert \psi\Vert_{H^\mez}.
\]
Therefore, we can estimate
\begin{align*}
\Vert p\Vert_{L^2([z_0, 0]; H^s)}&\les \Vert (\partial_z-T_A)u\Vert_{L^2([z_0, z_1]; H^s)}\\
&\les_\A \B\Vert\eta\Vert_{H^{s+\mez}}+\Vert \nabla_{x,z}v\Vert_{L^2([z_0, z_1];H^s)},\\
&\les_\A \B\lB \Vert\eta\Vert_{H^{s+\mez}}+\Vert \psi\Vert_{H^\mez}\rB.
\end{align*}
This, combined with \eqref{paralin:gain:1} , yields
\bq\label{claim1}
\Vert G\Vert_{Y^{s+\mez}([z_0, 0])}\les_\A\B \lB  \Vert \psi\Vert_{H^s}+\Vert \eta\Vert_{H^{s+\mez}}\rB=:\Pi.
\eq
Consequently, as $w\rvert_{z=z_0}=0$,  we can apply Theorem \ref{parabolic:Sob} to have 
$\Vert w\Vert_{X^{s+\mez}([z_0, 0])}\les\Pi$, which implies
\bq\label{est:paralin:gain}
\Vert \partial_zu-T_Au\Vert_{X^{s+\mez}([z_1, 0])}\les\Pi.
\eq
{\bf Step 2.}  We will write ~$f_1\sim f_2$ provided $\lA f_1-f_2\rA_{X^{s+\mez}([z_1,0])}\leq \Pi$. By paralinearizing (using the Bony decomposition and Theorem \ref{paralin}) we have
\begin{multline*}
	\frac{1+\la\nabla\rho\ra^2}{\partial_z\rho}\partial_zv-\nabla\rho\cdot\nabla v\sim T_{\frac{1+\la\nabla\rho\ra^2}{\partial_z\rho}}\partial_zv+2T_{b\nabla\rho}\cdot\nabla\rho-T_{b\frac{1+\la\nabla\rho\ra^2}{\partial_z\rho}}\partial_z\rho-T_{\nabla\rho}\cdot\nabla v-T_{\nabla v}\cdot\nabla\rho.
\end{multline*}
Then replacing~$v$ with~$u+T_b\rho$ we obtain, after some computations, that
$$\frac{1+\la\nabla\rho\ra^2}{\partial_z\rho}\partial_zv-\nabla\rho\cdot\nabla v\sim T_{\frac{1+\la\nabla\rho\ra^2}{\partial_z\rho}}\partial_zu-T_{\nabla\rho}\cdot\nabla u+T_{b\nabla\rho-\nabla v}\cdot\nabla\rho.$$
Now, using \eqref{est:paralin:gain} allows us to replace the normal derivative $\partial_zv$ with the "tangential derivative" $T_Av$, leaving a remainder which is $\sim 0$. Therefore,
$$
T_{\frac{1+\la\nabla\rho\ra^2}{\partial_z\rho}}\partial_zu-T_{\nabla\rho}\cdot\nabla u\sim T_\Lambda u+T_{b\nabla\rho-\nabla v}\cdot\nabla\rho
$$
with
\[
\Lambda:=\frac{1+\la\nabla\rho\ra^2}{\partial_z\rho}A-i\nabla\rho\cdot\xi.
\]
One can check that $\Lambda\rvert_{z=0}=\lambda=\lambda^{(1)}+\lambda^{(0)}$ as announced. On the other hand, at $z=0$, 
\[
b\nabla\rho-\nabla v=B\nabla\eta-\nabla \psi=V,\quad u=\psi-T_B\eta.
\]
In conclusion, we have proved that 
\[
G(\eta)\psi\sim T_\lambda (\psi-T_B\eta)+T_V\cdot\nabla \eta.
\]
\end{proof}
\subsection{Paralinearization of the full system}
\begin{lemm}\label{lem:paracurv}
	There exists a nondecreasing function~$\cF$ such that
	$$H(\eta)=T_{\l}\eta+f,$$
	where~$\l=\l^{(2)}+\l^{(1)}$ with
	\begin{equation}	\label{eq:l}
	\begin{aligned}
\l^{(2)}&=\lp1+\la\nabla\eta\ra^2\rp^{-\mez}\lp\la\xi\ra^2-\frac{(\nabla\eta\cdot\xi)^2}{1+\la\nabla\eta\ra^2}\rp,\quad
\l^{(1)}&=-\frac i2(\partial_x\cdot\partial_\xi)\l^{(2)},
	\end{aligned}
	\end{equation}
	and~$f\in H^{s}$ satisfying
	$$\lA f\rA_{H^{s}}\leq\cF\lp\lA \eta\rA_{W^{1,\infty}}\rp\lA \eta\rA_{\Cs^\frac{5}{2}}\lA \nabla\eta\rA_{H^{s-\mez}}.$$
\end{lemm}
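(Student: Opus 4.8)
\emph{Proof (sketch).} The plan is to view $H(\eta)$ as minus the divergence of a smooth function of $\nabla\eta$, to paralinearize that function by Theorem~\ref{paralin}, and then to read off the two symbols $\l^{(2)},\l^{(1)}$ by symbolic calculus, while keeping careful track of what is a genuine remainder.

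First I would write $H(\eta)=-\div\lp F(\nabla\eta)\rp$ with $F(p)=p\lp1+|p|^2\rp^{-\mez}$, and set $M(p)=DF(p)$, whose entries are $M_{jk}(p)=\lp1+|p|^2\rp^{-\mez}\delta_{jk}-\lp1+|p|^2\rp^{-\tdm}p_jp_k$; this matrix is symmetric and $M(0)=\mathrm{Id}$. Since $\eta\in\Cs^\frac52$ we have $\nabla\eta\in\Cs^\tdm\cap H^{s-\mez}$ with $s-\mez>\frac d2$, so Theorem~\ref{paralin}, with gain equal to the H\"older index $\tdm$, gives
\[
F(\nabla\eta)=T_{M(\nabla\eta)}\nabla\eta+R,\qquad \lA R\rA_{H^{s+1}}\leq\cF\lp\lA\eta\rA_{W^{1,\infty}}\rp\lA\eta\rA_{\Cs^\frac52}\lA\nabla\eta\rA_{H^{s-\mez}},
\]
where one uses that $M(p)-\mathrm{Id}$ vanishes at $p=0$, so that the tame composition estimate for $M(\nabla\eta)-\mathrm{Id}$ is linear in $\lA\nabla\eta\rA_{\Cs^\tdm}\simeq\lA\eta\rA_{\Cs^\frac52}$. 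Hence $H(\eta)=-\div\lp T_{M(\nabla\eta)}\nabla\eta\rp-\div R$, and $\div R\in H^s$ obeys the required bound.

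Next, for the main term, the exact Leibniz rule $\partial_j(T_aw)=T_a\partial_jw+T_{\partial_ja}w$ for paraproducts yields
\[
-\div\lp T_{M(\nabla\eta)}\nabla\eta\rp=-\sum_{j,k}\lp T_{M_{jk}(\nabla\eta)}\partial_j\partial_k\eta+T_{\partial_jM_{jk}(\nabla\eta)}\partial_k\eta\rp .
\]
Since differentiation coincides with paradifferentiation by a polynomial symbol ($\partial_j\partial_k=T_{-\xi_j\xi_k}$, $\partial_k=T_{i\xi_k}$), I would then compose these with the paraproducts above via Theorem~\ref{theo:sc}. As $M_{jk}(\nabla\eta)$ and $\partial_jM_{jk}(\nabla\eta)$ carry no $\xi$-dependence, each symbolic expansion reduces to the product of symbols plus a remainder of order $0+2-\tdm=\mez$, resp.\ $0+1-\mez=\mez$; applied to $\eta\in H^{s+\mez}$ these land in $H^s$. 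Using again the tame composition estimates — $\lA M(\nabla\eta)-\mathrm{Id}\rA_{\Cs^\tdm}$ and $\lA\partial_jM_{jk}(\nabla\eta)\rA_{\Cs^\mez}\les\lA M_{jk}(\nabla\eta)-\delta_{jk}\rA_{\Cs^\tdm}$ are both $\les\cF(\lA\eta\rA_{W^{1,\infty}})\lA\nabla\eta\rA_{\Cs^\tdm}$ — together with Remark~\ref{rema:low} to trade $\lA\eta\rA_{H^{s+\mez}}$ for $\lA\nabla\eta\rA_{H^{s-\mez}}$, these remainders satisfy the stated tame bound. Collecting the surviving symbols gives $H(\eta)=T_{\l^{(2)}+\l^{(1)}}\eta+f$ with $\l^{(2)}=\sum_{j,k}M_{jk}(\nabla\eta)\,\xi_j\xi_k$ and $\l^{(1)}=-i\sum_{j,k}\xi_k\,\partial_{x_j}M_{jk}(\nabla\eta)$.

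It then remains to match these with \eqref{eq:l}: inserting the explicit $M_{jk}$ gives $\l^{(2)}=\lp1+|\nabla\eta|^2\rp^{-\mez}(|\xi|^2-(1+|\nabla\eta|^2)^{-1}(\nabla\eta\cdot\xi)^2)$, and differentiating $\l^{(2)}$ and using $M_{jk}=M_{kj}$ yields $(\partial_x\cdot\partial_\xi)\l^{(2)}=2\sum_{j,k}\xi_k\,\partial_{x_j}M_{jk}(\nabla\eta)$, so $\l^{(1)}=-\tfrac i2(\partial_x\cdot\partial_\xi)\l^{(2)}$, as required; $f$ is the sum of $-\div R$ and of the symbolic-calculus remainders. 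The delicate point throughout is to keep every estimate linear in the top H\"older norm $\lA\eta\rA_{\Cs^\frac52}$: this is what forces the use of the \emph{tame} (bilinear, gain-$\tdm$) form of Bony's theorem rather than its quadratic-gain version (which would require $s\geq2+\frac d2$, outside the present range), and it requires exploiting at each step that $M(\nabla\eta)-\mathrm{Id}$ and $\partial_jM_{jk}(\nabla\eta)$ vanish when $\nabla\eta$ is constant.
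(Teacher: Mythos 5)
Your proposal follows essentially the same route as the paper: write $H(\eta)=-\div F(\nabla\eta)$, paralinearize $F(\nabla\eta)$ via Theorem \ref{paralin} with $u=\nabla\eta$, $\mu=s-\mez$, $\rho=\tdm$ (the tame, gain-$\tdm$ form), and then convert $-\div\lp T_{M(\nabla\eta)}\nabla\eta\rp$ into $T_{\l}\eta$ with the stated $\l^{(2)},\l^{(1)}$ and $f=-\div R$. The only real difference is that the paper uses the exact identities $T_a\partial_k u=T_{ia\xi_k}u$ and $T_a\partial_j\partial_k u=T_{-a\xi_j\xi_k}u$, so that $-\div(T_{M}\nabla\eta)=T_{M\xi\cdot\xi-i(\div M)\cdot\xi}\eta$ with no further remainder, whereas your detour through Theorem \ref{theo:sc} creates extra order-$\mez$ remainders whose generic bounds carry a factor $1+\lA\eta\rA_{\Cs^{5/2}}$ instead of $\lA\eta\rA_{\Cs^{5/2}}$ — harmless in the paper's applications, and removable by using the exactness you yourself noted.
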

\begin{proof}
We first apply Theorem \ref{paralin} with  $u=\nabla \eta$, $\mu=s-\mez$ and $\rho= \tdm$  to have
\[
\frac{\nabla\eta}{\sqrt{1+|\nabla\eta|^2}}=T_{p}\nabla\eta+f_1, \quad p=\frac{1}{(1+|\nabla\eta|^2)^\mez}I-\frac{\nabla \eta\otimes\nabla\eta}{(1+|\nabla\eta|^2)^{\tdm}}
\]
with $f_1$ satisfying
\[
\lA f_1\rA_{H^{s-\mez+\mez}}\le \cF\lp\lA \nabla\eta\rA_{L^{\infty}}\rp\lA \nabla\eta\rA_{\Cs^{\tdm}}\lA \nabla\eta\rA_{H^{s-\mez}}.
\]
Hence,
\[
H(\eta)=-\cnx(T_{p}\nabla\eta+f_1)=T_{p\xi\cdot\xi-i\cnx p\xi}\eta-\cnx f_1.
\]
This gives the conclusion with $l^{(2)}=p\xi\cdot\xi,~l^{(1)}=-i\cnx p\xi,~ f=-\cnx f_1.$
\end{proof}
We next paralinearize the other nonlinear terms.
Recall the notations
$$B=\frac{\nabla\eta\cdot\nabla\psi+G(\eta)\psi}{1+\la\nabla\eta\ra^2},\quad V=\nabla\psi-B\nabla\eta.$$
For later estimates on $B$, we write 
\bq \label{decompose:B} 
\begin{aligned}B&=\frac{\nabla\eta}{1+\la\nabla\eta\ra^2}\cdot\nabla\psi+\frac1{1+\la\nabla\eta\ra^2}G(\eta)\psi\\
&=:K(\nabla\eta)\cdot\nabla\psi+L(\nabla\eta)G(\eta)\psi+G(\eta)\psi,
\end{aligned}
\eq
where~$K$ and~$L$ are smooth function in $L^\infty(\xR^d)$ and satisfy $K(0)=L(0)=0$. From this expression and the Bony decomposition, one can easily prove the following.
\begin{lemm}\label{est:VB}
We have 
\begin{align}
&\Vert (V, B)\Vert_{\Bl^1}\les_\A \B\label{est:VB:C1},\\
&\Vert (V, B)\Vert_{L^\infty}\les_\A 1. \label{est:VB:C0}
\end{align}
\end{lemm}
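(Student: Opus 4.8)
The starting point is the decomposition \eqref{decompose:B} of $B$ together with $V=\nabla\psi-B\nabla\eta$, so that everything is built out of three blocks: the functions $K(\nabla\eta)$, $L(\nabla\eta)$ (smooth functions of $\nabla\eta$ vanishing at $0$), the Dirichlet--Neumann term $G(\eta)\psi$, and $\nabla\psi$, $\nabla\eta$. The three ingredients I would invoke are: (i) the composition estimate in Zygmund spaces, which gives $\Vert K(\nabla\eta)\Vert_{\Cs^{1+\eps_*}}+\Vert L(\nabla\eta)\Vert_{\Cs^{1+\eps_*}}\les_\A 1$ since $\nabla\eta\in\Cs^{1+\eps_*}$ with $\Vert\nabla\eta\Vert_{\Cs^{1+\eps_*}}\les\Vert\eta\Vert_{\Cs^{2+\eps_*}}\les\A$; (ii) the elliptic/Dirichlet--Neumann estimates already proved, namely Corollary \ref{est:v:C1} with $r=0$ combined with the formula \eqref{formula:DN} and $\Vert\zeta_j\arrowvert_{z=0}\Vert_{\Cs^{1+\eps_0}}\les_\A 1$, which yield $\Vert G(\eta)\psi\Vert_{\Bl^0}\les_\A \Vert\nabla\psi\Vert_{\Bl^0}+E(\eta,\psi)\les_\A 1$, and Corollary \ref{est:v:C1} with $r=1$ (equivalently \eqref{est:DN:C1}), which yields $\Vert G(\eta)\psi\Vert_{\Bl^1}\les_\A \Vert\nabla\psi\Vert_{\Bl^1}+E(\eta,\psi)\les_\A \B$; (iii) the facts, both consequences of \eqref{CBC} and the structure of $\Bl^0=B^0_{\infty,1}$, that $\Bl^0$ is a Banach algebra continuously embedded in $L^\infty$, and that $\Cs^{1+\eps_*}\hookrightarrow\Bl^1$.

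I would prove \eqref{est:VB:C0} first, in the slightly stronger form $\Vert(V,B)\Vert_{\Bl^0}\les_\A 1$. By (i)--(ii) each factor appearing in \eqref{decompose:B} lies in $\Bl^0$ with norm $\les_\A 1$ (recall that $\Vert\nabla\psi\Vert_{\Bl^0}$ is one of the summands of $\A$), so the algebra property in (iii) gives $\Vert B\Vert_{\Bl^0}\les_\A 1$ at once. Since also $\nabla\eta\in\Cs^{1+\eps_*}\hookrightarrow\Bl^0$ with $\Vert\nabla\eta\Vert_{\Bl^0}\les_\A 1$, the same algebra property applied to $V=\nabla\psi-B\nabla\eta$ gives $\Vert V\Vert_{\Bl^0}\les_\A 1$. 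Then \eqref{est:VB:C0} follows from $\Bl^0\hookrightarrow L^\infty$.

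For \eqref{est:VB:C1} I would Bony-decompose every product occurring in $B$ and in $B\nabla\eta$. The point is that in each such product exactly one factor is allowed to carry a $\Bl^1$ norm that is merely $\les_\A\B$ (namely $\nabla\psi$ in $K(\nabla\eta)\cdot\nabla\psi$, $G(\eta)\psi$ in $L(\nabla\eta)G(\eta)\psi$, and $B$ in $B\,\nabla\eta$), while the other factor is in $\Cs^{1+\eps_*}$ with norm $\les_\A 1$, or in $\Bl^0$ with norm $\les_\A 1$. The only contribution that is genuinely linear in $\B$ is the low--high paraproduct $T_{(\text{low-regularity factor})}(\text{factor in }\Bl^1)$, which \eqref{pest:B} bounds by $\Vert(\text{low factor})\Vert_{L^\infty}\,\Vert(\text{high factor})\Vert_{\Bl^1}\les_\A\B$; the high--low paraproduct and the remainder, estimated via \eqref{pest:B} and \eqref{Bony:B}, only involve the $\Bl^0$ norm of one factor together with the $\Cs^{1+\eps_*}$ (or $\Bl^1$) norm of the other, hence are $\les_\A 1$, and collecting them into $\les_\A\B$ is free because $\B\geq 1$. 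The remainders close because the Hölder factor $\nabla\eta$ (resp.\ $K(\nabla\eta)$, $L(\nabla\eta)$) has index $1+\eps_*>0$, so the sum of regularity indices in each remainder is positive, and the resulting space $\Bl^{1+\eps_*}$ embeds into $\Bl^1$. Finally $V=\nabla\psi-B\nabla\eta$ is handled the same way, noting that the term $T_B\nabla\eta$ requires the bound $\Vert B\Vert_{L^\infty}\les_\A 1$ obtained in the previous step; this is the reason \eqref{est:VB:C0} is established before \eqref{est:VB:C1}.

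\textbf{Main obstacle.} There is no serious analytic difficulty: once the $\Bl^0$ and $\Bl^1$ estimates for $G(\eta)\psi$ are in hand (and they are, from Corollary \ref{est:v:C1} and Proposition \ref{prop:estDN:Z}), the proof is purely paradifferential bookkeeping. The one point that needs care is maintaining strict linearity in the highest norm $\B$ throughout: one must check, product by product in the Bony decompositions, that the $\Bl^1$-norm of the ``high-regularity'' factor never gets multiplied by another $\B$-size quantity, so that the final bound is $\cF(\A)\,\B$ rather than $\cF(\A)\,\B^2$.
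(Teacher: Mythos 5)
Your proposal follows exactly the route the paper has in mind: the lemma is stated right after the decomposition \eqref{decompose:B} with the comment that it follows ``from this expression and the Bony decomposition'', and your use of \eqref{F(u):C} for $K(\nabla\eta),L(\nabla\eta)$, of Corollary \ref{est:v:C1}/\eqref{formula:DN} for the $\Bl^0$ bound on $G(\eta)\psi$ (the right reading, since \eqref{est:DN:C1} is only stated for $r>0$), of \eqref{est:DN:C1} with $r=1$, and of \eqref{pest:B}--\eqref{Bony:B} for the paraproducts is precisely that bookkeeping, with the correct accounting of which factor is allowed to carry the $\B$-size norm.

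The one step that is wrong as stated is ingredient (iii): $\Bl^0=B^0_{\infty,1}$ is \emph{not} a Banach algebra. The paraproducts $T_uv$, $T_vu$ are fine, but the remainder fails at the endpoint index $0$ (this is exactly why the paper's \eqref{Bony:B} is stated only for target index $s>0$); one can build $u,v$ of unit $\Bl^0$-norm, supported in frequency near a single annulus $\{|\xi|\sim 2^j\}$, whose product has low-frequency part behaving like a random lacunary series with $\Bl^0$-norm of size $\sqrt{j/\log j}$, so no uniform product estimate holds. Fortunately this claim is both unnecessary and easily bypassed: for \eqref{est:VB:C0} you only need $\Vert(V,B)\Vert_{L^\infty}\les_\A 1$, and $L^\infty$ is multiplicative while $\Bl^0\hookrightarrow L^\infty$ handles $\nabla\psi$ and $G(\eta)\psi$; and in your $\Bl^1$ step every product pairs the rough factor ($\nabla\psi$, $G(\eta)\psi$ or $B$) with a factor lying in $\Cs^{1+\eps_*}$, so the high--low terms and remainders are controlled by \eqref{pest:B} and \eqref{Bony:B} with target index $1>0$, using $\Vert u\Vert_{\Cs^{-\delta}}\les\Vert u\Vert_{L^\infty}$ (or $\les\Vert u\Vert_{\Bl^0}$) for the rough factor, $0<\delta<\eps_*$. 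With that substitution (in particular, establishing $\Vert B\Vert_{L^\infty}\les_\A1$ first and then treating $T_{\nabla\eta}B$, $R(\nabla\eta,B)$ in $V$ as you indicate), your proof is complete and coincides with the paper's intended argument.
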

\begin{lemm}	\label{lem:paraquad}
	We have
	$$\mez\la\nabla\psi\ra^2-\mez\frac{(\nabla\eta\cdot\nabla\psi+G(\eta)\psi)^2}{1+\la\nabla\eta\ra^2}=
	T_V\cdot\nabla\psi-T_VT_B\cdot\nabla\eta-T_BG(\eta)\psi+f,$$
	with~$f\in H^{s}$ and
	$$\lA f\rA_{H^{s}}\les_\A \B \lB\Vert \eta\Vert_{H^{s}}+\Vert \psi\Vert_{H^s}\rB.$$
\end{lemm}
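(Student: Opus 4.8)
The plan is to view the left-hand side as $\Phi(\nabla\eta,\nabla\psi,G(\eta)\psi)$ for the fixed smooth function
\[
\Phi(p,q,g)=\mez|q|^2-\mez\frac{(p\cdot q+g)^2}{1+|p|^2},
\]
to paralinearize this composite (applying Theorem \ref{paralin} successively in the three arguments, as in \cite{ABZ1}, together with Bony's linearization of products), and finally to convert one of the resulting paraproducts into a composition of paraproducts. As a preliminary step I would record, for each ``ingredient'', bounds of the form: $L^\infty$-norm $\les_\A 1$, $\Bl^1$-norm $\les_\A\B$, and $H^{s-1}$-norm $\les_\A \|\psi\|_{H^s}+\|\eta\|_{H^{s+\mez}}$. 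For $\nabla\eta$ (which in fact lies in $\Cs^{\frac52+\eps_*-1}$) and $\nabla\psi$ this is immediate from \eqref{assu:reg}; for $G(\eta)\psi$ it follows from Proposition \ref{est:DN:Sob} and from item~1 of Proposition \ref{prop:estDN:Z} with $r=0$ and $r=1$. I would also record $\|(V,B)\|_{L^\infty}\les_\A 1$ and $\|(V,B)\|_{\Bl^1}\les_\A\B$ from Lemma \ref{est:VB}.

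The core of the argument is the symbol computation. Writing $N:=\nabla\eta\cdot\nabla\psi+G(\eta)\psi$, so that $B=(1+|\nabla\eta|^2)^{-1}N$ and $V=\nabla\psi-B\nabla\eta$, a direct differentiation of $\Phi$ at the point $(\nabla\eta,\nabla\psi,G(\eta)\psi)$ gives
\[
\partial_q\Phi=V,\qquad \partial_g\Phi=-B,\qquad \partial_p\Phi=B^2\nabla\eta-B\nabla\psi=-BV .
\]
Hence the paralinearization produces
\[
\mez|\nabla\psi|^2-\mez\frac{(\nabla\eta\cdot\nabla\psi+G(\eta)\psi)^2}{1+|\nabla\eta|^2}=T_V\cdot\nabla\psi-T_{BV}\cdot\nabla\eta-T_BG(\eta)\psi+r,
\]
where $r$ collects the paralinearization remainders. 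Each piece of $r$ is at least bilinear in the fluctuations of the three arguments, carrying smooth coefficients that are functions of $\nabla\eta$; estimating it with the tame Bony bounds \eqref{Bony1}, \eqref{Bony2} — always placing the $H^{s-1}$-norm on one factor and a $\Bl^1$- or $L^\infty$-norm (each $\les_\A\B$, resp.\ $\les_\A 1$) on the remaining ones — yields $\|r\|_{H^s}\les_\A\B\big(\|\eta\|_{H^s}+\|\psi\|_{H^s}\big)$.

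It then remains only to replace $T_{BV}\cdot\nabla\eta$ by $T_VT_B\cdot\nabla\eta$. Componentwise, $T_{V_iB}-T_{V_i}T_B$ is a composition defect of plain paraproducts, hence a regularizing operator; since $V_i,B\in\Bl^1\hookrightarrow W^{1,\infty}$, one has the asymmetric composition estimate (a special case of Theorem \ref{theo:sc}, or a direct Littlewood--Paley computation)
\[
\big\|(T_{V_iB}-T_{V_i}T_B)u\big\|_{H^{\mu+1}}\les\big(\|V_i\|_{\Bl^1}\|B\|_{L^\infty}+\|V_i\|_{L^\infty}\|B\|_{\Bl^1}\big)\|u\|_{H^\mu},
\]
which is linear in $\B$. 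Applied with $u=\partial_i\eta\in H^{s-1}$ it produces an error in $H^s$ bounded by $\les_\A\B\|\eta\|_{H^s}$, and absorbing it into $r$ gives the claimed identity with $f:=r-\sum_i(T_{V_iB}-T_{V_i}T_B)\partial_i\eta$.

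The main difficulty is the bookkeeping needed to keep the estimate \emph{tame}, i.e.\ linear in $\B$: in every bilinear estimate the Zygmund/$\Bl^1$-norm — the only norm reaching the $\B$-level — must occur \emph{exactly once}, all other factors being controlled in $L^\infty$ by $\les_\A 1$. This is why Step~2 requires the tame, rather than the classical, form of Bony's paralinearization theorem, and why Step~3 requires the asymmetric (one factor in $L^\infty$, the other in $\Bl^1$) form of the composition-defect bound; the naive symmetric versions would give a quadratic dependence $\B^2$. The verification that each remainder term in $r$ can indeed be split in this way — using in particular the extra Hölder regularity $\Cs^{\frac52+\eps_*}$ of $\eta$ and the gain of a half derivative from $\nabla\eta\in H^{s-\mez}$ — is the only genuinely laborious point.
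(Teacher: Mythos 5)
Your proposal is correct and follows essentially the same route as the paper: compute the differentials of the nonlinearity (obtaining $V$, $-BV$, $-B$), apply the tame paralinearization theorem using the $L^\infty$, $\Cs^1$/$\Bl^1$ and $H^{s-1}$ bounds on $\nabla\eta$, $\nabla\psi$, $G(\eta)\psi$ from Proposition \ref{est:DN:Sob} and Proposition \ref{prop:estDN:Z}, then trade $T_{BV}$ for $T_VT_B$ via the symbolic calculus of Theorem \ref{theo:sc} (ii) with the asymmetric bound on $(V,B)$ from Lemma \ref{est:VB}. The only (immaterial) differences are that you paralinearize the whole left-hand side as one composite instead of treating $\mez|\nabla\psi|^2$ and the quotient separately, and you take $\rho=1$ on $\partial_i\eta\in H^{s-1}$ where the paper takes $\rho=\mez$ on $\nabla\eta\in H^{s-\mez}$.
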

\begin{proof}
	Consider
$$
F(a, b, c)=\mez\frac{(ab+c)^2}{1+\la a\ra^2}, \quad (a, b, c)\in \R^d\times\R^d\times\R.
$$
We compute 
$$
\partial_aF=\frac{(ab+c)}{1+\la a\ra^2}\lp b-\frac{(ab+c)}{1+\la a\ra^2}a\rp,\quad \partial_bF=\frac{(ab+c)}{1+\la a\ra^2}a,\quad \partial_cF=\frac{(ab+c)}{1+\la a\ra^2}.
$$
Taking~$a=\nabla\eta$, $b=\nabla\psi$, and~$c=G(\eta)\psi$ gives
$$
\partial_aF=BV,\quad \partial_bF=B\nabla\eta,\quad \partial_cF=B.
$$
The estimate \eqref{est:DN:C1} with $r=0$ gives
\[
\Vert (a, b, c)\Vert_{L^\infty}\les_\A 1.
\]
Next, Proposition \ref{est:DN:Sob} implies
\[
\Vert (a,b,c)\Vert_{H^{s-1}}\les_\A \Vert \eta\Vert_{H^{s+\mez}}+\Vert \psi\Vert_{H^s}.
\]
On the other hand,  the estimate \eqref{est:DN:C1} with $r=1$ implies
\[
\Vert (a,b,c)\Vert_{\Cs^1}\les_A\B.
\]
Using the above estimates, we can apply Theorem~\ref{paralin} with $\rho=1$ to have
$$\mez\frac{(\nabla\eta\cdot\nabla\psi-G(\eta)\psi)^2}{1+\la\nabla\eta\ra^2}=T_{VB}\cdot\nabla\eta+T_{B\nabla\eta}\cdot\nabla\psi+T_BG(\eta)\psi+f_1,$$
with
$$\lA f_1\rA_{H^{s-1+1}}\les_\A \B \lB\Vert \eta\Vert_{H^{s+\mez}}+\Vert \psi\Vert_{H^s}\rB.$$
By the same theorem, there holds
\[ \mez\la\nabla\psi\ra^2=T_{\nabla\psi}\cdot\nabla\psi+f_2,\quad \lA f_2\rA_{H^{s-1+1}}\les_\A \B\lA\psi\rA_{H^s}.
\]
At last, we deduce  from Theorem \ref{theo:sc} (ii) (with $m=m'=0,~\rho=\mez$) and the estimates for~$(B,V)$ in Lemma ~\ref{est:VB} that
$$\lA(T_{BV}-T_VT_B)\cdot\nabla\eta\rA_{H^{s-\mez+\mez}}\les_\A \B\Vert \nabla\eta\Vert_{H^{s-\mez}}
$$
A combination of the above paralinearizations concludes the proof.
\end{proof}
\begin{lemm} \label{lem:derB}
We have
	$$\lA T_{\partial_tB}\eta\rA_{H^{s}}\les_\A \B\lA\eta\rA_{H^{s+\mez}}.$$
\end{lemm}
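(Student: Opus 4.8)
The plan is to establish the tame bound
\[
\lA\partial_tB\rA_{\Cs^{-\mez}}\les_\A\B,
\]
after which the lemma follows at once from the basic mapping property of the paraproduct: $T_a$ sends $H^{s+\mez}$ into $H^s$ with norm $\les\lA a\rA_{\Cs^{-\mez}}$ (cf. Appendix~\ref{Appendix}), so that $\lA T_{\partial_tB}\eta\rA_{H^s}\les\lA\partial_tB\rA_{\Cs^{-\mez}}\lA\eta\rA_{H^{s+\mez}}$. We aim for $\Cs^{-\mez}$, rather than $L^\infty$, because $\partial_t\psi$ — hence $\partial_tB$ — involves the mean curvature $H(\eta)$, which at this regularity is only bounded, so a full derivative has to be spent to land in an honest function space; this is also what forces the $H^{s+\mez}$-norm of $\eta$ on the right-hand side.

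To compute $\partial_tB$ I would differentiate $B=(\nabla\eta\cdot\nabla\psi+G(\eta)\psi)/(1+\la\nabla\eta\ra^2)$ in time, substituting $\partial_t\eta=G(\eta)\psi$ and the $\partial_t\psi$-equation from \eqref{ww}, and using Theorem~\ref{shape} in the form
\[
\partial_t\big(G(\eta)\psi\big)=G(\eta)\big(\partial_t\psi-B\,\partial_t\eta\big)-\div\big(V\,\partial_t\eta\big)
\]
(legitimate at our regularity by approximating the solution by smooth ones). This expresses $\partial_tB$ as a finite sum of products of smooth functions of $\nabla\eta$ — all bounded in $L^\infty$ and in $\Cs^{1+\eps_*}$ by $\cF(\A)$ — with the blocks $\nabla G(\eta)\psi$, $\nabla\big(V\,G(\eta)\psi\big)$, $\nabla\psi$, $\nabla\eta$, $V$, $G(\eta)\psi$, $\nabla\partial_t\psi$, and the one genuinely delicate term $G(\eta)\big(\partial_t\psi-B\,G(\eta)\psi\big)$. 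By \eqref{est:DN:C1} with $r=1$, \eqref{est:VB:C1} and the product rule in $\Bl^1$, the first two blocks lie in $\Bl^0$ with norm $\les_\A\B$, while $\nabla\psi,\nabla\eta,V,G(\eta)\psi$ are bounded in $L^\infty$ by $\cF(\A)$; and $\nabla\partial_t\psi\in\Cs^{-\mez}$ with norm $\les_\A\B$ because, differentiating the $\partial_t\psi$-equation, its only borderline contribution is $\nabla H(\eta)$, for which the composition/product estimates behind Lemma~\ref{lem:paracurv} give $\lA H(\eta)\rA_{\Cs^{\mez+\eps_*}}\le\cF(\lA\nabla\eta\rA_{L^\infty})\lA\eta\rA_{\Cs^{\frac52+\eps_*}}$, whence $\lA\nabla H(\eta)\rA_{\Cs^{-\mez}}\les_\A\B$; the remaining contributions ($g\,\nabla\eta$, $\nabla\la\nabla\psi\ra^2$, $\nabla\big(B^2(1+\la\nabla\eta\ra^2)\big)$) lie in $\Bl^0$ with norm $\les_\A\B$. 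Multiplying these blocks by the smooth $\Cs^{1+\eps_*}$ functions of $\nabla\eta$, and treating $\nabla\eta\cdot\nabla\partial_t\psi$ by the Bony decomposition (admissible since $(1+\eps_*)-\mez>0$), keeps everything in $\Cs^{-\mez}$ with norm $\les_\A\B$.

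The crux is the term $G(\eta)g$ with $g:=\partial_t\psi-B\,G(\eta)\psi$, which cannot be controlled in $L^\infty$; here I would invoke the Dirichlet--Neumann estimate \eqref{est:DN:C-mez} with $\eps_0=\eps_*$, checking its hypotheses as follows. First, $g\in L^\infty$ with $\lA g\rA_{L^\infty}\les_\A1$: the decisive point is that $\lA H(\eta)\rA_{L^\infty}\le\cF(\lA\nabla\eta\rA_{L^\infty})\lA\eta\rA_{\Cs^{2+\eps_*}}\les_\A1$, that $\lA\nabla\psi\rA_{\Bl^0}\les_\A1$, and that $\lA B\,G(\eta)\psi\rA_{L^\infty}\les_\A1$ by \eqref{est:VB:C0} and \eqref{est:DN:C1}, so that \emph{no top-order norm enters} $\lA g\rA_{L^\infty}$. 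Second, $\nabla g\in\Cs^{-\mez}$ with $\lA\nabla g\rA_{\Cs^{-\mez}}\les_\A\B$, as in the previous paragraph. Third, $g\in H^{\mez}$, because $\partial_t\psi\in H^{s-\tdm}\hookrightarrow H^{\mez}$ (recall $s>\tdm+\frac d2\ge2$) and $B\,G(\eta)\psi\in H^\mez$. Then \eqref{est:DN:C-mez} yields
\[
\lA G(\eta)g\rA_{\Cs^{-\mez}}\les_\A\lA\nabla g\rA_{\Cs^{-\mez}}+\big(1+\lA\eta\rA_{\Cs^{\frac52+\eps_*}}\big)\lA g\rA_{L^\infty}\les_\A\B,
\]
the product on the right being \emph{linear} in $\B$ precisely because $\lA g\rA_{L^\infty}\les_\A1$. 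Collecting the pieces gives $\lA\partial_tB\rA_{\Cs^{-\mez}}\les_\A\B$ and hence the lemma. I expect the main obstacle to be exactly this bookkeeping of tameness — keeping $\lA g\rA_{L^\infty}\les_\A1$ cleanly separated from $\lA\nabla g\rA_{\Cs^{-\mez}}\les_\A\B$ — together with the Zygmund product/composition estimates for $\nabla H(\eta)$; should one prefer to avoid the $H^\mez$-hypothesis of \eqref{est:DN:C-mez}, the variant \eqref{est:DN:C-mez:l} could be used instead, at the cost of also carrying along $E(\eta,\psi)$ and a low Sobolev bound on $g$.
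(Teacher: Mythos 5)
Your argument is correct and is essentially the paper's own proof: you reduce via \eqref{pest1} to the bound $\lA\partial_tB\rA_{\Cs^{-\mez}}\les_\A\B$, use the shape-derivative formula of Theorem \ref{shape} to isolate the two pieces $G(\eta)(\partial_t\psi-B\,\partial_t\eta)$ and $\div(V\partial_t\eta)$, and apply \eqref{est:DN:C-mez} to the former while keeping the $L^\infty$-norm of its argument bounded by $\cF(\A)$ alone, so that the final estimate remains linear in $\B$ — exactly the paper's route. The only inaccuracy is your closing aside: the variant \eqref{est:DN:C-mez:l} is in fact \emph{not} available here, as the paper itself remarks, because $\partial_t\psi-B\,\partial_t\eta$ contains $H(\eta)$ and hence only lies in $H^{\frac d2+}$ at the regularity \eqref{assu:reg}, not in the required $H^{\mez+\frac d2}$; this does not affect your main argument, which correctly relies on \eqref{est:DN:C-mez}.
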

\begin{proof}
Applying the paraproduct rule \eqref{pest1} gives
\[
\lA T_{\partial_tB}\eta\rA_{H^{s}}\les \lA \partial_tB\rA_{\Cs^{-\mez}}\lA\eta\rA_{H^{s+\mez}}.
\]
The proof thus boils down to showing $\Vert\partial_t B\Vert_{\Cs^{-\mez}}\les_\A\B$. By Theorem \ref{shape} for the shape derivative of the Dirichlet-Neumann, we have 
$$\partial_t\lb G(\eta)\psi\rb=G(\eta)(\partial_t\psi-B\partial_t\eta)-\dive(V\partial_t\eta).
$$
From the formulas of $V, B$ and the definition of $G(\eta)\psi$,  the water waves system \eqref{ww} can be rewritten as
\bq\label{ww:1}
\left\{
\begin{aligned}
&\partial_t\eta=B-V\cdot\nabla\eta,\\
&\partial_t\psi=-V\cdot\nabla\psi-g\eta+\mez V^2+\mez B^2+H(\eta).
\end{aligned}
\right.
\eq
 We first estimate using Lemma \ref{est:VB} and \eqref{tame:H}
\[
\Vert \dive(V\partial_t\eta)\Vert_{\Cs^{-\mez}}\les \Vert VB-V(V\cdot \nabla\eta)\Vert_{\Cs^\mez}\les \Vert VB-V(V\cdot \nabla\eta)\Vert_{\Cs^1}\les_\A \B.
\]	
Similarly, we get 
\[
\Vert \partial_t\psi-B\partial_t\eta\Vert_{L^\infty}\les_\A,\quad \Vert \partial_t\psi-B\partial_t\eta\Vert_{\Cs^1}\les_\A\B.
\]
Consequently, the estimate \eqref{est:DN:C-mez} yields
\[
\Vert G(\eta)(\partial_t\psi-B\partial_t\eta)\Vert_{\Cs^{-\mez}}\les_\A \B,
\]
from which we conclude the proof. Remark that the estimate \eqref{est:DN:C-mez:l} is not applicable to $G(\eta)(\partial_t\psi-B\partial_t\eta)$ since under the assumption \eqref{assu:reg} we only have $\partial_t\psi-B\partial_t\eta\in H^{\frac{d}{2}+}$ (due to the bad term $H(\eta)$) and not $H^{\mez+\frac d2+}$.
\end{proof}
We now have all the ingredients needed to paralinearize \eqref{ww}.
\begin{prop}\label{prop:PL}
	There exists a nondecreasing function~$\cF$ such that with ~$U:=\psi-T_B\eta$ there holds
	\begin{equation}	\label{eq:PL}
	\lB\begin{aligned}
		   \partial_t\eta+T_V\cdot\nabla\eta-T_\lambda U=&f_1,\\
		   \partial_tU+T_V\cdot\nabla U+T_{\l}\eta=&f_2,
	    \end{aligned}
	\right.
	\end{equation}
	with~$(f_1,f_2)$ satisfying
\[
\lA(f_1,f_2)\rA_{H^{s+\mez}\times H^s}\les_\A \B\lB\lA\psi\rA_{H^s}+\lA\eta\rA_{H^{s+\mez}}\rB.
\]
\end{prop}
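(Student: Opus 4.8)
The plan is to substitute the paralinearizations provided by Lemmas~\ref{lem:paracurv}, \ref{lem:paraquad}, \ref{lem:derB} and by Proposition~\ref{prop:paralin:DN} into the two equations of~\eqref{ww} (keeping the first equation in the form $\partial_t\eta=G(\eta)\psi$), to isolate the operators $T_\lambda U$, $T_V\cdot\nabla\eta$ in the $\eta$-equation and $T_{\l}\eta$, $T_V\cdot\nabla U$ in the $U$-equation, and then to verify that everything left over obeys the tame bound $\lesssim_\A\B\big(\lA\psi\rA_{H^s}+\lA\eta\rA_{H^{s+\mez}}\big)$. For the first equation there is essentially nothing to do: since $\partial_t\eta=G(\eta)\psi$, Proposition~\ref{prop:paralin:DN} directly yields the first line of~\eqref{eq:PL} with $f_1:=F$, and the required $H^{s+\mez}$-estimate on $f_1$ is exactly the one supplied there.

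For the $U$-equation, I would differentiate $U=\psi-T_B\eta$ in time,
\[
\partial_tU=\partial_t\psi-T_{\partial_tB}\eta-T_B\partial_t\eta,
\]
substitute the second equation of~\eqref{ww} for $\partial_t\psi$ (replacing $H(\eta)$ by $T_{\l}\eta+f$ via Lemma~\ref{lem:paracurv} and the quadratic nonlinearity by $T_V\cdot\nabla\psi-T_VT_B\cdot\nabla\eta-T_BG(\eta)\psi+f$ via Lemma~\ref{lem:paraquad}), and use $\partial_t\eta=G(\eta)\psi$ in the term $T_B\partial_t\eta$. The decisive point is that $T_BG(\eta)\psi$ produced by Lemma~\ref{lem:paraquad} --- which belongs only to $H^{s-1}$, hence could never be absorbed into a remainder --- is cancelled exactly by $-T_B\partial_t\eta=-T_BG(\eta)\psi$; this is the very reason for working with the good unknown $U=\psi-T_B\eta$ rather than $\psi$. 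After this cancellation one finds $\partial_tU+T_{\l}\eta$ equal to $-g\eta-T_{\partial_tB}\eta-T_V\cdot\nabla\psi+T_VT_B\cdot\nabla\eta$ plus the two remainders coming from Lemmas~\ref{lem:paracurv} and~\ref{lem:paraquad}.

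It then remains to reconstitute the transport term in the good unknown. The exact Leibniz rule for paraproducts gives $\nabla U=\nabla\psi-T_{\nabla B}\eta-T_B\nabla\eta$, whence $T_V\cdot\nabla\psi-T_VT_B\cdot\nabla\eta=T_V\cdot\nabla U+T_V\cdot T_{\nabla B}\eta$, so that the second line of~\eqref{eq:PL} holds with $f_2$ equal to the sum of $-g\eta$, $-T_{\partial_tB}\eta$, $-T_V\cdot T_{\nabla B}\eta$ and the two remainders. Each term of $f_2$ is then tame: the two remainders by Lemmas~\ref{lem:paracurv} and~\ref{lem:paraquad} (using $\lA\eta\rA_{\Cs^{\frac52}}\le\B$ and $\lA\nabla\eta\rA_{H^{s-\mez}}\lesssim\lA\eta\rA_{H^{s+\mez}}$); the bound $\lA T_{\partial_tB}\eta\rA_{H^s}\lesssim_\A\B\lA\eta\rA_{H^{s+\mez}}$ is Lemma~\ref{lem:derB}; $\lA g\eta\rA_{H^s}\le g\lA\eta\rA_{H^{s+\mez}}$ is trivial; and, since $\lA V\rA_{L^\infty}\lesssim_\A1$ and $\lA\nabla B\rA_{L^\infty}\lesssim\lA\nabla B\rA_{\Bl^0}\lesssim\lA B\rA_{\Bl^1}\lesssim_\A\B$ by Lemma~\ref{est:VB}, the paraproduct estimate gives $\lA T_V\cdot T_{\nabla B}\eta\rA_{H^{s+\mez}}\lesssim_\A\B\lA\eta\rA_{H^{s+\mez}}$, which is more than enough.

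The analytically hard work is entirely contained in the results invoked above (the low-regularity paralinearization of the Dirichlet-Neumann operator in Proposition~\ref{prop:paralin:DN}, the tame curvature and quadratic paralinearizations, and the $\Cs^{-\mez}$-bound on $\partial_tB$). What this proof must get right is structural bookkeeping: that the dangerous term $T_BG(\eta)\psi$ disappears thanks to the choice of good unknown, and that the single new error $T_V\cdot T_{\nabla B}\eta$, created when rewriting $T_V\cdot\nabla\psi$ through $\nabla U$, is controlled --- which is exactly where the $\Bl^1$-control of $\nabla\psi$ built into $\B$ (equivalently of $(V,B)$, via Lemma~\ref{est:VB}) enters.
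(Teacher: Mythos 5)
Your proposal is correct and follows essentially the same route as the paper: the first equation is Proposition~\ref{prop:paralin:DN} applied to $\partial_t\eta=G(\eta)\psi$, and for the second one the paper likewise differentiates $U=\psi-T_B\eta$, invokes Lemmas~\ref{lem:paracurv}, \ref{lem:paraquad} and \ref{lem:derB}, relies on the cancellation of $T_BG(\eta)\psi$ against $T_B\partial_t\eta$, and controls the leftover term $T_VT_{\nabla B}\eta$ via the $\Bl^1$-bound on $(V,B)$ from Lemma~\ref{est:VB}. The bookkeeping and the estimates you give match the paper's argument.
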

\begin{proof}
	The first equation is an immediate consequence of the equation $\partial_t\eta=G(\eta)\psi$ and Proposition~\ref{prop:paralin:DN}.
	For the second one, we use the second equation of \eqref{ww} and Lemmas~\ref{lem:paracurv}, \ref{lem:paraquad} to get
	$$\partial_t\psi-T_BG(\eta)\psi+T_V(\nabla\psi-T_B\cdot\nabla\eta)+T_{\l}\eta=R$$
	with
	$$\lA R\rA_{H^s}\les_\A \B\lB\lA\psi\rA_{H^s}+\lA\eta\rA_{H^{s+\mez}}\rB.$$
	Next, differentiating $U$ with respect to $t$ yields
	$$\partial_tU=\partial_t\psi-T_B\partial_t\eta-T_{\partial_tB}\eta=\partial_t\psi-T_BG(\eta)\psi-T_{\partial_tB}\eta,$$
where the $H^s$-norm of $T_{\partial_t B}\eta$ is controlled by means of Lemma~\ref{lem:derB}.\\
	On the other hand, 
\[
      \nabla\psi-T_B\nabla \eta=\nabla U+T_{\nabla B}\eta
\]
and by \eqref{est:VB:C1}
	$$\lA T_VT_{\nabla B}\eta\rA_{H^s}\les_\A \lA T_{\nabla B}\eta\rA_{H^s}\les_\A \lA \nabla B\rA_{L^\infty}\lA \eta\rA_{H^{s}}\les_\A\B \Vert \eta\Vert_{H^{s}}.
$$
The proof is complete.
\end{proof}

\subsection{Symmetrization of the system}\label{section:sym}
As in \cite{ABZ1} we shall deal with a class of symbols having a special structure that we recall here .
\begin{defi}\label{structure:symbol} Given $m\in \xR$, $\Sigma^m$ denotes the class of symbols $a$ of the form  $a=a^{(m)}+a^{(m-1)}$ with 
\[
a^{(m)}( x, \xi)=F(\nabla\eta(x), \xi),\quad a^{(m-1)}( x, \xi)=\sum_{|\alpha|=2}F_{\alpha}(\nabla\eta(x), \xi)\partial_x^{\alpha}\eta( x)
\]
such that 
\begin{enumerate}
\item  $T_a$ maps real-valued functions to real-valued functions;
\item $F$ is a $C^{\infty}$ real-valued function of $(\zeta, \xi)\in \xR^d\times\xR^d\setminus\{0\}$, homogeneous of order $m$ in $\xi$, and there exists a function $K=K(\zeta)>0$ such that 
\[
F(\zeta, \xi)\ge K(\zeta)|\xi|^m, \quad\forall (\zeta, \xi)\in  \xR^d\times\xR^d\setminus\{0\};
\]
\item the $F_{\alpha}$s are complex-valued functions of  $(\zeta, \xi)\in\xR^d\times\xR^d\setminus\{0\}$, homogeneous of order $m-1$ in $\xi$.
\end{enumerate}
\end{defi}
\hk In what follows, we often need an estimate for $u$ from $T_au$. For this purpose, we prove the next proposition.
\begin{prop}\label{inverse}
Let $m,~\mu,~M \in \xR$. Then, for all ~$a\in\Sigma^m$, there exists a nondecreasing function $\cF$ such that
\begin{gather}
\Vert u\Vert_{H^{\mu+m}}\le \cF(\Vert \eta\Vert_{\Cs^{2}})\left(\Vert T_{a}u\Vert_{H^{\mu}}+\Vert u\Vert_{H^{-M}}\right)\label{est:inverse},\\
\Vert u\Vert_{C_*^{\mu+m}}\le \cF(\Vert \eta\Vert_{\Cs^{2}})\left(\Vert T_{a}u\Vert_{C_*^{\mu}}+\Vert u\Vert_{\Cs^{-M}}\right)\label{est:inverse:H},
\end{gather}
here $\cF$ depends only on $m,~\mu,~M$ and the functions $F,~F_\alpha$ given in Definition \ref{structure:symbol} of the class $\Sigma^m$.
\end{prop}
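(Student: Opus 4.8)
The plan is to construct a parametrix for $T_a$, the same way one inverts an elliptic paradifferential operator of order $m$, and then to absorb the error terms by the low-order remainder $\|u\|_{H^{-M}}$ (resp. $\|u\|_{\Cs^{-M}}$). Concretely, set $b(x,\xi)=\chi(\xi)/a^{(m)}(x,\xi)$, where $\chi$ is a smooth cutoff vanishing near $\xi=0$ and equal to $1$ for $|\xi|\ge 1$; by property (2) in Definition \ref{structure:symbol}, $a^{(m)}(x,\xi)\ge K(\nabla\eta(x))|\xi|^m$, so $b$ is a well-defined symbol of order $-m$, with seminorms $\cM^{-m}_0(b)$ controlled by $\cF(\|\eta\|_{\Cs^1})$ (only $\nabla\eta$ enters $a^{(m)}$, hence only $\|\eta\|_{W^{1,\infty}}\le\|\eta\|_{\Cs^1}$). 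Then I would write
\[
u=T_b T_a u+(I-T_bT_a)u=T_b T_a u+\big(I-T_bT_{a^{(m)}}\big)u-T_bT_{a^{(m-1)}}u.
\]
The first term is estimated directly by the action of paradifferential operators (Theorem \ref{theo:sc} (i)): $\|T_b T_a u\|_{H^{\mu+m}}\lesssim \cM^{-m}_0(b)\,\|T_a u\|_{H^\mu}\le \cF(\|\eta\|_{\Cs^1})\|T_a u\|_{H^\mu}$, and similarly in Zygmund spaces.

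The heart of the matter is to show that $I-T_bT_{a^{(m)}}$ and $T_b T_{a^{(m-1)}}$ gain one derivative, i.e. map $H^\mu$ (resp. $\Cs^\mu$) into $H^{\mu+m+1}$, so that their contributions to the $H^{\mu+m}$ norm are bounded by a weaker norm of $u$ and can be fed into an interpolation/absorption argument. For $T_bT_{a^{(m-1)}}$ this is immediate from symbolic calculus, since $a^{(m-1)}$ has order $m-1$: its seminorms involve $\partial_x^\alpha\eta$ with $|\alpha|=2$, hence $\|\eta\|_{\Cs^2}$, and $\|T_bT_{a^{(m-1)}}u\|_{H^{\mu+1}}\le\cF(\|\eta\|_{\Cs^2})\|u\|_{H^{\mu+m}}$ — wait, more carefully, composition gives order $-1$, so we land in $H^{\mu+m+1}$ relative to the input $H^{\mu+m}$; to make the bound useful one uses that the input is really only needed in a lower norm after the absorption step below. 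For $I-T_bT_{a^{(m)}}$, one invokes the composition rule $T_bT_{a^{(m)}}=T_{ba^{(m)}}+\text{(order $-1$ remainder)}$ from Theorem \ref{theo:sc} (ii), with remainder seminorms controlled by $\cM^{-m}_\rho(b)\cM^{m}_\rho(a^{(m)})$ for $\rho=1$, hence by $\cF(\|\eta\|_{\Cs^2})$ (the $\rho$-regularity of $b$ and $a^{(m)}$ costs one derivative of $\nabla\eta$, i.e.\ $\|\eta\|_{\Cs^2}$). Since $ba^{(m)}=\chi(\xi)$, we get $T_{ba^{(m)}}u=\chi(D)u=u+\text{smoothing}$, so $I-T_bT_{a^{(m)}}$ is of order $-1$ with the advertised constant.

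Putting these together yields, for any $N$,
\[
\|u\|_{H^{\mu+m}}\le\cF(\|\eta\|_{\Cs^2})\big(\|T_a u\|_{H^\mu}+\|u\|_{H^{\mu+m-1}}\big),
\]
and iterating $\lceil \mu+m+M\rceil$ times (each iteration trading one derivative) gives $\|u\|_{H^{\mu+m-1}}$ in terms of $\|T_a u\|$'s and finally $\|u\|_{H^{-M}}$; at each stage the symbol $a$ is the same, so the constants stay of the form $\cF(\|\eta\|_{\Cs^2})$. Alternatively, and more cleanly, one keeps a single step and then \emph{interpolates}: $\|u\|_{H^{\mu+m-1}}\le \epsilon\|u\|_{H^{\mu+m}}+C_\epsilon\|u\|_{H^{-M}}$, absorb the $\epsilon$-term into the left side. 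This gives \eqref{est:inverse} directly. The Zygmund estimate \eqref{est:inverse:H} follows verbatim, replacing $H^s$ by $\Cs^s$ and using the Zygmund-space versions of Theorem \ref{theo:sc} (the paper has the parallel statements, e.g.\ \eqref{pest:B}, \eqref{Bony:B}).

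\textbf{Main obstacle.} The only real subtlety is bookkeeping of which Hölder norm of $\eta$ controls the symbol seminorms of $b$ and of the remainders: we must check that inverting the \emph{principal} symbol costs only $\|\eta\|_{\Cs^1}$ while the first-order symbolic-calculus error and the subprincipal term $a^{(m-1)}$ cost exactly $\|\eta\|_{\Cs^2}$ and no more — in particular that no $\|\eta\|_{\Cs^{2+\eps}}$ or Sobolev norm sneaks in — so that the final constant is genuinely $\cF(\|\eta\|_{\Cs^2})$ as claimed. This is where one has to be careful that the remainder in $T_bT_{a^{(m)}}-T_{ba^{(m)}}$ is taken at regularity index $\rho=1$ (not $\rho=1+\eps$), which is exactly the borderline case allowed by the paradifferential calculus in the Appendix; the gain of one full derivative there is what makes the absorption argument close.
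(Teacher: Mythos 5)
Your parametrix strategy is the right general idea, but the derivative bookkeeping that you yourself flag as ``the only real subtlety'' is precisely where the argument breaks at the stated regularity. Both of your key claims require more than $\eta\in\Cs^2$. First, to make $T_bT_{a^{(m)}}-T_{ba^{(m)}}$ an operator of order $-1$ you invoke Theorem \ref{theo:sc} (ii) with $\rho=1$, whose constant is $M^{-m}_{1}(b)\,M^{m}_{0}(a^{(m)})+M^{-m}_{0}(b)\,M^{m}_{1}(a^{(m)})$; by Definition \ref{defi:norms} the $\rho=1$ seminorms are measured in $W^{1,\infty}_x$, so they involve $\lA\nabla_x F(\nabla\eta,\xi)\rA_{L^\infty}\sim\lA\nabla^2\eta\rA_{L^\infty}$, and since $W^{2,\infty}\subsetneq\Cs^{2}$ this is \emph{not} bounded by $\cF(\lA\eta\rA_{\Cs^2})$ — getting a constant in the Zygmund norm $\Cs^2$ rather than $W^{2,\infty}$ is exactly the point of the proposition. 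Second, treating $T_{a^{(m-1)}}$ as an operator of order $m-1$ via Theorem \ref{theo:sc} (i) needs $M^{m-1}_{0}(a^{(m-1)})$, i.e.\ $\lA F_\alpha(\nabla\eta,\xi)\partial_x^\alpha\eta\rA_{L^\infty}$ with $|\alpha|=2$, which again requires $\partial^2\eta\in L^\infty$ and is unavailable for $\eta\in\Cs^2$. So neither the gain of a full derivative in the error nor the one-step absorption (or the iteration ``trading one derivative'' per step) can be justified with the advertised constant.

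The paper closes this gap by giving up the full-derivative gain: it composes only at regularity $\rho=\eps$ (so $T_bT_{a^{(m)}}=I+r$ with $r$ of order $-\eps$ and constant $\cF(\lA\eta\rA_{\Cs^{1+\eps}})$), and it treats $a^{(m-1)}$ as a symbol of \emph{negative} regularity, $a^{(m-1)}\in\dot\Gamma^{m-1}_{-1+\eps}$, estimating $\lA F_\alpha(\nabla\eta,\xi)\partial_x^\alpha\eta\rA_{\Cs^{-1+\eps}}\le\cF(\lA\eta\rA_{\Cs^2})$ via the product rule \eqref{tame:H<0}, so that Proposition \ref{regu<0} makes $T_{a^{(m-1)}}$ of order $m-\eps$ (not $m-1$). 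The resulting error $R=-r-T_bT_{a^{(m-1)}}$ is then smoothing only of order $-\eps$, and instead of a single absorption one multiplies $(I-R)u=T_bT_au$ by the finite Neumann series $1+R+\cdots+R^N$ and chooses $N$ so large that $\lA R^Nu\rA_{H^{\mu+m}}\le\cF(\lA\eta\rA_{\Cs^2})\lA u\rA_{H^{\mu+m-N\eps}}\le\cF(\lA\eta\rA_{\Cs^2})\lA u\rA_{H^{-M}}$. If you adopt these two modifications (composition at $\rho=\eps$, negative-regularity class for the subprincipal symbol, finite iteration in place of one-step absorption), your proof becomes essentially the paper's; as written, however, it would only prove the estimate with $\cF(\lA\eta\rA_{W^{2,\infty}})$, which is strictly weaker than the claim. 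A minor additional caveat: your interpolation-absorption step also presupposes a priori that $\lA u\rA_{H^{\mu+m}}<\infty$, which the Neumann-series route does not need in the same way.
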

\begin{rema}
The same result was proved in Proposition $4.6$ of~\cite{ABZ1} where the constant in the right hand side reads $\cF(\Vert \eta(t)\Vert_{H^{s-1}})$, $s>2+\frac{d}{2}$. 
\end{rema}
\begin{proof}
We give the proof for \eqref{est:inverse}, the proof for \eqref{est:inverse:H} follows similarly. We write $a=a^{(m)}+a^{(m-1)}$. Set $b=\frac{1}{a^{(m)}}$. Applying Theorem \ref{theo:sc} (ii) with $\rho=\eps$ gives $T_bT_{a^{(m)}}=I+r$ where~$r$ is of order~$-\eps$ and 
\bq\label{inverse:1}
\Vert ru\Vert_{H^{\mu+\eps}}\le \cF\lp\Vert \nabla\eta\Vert_{C^{\eps}}\rp\Vert u\Vert_{H^{\mu}}\le \cF\lp\Vert \eta\Vert_{C^{1+\eps}}\rp\Vert u\Vert_{H^{\mu}}.
\eq
Then, setting $R=-r-T_bT_{a^{(m-1)}}$ we have
\bq\label{eq:R:inverse}
(I-R)u=T_bT_au.
\eq
Let us consider the symbol $a^{(m-1)}$ having the structure given by Definition \ref{structure:symbol}. Applying \eqref{tame:H<0} and \eqref{F(u):C} yields for $|\alpha|=2$ and uniformly for $|\xi|=1$, 
\[
\Vert F_{\alpha}(\nabla\eta, \xi)\partial_x^{\alpha}\eta\Vert_{C^{-1+\eps}_*}
\le \Vert F_{\alpha}(\nabla\eta , \xi)\Vert_{\Cs^1}\Vert\partial_x^{\alpha}\eta\Vert_{\Cs^{-1+\eps}}\le \cF(\Vert \eta\Vert_{\Cs^{2}}).
\]
Similar estimates also hold when taking $\xi$-derivatives of $F_{\alpha}(\nabla\eta, \xi)\partial_x^{\alpha}\eta$. Consequently,  $a^{(m-1)}\in \dot{\Gamma}^{m-1}_{-1+\eps}$ and thus by Proposition \ref{regu<0},
\[
\Vert T_{a^{(m-1)}}u\Vert_{H^{\mu-m+\eps}}\le \cF(\Vert \eta\Vert_{\Cs^{2}})\Vert u\Vert_{H^{\mu}}.
\]
Because $b\in \Gamma^{-m}_0$ with semi-norm bounded by $\cF(\Vert \eta\Vert_{\Cs^{1+\eps}})$ we get
\bq
\label{inverse:2}
\Vert T_bT_{a^{(m-1)}}u\Vert_{H^{\mu+\eps}}\le \cF(\Vert \eta\Vert_{\Cs^{2}})\Vert u\Vert_{H^{\mu}}.
\eq
Combining \eqref{inverse:1} with \eqref{inverse:2} yields 
\[
\Vert Ru\Vert_{H^{\mu+\eps}}\le \cF(\Vert \eta\Vert_{\Cs^{2}})\Vert u\Vert_{H^{\mu}}.
\]
In other words, $R$ is a smoothing operator of order $-\eps$. Now, multiplying both sides of \eqref{eq:R:inverse} by $1+R+...+R^N$ leads to 
\[
u-R^Nu=(1+R+...+R^N)T_bT_au.
\]
On the one hand, using the fact that $R$ is of order $0$, we get
\begin{align*}
\Vert (1+R+...+R^N)T_bT_au\Vert_{H^{\mu+m}}&\le \cF(\Vert \eta\Vert_{\Cs^{2}})\Vert T_bT_au\Vert_{H^{\mu+m}}\\
&\le \cF(\Vert \eta\Vert_{\Cs^{2}})\Vert T_au\Vert_{H^{\mu}}.
\end{align*}
On the other hand, that $R$ is of order $-\eps$ implies
\[
\Vert R^Nu\Vert_{H^{\mu+m}}\le \cF(\Vert \eta\Vert_{\Cs^{2}})\Vert u\Vert_{H^{\mu+m-N\eps}}.
\]
Therefore, by choosing $N$ sufficiently large we conclude the proof.
\end{proof}
For the sake of conciseness, we give the following definition.
\begin{defi}\label{equi:operators}
Let $m\in \xR$ and consider two families of operators of order~$m$,
\[
\{ A(t): t\in [0, T]\},\quad \{ B(t): t\in [0, T]\}.
\]
We write $A\sim B$ (in $\Sigma^m$) if $A-B$ is of order $m-\tdm$ and the following condition is fulfilled: for all $\mu\in \xR$, there exists a nondecreasing function $\cF$ such that for a.e. $t\in [0, T]$, 
\[
\lA A(t)-B(t)\rA_{H^{\mu}\to H^{\mu-m+\tdm}}\le \cF(\lA \eta(t)\rA_{\Cs^2}\big)\big(1+\lA \eta(t)\rA_{C^{\frac{5}{2}}}\big).
\]
\end{defi}
\begin{prop}\label{rema:class}
For any $a\in \Sigma^m$ and  $b\in \Sigma^{m'}$, it holds that
\[
T_aT_b\sim T_{c}
\]
(in $\Sigma^{m+m'}$) with 
\[
c=a^{(m)}b^{(m')}+a^{(m-1)}b^{(m')}+a^{(m-1)}b^{(m')}+\frac{1}{i}\partial_\xi a^{(m)}\partial_x b^{(m')}.
\]
\end{prop}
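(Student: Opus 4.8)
The plan is to reduce the whole statement to the symbolic calculus of Theorem~\ref{theo:sc}~(ii), applied separately to the four pieces obtained by expanding $a=a^{(m)}+a^{(m-1)}$ and $b=b^{(m')}+b^{(m'-1)}$:
\[
T_aT_b=T_{a^{(m)}}T_{b^{(m')}}+T_{a^{(m-1)}}T_{b^{(m')}}+T_{a^{(m)}}T_{b^{(m'-1)}}+T_{a^{(m-1)}}T_{b^{(m'-1)}}.
\]
In each product I would truncate the asymptotic expansion at the order dictated by the regularity of the symbols, arranging that the remainder always lands among operators of order $m+m'-\tdm$, which is exactly what Definition~\ref{equi:operators} demands. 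Before doing so I would record the relevant symbol seminorms. Since $\eta\in\Cs^{\frac52}$ one has $\nabla\eta\in\Cs^{\tdm}$ and $\partial_x^\alpha\eta\in\Cs^{\mez}$ for $|\alpha|=2$, with $\Vert\partial_x^\alpha\eta\Vert_{\Cs^{\mez}}\le\Vert\eta\Vert_{\Cs^{\frac52}}$; hence, using the tame composition and product estimates in Zygmund spaces (cf.\ \eqref{F(u):C}, \eqref{tame:H<0}), $a^{(m)},b^{(m')}$ lie in $\Gamma^{m}_{\tdm}$ resp.\ $\Gamma^{m'}_{\tdm}$ and $a^{(m-1)},b^{(m'-1)}$ lie in $\Gamma^{m-1}_{\mez}$ resp.\ $\Gamma^{m'-1}_{\mez}$, with
\[
\cM^{m}_{\tdm}(a^{(m)})+\cM^{m'}_{\tdm}(b^{(m')})+\cM^{m-1}_{\mez}(a^{(m-1)})+\cM^{m'-1}_{\mez}(b^{(m'-1)})\les\cF(\Vert\eta\Vert_{\Cs^{2}})\big(1+\Vert\eta\Vert_{\Cs^{\frac52}}\big),
\]
whereas the lower seminorms $\cM^{m}_{\mez}(a^{(m)})$, $\cM^{m'}_{\mez}(b^{(m')})$, $\cM^{m-1}_{0}(a^{(m-1)})$, $\cM^{m'-1}_{0}(b^{(m'-1)})$ are all bounded just by $\cF(\Vert\eta\Vert_{\Cs^{2}})$.

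For the leading product I would apply Theorem~\ref{theo:sc}~(ii) with $\rho=\tdm$; since $\tdm>1$ the expansion keeps the terms $|\alpha|\le1$, so
\[
T_{a^{(m)}}T_{b^{(m')}}=T_{a^{(m)}b^{(m')}}+T_{\frac1i\partial_\xi a^{(m)}\cdot\partial_x b^{(m')}}+R_1,
\]
with $R_1$ of order $m+m'-\tdm$ and, by the tame form of the calculus, $\Vert R_1\Vert_{H^\mu\to H^{\mu-m-m'+\tdm}}\les\cM^{m}_{\tdm}(a^{(m)})\cM^{m'}_{\mez}(b^{(m')})+\cM^{m}_{\mez}(a^{(m)})\cM^{m'}_{\tdm}(b^{(m')})\les\cF(\Vert\eta\Vert_{\Cs^{2}})(1+\Vert\eta\Vert_{\Cs^{\frac52}})$. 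For the two mixed products I would apply the calculus with $\rho=\mez$, so that only the leading term survives:
\[
T_{a^{(m-1)}}T_{b^{(m')}}=T_{a^{(m-1)}b^{(m')}}+R_2,\qquad T_{a^{(m)}}T_{b^{(m'-1)}}=T_{a^{(m)}b^{(m'-1)}}+R_3,
\]
where $R_2$ is of order $(m-1)+m'-\mez=m+m'-\tdm$ and $R_3$ of order $m+(m'-1)-\mez=m+m'-\tdm$, both with operator norm $\les\cF(\Vert\eta\Vert_{\Cs^{2}})(1+\Vert\eta\Vert_{\Cs^{\frac52}})$. The last product requires no calculus: $R_4:=T_{a^{(m-1)}}T_{b^{(m'-1)}}$ is an operator of order $(m-1)+(m'-1)=m+m'-2\le m+m'-\tdm$ whose norm is $\les\cF(\Vert\eta\Vert_{\Cs^{2}})$. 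Summing the four identities gives $T_aT_b=T_c+R$ with $R=R_1+R_2+R_3+R_4$ and $c=a^{(m)}b^{(m')}+\frac1i\partial_\xi a^{(m)}\cdot\partial_x b^{(m')}+a^{(m-1)}b^{(m')}+a^{(m)}b^{(m'-1)}$, the symbol of the statement; then $R$ is an operator of order $m+m'-\tdm$ obeying the bound of Definition~\ref{equi:operators}, i.e.\ $T_aT_b\sim T_c$.

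Finally I would verify that $c\in\Sigma^{m+m'}$, so that the relation ``$\sim$ in $\Sigma^{m+m'}$'' is meaningful: the principal part $c^{(m+m')}=a^{(m)}b^{(m')}=F(\nabla\eta,\xi)\widetilde F(\nabla\eta,\xi)$ is real, $C^\infty$, homogeneous of degree $m+m'$ in $\xi$, and elliptic since $a^{(m)}b^{(m')}\ge K(\nabla\eta)\widetilde K(\nabla\eta)|\xi|^{m+m'}$; the remaining part of $c$, once the $x$-derivative in $\frac1i\partial_\xi a^{(m)}\cdot\partial_x b^{(m')}$ is carried out, is a finite sum $\sum_{|\beta|=2}F_\beta(\nabla\eta,\xi)\partial_x^\beta\eta$ with $F_\beta$ homogeneous of degree $m+m'-1$, and likewise for $a^{(m-1)}b^{(m')}$ and $a^{(m)}b^{(m'-1)}$; and the fact that $T_c$ maps real functions to real functions follows from the same property of $T_a$, $T_b$ and the parity in $\xi$ of the symbols. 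The main obstacle throughout is keeping every estimate \emph{tame}, i.e.\ linear in $\Vert\eta\Vert_{\Cs^{\frac52}}$: one must systematically pair the high seminorm ($\Gamma_{\tdm}$ or $\Gamma_{\mez}$) of one factor with only the low seminorm ($\Gamma_{\mez}$ or $\Gamma_0$) of the other when invoking Theorem~\ref{theo:sc}~(ii), and use that a smooth function of $\nabla\eta$ has $\Cs^{\tdm}$-norm tamely controlled by $\Vert\eta\Vert_{\Cs^{\frac52}}$ — this is precisely the refinement over the way the calculus is used in \cite{ABZ1}.
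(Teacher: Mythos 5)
Your proof is correct and takes essentially the same route as the paper: the same four-fold splitting $T_aT_b=T_{a^{(m)}}T_{b^{(m')}}+T_{a^{(m-1)}}T_{b^{(m')}}+T_{a^{(m)}}T_{b^{(m'-1)}}+T_{a^{(m-1)}}T_{b^{(m'-1)}}$, Theorem \ref{theo:sc}~(ii) applied with $\rho=\tdm$ to the principal product and $\rho=\mez$ to the mixed ones, the crude order count for the last product, and the same tame pairing of high ($\tdm$ or $\mez$) seminorms of one factor with low seminorms of the other. Your two additions — checking explicitly that $c\in\Sigma^{m+m'}$ and writing the cross term as $a^{(m)}b^{(m'-1)}$ (the repeated $a^{(m-1)}b^{(m')}$ in the statement is a typo) — do not change the argument.
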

\begin{proof}
1. Since the principal symbol $a^{(m)}(t)$ contains only the first order derivatives of $\eta$, applying the nonlinear estimate \eqref{F(u):H} we obtain
\[
M^m_{3/2}(a^{(m)}(t))\le  \cF(\lA \eta(t)\rA_{\Cs^{1+\eps}}\big)\big(1+\lA \eta(t)\rA_{C^{\frac{5}{2}}}\big).
\]
On the other hand, 
\[
M^m_{1/2}(a^{(m)}(t))\le  \cF(\lA \eta(t)\rA_{\Cs^{\tdm}})
\]
and 
\[
M^m_0(a^{(m)}(t))\le \cF(\lA \eta(t)\rA_{\Cs^{1+\eps}}\big).
\]
2. The subprincipal symbol $a^{(m-1)}(t)$ depends linearly on $\partial^{\alpha}\eta~, |\alpha|=2$ and nonlinearly on $\nabla\eta$. Hence $a^{(m-1)}\in \Gamma^{m-1}_{1/2}$ and by \eqref{tame:H} and \eqref{F(u):H} we have uniformly for $|\xi|=1$,
\begin{align*}
&\lA F_{\alpha}(\nabla\eta(t,x), \xi)\partial_x^{\alpha}\eta(t, x) \rA_{\Cs^{\mez}}\\
&\le \lA [F_{\alpha}(\nabla\eta(t,\cdot), \xi)-F_{\alpha}(0, \xi)]\partial_x^{\alpha}\eta(t,\cdot) \rA_{\Cs^\mez}+\la F_{\alpha}(0, \xi)\ra \lA \partial_x^{\alpha}\eta(t, \cdot) \rA_{\Cs^\mez}\\
&\le \cF(\lA\eta(t)\rA_{\Cs^{\tdm}})\lA \eta(t) \rA_{\Cs^{\frac{5}{2}}}.
\end{align*}
The same estimates hold when taking $\xi$-derivatives, consequently
\[
M^{m-1}_{1/2}(a^{(m-1)}(t))\le  \cF(\lA\eta(t)\rA_{\Cs^{\tdm}})\lA \eta(t) \rA_{\Cs^{\frac{5}{2}}}.
\]
On the other hand, 
\[
M^{m-1}_0(a^{(m-1)}(t))\le \cF(\lA\eta(t)\rA_{\Cs^{2}}).
\]
3. We now write
\[
T_aT_b=T_{a^{(m)}}T_{b^{(m')}}+T_{a^{(m-1)}}T_{b^{(m')}}+T_{a^{(m)}}T_{b^{(m'-1)}}+T_{a^{(m-1)}}T_{b^{(m'-1)}}.\]
 Using 1. and 2., we deduce by virtue of Theorem \ref{theo:sc} (ii) with $\rho=3/2$ that 
\begin{multline*}
\Vert T_{a^{(m)}}T_{b^{(m')}}- T_{a^{(m)}b^{(m')}+\frac{1}{i}\partial_\xi a^{(m)}\partial_x b^{(m')}}\Vert_{H^{\mu}\to H^{\mu-(m+m')+\tmez}}\cr
\le  \cF(\lA \eta(t)\rA_{\Cs^{1+\eps}}\big)\big(1+\lA \eta(t)\rA_{C^{\frac{5}{2}}}\big).
\end{multline*}
 The same theorem, applied with $\rho=1/2$, yields
\begin{align*}
&\Vert T_{a^{(m-1)}}T_{b^{(m')}}-T_{a^{(m-1)}b^{(m')}}\Vert_{H^{\mu}\to H^{\mu-(m+m')+\tmez}}\le  \cF(\lA \eta(t)\rA_{\Cs^2}\big)\big(1+\lA \eta(t)\rA_{C^{\frac{5}{2}}}\big),\\
&\Vert T_{a^{(m)}}T_{b^{(m'-1)}}- T_{a^{(m-1)}b^{(m')}}\Vert_{H^{\mu}\to H^{\mu-(m+m')+\tmez}}\le \cF(\lA \eta(t)\rA_{\Cs^2}\big)\big(1+\lA \eta(t)\rA_{C^{\frac{5}{2}}}\big).
\end{align*}
Finally, applying Theorem \ref{theo:sc} (i) leads to
\[
\Vert T_{a^{(m-1)}}T_{b^{(m'-1)}}\Vert_{H^{\mu}\to H^{\mu-(m+m')+2}}\le  \cF(\lA \eta(t)\rA_{\Cs^2}\big).
\]
Putting  the above estimates together we conclude that $T_aT_b\sim T_c$ in $\Sigma^{m+m'}$.
\end{proof}
Using the preceding Proposition, one can easily verify  that  Proposition $4.8$ in \cite{ABZ1} is still valid:
\begin{prop}\label{pq}
Let $q\in \Sigma^0,~p\in \Sigma^{\mez},~\gamma\in \Sigma^{\tdm}$ defined by 
\[
\begin{aligned}
q&=(1+|\partial_x\eta|^2)^{-\mez},\\
p&=(1+|\partial_x\eta|^2)^{-\frac{5}{4}}\sqrt{\lambda^{(1)}}+p^{(-1/2)},\\
\gamma&=\sqrt{\ell^{(2)}\lambda^{(1)}}+\sqrt{\frac{\ell^{(2)}}{\lambda^{(1)}}}\frac{\Re \lambda^{(0)}}{2}-\frac{i}{2}(\partial_{\xi}\cdot\partial_x)\sqrt{\ell^{(2)}\lambda^{(1)}},
\end{aligned}
\]
where 
\[
p^{(-1/2)}=\frac{1}{\gamma^{(3/2)}}\left\{q^{(0)}\ell^{(1)}-\gamma^{(1/2)}p^{(1/2)}+i\partial_{\xi}\gamma^{(3/2)}\partial_xp^{(1/2)} \right\}.
\]
Then, it holds that
\[
T_pT_{\lambda}\sim T_{\gamma}T_q,\quad T_qT_{\ell}\sim T_{\gamma}T_p,\quad (T_{\gamma})^*\sim T_\gamma.
\]
\end{prop}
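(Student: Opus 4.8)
The plan is to reduce each of the three claimed equivalences to a purely \emph{algebraic} identity between symbols, the analytic content — the operator bounds required by Definition~\ref{equi:operators} — being supplied automatically by Proposition~\ref{rema:class} for the compositions and by the rule for adjoints of paradifferential operators (Theorem~\ref{theo:sc}) for $(T_\gamma)^*$. Accordingly, the first step is to check that $q,p,\lambda,\ell,\gamma$ genuinely belong to the structural classes $\Sigma^0,\Sigma^{1/2},\Sigma^1,\Sigma^2,\Sigma^{3/2}$ of Definition~\ref{structure:symbol}. For $q,\lambda,\ell$ this is already in \cite{ABZ1}; for $\gamma$ and for the lower-order piece $p^{(-1/2)}$ one reads off from the explicit formulas that the top symbols $\lambda^{(1)},\ell^{(2)},\gamma^{(3/2)}=\sqrt{\ell^{(2)}\lambda^{(1)}}$ are smooth, elliptic, homogeneous of the stated degree and depend on $x$ only through $\nabla\eta$, while $\gamma^{(1/2)}$ and $p^{(-1/2)}$ are sums of a term smooth in $\nabla\eta$ and a term depending linearly on the second derivatives $(\partial_x^\alpha\eta)_{|\alpha|=2}$ — the $\partial_\xi\cdot\partial_x$ operations producing precisely such second derivatives — so they fit the template. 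Ellipticity of $\lambda^{(1)},\ell^{(2)},\gamma^{(3/2)}$ (so that their square roots and the factor $1/\gamma^{(3/2)}$ appearing in $p^{(-1/2)}$ are legitimate symbols) is used throughout.

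For the two product relations, I would apply Proposition~\ref{rema:class} to the four products $T_pT_\lambda,\ T_\gamma T_q$ (in $\Sigma^{3/2}$) and $T_qT_\ell,\ T_\gamma T_p$ (in $\Sigma^2$), obtaining $T_pT_\lambda\sim T_{c_1}$, $T_\gamma T_q\sim T_{c_2}$, $T_qT_\ell\sim T_{d_1}$, $T_\gamma T_p\sim T_{d_2}$ with $c_i,d_i$ the composition symbols given there; several subprincipal contributions simplify since $q$ has vanishing subprincipal part and $q^{(0)}=(1+|\nabla\eta|^2)^{-1/2}$ is independent of $\xi$. It then suffices to verify the equalities $c_1=c_2$ and $d_1=d_2$, each of which amounts to matching separately the principal part (homogeneous of top degree) and the subprincipal part (degree one lower). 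The two principal identities both follow from the single relation
\[
\ell^{(2)}=\frac{(\lambda^{(1)})^2}{(1+|\nabla\eta|^2)^{3/2}},\qquad\text{so that}\qquad \gamma^{(3/2)}=\sqrt{\ell^{(2)}\lambda^{(1)}}=\frac{(\lambda^{(1)})^{3/2}}{(1+|\nabla\eta|^2)^{3/4}},
\]
which is immediate from the definitions of $\lambda^{(1)}$ and $\ell^{(2)}$. At subprincipal order, the identity $d_1=d_2$ collapses, after cancelling the principal contributions, to exactly
\[
\gamma^{(3/2)}p^{(-1/2)}=q^{(0)}\ell^{(1)}-\gamma^{(1/2)}p^{(1/2)}+i\,\partial_\xi\gamma^{(3/2)}\cdot\partial_xp^{(1/2)},
\]
which is the very definition of $p^{(-1/2)}$ — this proves $T_qT_\ell\sim T_\gamma T_p$. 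Substituting this value of $p^{(-1/2)}$ into the subprincipal part of $c_1$ and using the above relations among $\ell^{(2)},\lambda^{(1)},\gamma^{(3/2)},\gamma^{(1/2)},q^{(0)},p^{(1/2)}$, one checks $c_1=c_2$, which gives $T_pT_\lambda\sim T_\gamma T_q$; this last verification is the finite computation already carried out in \cite[Proposition~4.8]{ABZ1}.

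For the adjoint relation I would invoke the adjoint rule of the paradifferential calculus (Theorem~\ref{theo:sc}) to get $(T_\gamma)^*\sim T_{\gamma^*}$ in $\Sigma^{3/2}$ with $\gamma^*=\overline{\gamma}+\tfrac{1}{i}\partial_\xi\cdot\partial_x\overline{\gamma}$ modulo order $0$. Since $\gamma^{(3/2)}=\sqrt{\ell^{(2)}\lambda^{(1)}}$ is real, $\overline{\gamma^{(3/2)}}=\gamma^{(3/2)}$ and the principal parts coincide. For the subprincipal part, $\gamma^{(1/2)}$ is the sum of the real term $\tfrac{1}{2}\sqrt{\ell^{(2)}/\lambda^{(1)}}\;\Re\lambda^{(0)}$ and the purely imaginary term $-\tfrac{i}{2}(\partial_\xi\cdot\partial_x)\gamma^{(3/2)}$, hence
\[
\overline{\gamma^{(1/2)}}+\tfrac{1}{i}\partial_\xi\cdot\partial_x\gamma^{(3/2)}=\tfrac{1}{2}\sqrt{\ell^{(2)}/\lambda^{(1)}}\;\Re\lambda^{(0)}+\tfrac{i}{2}(\partial_\xi\cdot\partial_x)\gamma^{(3/2)}-i(\partial_\xi\cdot\partial_x)\gamma^{(3/2)}=\gamma^{(1/2)},
\]
i.e. $\gamma^*=\gamma$ up to order $0$, so $(T_\gamma)^*\sim T_\gamma$; this is exactly why the term $-\tfrac{i}{2}(\partial_\xi\cdot\partial_x)\sqrt{\ell^{(2)}\lambda^{(1)}}$ was built into $\gamma^{(1/2)}$.

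The main obstacle is not conceptual. Proposition~\ref{rema:class} is tailored precisely so that every remainder arising from composing two symbols of the class $\Sigma$ has the gained order \emph{and} is tame, i.e. carries at most one factor $\|\eta\|_{\Cs^{5/2}}$; hence the passage from the operator statements to symbol identities is bookkeeping. The genuine work is the algebraic verification that the subprincipal component of $c_1$ matches that of $c_2$: this is a finite but somewhat lengthy computation, already performed in \cite{ABZ1}, and the point is merely that every manipulation there uses only $\nabla\eta$, the second derivatives of $\eta$, and $\xi$-derivatives, so it transposes verbatim to the present lower-regularity framework once Proposition~\ref{rema:class} is available.
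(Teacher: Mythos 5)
Your proposal is correct and follows essentially the same route as the paper: the paper's proof consists precisely of observing that, thanks to the tame composition rule of Proposition \ref{rema:class} (and the adjoint rule of Theorem \ref{theo:sc} applied to the $\Sigma^{3/2}$ symbol $\gamma$), the symbol identities verified in Proposition $4.8$ of \cite{ABZ1} carry over verbatim at the present regularity. Your explicit checks (the relation $\ell^{(2)}=(\lambda^{(1)})^2(1+|\nabla\eta|^2)^{-3/2}$, the definition of $p^{(-1/2)}$ matching the subprincipal terms, and the self-adjointness computation for $\gamma^{(1/2)}$) simply spell out that algebra.
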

We are now in position to perform the symmetrization.
\begin{prop}\label{sym:prop}
Introduce two new unknowns
\[
\Phi_1=T_p\eta,\quad \Phi_2=T_qU.
\]
Then $\Phi_1,~\Phi_2\in L^\infty ([0, T], H^s)$ and 
\bq
\left\{
\begin{aligned}
&\partial_t\Phi_1+T_V\cdot\nabla\Phi_1- T_{\gamma}\Phi_2=F_1,\\
&\partial_t\Phi_2+T_V\cdot\nabla\Phi_2+T_{\gamma}\Phi_2=F_2,
\end{aligned}
\right.
\eq
where, there exists a nondecreasing function $\cF$ independent of $\eta, \psi$ such that for {\it a.e.} $t\in [0, T]$, there holds
\bq\label{SS:Fi}
\lA (F_1, F_2)\rA_{ H^s\times H^s}\les_\A\B\lB\lA\eta\rA_{H^{s+\mez}}+\lA  \psi\rA_{H^{s}}\rB.
\eq
\end{prop}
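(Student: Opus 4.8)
The plan is to symmetrize \eqref{eq:PL} by applying $T_p$ to its first equation and $T_q$ to its second, and then to push these paramultipliers past $\partial_t$, past $T_V\cdot\nabla$, and past the coupling operator $T_\lambda$ (resp.\ $T_\l$). For the time derivative I would write $T_p\partial_t\eta=\partial_t\Phi_1-T_{\partial_tp}\eta$ and $T_q\partial_tU=\partial_t\Phi_2-T_{\partial_tq}U$; for the transport term $T_p(T_V\cdot\nabla\eta)=T_V\cdot\nabla\Phi_1+[T_p,T_V\cdot\nabla]\eta$ and likewise for $U$; and for the coupling I would use Proposition~\ref{pq}: setting $R_1:=T_pT_\lambda-T_\gamma T_q$ and $R_2:=T_qT_\l-T_\gamma T_p$, Proposition~\ref{pq} and Definition~\ref{equi:operators} say that $R_1,R_2$ are of order $0$ with $\|R_j\|_{H^\mu\to H^\mu}\le\cF(\|\eta\|_{\Cs^2})(1+\|\eta\|_{\Cs^{5/2}})$, so that $T_pT_\lambda U=T_\gamma\Phi_2+R_1U$ and $T_qT_\l\eta=T_\gamma\Phi_1+R_2\eta$. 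This produces exactly the announced system with
\[
F_1=T_pf_1+T_{\partial_tp}\eta-[T_p,T_V\cdot\nabla]\eta+R_1U,\qquad
F_2=T_qf_2+T_{\partial_tq}U-[T_q,T_V\cdot\nabla]U-R_2\eta.
\]
That $\Phi_1,\Phi_2\in L^\infty(I;H^s)$ is immediate from $\eta\in L^\infty(I;H^{s+\mez})$, $U=\psi-T_B\eta\in L^\infty(I;H^s)$ and the seminorm bound $\cM^{1/2}_0(p)+\cM^0_0(q)\les_\A 1$ — the only second derivatives of $\eta$ entering $p,q$ sit in the subprincipal symbols and are measured in $L^\infty$, hence controlled by $\|\eta\|_{\Cs^{2+\eps_*}}\les\A$. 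The terms $T_pf_1,T_qf_2$ are then controlled in $H^s$ by Proposition~\ref{prop:PL} together with the mappings $T_p:H^{s+\mez}\to H^s$ and $T_q:H^s\to H^s$, and $R_1U,R_2\eta$ by the above order-$0$ bound, using $1+\|\eta\|_{\Cs^{5/2}}\les\B$ and $\|U\|_{H^s}\les_\A\|\psi\|_{H^s}+\|\eta\|_{H^s}$.

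For the commutators the whole point is to keep the estimates tame, i.e.\ linear in $\B$. I would split $p=p^{(1/2)}+p^{(-1/2)}$ (note that $q$ involves only $\nabla\eta$) and write $[T_p,T_{V_j}\partial_j]=[T_p,T_{V_j}]\partial_j-T_{V_j}T_{\partial_jp}$. For the principal part, Theorem~\ref{theo:sc} (ii) applied with $\rho=1$ gives that $[T_{p^{(1/2)}},T_{V_j}]$ has order $-\mez$ with operator norm $\les\cM^{1/2}_1(p^{(1/2)})\|V_j\|_{W^{1,\infty}}$; here $\cM^{1/2}_1(p^{(1/2)})$ depends only on $\|\nabla\eta\|_{L^\infty}$ and $\|\partial_x^2\eta\|_{L^\infty}$, hence is $\les_\A 1$, while $\|V_j\|_{W^{1,\infty}}\les\|V\|_{\Bl^1}\les_\A\B$ by Lemma~\ref{est:VB} enters \emph{linearly}. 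The subprincipal contribution $[T_{p^{(-1/2)}},T_{V_j}]\partial_j\eta$ and $T_{V_j}T_{\partial_jp^{(-1/2)}}\eta$ — where $p^{(-1/2)}$ has order $-\mez$ with coefficients $\partial_x^2\eta\in\Cs^{1/2+\eps_*}$, and $\partial_jp^{(-1/2)}$ has coefficients $\partial_x^3\eta\in\Cs^{-1/2+\eps_*}$ of norm $\les\|\eta\|_{\Cs^{5/2+\eps_*}}\les\B$ — needs no commutator structure and is bounded by crude composition, costing only $\cM^{-1/2}_0(p^{(-1/2)})\les_\A 1$ and $\|V_j\|_{L^\infty}\les_\A 1$. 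All these pieces have order $\le\mez$, so altogether $\|[T_p,T_V\cdot\nabla]\eta\|_{H^s}\les_\A\B\|\eta\|_{H^{s+\mez}}$, and similarly, using that $[T_q,T_V\cdot\nabla]$ has order $0$, $\|[T_q,T_V\cdot\nabla]U\|_{H^s}\les_\A\B\|U\|_{H^s}$.

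The genuinely delicate terms — and, I expect, the bulk of the work — are $T_{\partial_tp}\eta$ and $T_{\partial_tq}U$; this is the analogue of Lemma~\ref{lem:derB}. The idea is to use the equations of motion in the form \eqref{ww:1}, namely $\partial_t\eta=B-V\cdot\nabla\eta$, to rewrite the time derivatives of the (nonlinear) coefficients of $p$ and $q$ as combinations of spatial derivatives of $B$, $V$ and $\eta$. Since $\Bl^1$ is an algebra and $\nabla\eta\in\Cs^{3/2+\eps_*}\hookrightarrow\Bl^1$ with $\|\nabla\eta\|_{L^\infty}\les_\A 1$, the tame product estimate gives
\[
\|\partial_t\nabla\eta\|_{\Bl^0}\les\|B\|_{\Bl^1}+\|V\|_{L^\infty}\|\nabla\eta\|_{\Bl^1}+\|V\|_{\Bl^1}\|\nabla\eta\|_{L^\infty}\les_\A\B,\qquad
\|\partial_t\partial_x^2\eta\|_{\Bl^{-1}}\les_\A\B,
\]
where Lemma~\ref{est:VB} supplies $\|B\|_{\Bl^1},\|V\|_{\Bl^1}\les_\A\B$. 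Hence $\partial_tq\in L^\infty$ with $\|\partial_tq\|_{L^\infty}\les_\A\B$, while $\partial_tp$ is a symbol of order $\mez$ whose principal part has $L^\infty$ coefficients and whose subprincipal part has $\Cs^{-1}$ coefficients, all with seminorms $\les_\A\B$; the paraproduct rule \eqref{pest1} and its negative-regularity variant then yield $\|T_{\partial_tp}\eta\|_{H^s}\les_\A\B\|\eta\|_{H^{s+\mez}}$ and $\|T_{\partial_tq}U\|_{H^s}\les_\A\B\|U\|_{H^s}\les_\A\B(\|\psi\|_{H^s}+\|\eta\|_{H^s})$. Collecting the four groups of estimates gives \eqref{SS:Fi}. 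The only real obstacle throughout is the bookkeeping that forces every remainder to be linear in $\B$: in each use of the symbolic calculus or of a product rule one must isolate the single factor carrying a top-order norm ($\|\eta\|_{\Cs^{5/2+\eps_*}}$, $\|\nabla\psi\|_{\Bl^1}$, or $\|\partial_t\nabla_x^2\eta\|_{\Bl^{-1}}$) and keep all remaining factors at the regularity level controlled by $\A$; beyond that the argument is a mechanical combination of Propositions~\ref{pq} and~\ref{rema:class} with Theorem~\ref{theo:sc}.
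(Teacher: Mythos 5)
Your proposal follows essentially the same route as the paper's proof: apply $T_p$ and $T_q$ to the paralinearized system \eqref{eq:PL}, absorb the coupling terms through Proposition \ref{pq} (the remainders $T_pT_\lambda-T_\gamma T_q$, $T_qT_\l-T_\gamma T_p$ being of order $0$ with bound linear in $1+\lA\eta\rA_{\Cs^{5/2}}$), control the commutators with $T_V\cdot\nabla$ by the symbolic calculus of Theorem \ref{theo:sc} keeping $\lA V\rA_{\Bl^1}\les_\A\B$ linear, and treat $T_{\partial_t p}\eta$, $T_{\partial_t q}U$ by substituting $\partial_t\eta=B-V\cdot\nabla\eta$ and estimating the subprincipal symbol in a negative Zygmund norm, exactly as in the paper's proof of \eqref{norm:dtp} via Proposition \ref{regu<0}. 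The only differences are presentational (your explicit splitting $p=p^{(1/2)}+p^{(-1/2)}$ in the commutator step, where the paper cites Proposition \ref{rema:class} and Theorem \ref{theo:sc}\,(ii) in one line), so the argument is correct and matches the paper.
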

\begin{proof}
It follows directly from system \eqref{eq:PL} that $\Phi_1,~\Phi_2$ satisfy
\bq
\left\{
\begin{aligned}
&\partial_t\Phi_1+T_V\cdot\nabla\Phi_1- T_{\gamma}\Phi_2=T_pf_1+T_{\partial_tp}\eta+[T_V\cdot\nabla,T_p]\eta+R_1,\\
&\partial_t\Phi_2+T_V\cdot\nabla\Phi_2+T_{\gamma}\Phi_2=T_qf_2+T_{\partial_tq}U+[T_V\cdot\nabla,T_q]U+R_2,
\end{aligned}
\right.
\eq
where 
\[
R_1=(T_pT_{\lambda}-T_{\gamma}T_q)\psi,\quad R_2=-(T_qT_{\ell}- T_{\gamma}T_p)\eta.
\]
Let $\Pi$ denote the right-hand side of \eqref{SS:Fi}. According to Proposition \ref{pq},
\[
\Vert R_1\Vert_{H^s}+\Vert R_2\Vert_{H^s}\les \Pi.
\]
On the other hand, Proposition \ref{prop:PL} implies
\[
\Vert T_pf_1\Vert_{H^s}+\Vert T_qf_2\Vert_{H^s}\les \Pi.
\]
Owing to Lemma \ref{est:VB} and the norm estimates for symbols in Proposition \ref{rema:class}, the composition rule of Theorem \ref{theo:sc} (ii) (with  $\rho=1$) yields
\[
\lA [T_V\cdot\nabla,T_p]\eta\rA_{H^s}+\lA [T_V\cdot\nabla,T_q]U\rA_{H^s}\les \Pi.
\]
It remains to prove
\[
\lA T_{\partial_tp}\rA_{H^{s+\mez}\to H^s}+ \lA T_{\partial_tq}\rA_{H^s\to H^s}\les_\A \B.
\]
To this end, we first recall from the first equation of \eqref{ww:1} that $\partial_t\eta=B-V\cdot\nabla\eta$. Hence $\|\partial_t\eta\|_{W^{1,\infty}}\les_\A \B$ and 
\[
\cM^{1/2}_0(\partial_tp^{(1/2)})+\cM^{0}_0(\partial_tq)\les_\A\B,
\]
 which, combined with Theorem \ref{theo:sc} (i), yields
\[
\Vert T_{\partial_tp^{(1/2)}}\Vert_{H^{s+\mez}\to H^s}+ \lA T_{\partial_tq}\rA_{H^s\to H^s}\les_\A\B.
\]
We are thus left with the estimate of $\Vert T_{\partial_tp^{(-1/2)}}\Vert_{H^{s+\mez}\to H^s}$. According to Proposition \ref{regu<0}, it suffices to show 
\bq\label{norm:dtp}
\cM^{-1/2}_{-1}(\partial_tp^{(-1/2)})\les_\A \B.
\eq
Recall that $p^{(-1/2)}$ is of the form 
\[
p^{(-1/2)}=\sum_{|\alpha|=2}F_{\alpha}(\nabla \eta, \xi)\partial_x^{\alpha}\eta,
\]
where the~$F_{\alpha}$ are smooth functions in $\xi\ne 0$ and homogeneous of order $-1/2$. Hence,
\[
\partial_tp^{(-1/2)}=\sum_{|\alpha|=2}[\partial_tF_{\alpha}(\nabla \eta, \xi)]\partial_x^{\alpha}\eta+\sum_{|\alpha|=2}F_{\alpha}(\nabla \eta, \xi)\partial_t\partial_x^{\alpha}\eta.
\]
It is easy to see that 
\[
\cM^{-\mez}_0\big([\partial_tF_{\alpha}(\nabla \eta, \xi)]\partial_x^{\alpha}\eta\big)\les_\A 1.
\]
For the main term $F_{\alpha}(\nabla \eta, \xi)\partial_t\partial_x^{\alpha}\eta$ we use the first equation of \eqref{ww:1} to have
\[\partial_t\partial_x^{\alpha}\eta=\partial_x^\alpha(B-V\nabla_x\eta).
\]
Hence 
\[
\Vert \partial_t\partial_x^{\alpha}\eta\Vert_{\Cs^{-1}}\le \Vert B-V\nabla_x\eta\Vert_{\Cs^1}\les_\A \B.
\]
The product rule \eqref{tame:H<0} then implies
\[
\cM^{-\mez}_{-1}\big(F_{\alpha}(\nabla \eta, \xi)\partial_t\partial_x^{\alpha}\eta)\les_\A \B,
\]
which concludes the proof of \eqref{norm:dtp} and hence of the proposition.
\end{proof}
\section{A priori estimates and blow-up criteria}\label{section:apriori}
\subsection{A priori estimates}
First of all, it follows straightforwardly from Proposition \ref{sym:prop} that  the water waves system can be reduced to a single equation of a complex-valued unknown as follows.
\begin{prop}\label{singleeq:Phi}
Assume that $(\eta, \psi)$ is a solution to \eqref{ww} and satisfies \eqref{assu:reg}. Let $\Phi_1, \Phi_2$ be as in Proposition \ref{sym:prop}, then 
\[
\Phi\defn \Phi_1+i\Phi_2=T_p\eta+iT_qU
\]
 satisfies
\begin{gather}
\left( \partial_t+T_V\cdot\nabla+iT_{\gamma}\right)\Phi=F,\label{Phi}\\
\lA F(t)\rA_{ H^s}\les_\A\B\lB\lA\eta\rA_{H^{s+\mez}}+\lA  \psi\rA_{H^{s}}\rB. \label{F-tame}
\end{gather}
\end{prop}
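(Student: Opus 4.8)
The plan is to read off Proposition \ref{singleeq:Phi} as a purely algebraic repackaging of the symmetrized system already established in Proposition \ref{sym:prop}. Recall that that proposition gives $\Phi_1 = T_p\eta$ and $\Phi_2 = T_qU$ in $L^\infty([0,T]; H^s)$ solving
\[
\partial_t\Phi_1 + T_V\cdot\nabla\Phi_1 - T_\gamma\Phi_2 = F_1,\qquad
\partial_t\Phi_2 + T_V\cdot\nabla\Phi_2 + T_\gamma\Phi_1 = F_2,
\]
with $(F_1,F_2)$ obeying the tame bound \eqref{SS:Fi}.

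The first step is to multiply the second equation by $i$ and add it to the first. Since $\partial_t$ and $T_V\cdot\nabla$ are linear, the left-hand side collapses to $\partial_t\Phi + T_V\cdot\nabla\Phi$ with $\Phi = \Phi_1 + i\Phi_2$; using only that $T_\gamma$ is $\mathbb{C}$-linear, the off-diagonal terms combine as $-T_\gamma\Phi_2 + iT_\gamma\Phi_1 = iT_\gamma(\Phi_1 + i\Phi_2) = iT_\gamma\Phi$. This yields exactly \eqref{Phi} with $F = F_1 + iF_2$. The tame estimate \eqref{F-tame} then follows immediately from $\Vert F(t)\Vert_{H^s} \le \Vert F_1(t)\Vert_{H^s} + \Vert F_2(t)\Vert_{H^s}$ together with \eqref{SS:Fi}.

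It remains to note $\Phi \in L^\infty([0,T]; H^s)$, which is inherited from the corresponding membership of $\Phi_1$ and $\Phi_2$ in Proposition \ref{sym:prop}: concretely $\Phi_1 = T_p\eta \in L^\infty H^s$ because $T_p$ has order $\mez$ and $\eta\in L^\infty H^{s+\mez}$, while $\Phi_2 = T_qU \in L^\infty H^s$ because $T_q$ has order $0$ and $U = \psi - T_B\eta \in L^\infty H^s$ (using $\psi\in L^\infty H^s$, the $L^\infty$-bound \eqref{est:VB:C0} on $B$, and $\eta\in L^\infty H^{s+\mez}\subset L^\infty H^s$).

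I do not expect any real obstacle here: all the hard analysis — the elliptic estimates on the Dirichlet--Neumann operator, the paralinearization of \eqref{ww}, and the symmetrization of Section \ref{section:sym} — has already been carried out, and this statement only assembles it. The single point deserving a word of care is the sign structure of the symmetrized system, which is precisely what makes the two coupling terms fuse into the one skew-type operator $iT_\gamma$; this is exactly the feature that the choice of symmetrizers $p,q,\gamma$ in Proposition \ref{pq} was designed to produce.
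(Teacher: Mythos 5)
Your proposal is correct and matches the paper's treatment: the paper states Proposition \ref{singleeq:Phi} as an immediate consequence of Proposition \ref{sym:prop}, obtained exactly by multiplying the second symmetrized equation by $i$, adding it to the first, and using the $\xC$-linearity of $T_\gamma$ so that $-T_\gamma\Phi_2+iT_\gamma\Phi_1=iT_\gamma\Phi$, with $F=F_1+iF_2$ and the bound \eqref{F-tame} following from \eqref{SS:Fi} by the triangle inequality. (You also correctly read the second equation as $+T_\gamma\Phi_1$, fixing the evident typo $+T_\gamma\Phi_2$ in the paper's statement of Proposition \ref{sym:prop}.)
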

\hk In order to obtain $H^s$ estimate for $\Phi$, we shall commute equation \eqref{Phi} with an elliptic operator $\wp$ of order $s$ and then perform an $L^2$-energy estimate. Since $\gamma^{(3/2)}$ is of order $3/2>1$, we need to choose $\wp$ as a function of $\gamma^{(3/2)}$ as in \cite{ABZ1}:
\bq\label{def:wp}
\wp\defn (\gamma^{(3/2)})^{2s/3},
\eq
and take $\ph=T_{\wp}\Phi$. To obtain energy estimates in terms of the original variables~$\eta$ and~$\psi$, it is necessary to link them with this new unknown~$\ph$. 
\begin{lemm}
	We have 
\begin{align}
&\lA\ph\rA_{L^2}\les_\A\lA\eta\rA_{H^{s+\mez}}+\lA\psi\rA_{H^s}, \label{new-old}\\
&\lA\eta\rA_{H^{s+\mez}}+\lA\psi\rA_{H^s}\les_\A \lA\ph\rA_{L^2}+\Vert \eta\Vert_{L^2}+\Vert \psi\Vert_{L^2}. \label{old-new} 
\end{align}
\end{lemm}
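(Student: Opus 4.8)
Both estimates are a matter of mapping properties of paradifferential operators combined with the elliptic inversion of Proposition~\ref{inverse}. Recall the set-up: $\ph=T_\wp\Phi$, $\Phi=\Phi_1+i\Phi_2$ with $\Phi_1=T_p\eta$, $\Phi_2=T_qU$, $U=\psi-T_B\eta$, where $\wp=(\gamma^{(3/2)})^{2s/3}$ is real-valued and elliptic, and belongs to $\Sigma^s$ with vanishing subprincipal part (it is a smooth homogeneous function of $(\nabla\eta,\xi)$ only); $p\in\Sigma^{\mez}$ and $q\in\Sigma^0$ are the elliptic symbols of Proposition~\ref{pq}; and $B$ is real with $\|B\|_{L^\infty}\les_\A 1$ by Lemma~\ref{est:VB}. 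Since $T_p,T_q,T_B$ preserve real-valuedness, $\Phi_1=\RE\Phi$ and $\Phi_2=\IM\Phi$. The first thing I would record is that all the symbol semi-norms entering below, namely $\cM^s_0(\wp)$, $\cM^{\mez}_0(p^{(\mez)})$, $\cM^{-\mez}_0(p^{(-\mez)})$ and $\cM^0_0(q)$, are $\les_\A 1$; the principal parts involve only $\nabla\eta$, while the subprincipal part $p^{(-\mez)}$ is linear in $\partial^2_x\eta$, so one uses the embedding $\Cs^{2+\eps_*}\hookrightarrow W^{2,\infty}$ and $\|\eta\|_{\Cs^{2+\eps_*}}\le\A$ exactly as in the computations of Proposition~\ref{rema:class}.

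For \eqref{new-old} I would simply write $\ph=T_\wp T_p\eta+iT_\wp T_qU$ and use the composition/boundedness statement of Theorem~\ref{theo:sc}: $T_\wp T_p$ is of order $s+\mez$ and $T_\wp T_q$ of order $s$, with operator norms $\les_\A 1$, so $\|\ph\|_{L^2}\les_\A\|\eta\|_{H^{s+\mez}}+\|U\|_{H^s}$; then $\|U\|_{H^s}\le\|\psi\|_{H^s}+\|T_B\eta\|_{H^s}\les_\A\|\psi\|_{H^s}+\|\eta\|_{H^s}$ since $T_B$ is of order $0$, and $\|\eta\|_{H^s}\le\|\eta\|_{H^{s+\mez}}$ finishes it. For \eqref{old-new} I would fix an auxiliary index $M\ge\mez$ and apply Proposition~\ref{inverse} to the elliptic operator $T_\wp\in\Sigma^s$: $\|\Phi\|_{H^s}\les_\A\|\ph\|_{L^2}+\|\Phi\|_{H^{-M}}$. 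Because $T_p$ is of order $\mez$ and $M\ge\mez$, $\|T_p\eta\|_{H^{-M}}\les_\A\|\eta\|_{H^{\mez-M}}\le\|\eta\|_{L^2}$, and $\|T_qU\|_{H^{-M}}\les_\A\|U\|_{L^2}\le\|\psi\|_{L^2}+\|T_B\eta\|_{L^2}\les_\A\|\psi\|_{L^2}+\|\eta\|_{L^2}$, so $\|\Phi\|_{H^s}\les_\A\|\ph\|_{L^2}+\|\eta\|_{L^2}+\|\psi\|_{L^2}$. Then, using $\|\Phi_1\|_{H^s}+\|\Phi_2\|_{H^s}\le 2\|\Phi\|_{H^s}$, I would apply Proposition~\ref{inverse} once more, to $T_p\in\Sigma^{\mez}$ to get $\|\eta\|_{H^{s+\mez}}\les_\A\|\Phi_1\|_{H^s}+\|\eta\|_{L^2}$, and to $T_q\in\Sigma^0$ to get $\|U\|_{H^s}\les_\A\|\Phi_2\|_{H^s}+\|U\|_{L^2}\les_\A\|\Phi_2\|_{H^s}+\|\psi\|_{L^2}+\|\eta\|_{L^2}$; finally $\psi=U+T_B\eta$ gives $\|\psi\|_{H^s}\le\|U\|_{H^s}+\|T_B\eta\|_{H^s}\les_\A\|U\|_{H^s}+\|\eta\|_{H^{s+\mez}}$, and chaining these bounds yields \eqref{old-new}. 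As a by-product, $\ph\in L^\infty([0,T];L^2)$ follows from \eqref{new-old} and \eqref{assu:reg}.

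\textbf{Main obstacle.} There is no deep difficulty here; the one point to get right is the half-derivative bookkeeping in \eqref{old-new}. Since $\Phi$ contains the term $T_p\eta$, which behaves like $|D_x|^{\mez}\eta$, the error term produced by each application of Proposition~\ref{inverse} is a \emph{negative-order} Sobolev norm of $\eta$ (and of $U$), and one must choose the auxiliary index $-M$ strictly below $-\mez$ so that this error is genuinely controlled by $\|\eta\|_{L^2}+\|\psi\|_{L^2}$ and not by a positive-order norm; otherwise the estimate would not close. Everything else reduces to checking, via Proposition~\ref{rema:class} and the embedding $\Cs^{2+\eps_*}\hookrightarrow W^{2,\infty}$, that the relevant symbol semi-norms of $\wp,p,q$ and the $L^\infty$-norm of $B$ are all dominated by $\A$.
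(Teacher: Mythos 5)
Your proposal is correct and follows essentially the same route as the paper: \eqref{new-old} by the boundedness/composition results of Theorem \ref{theo:sc} together with $\lA T_B\eta\rA_{H^s}\les_\A\lA\eta\rA_{H^{s+\mez}}$, and \eqref{old-new} by repeated use of the elliptic inversion of Proposition \ref{inverse}, exploiting that $T_p\eta$, $T_qU$ are real so the real and imaginary parts of $\ph$ can be separated. The only (harmless) difference is organizational: you invert $T_\wp$ on $\Phi$ first and then $T_p$, $T_q$ on $\Phi_1$, $\Phi_2$, recovering $\psi$ from $U=\psi-T_B\eta$ at the end, whereas the paper inverts the compositions $T_\wp T_p$ and $T_\wp T_q$ directly and absorbs the $T_\wp T_qT_B\eta$ correction using the already-established bound on $\lA\eta\rA_{H^{s+\mez}}$.
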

\begin{proof}
	Recall that~$p\in\Sigma^s$, $q\in\Sigma^0$, and~$\wp\in\Sigma^s$ since~$\gamma^{(\frac32)}\in\Sigma^{\frac32}$. The estimate \eqref{new-old} is then a direct consequence of Theorem \ref{theo:sc} (i). To prove \eqref{old-new} we apply Proposition~\ref{inverse} twice to get
	\begin{align*}
	&\lA\eta\rA_{H^{s+\mez}}\les_A\lA T_\wp T_p\eta\rA_{L^2}+\lA\eta\rA_{L^2},\\
	&\lA\psi\rA_{H^s}\les_\A\lA T_\wp T_q\psi\rA_{L^2}+\lA\psi\rA_{L^2}.
	\end{align*}
	Clearly, $\Vert T_\wp T_p\eta\Vert_{L^2}\le \lA \varphi\rA_{L^2}$ , hence
\[
\lA\eta\rA_{H^{s+\mez}}\les_\A \Vert \varphi\Vert_{L^2}+\Vert \eta\Vert_{L^2}.
\]
 On the other hand,
\begin{align*}
\lA T_\wp T_q\psi\rA_{L^2}&\leq\lA T_\wp T_qU\rA_{L^2}+\lA T_\wp T_qT_B\eta\rA_{L^2}\\
&\le \lA \varphi\rA_{L^2}+\lA T_\wp T_qT_B\eta\rA_{L^2}\\
&\les_\A \lA \varphi\rA_{L^2}+\Vert \eta\Vert_{H^{s+\mez}}\\
&\les_\A \lA \varphi\rA_{L^2}+\Vert \eta\Vert_{L^2}.
\end{align*}
This completes the proof of \eqref{old-new}.
\end{proof}
\begin{prop}\label{L2varphi}
There exists a nondecreasing function $\cF:\xR^+\to \xR^+$ depending only on $s, \eps_*, h$ such that for any $t\in [0, T]$, 
\begin{equation}	\label{eq:enerL^2}
		\frac{\d}{\d t}\lA\ph\rA_{L^2}^2\le \cF(\A)\B (\Vert \eta\Vert_{L^2}+\Vert \psi\Vert_{L^2}+\lA\ph\rA_{L^2})\lA\ph\rA_{L^2}.
	\end{equation}
\end{prop}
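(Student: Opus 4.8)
The plan is to push an $L^2$ energy estimate through the elliptic factor $T_\wp$. Commuting \eqref{Phi} with $T_\wp$ and using $\partial_t(T_\wp\Phi)=T_\wp\partial_t\Phi+T_{\partial_t\wp}\Phi$, one finds that $\ph=T_\wp\Phi$ solves
\[
\lp\partial_t+T_V\cdot\nabla+iT_\gamma\rp\ph=F',\qquad F'=T_\wp F+T_{\partial_t\wp}\Phi+[T_V\cdot\nabla,T_\wp]\Phi+i[T_\gamma,T_\wp]\Phi.
\]
The first task is to bound $\lA F'\rA_{L^2}$ by $\cF(\A)\,\B\,(\lA\eta\rA_{L^2}+\lA\psi\rA_{L^2}+\lA\ph\rA_{L^2})$; whenever a term is controlled by $\cF(\A)\,\B\,\lA\Phi\rA_{H^s}$, I then invoke the elementary bound $\lA\Phi\rA_{H^s}\les_\A\lA\eta\rA_{H^{s+\mez}}+\lA\psi\rA_{H^s}$ (from $\Phi=T_p\eta+iT_q(\psi-T_B\eta)$, Theorem~\ref{theo:sc}~(i) and Lemma~\ref{est:VB}) followed by \eqref{old-new}. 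For $T_\wp F$: since $\wp\in\Sigma^s$, $\lA T_\wp F\rA_{L^2}\les_\A\lA F\rA_{H^s}$, and \eqref{F-tame} finishes it. For $T_{\partial_t\wp}\Phi$: as $\wp=(\gamma^{(3/2)})^{2s/3}$ depends only on $\nabla\eta$, the symbol $\partial_t\wp$ is a smooth symbol of order $s$ times $\partial_t\nabla\eta$; the first equation of \eqref{ww:1} gives $\partial_t\eta=B-V\cdot\nabla\eta$, so $\lA\partial_t\nabla\eta\rA_{L^\infty}\les_\A\B$ by Lemma~\ref{est:VB}, hence $\cM^s_0(\partial_t\wp)\les_\A\B$ and $\lA T_{\partial_t\wp}\Phi\rA_{L^2}\les_\A\B\lA\Phi\rA_{H^s}$ by Theorem~\ref{theo:sc}~(i). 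For $[T_V\cdot\nabla,T_\wp]\Phi$: the commutator has order $s$, with $H^s\to L^2$ norm bounded (Theorem~\ref{theo:sc}~(ii)) by $\lA V\rA_{\Cs^1}$ — which is $\les_\A\B$ by Lemma~\ref{est:VB} — times seminorms of $\wp$ that are $\les\cF(\lA\eta\rA_{\Cs^2})\le\cF(\A)$.

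The crux is $i[T_\gamma,T_\wp]\Phi$. Since $\gamma$ has order $3/2>1$, the choice of $\wp$ is essential: for a generic elliptic order-$s$ symbol the commutator would have order $s+1/2$ and would not act $H^s\to L^2$. With $\wp=(\gamma^{(3/2)})^{2s/3}$, however, the chain rule gives
\[
\partial_\xi\gamma^{(3/2)}\cdot\partial_x(\gamma^{(3/2)})^{2s/3}=\partial_\xi(\gamma^{(3/2)})^{2s/3}\cdot\partial_x\gamma^{(3/2)},
\]
so in the symbolic expansion of $T_\gamma T_\wp-T_\wp T_\gamma$ both the leading symbol $\gamma^{(3/2)}\wp$ and its first subprincipal correction $\tfrac1i\partial_\xi\gamma^{(3/2)}\cdot\partial_x\wp$ cancel; what survives comes from pairing $\gamma^{(1/2)}$ with $\wp$ and from the symbolic remainders, all of order at most $s-\min\{\mez,\eps_*\}<s$ given the available regularity $\nabla\eta\in\Cs^{\frac32+\eps_*}$ (recall $\eta\in\Cs^{\frac52+\eps_*}$). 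Hence $[T_\gamma,T_\wp]$ acts $H^s\to L^2$, and since every symbol seminorm in play ($\gamma^{(3/2)}$, $\gamma^{(1/2)}$, $\wp$) is linear in the top Hölder norm $\lA\eta\rA_{\Cs^{\frac52+\eps_*}}$, we obtain $\lA i[T_\gamma,T_\wp]\Phi\rA_{L^2}\les_\A\B\lA\Phi\rA_{H^s}$.

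Finally, with $F'$ under control, I close the estimate by the standard $L^2$ energy identity for the equation satisfied by $\ph$ — carried out for a sufficiently smooth solution (for instance after applying a Friedrichs mollifier) and then passed to the limit, \eqref{eq:enerL^2} being an a priori bound:
\[
\mez\frac{\d}{\d t}\lA\ph\rA_{L^2}^2=\RE\langle F',\ph\rangle-\RE\langle T_V\cdot\nabla\ph,\ph\rangle-\RE\langle iT_\gamma\ph,\ph\rangle.
\]
The first term is handled above. For the transport term, $T_V\cdot\nabla+(T_V\cdot\nabla)^*=-T_{\dive V}+R$ with $R$ of order $0$ and $\lA R\rA_{L^2\to L^2}\les_\A\B$ (Lemma~\ref{est:VB}), so $|\RE\langle T_V\cdot\nabla\ph,\ph\rangle|\les_\A\B\lA\ph\rA_{L^2}^2$. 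For the dispersive term, $\RE\langle iT_\gamma\ph,\ph\rangle=-\IM\langle T_\gamma\ph,\ph\rangle=-\tfrac1{2i}\langle(T_\gamma-(T_\gamma)^*)\ph,\ph\rangle$, and Proposition~\ref{pq} ($(T_\gamma)^*\sim T_\gamma$ in $\Sigma^{3/2}$) shows $T_\gamma-(T_\gamma)^*$ has order $0$ with $L^2\to L^2$ norm $\les_\A\B$, so this term is also $\les_\A\B\lA\ph\rA_{L^2}^2$. Adding the three contributions yields \eqref{eq:enerL^2}. The genuinely delicate step is the order gain in $[iT_\gamma,T_\wp]$: it dictates the definition \eqref{def:wp} of $\wp$ and must be carried through with only $\eta\in\Cs^{\frac52+\eps_*}$ available, while keeping every bound linear in $\B$.
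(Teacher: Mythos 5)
Your proposal is correct and follows essentially the same route as the paper: conjugate \eqref{Phi} with $T_\wp$, bound $T_\wp F$ via \eqref{F-tame}, bound $T_{\partial_t\wp}\Phi$ through $\partial_t\eta=B-V\cdot\nabla\eta$, control $[T_V\cdot\nabla,T_\wp]$ by Theorem \ref{theo:sc}~(ii), exploit the cancellation $\partial_\xi\wp\cdot\partial_x\gamma^{(3/2)}=\partial_\xi\gamma^{(3/2)}\cdot\partial_x\wp$ (which is exactly how the paper applies Proposition \ref{rema:class} twice) for $[T_\gamma,T_\wp]$, return from $\Phi$ to $(\eta,\psi,\ph)$ via Proposition \ref{inverse}/\eqref{old-new}, and close with the $L^2$ energy estimate using \eqref{conjugate:V} and $(T_\gamma)^*\sim T_\gamma$ from Proposition \ref{pq}. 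The only cosmetic differences (the slightly stronger claimed order of the surviving commutator terms and the explicit mollification remark) do not change the argument.
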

\begin{proof}
We see from~\eqref{Phi} that~$\ph$ solves the equation
	\bq\label{varphi}
	\left( \partial_t+T_V\cdot\nabla+iT_{\gamma}\right)\varphi=T_{\wp}F+G
	\eq
	where 
	\[
	G=T_{\partial_t\wp}\Phi+[T_V\cdot\nabla, T_{\wp}]\Phi+i[T_{\gamma}, T_{\wp}]\Phi.
	\]
	 First, remark that since $\partial_{\xi}\wp\cdot\partial_x\gamma^{(3/2)}=\partial_{\xi}\gamma^{(3/2)}\cdot\partial_x\wp$ we can apply Lemma \ref{rema:class} twice: once with $m=s,~ m'=\tdm,~\rho=\tdm$ and once with $m=\tdm,~ m'=s,~\rho=\tdm$ to find 
\[
\Vert [T_{\wp}, T_{\gamma}]\Vert_{H^s\to L^2}\les_\A\B.
\]
On the other hand, Theorem \ref{theo:sc} (ii) applied with $\rho=1$ gives
\[
\Vert [T_V\cdot\nabla, T_{\wp}]\Vert_{H^s\to L^2}\les_\A\B.
\]
Next, we write $\partial_t\wp=L(\nabla\eta, \xi) \partial_t\nabla \eta$ for some smooth function $L$ homogeneous of order $s$ in $\xi$, where by the first equation of \eqref{ww:1} $\Vert  \partial_t\nabla \eta\Vert_{L^\infty}\les_\A \B$. Hence
\[
\Vert T_{\partial_t\wp}\Vert_{H^s\to L^2}\les_\A\B.
\]
Putting the above estimates together leads to
$$\lA G\rA_{L^2}\les_\A\B\lA\Phi\rA_{H^s}.$$
On the other hand,  Proposition~\ref{inverse} applied to $u=\Phi,~a=\wp\in \Sigma^s$ yields
$$\lA\Phi\rA_{H^s}\les_\A\lA\ph\rA_{L^2}+\Vert \eta\Vert_{L^2}+\Vert \psi\Vert_{L^2}.$$
Therefore,
$$\lA G\rA_{L^2}\les_\A \B(\lA\ph\rA_{L^2}+\Vert \eta\Vert_{L^2}+\Vert \psi\Vert_{L^2}).$$
On the other hand, \eqref{F-tame} together with \eqref{old-new} implies
\bq\lA T_\wp F\rA_{L^2}\les_\A \B(\lA\ph\rA_{L^2}+\Vert \eta\Vert_{L^2}+\Vert \psi\Vert_{L^2}).
\eq
Now, using Theorem \ref{theo:sc} (iii) and he proof of Lemma \ref{rema:class} we easily  find that 
\bq\label{conjugate:V}
\Vert (T_V\cdot\nabla)+(T_V\cdot\nabla)^*\Vert_{L^2\to L^2}\les_\A\B.
\eq
On the other hand, according to Proposition \ref{pq}, $(T_\gamma)^*\sim T_\gamma$, so
\bq\label{conjugate:gamma}
\Vert (T_{\gamma})-(T_{\gamma})^*\Vert_{L^2\to L^2}\les_\A\B.
\eq
Therefore, by an $L^2$-energy estimate for  \eqref{varphi} we end up with \eqref{eq:enerL^2}.
\end{proof}
\begin{prop}\label{energy:W}
Set $W=(\eta, \psi)$, $\cH^{r}=H^{r+\mez}\times H^r$. Then, there exists a nondecreasing function $\cF:\xR^+\to \xR^+$ depending only on $s, \eps_*, h$ such that for {\it a.e.} $t\in [0, T]$, 
\[
\Vert W(t)\Vert^2_{\cH^s}\le \cF(P^1(t))\Vert W(0)\Vert^2_{\cH^s}+\cF(P^1(t))\int_0^t \B(r) \Vert W(r)\Vert_{\cH^s}^2\d r
\]
with 
\[
P^1(t):=\sup_{r\in [0, t]}\A(r).
\]
\end{prop}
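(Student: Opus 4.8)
The plan is to convert the $L^2$ energy estimate for the symmetrized unknown $\ph$ from Proposition~\ref{L2varphi} into an estimate for $\lA W\rA_{\cH^s}$, using the two-sided comparison \eqref{new-old}--\eqref{old-new} between $\lA\ph\rA_{L^2}$ and $\lA W\rA_{\cH^s}$, together with an elementary low-order $L^2$ energy estimate for $(\eta,\psi)$ itself. The key structural observation is that $\B\geq1$ by its very definition, so any bound of the form $\cF(\A)\,X$ is \emph{a fortiori} a bound of the form $\cF(\A)\B\,X$; this is what makes all the subprincipal contributions fit into the announced right-hand side.

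Concretely, I would work with the combined energy
\[
\mathcal{E}(t)\defn\lA\ph(t)\rA_{L^2}^2+\lA\eta(t)\rA_{L^2}^2+\lA\psi(t)\rA_{L^2}^2 .
\]
From Proposition~\ref{L2varphi} and Young's inequality one immediately gets $\frac{\d}{\d t}\lA\ph\rA_{L^2}^2\les_\A\B\,\mathcal{E}$. For the low-order part, I would write the system in the form \eqref{ww:1} and test $\partial_t\eta=B-V\cdot\nabla\eta$ against $\eta$, and $\partial_t\psi=-V\cdot\nabla\psi-g\eta+\mez V^2+\mez B^2+H(\eta)$ against $\psi$, in $L^2$ (all terms lie in $L^\infty_tL^2_x$ under \eqref{assu:reg}, so these identities are legitimate for a.e. $t$). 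The transport terms are integrated by parts and controlled by $\lA\dive V\rA_{L^\infty}\mathcal{E}\les_\A\B\,\mathcal{E}$ thanks to \eqref{est:VB:C1}; the curvature term is bounded directly by $\lA H(\eta)\rA_{L^2}\lA\psi\rA_{L^2}\les_\A\lA\eta\rA_{H^{s+\mez}}\lA\psi\rA_{L^2}$; and the remaining terms $\langle B,\eta\rangle$, $g\langle\eta,\psi\rangle$, $\langle\mez V^2+\mez B^2,\psi\rangle$ are estimated, via Lemma~\ref{est:VB}, the decomposition \eqref{decompose:B} and Proposition~\ref{est:DN:Sob}, by $\cF(\A)\lA W\rA_{\cH^s}^2$. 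Since \eqref{old-new} (and the trivial inclusion $\cH^s\hookrightarrow L^2\times L^2$) gives $\lA W\rA_{\cH^s}^2\les_\A\mathcal{E}$, each of these is $\les_\A\mathcal{E}\les_\A\B\,\mathcal{E}$, so altogether
\[
\frac{\d}{\d t}\mathcal{E}\les_\A\B\,\mathcal{E}.
\]

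Integrating in time, bounding $\A(r)\leq P^1(t)$ for $r\in[0,t]$ and using the monotonicity of $\cF$, one obtains
\[
\mathcal{E}(t)\leq\mathcal{E}(0)+\cF(P^1(t))\int_0^t\B(r)\,\mathcal{E}(r)\,\d r .
\]
It then remains to translate back: by \eqref{new-old}, $\mathcal{E}(0)\les_{\A(0)}\lA W(0)\rA_{\cH^s}^2$ and, for $r\leq t$, $\mathcal{E}(r)\les_{\A(r)}\lA W(r)\rA_{\cH^s}^2$; by \eqref{old-new}, $\lA W(t)\rA_{\cH^s}^2\les_{\A(t)}\mathcal{E}(t)$. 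Replacing each occurrence of $\A$ by $P^1(t)$ and absorbing products and composites of nondecreasing functions into a single $\cF$ yields exactly the claimed inequality. The one point requiring genuine care is the low-order estimate: one must check that no term of \eqref{ww:1} forces into the inequality a norm of $(\eta,\psi)$ strictly stronger than the $L^2\times L^2$ level without being compensated — either by the slack already present in $\lA W\rA_{\cH^s}^2\les_\A\mathcal{E}$, or by a factor $\B$ — and here the facts that $\B\geq1$ and that \eqref{old-new} holds are precisely what close the bookkeeping.
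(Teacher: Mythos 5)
Your argument is correct and takes essentially the same route as the paper's own proof: both rest on Proposition \ref{L2varphi}, the comparison estimates \eqref{new-old}--\eqref{old-new}, and a standard low-order $L^2$ energy estimate for system \eqref{ww:1}, followed by integration in time and replacing $\cF(\A(r))$ by $\cF(P^1(t))$. The only difference --- carrying a single combined energy $\mathcal{E}=\lA\ph\rA_{L^2}^2+\lA\eta\rA_{L^2}^2+\lA\psi\rA_{L^2}^2$ instead of first bounding $\lA W\rA_{L^2\times L^2}^2$ and then inserting it into the integrated estimate for $\lA\ph\rA_{L^2}^2$ --- is purely organizational.
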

\begin{proof}  Integrating \eqref{eq:enerL^2} over $[0, t]$ and using \eqref{new-old}-\eqref{old-new}, we obtain
\bq\label{dt:W1}
\begin{aligned}
\Vert W(t)\Vert^2_{\cH^s}&\les_\A \Vert W(t)\Vert^2_{L^2\times L^2}+\lA\ph\rA^2_{L^2}\\
&\les_\A \Vert W(t)\Vert^2_{L^2\times L^2}+\Vert W(0)\Vert^2_{\cH^s}+\int_0^t \cF(\A(r))\B(r) \Vert W(r)\Vert_{\cH^s}^2\d r.
\end{aligned}
\eq
Recall the system \eqref{ww:1} satisfied by $W$:
\[
\left\{
\begin{aligned}
&\partial_t\eta=B-V\cdot\nabla\eta,\\
&\partial_t\psi=-V\cdot\nabla\psi-g\eta+\mez V^2+\mez B^2+H(\eta).
\end{aligned}
\right.
\]
A standard $L^2$ estimate for each equation gives
\[
\frac{\d}{\d t}\Vert W(t)\Vert^2_{L^2\times L^2}\les_\A\B \Vert W(t)\Vert^2_{\cH^s}.
\]
Hence 
\[
\Vert W(t)\Vert^2_{L^2\times L^2}\le \Vert W(0)\Vert^2_{L^2\times L^2}+\int_0^t \cF(\A(r))\B(r) \Vert W(r)\Vert_{\cH^s}^2\d r.
\]
Plugging this into \eqref{dt:W1} we conclude the proof.
\end{proof}
Let us denote the Sobolev norm and the "Strichartz norm" of the solution by
\bq\label{nota:MZ}
\begin{aligned}
&M_{\sigma,T}=\Vert (\eta, \psi)\Vert_{L^{\infty}([0, T]; H^{\sigma+\mez}\times H^\sigma)},\\ &M_{\sigma,0}=\Vert (\eta, \psi)\arrowvert_{t=0}\Vert_{H^{\sigma+\mez}\times H^\sigma},\\
&N_{r, T}=\Vert (\eta, \nabla\psi)\Vert_{L^1([0, T]; W^{r+\mez}\times \Bl^1)}.
\end{aligned}
\eq
We next derive from Proposition \ref{L2varphi} an a priori estimate for t$M_{s,T}$ using the control of $N_{r,T}$.s
\begin{theo}\label{theo:aprioriestimate}
	Let~$d\geq 1$, $h>0$ and 
	$$s>\frac32+\frac d2,\quad r>2.$$
	Then there exists a nondecreasing function~$\cF:\xR^+\to \xR^+$ depending only on $(s, r, h, d)$ such that for all  $T\in [0, 1)$ and all $(\eta,\psi)$ solution to \eqref{ww} with
\begin{align*}
&(\eta,\psi)\in L^{\infty}\lp[0, T]; H^{s+\mez}\times H^s\rp,\\
&(\eta, \nabla\psi)\in L^1\lp[0, T]; W^{r+\mez, \infty}\times \Bl^1\rp.\\
&\inf_{t\in [0,T]}\dist(\eta(t), \Gamma)>h,
\end{align*}
there holds
\bq\label{aprioriestimate} M_{s, T}\leq\cF\lp M_{s,0}+T\cF\big(M_{s, T}\big)+N_{r,T}\rp.
\eq
\end{theo}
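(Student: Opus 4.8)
The plan is to combine the energy estimate of Proposition~\ref{energy:W} with a Gr\"onwall-type argument, after first upgrading the control of $\A$ and the time integral of $\B$ to expressions involving only the Sobolev norm $M_{s,T}$ and the Strichartz norm $N_{r,T}$. Recall that
\[
\A(t)=\lA\eta(t)\rA_{\Cs^{2+\eps_*}}+\lA\eta(t)\rA_{L^2}+\lA\nabla\psi(t)\rA_{\Bl^0}+E(\eta(t),\psi(t)),\qquad
\B(t)=\lA\eta(t)\rA_{\Cs^{\frac52+\eps_*}}+\lA\nabla\psi(t)\rA_{\Bl^1}+1.
\]
First I would fix $\eps_*>0$ small enough (depending on $r$) so that $r>\frac52+\eps_*$; this is possible since $r>2$ is not quite enough — here I should instead use the part of $\A$ below threshold, so more precisely one chooses $\eps_*$ with $2+\eps_*<r$ (note $r>2$) and uses that $\A$ only requires the $\Cs^{2+\eps_*}$-norm of $\eta$ and the $\Bl^0$-norm of $\nabla\psi$. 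Then by Sobolev embedding $H^{s+\mez}\hookrightarrow \Cs^{2+\eps_*}$ (valid since $s>\tmez+\frac d2$ gives $s+\mez>2+\frac d2$, so choose $\eps_*<s-\tmez-\frac d2$) and $H^s\hookrightarrow$ spaces controlling $\nabla\psi\in\Bl^0$, together with Remark~\ref{rema:E} bounding $E(\eta,\psi)$ by $(1+\lA\eta\rA_{W^{1,\infty}})\lA\psi\rA_{H^\mez}$, one gets $\sup_{[0,T]}\A(t)\le\cF(M_{s,T})$. On the other hand, $\B$ is genuinely above the Sobolev threshold: $\lA\eta\rA_{\Cs^{\frac52+\eps_*}}$ and $\lA\nabla\psi\rA_{\Bl^1}$ are exactly the ingredients of $N_{r,T}$ (for $\eta$, using $W^{r+\mez,\infty}\hookrightarrow\Cs^{\frac52+\eps_*}$ when $r>2$, i.e. $r+\mez>\frac52$), so $\int_0^T\B(t)\,dt\le N_{r,T}+T$.

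Second, I would feed these two bounds into Proposition~\ref{energy:W}: with $P^1(t)=\sup_{[0,t]}\A(r)\le\cF(M_{s,T})$ one obtains, for a.e.\ $t\in[0,T]$,
\[
\lA W(t)\rA_{\cH^s}^2\le \cF(M_{s,T})\lA W(0)\rA_{\cH^s}^2+\cF(M_{s,T})\int_0^t\B(r)\lA W(r)\rA_{\cH^s}^2\,dr.
\]
Applying Gr\"onwall's lemma in the variable $t$ (with the integrable weight $\B$) yields
\[
\lA W(t)\rA_{\cH^s}^2\le \cF(M_{s,T})\lA W(0)\rA_{\cH^s}^2\exp\!\Big(\cF(M_{s,T})\int_0^T\B(r)\,dr\Big)\le \cF(M_{s,T})\,M_{s,0}^2\,\exp\!\big(\cF(M_{s,T})(N_{r,T}+T)\big).
\]
Taking the supremum over $t\in[0,T]$ gives $M_{s,T}\le\cF(M_{s,T})\big(M_{s,0}+e^{\cF(M_{s,T})(N_{r,T}+1)}\big)$, which has the right shape but not yet the exact claimed form $M_{s,T}\le\cF(M_{s,0}+T\cF(M_{s,T})+N_{r,T})$.

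Third, to reach the precise statement one must be a little more careful about which nonlinear function of $M_{s,T}$ multiplies what, and in particular isolate the linear-in-time contribution. I would revisit the $L^2$-energy estimate \eqref{eq:enerL^2} and Proposition~\ref{energy:W}, keeping track that the genuinely dangerous time integral $\int_0^t\B\,\lA W\rA_{\cH^s}^2$ can be split: the part of $\B$ equal to $1$ (plus lower-order pieces controlled by $\A$) contributes a term $\le T\cF(M_{s,T})\cdot M_{s,T}^2$ after trivially bounding $\lA W(r)\rA_{\cH^s}\le M_{s,T}$, while the genuinely Strichartz part contributes $N_{r,T}\cdot\cF(M_{s,T})\cdot M_{s,T}^2$. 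Absorbing the $\cF(M_{s,T})$ factors appropriately (they are harmless once we accept $\cF$ highly nonlinear) and then taking square roots, one arrives at $M_{s,T}\le\cF\big(M_{s,0}+T\cF(M_{s,T})+N_{r,T}\big)$, which is \eqref{aprioriestimate}. The main obstacle, and the only place genuine care is needed, is precisely this last bookkeeping step: organizing the Gr\"onwall output so that the time dependence appears only through the benign term $T\cF(M_{s,T})$ rather than being hidden inside an exponential of $N_{r,T}$, and making sure every norm of $\eta$ and $\nabla\psi$ appearing in $\A$ really does sit below the Sobolev regularity $H^{s+\mez}\times H^s$ so that $\sup_{[0,T]}\A$ is controlled by $M_{s,T}$ alone. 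Everything else — the embeddings, Remark~\ref{rema:E}, and the Gr\"onwall lemma — is routine given Proposition~\ref{energy:W}.
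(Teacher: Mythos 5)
Your first two steps follow the paper's own route: choose $\eps_*$ small, bound $\int_0^T\B(t)\,dt\les N_{r,T}+T$, and combine Proposition \ref{energy:W} with Gr\"onwall. The gap is in the third step. Once you have estimated $\sup_{[0,T]}\A\le\cF(M_{s,T})$, your Gr\"onwall output is
\[
M_{s,T}^2\le \cF(M_{s,T})\,M_{s,0}^2\,\exp\bigl(\cF(M_{s,T})(N_{r,T}+T)\bigr),
\]
and no amount of ``absorbing the $\cF(M_{s,T})$ factors'' turns this into \eqref{aprioriestimate}. In the target estimate the only occurrence of $M_{s,T}$ on the right-hand side is damped by the prefactor $T$; in your inequality $\cF(M_{s,T})$ multiplies $M_{s,0}$ and sits in the exponent next to $N_{r,T}$ even when $T$ is tiny. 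Concretely, as soon as $N_{r,T}$ is of order one, an inequality of the form $M^2\le\cF(M)\bigl(M_0^2+(N+T)M^2\bigr)$ (or its exponential version) is satisfied by every sufficiently large $M$, hence carries no information, whereas \eqref{aprioriestimate} still closes by a bootstrap for small $T$. So bounding $\sup_t\A$ by $\cF(M_{s,T})$ alone is precisely what must be avoided; the remark that ``every norm in $\A$ sits below $H^{s+\mez}\times H^s$'' has to be exploited quantitatively, not just qualitatively.

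The missing idea is the paper's treatment of the Gr\"onwall constant. Writing it as $K(T)=\cF\bigl(\sup_{t\le T}\bigl(\Vert(\eta,\psi)(t)\Vert_{\Cs^{2+\eps}\times\Cs^{\eps}}+\Vert(\eta,\psi)(t)\Vert_{L^2\times H^{\mez}}\bigr)\bigr)$, one shows that this lower-order quantity is bounded by $\cF\bigl(M_{s,0}+T\cF(M_{s,T})\bigr)$, not merely by $\cF(M_{s,T})$. This uses the equations themselves: by Proposition \ref{est:DN:Sob} together with Remark \ref{rema:E},
\[
\Vert\eta(t)-\eta(0)\Vert_{H^{s-1}}\le\int_0^t\Vert G(\eta(\tau))\psi(\tau)\Vert_{H^{s-1}}\,d\tau\le T\,\cF(M_{s,T}),
\]
and interpolating between $H^{s-1}$ and $H^{s+\mez}$ yields $\Vert\eta(t)\Vert_{H^{s+\mez-\eps}}\le M_{s,0}+T^{\theta}\cF(M_{s,T})$ for some $\theta\in(0,1)$; the analogous bound for $\Vert\psi(t)\Vert_{H^{s-\eps}}$ follows from the second equation of \eqref{ww:1}. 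Choosing $0<\eps<\min\{\mez,\,r-1,\,s-\tdm-\frac d2\}$, Sobolev embedding controls $\Cs^{2+\eps}\times\Cs^{\eps}$ by $H^{s+\mez-\eps}\times H^{s-\eps}$, and since $T\le 1$ one may dominate $T^{\theta}\cF(M_{s,T})$ by $1+T\,\cF(M_{s,T})^{1/\theta}$, which is of the admissible form. It is exactly this time-integration-plus-interpolation argument, and not an algebraic reshuffling of the Gr\"onwall output, that produces the factor $T$ in front of $\cF(M_{s,T})$ in \eqref{aprioriestimate}.
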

\begin{proof}
Pick 
\[
0<\eps< \mez\min\big\{\mez, r-1, s-\tdm-\frac d2\big\}.
\]
By Remark \ref{rema:E}, $E(\eta, \psi)\le \cF(\Vert \eta\Vert_{\Cs^{1+\eps}})\Vert \psi\Vert_{H^\mez}$.  Therefore, by applying Proposition \ref{energy:W} we obtain 
\[
M_{s,T}\le M_{s, 0}K(T)\exp\lp K(T)\int_0^T\Big(\Vert (\eta, \nabla\psi)(t)\Vert_{\Cs^{\frac 52+\eps}\times \Bl^1}+1\Big)dt\rp
\]
with 
\[
K(T):=\cF\Big(\sup_{t\in [0, T]}\big(\Vert (\eta, \psi)(t)\Vert_{\Cs^{2+\eps}\times \Cs^{\eps}}+\Vert (\eta, \psi)\Vert_{L^2\times H^\mez}\big)\Big).
\]
 Therefore, it suffices to show for all $t\le T$
\[\Vert (\eta, \psi)(t)\Vert_{\Cs^{2+\eps}\times \Cs^{\eps}}+\Vert (\eta, \psi)(t)\Vert_{L^2\times H^\mez} \le \cF\big( M_{s,0}+TM_{s, T}\big).
\]
By Sobolev's embeddings, this reduces to 
\[\Vert (\eta, \psi)(t)\Vert_{H^{s+\mez-\eps}\times H^{s-\eps}} \le \cF\big( M_{s,0}+TM_{s, T}\big)\quad\forall t\le T.
\]
Using the Sobolev estimate for the Dirichlet-Neumann  in Proposition \ref{est:DN:Sob} in conjunction with Remark \ref{rema:E} we get
\bq\label{eta:s-1}
\Vert\eta(t)-\eta(0)\Vert_{H^{s-1}}\le\int_0^t\Vert\partial_t\eta(\tau)\Vert_{H^{s-1}}d\tau= \int_0^t\Vert G(\eta(\tau))\psi(\tau)\Vert_{H^{s-1}}d\tau\le T\cF (M_{s,T}).
\eq
Consequently, it follows by interpolation that
\bq\label{eta:s-mez}
\begin{aligned}
\Vert\eta(t)\Vert_{H^{s+\mez-\eps}}&\le \Vert\eta(0)\Vert_{H^{s+\mez-\eps}}+\Vert\eta(t)-\eta(0)\Vert_{H^{s+\mez-\eps}}
\\   
&\le M_{s,0}+\Vert\eta(t)-\eta(0)\Vert_{H^{s-1}}^\theta\Vert\eta(t)-\eta(0)\Vert_{H^{s+\mez}}^{1-\theta}\quad\theta\in (0, 1)\\&\le M_{s, 0}+T^\theta \cF(M_s(T)).
\end{aligned}
\eq
The estimate for $\Vert \psi(t)\Vert_{H^{s-\eps}}$ follows along the same lines using the second equation of \eqref{ww} (or \eqref{ww:1}) and interpolation.
\end{proof}
\subsection{Blow-up criteria}
Taking~$\sigma>2+\frac d2$ and 
\bq\label{condition:data}
(\eta_0, \psi_0)\in  H^{\sigma+\mez}\times H^\sigma,\quad \dist(\eta_0, \Gamma)>h>0,
\eq
 we know from Theorem $1.1$ in \cite{ABZ1} that there exists a time $ T\in (0, \infty)$ such that the Cauchy problem for system \eqref{ww} with initial data $(\eta_0, \psi_0)$ has a {\it unique solution} 
\[
(\eta, \psi)\in C\lp[0, T];  H^{\sigma+\mez}\times H^\sigma\rp
\]
satisfying 
\[
\sup_{t\in [0, T]}\dist(\eta(t), \Gamma)>\frac{h}{2}.
\]
 The {\it maximal time of existence} $T^*>0$ then can be defined as
\begin{multline}\label{def:T*}
T^*=T^*(\eta_0, \psi_0, \sigma, h):=\sup\left\{ T'>0:  ~\text {the Cauchy problem for \eqref{ww} with data}\right.\\
\left.  ~(\eta_0, \psi_0) ~\text{satisfying}~ \eqref{condition:data}~\text{has a solution}~(\eta, \psi)\in C([0, T'];  H^{\sigma+\mez}\times H^\sigma) \right.\\
\left. ~\text{satisfying}~ \inf_{[0, T']}\dist(\eta(t), \Gamma)>0\right\}.
\end{multline}
It should be emphasized that $T^*$ depends not only on $(\eta_0, \psi_0)$ and $\sigma$ but also on the initial depth $h$. By the uniqueness statement of Proposition $6.4$, \cite{ABZ1} (it is because of this Proposition that we require the separation condition in the definition \eqref{def:T*}) the solution $(\eta, \psi)$ is defined for all $t<T^*$ and 
\[
(\eta, \psi)\in C\lp[0, T^*);  H^{\sigma+\mez}\times H^\sigma\rp,
\]
which will be called the {\it maximal solution}.\\
We recall the following lemma from \cite{Wa2} (see Lemma $9.20$ there).
\begin{lemm}\label{B-H}
Let $\mu>1+\frac d2$. Then, there exists a constant $C>0$ such that 
\[
\Vert u\Vert_{\Bl^1}\le C\big(1+\Vert u\Vert_{\Cs^1}\big)\ln\big(e+\Vert u\Vert^2_{H^\mu}\big)
\]
provided the right-hand side is finite.
\end{lemm}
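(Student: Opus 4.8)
The plan is to prove this Brezis--Gallou\"et--Wainger type logarithmic interpolation inequality by a frequency-splitting argument based on the dyadic decomposition $\mathrm{Id}=\sum_{j\ge 0}\Delta_j$. Writing $\Vert u\Vert_{\Bl^1}=\sum_{j\ge 0}2^j\Vert \Delta_j u\Vert_{L^\infty}$, I will cut this sum at an integer level $N\ge 0$, to be optimized at the end, and estimate the low-frequency block $j\le N$ and the high-frequency block $j>N$ separately.

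For the low frequencies I will simply bound each term by $2^j\Vert \Delta_j u\Vert_{L^\infty}\les\Vert u\Vert_{\Cs^1}$ and sum, getting a contribution $\les(N+1)\Vert u\Vert_{\Cs^1}$. For the high frequencies I will invoke Bernstein's inequality $\Vert \Delta_j u\Vert_{L^\infty}\les 2^{jd/2}\Vert \Delta_j u\Vert_{L^2}$, insert the factor $2^{j\mu}2^{-j\mu}$, and apply the Cauchy--Schwarz inequality; since $\mu>1+\tfrac d2$, the geometric series $\sum_{j>N}2^{2j(1+d/2-\mu)}$ converges and the high-frequency block is bounded by $\les 2^{-N(\mu-1-d/2)}\Vert u\Vert_{H^\mu}$. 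Combining the two pieces yields $\Vert u\Vert_{\Bl^1}\les (N+1)\Vert u\Vert_{\Cs^1}+2^{-N(\mu-1-d/2)}\Vert u\Vert_{H^\mu}$.

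The final step is to choose $N$ as the smallest nonnegative integer with $2^{N(\mu-1-d/2)}\ge e+\Vert u\Vert_{H^\mu}^2$, so that the high-frequency contribution is $\les 1$ while $N\les\ln\big(e+\Vert u\Vert_{H^\mu}^2\big)$; using $\ln(e+\cdot)\ge 1$ to absorb the residual additive constant then produces the stated estimate. There is no serious obstacle here --- the only points requiring a little care are the balancing of the cutoff $N$ and checking that the additive ``$+1$'' can be absorbed into the right-hand side (which is why having the logarithm of $e+\Vert u\Vert_{H^\mu}^2$, a quantity bounded below by $1$, is convenient); the hypothesis that the right-hand side is finite is used precisely to guarantee $u\in H^\mu$, so that such an integer $N$ exists and the high-frequency sum makes sense.
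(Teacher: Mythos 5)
Your proposal is correct and follows essentially the same route as the paper's proof (itself taken from \cite{Wa2}): split the dyadic sum at a level $N$, bound the low frequencies by $(N+1)\Vert u\Vert_{\Cs^1}$, use Bernstein plus Cauchy--Schwarz (H\"older for sequences) and the convergence of $\sum_{j>N}2^{2j(1+\frac d2-\mu)}$ for the high frequencies, and then optimize $N\sim\ln\big(e+\Vert u\Vert_{H^\mu}^2\big)$. The only differences are cosmetic (your explicit choice of $N$ and the remark on absorbing the additive constant via $\ln(e+\cdot)\ge 1$), so nothing further is needed.
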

\begin{proof}
For the sake of completeness, we present the proof of this lemma, taken from \cite{Wa2}. Given an integer $N$, we have by the Berstein inequality
\[
\begin{aligned}
\Vert u\Vert_{\Bl^1}&=\sum_{j=0}^N2^j\Vert \Delta_ju\Vert_{L^\infty}+\sum_{j>N}2^j\Vert \Delta_ju\Vert_{L^\infty}\\
&\le (N+1)\Vert u\Vert_{\Cs^1}+\sum_{j>N}2^{j(1+\frac d2-\mu)}2^{j\mu}\Vert \Delta_ju\Vert_{L^2}.
\end{aligned}
\]
As $1+\frac d2-\mu<0$, it follows by H\"older's inequality for sequence that there exists $C>$ independent of $N$ such that
\[
\Vert u\Vert_{\Bl^1}\le (N+1)\Vert u\Vert_{\Cs^1}+C2^{-N(\mu-1-\frac d2)}(\Vert u\Vert_{H^\mu}+e).
\] 
Choosing $N\sim \ln(e+\Vert u\Vert_{H^\mu})$ so that $2^{-N(\mu-1-\frac d2)}(\Vert u\Vert_{H^\mu}+e)\sim 1$, we obtain the desired inequality.
\end{proof}
\begin{prop}\label{prop:grow}
Let~$d\geq 1, h>0$, $\sigma>2+\frac{d}{2}$, $T>0$. Let
\[
(\eta, \psi)\in  C([0, T]; H^{\sigma+\mez}\times H^\sigma),\quad \inf_{t\in [0, T]}\dist(\eta(t), \Gamma)>h>0
\]
be a solution to \eqref{ww}. Fix $\eps_*\in (0, \sigma-\tdm-\frac d2)$. Then there exists a nondecreasing function~$\cF:\xR^+\to \xR^+$ depending only on $(\sigma, \eps_*, h, d)$ such that
\[
M^2_{\sigma, T}\le \cF(P^2(t))\big(M^2_{\sigma, 0}+2e\big)e^{e^{\int_0^tQ(r)dr}}-2e
\]
with 
\begin{align*}
&Q(r):=1+\Vert \nabla\psi(r)\Vert_{\Cs^1}+\Vert \eta(r)\Vert_{\Cs^{2+\eps_*}},\\
&P^2(t):=\sup_{r\in [0, t]}\big(\Vert \eta(r)\Vert_{\Cs^{2+\eps_*}}+\Vert \nabla \psi(r)\Vert_{\Bl^0}+\cH(0)\big).
\end{align*}
\end{prop}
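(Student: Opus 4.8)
The plan is to turn the Sobolev energy estimate of Proposition~\ref{energy:W} (read with $s$ replaced by $\sigma$) into a closed, nonlinear Gr\"onwall inequality for $M_{\sigma,\cdot}^2$, by absorbing the top-order weight $\B$ into the lower-order quantities $Q$ and $P^2$ at the price of a logarithm, and then integrating that inequality. First I would check that the hypotheses of Proposition~\ref{energy:W} — i.e. \eqref{assu:reg} with $(s,\eps_*)$ there replaced by $(\sigma,\eps_*)$ — are met: since $\sigma>2+\frac d2$ one has the embeddings $H^{\sigma+\mez}\hookrightarrow\Cs^{\sigma+\mez-\frac d2}\hookrightarrow\Cs^{\frac{5}{2}+\eps_*}$ (for $\eps_*$ small) and $H^{\sigma-1}\hookrightarrow\Bl^{1}$, so that $\eta\in L^1([0,T];\Cs^{\frac{5}{2}+\eps_*})$, $\nabla\psi\in L^1([0,T];\Bl^{1})$, $\sigma>\tdm+\frac d2$, and the separation condition holds by assumption. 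Proposition~\ref{energy:W} then gives, for every $\tau\le t$,
\[
\Vert W(\tau)\Vert^2_{\cH^\sigma}\le \cF(P^1(\tau))\Vert W(0)\Vert^2_{\cH^\sigma}+\cF(P^1(\tau))\int_0^\tau \B(r)\,\Vert W(r)\Vert^2_{\cH^\sigma}\,dr,\qquad W=(\eta,\psi).
\]

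Next I would eliminate $P^1$ and $\B$ in favour of $P^2$, $Q$ and $M_{\sigma,\cdot}$. For $P^1$: in $\A=\Vert\eta\Vert_{\Cs^{2+\eps_*}}+\Vert\eta\Vert_{L^2}+\Vert\nabla\psi\Vert_{\Bl^0}+E(\eta,\psi)$ the first and third terms already belong to $P^2$, while the conservation of the Hamiltonian $\cH$ of \eqref{Hamiltonian} controls the last two: Proposition~\ref{identity} yields $E(\eta,\psi)^2\le\Vert\nabla_{x,y}\phi\Vert^2_{L^2(\Omega)}=\int\psi\,G(\eta)\psi\le 2\cH(t)=2\cH(0)$, and $\tfrac g2\Vert\eta\Vert_{L^2}^2\le\cH(0)$, so $\A(r)\le\cF(P^2(t))$ and hence $\cF(P^1(\tau))\le\cF(P^2(t))$ for all $\tau\le t$. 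For the weight $\B=\Vert\eta\Vert_{\Cs^{\frac{5}{2}+\eps_*}}+\Vert\nabla\psi\Vert_{\Bl^1}+1$, the $\eta$-term is (up to the additive constant) the $\eta$-contribution of $Q$, and Lemma~\ref{B-H} applied to $\nabla\psi$ with $\mu=\sigma-1>1+\frac d2$ trades the Besov norm for a Zygmund norm and a logarithm:
\[
\Vert\nabla\psi(r)\Vert_{\Bl^1}\le C\big(1+\Vert\nabla\psi(r)\Vert_{\Cs^1}\big)\ln\!\big(e+\Vert\psi(r)\Vert^2_{H^\sigma}\big)\le C\,Q(r)\ln\!\big(e+M^2_{\sigma,r}\big);
\]
together with the trivial bound $1\le Q(r)\ln(e+M^2_{\sigma,r})$ this gives $\B(r)\le\cF(P^2(t))\,Q(r)\ln(e+M^2_{\sigma,r})$.

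Inserting these bounds into the energy inequality and taking the supremum over $\tau\in[0,t]$ (the right-hand side being non-decreasing in $\tau$), one obtains, with $m(\tau):=M^2_{\sigma,\tau}+2e$,
\[
m(\tau)\le \cF(P^2(t))\,m(0)+\cF(P^2(t))\int_0^\tau Q(r)\,\ln\!\big(m(r)\big)\,m(r)\,dr,\qquad \tau\le t.
\]
Calling $\Psi$ the non-decreasing $C^1$ function on the right, one has $m\le\cF(P^2(t))\Psi$, hence $\Psi'(\tau)=Q(\tau)\ln(m(\tau))\,m(\tau)\le\cF(P^2(t))\,Q(\tau)\,\Psi(\tau)\ln\!\big(\cF(P^2(t))\Psi(\tau)\big)$; dividing by $\Psi$, integrating the resulting linear differential inequality for $\ln(\cF(P^2(t))\Psi)$, and exponentiating twice produces the announced double-exponential bound for $m(t)=M^2_{\sigma,t}+2e$, whence the stated inequality after subtracting $2e$. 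No single step here is the obstacle — the analytic heart (the tame symmetrization and the $L^2$-energy estimate for $\varphi$ of Proposition~\ref{L2varphi}, together with the Besov/Zygmund estimates for the Dirichlet--Neumann operator of Section~\ref{section:DN}) has already been carried out. The delicate points are rather (i) the reduction of $P^1$ to $P^2$, which forces one to use conservation of energy and Proposition~\ref{identity} instead of Sobolev norms of $\psi$ on the right-hand side, and (ii) the Gr\"onwall bookkeeping: since $M_{\sigma,\cdot}$ is only a supremum and not differentiable one must run the argument through the differentiable majorant $\Psi$, and one must keep track of how the $\cF(P^2(t))$ factors stack up inside the iterated exponential so as to land exactly on the $e^{e^{\int Q}}$ form.
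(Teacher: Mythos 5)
Your proposal is correct and follows essentially the same route as the paper: apply Proposition~\ref{energy:W} with $s=\sigma$, reduce $P^1$ to $P^2$ via conservation of the Hamiltonian together with Proposition~\ref{identity}, trade $\Vert\nabla\psi\Vert_{\Bl^1}$ for $(1+\Vert\nabla\psi\Vert_{\Cs^1})\ln(2e+\Vert W\Vert^2_{\cH^\sigma})$ by Lemma~\ref{B-H}, and close with a BKM-type logarithmic Gr\"onwall argument yielding the double exponential. The only caveat is your claim that the $\Vert\eta\Vert_{\Cs^{\frac52+\eps_*}}$ term of $\B$ is ``the $\eta$-contribution of $Q$'': as stated $Q$ only carries $\Vert\eta\Vert_{\Cs^{2+\eps_*}}$, and a $\tfrac12$-derivative gap cannot be absorbed into a logarithm of the $H^{\sigma+\mez}$ norm, so this step really requires reading $Q$ with the $\Cs^{\frac52+\eps_*}$ norm (as in Remark~1.4 and in the application to Theorem~\ref{theo:blowup}), which is exactly the implicit reading in the paper's own proof, so you are no less rigorous than the original argument.
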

\begin{proof}
 Recall the definition of $\A(t)$:
\[
\A(t)=\Vert \eta\Vert_{\Cs^{2+\eps_*}}+\Vert \nabla \psi\Vert_{\Bl^0}+\Vert \eta\Vert_{L^2}+E(\eta, \psi).
\]
 Proposition \ref{identity} tells us 
\[
E(\eta(t) , \psi(t))\le \int_{\xR^d}\psi G(\eta)\psi,
\]
hence
\[
\Vert \eta\Vert_{L^2}+E(\eta, \psi)\le \cH(t)=\cH(0),
\]
$\cH(t)$ being the total energy \eqref{Hamiltonian} at time $t$.
 Here, we remark that the conservation of $\cH$ follows by proving $\frac{\d}{\d t}\cH(t)=0$, which can be justified under our the regularity $\cH^s$. Therefore, Proposition \ref{energy:W} applied with $s=\sigma>\tdm+\frac d2$ yields
\bq\label{energy:W:2}
\Vert W(t)\Vert^2_{\cH^\sigma}\le \cF(P^2(t))\Vert W(0)\Vert^2_{\cH^\sigma}+\cF(P^2(t))\int_0^t \B(r) \Vert W(r)\Vert_{\cH^\sigma}^2\d r
\eq
with 
\[
P^2(t):=\sup_{r\in [0, t]}\big(\Vert \eta(r)\Vert_{\Cs^{2+\eps_*}}+\Vert \nabla \psi(r)\Vert_{\Bl^0}+\cH(0)\big).
\]
Next, as $\nabla\psi\in H^{s-1}$ with $s-1>1+\frac{d}{2}$, we can apply Lemma \ref{B-H} to have
\[
\Vert \nabla\psi\Vert_{\Bl^1}\le C\big(1+\Vert \nabla\psi\Vert_{\Cs^1}\big)\ln\big(e+\Vert \psi\Vert_{H^{\sigma}}\big)\le C\big(1+\Vert \nabla\psi\Vert_{\Cs^1}\big) \ln\big(2e+\Vert \psi\Vert^2_{H^{\sigma}}\big).
\]
Consequently,
\[
\B(r)\le C\big(1+\Vert \nabla\psi(r)\Vert_{\Cs^1}+\Vert \eta(r)\Vert_{\Cs^{2+\eps_*}}\big)\ln\big(2e+\Vert W(r)\Vert^2_{\cH^{\sigma}}\big).
\]
In view of \eqref{energy:W:2}, this implies
\begin{align*}
\Vert W(t)\Vert^2_{\cH^\sigma}&\le \cF(P^2(t))\Vert W(0)\Vert^2_{\cH^\sigma}+\\
&\cF(P^2(t))\int_0^t  Q(r)\ln\big(2e+\Vert W(r)\Vert^2_{\cH^{\sigma}}\big)\Vert W(r)\Vert_{\cH^s}^2\d r
\end{align*}
with $Q(r):=1+\Vert \nabla\psi(r)\Vert_{\Cs^1}+\Vert \eta(r)\Vert_{\Cs^{2+\eps_*}}$.
Finally, using a Gr\"onwall type argument as in \cite{BKM} we conclude that
\[
\Vert W(t)\Vert^2_{\cH^\sigma}\le \cF(P^2(t))\big(\Vert W(0)\Vert^2_{\cH^\sigma}+2e\big)\exp\big(e^{\cF(P^2(t))\int_0^tQ(r)dr}\big)-2e.
\]
\end{proof}
\begin{rema}\label{rema:expo}
Using \eqref{energy:W:2} and Gr\"onwall's lemma we obtain the exponential bound
\[
\Vert W(t)\Vert^2_{\cH^\sigma}\le \cF(P^2(t))\Vert W(0)\Vert^2_{\cH^\sigma}\exp\big(\cF(P^2(t))\int_0^t\B(r)dr\big)
\]
provided $\sigma>\tdm+\frac{d}{2}$ only.
\end{rema}
\begin{theo}\label{theo:blowup}
	Let~$d\geq 1, h>0$, and $\sigma>2+\frac{d}{2}$. Let
\[
(\eta_0, \psi_0)\in  H^{\sigma+\mez}\times H^\sigma,\quad \dist(\eta_0, \Gamma)>h>0.
\]
Let $T^*=T^*(\eta_0, \psi_0, \sigma, h)$ be the maximal time of existence defined by \eqref{def:T*} and
 \bq
(\eta,\psi)\in L^{\infty}\lp[0, T^*); H^{\sigma+\mez}\times H^\sigma\rp
\eq
be the maximal solution of ~\eqref{ww} with prescribed data $(\eta_0, \psi_0)$. If ~$T^*$ is finite, then for all $\eps>0$, it holds that
\bq\label{criterion:blowup} P_\eps(T^*)+\int_0^{T^*}Q_\eps(t)dt+\frac{1}{h(T^*)}=+\infty,
\eq
where 
\[
\begin{aligned}
&P_\eps(T^*)=\sup_{t\in [0, T^*)}\Vert \eta(t)\Vert_{\Cs^{2+\eps}}+\Vert \nabla \psi(t)\Vert_{\Bl^0},\\
&Q_\eps(t)=\Vert \eta(t)\Vert_{\Cs^{\frac{5}{2}+\eps}}+\Vert \nabla\psi(t)\Vert_{\Cs^1},\\
&h(T^*)=\inf_{t\in [0, T^*)}\dist(\eta(t),\Gamma).
\end{aligned}
\]
Consequently, if $T^*$ is finite then for all $\eps>0$
\bq\label{criter2}
 P^0_\eps(T^*)+\int_0^{T^*}Q^0_\eps(t)dt+\frac{1}{h(T^*)}=+\infty,
\eq
where
\[
\begin{aligned}
&P^0_\eps(T)=\sup_{t\in [0, T]}\Vert \eta(t)\Vert_{\Cs^{2+\eps}}+\Vert (V, B)(t)\Vert_{\Bl^0},\\
&Q^0_\eps(t)=\Vert \eta(t)\Vert_{\Cs^{\frac{5}{2}+\eps}}+\Vert (V, B)(t)\Vert_{\Cs^1}.
\end{aligned}
\]
\end{theo}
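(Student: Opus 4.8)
The plan is to argue by contradiction, feeding the hypothesis ``$T^*<\infty$ and the left-hand side of \eqref{criterion:blowup} is finite'' into the growth estimate of Proposition~\ref{prop:grow} to obtain a uniform bound on $\Vert(\eta,\psi)\Vert_{L^\infty([0,T^*);H^{\sigma+\mez}\times H^\sigma)}$, then extending the solution continuously up to $t=T^*$ and re-applying the local existence theorem of \cite{ABZ1} at time $T^*$, which contradicts the definition \eqref{def:T*} of $T^*$. Since $\eps\mapsto P_\eps(T^*)$ and $\eps\mapsto Q_\eps$ are nondecreasing (a larger $\eps$ means a stronger Zygmund norm on $\eta$), one has $P_{\eps'}(T^*)\le P_\eps(T^*)$ and $Q_{\eps'}\le Q_\eps$ for $\eps'\le\eps$; hence it suffices to prove \eqref{criterion:blowup} for $\eps$ as small as we wish, and in particular we may assume $0<\eps<\sigma-\tdm-\frac d2$, the admissible range in Proposition~\ref{prop:grow}.

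So assume $T^*<\infty$ and, for such an $\eps$, that $P:=\sup_{t\in[0,T^*)}\big(\Vert\eta(t)\Vert_{\Cs^{2+\eps}}+\Vert\nabla\psi(t)\Vert_{\Bl^0}\big)<\infty$, $\int_0^{T^*}Q_\eps(t)\,\d t<\infty$ and $h(T^*)>0$. I would apply Proposition~\ref{prop:grow} on each interval $[0,T]$, $T<T^*$, with $\eps_*=\eps$ (note $\inf_{[0,T]}\dist(\eta(t),\Gamma)\ge h(T^*)>0$). Since $\Cs^{\frac{5}{2}+\eps}\hookrightarrow\Cs^{2+\eps}$, the quantity $Q(r)=1+\Vert\nabla\psi(r)\Vert_{\Cs^1}+\Vert\eta(r)\Vert_{\Cs^{2+\eps}}$ appearing there satisfies $Q(r)\le 1+2Q_\eps(r)$, so $\int_0^{T^*}Q(r)\,\d r<\infty$; and $P^2(t)=\sup_{r\le t}\big(\Vert\eta(r)\Vert_{\Cs^{2+\eps}}+\Vert\nabla\psi(r)\Vert_{\Bl^0}+\cH(0)\big)\le P+\cH(0)<\infty$, the initial energy $\cH(0)$ given by \eqref{Hamiltonian} being finite because $(\eta_0,\psi_0)\in H^{\sigma+\mez}\times H^\sigma$. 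Thus Proposition~\ref{prop:grow} yields a finite constant $\mathcal M$, independent of $T<T^*$, with $\Vert(\eta,\psi)\Vert_{L^\infty([0,T^*);H^{\sigma+\mez}\times H^\sigma)}\le\mathcal M$.

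Now for the continuation. From $\partial_t\eta=G(\eta)\psi$ together with Proposition~\ref{est:DN:Sob} (applied with $s=\sigma$, using Remark~\ref{rema:E}) we get $\sup_{t<T^*}\Vert\partial_t\eta(t)\Vert_{H^{\sigma-1}}<\infty$, so $\eta$ is Lipschitz in time into $H^{\sigma-1}$ and extends to $\eta(T^*)\in H^{\sigma-1}$; interpolating with the uniform $H^{\sigma+\mez}$ bound gives $\eta\in C([0,T^*];H^{\sigma+\mez-\delta})$ for every $\delta>0$, and weak-$\ast$ compactness gives $\eta(T^*)\in H^{\sigma+\mez}$. Writing the second equation as in \eqref{ww:1} and noting that its worst term $H(\eta)$ lies in $H^{\sigma-\tdm}$, we likewise obtain $\psi\in C([0,T^*];H^{\sigma-\delta})$ and $\psi(T^*)\in H^\sigma$. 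Since $H^{\sigma+\mez-\delta}\hookrightarrow W^{1,\infty}$ for $\delta$ small, $\eta(t)\to\eta(T^*)$ in $W^{1,\infty}$, so the surface at time $T^*$ still satisfies the separation condition with $\dist(\eta(T^*),\Gamma)\ge h(T^*)>0$. Hence $(\eta(T^*),\psi(T^*))$ is an admissible Cauchy datum, and the local existence theorem of \cite{ABZ1} applied at time $T^*$, glued to $(\eta,\psi)$ on $[0,T^*)$ — the two pieces being compatible by the uniqueness statement of \cite{ABZ1} — produces a solution of \eqref{ww} on $[0,T^*+\tau]$ for some $\tau>0$ with $\inf_{[0,T^*+\tau]}\dist(\eta(t),\Gamma)>0$, contradicting \eqref{def:T*}. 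This proves \eqref{criterion:blowup}, and by the monotonicity in $\eps$ noted above it holds for all $\eps>0$. Given Proposition~\ref{prop:grow}, which carries the analytic weight, I expect this continuation step to be the delicate one: one must upgrade $(\eta(t),\psi(t))\to(\eta(T^*),\psi(T^*))$ from weak (resp.\ low-norm) convergence to strong convergence in $H^{\sigma+\mez}\times H^\sigma$ at $t=T^*$, so that the glued map is genuinely a $C([0,T^*+\tau];H^{\sigma+\mez}\times H^\sigma)$ solution; this is handled by running \cite{ABZ1} backward from $T^*$ and comparing, via a Gr\"onwall/stability estimate in a lower norm $H^{\sigma'+\mez}\times H^{\sigma'}$ with $2+\frac d2<\sigma'<\sigma$, for solutions bounded in $H^{\sigma+\mez}\times H^\sigma$.

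It remains to deduce \eqref{criter2} from \eqref{criterion:blowup}. By \eqref{BV} one has $\nabla\psi=V+B\nabla\eta$. Were the left-hand side of \eqref{criter2} finite, then $\sup_{t<T^*}\Vert\eta(t)\Vert_{\Cs^{2+\eps}}$ and $\sup_{t<T^*}\Vert(V,B)(t)\Vert_{\Bl^0}$ would be finite; the Bony decomposition together with the usual paraproduct and remainder estimates — $\Bl^0$ and $\Cs^1$ being stable under multiplication by a function with a small surplus of H\"older regularity, and $\Vert\nabla\eta\Vert_{\Cs^{\eps}},\Vert\nabla\eta\Vert_{\Cs^{1+\eps}}$ being controlled by $\Vert\eta\Vert_{\Cs^{2+\eps}}$ — would give $\Vert\nabla\psi\Vert_{\Bl^0}\le\cF(P^0_\eps(T^*))$ and $\Vert\nabla\psi\Vert_{\Cs^1}\le\cF(P^0_\eps(T^*))\big(1+\Vert(V,B)\Vert_{\Cs^1}+\Vert\eta\Vert_{\Cs^{\frac{5}{2}+\eps}}\big)\le\cF(P^0_\eps(T^*))\big(1+Q^0_\eps\big)$. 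Since $\Vert\eta\Vert_{\Cs^{2+\eps}}$ and $\Vert\eta\Vert_{\Cs^{\frac{5}{2}+\eps}}$ occur in both families of quantities and $h(T^*)$ is unchanged, we would obtain $P_\eps(T^*)<\infty$, $\int_0^{T^*}Q_\eps(t)\,\d t\le\cF(P^0_\eps(T^*))\big(T^*+\int_0^{T^*}Q^0_\eps(t)\,\d t\big)<\infty$ and $h(T^*)>0$, contradicting \eqref{criterion:blowup}. Hence \eqref{criter2} holds for every $\eps>0$.
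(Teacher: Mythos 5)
Your proof follows the paper's strategy through the main analytic step: reduce to small $\eps$ by monotonicity of the norms in $\eps$, assume the left-hand side of \eqref{criterion:blowup} is finite, note that $h(T)\ge h(T^*)>0$ and that $\cH(0)<\infty$ controls $E(\eta,\psi)+\Vert\eta\Vert_{L^2}$, and apply Proposition \ref{prop:grow} to obtain a bound $M_{\sigma,T}\le L$ uniform in $T<T^*$; likewise your deduction of \eqref{criter2} from \eqref{criterion:blowup} via $\nabla\psi=V+B\nabla\eta$ and paraproduct estimates is exactly the paper's \eqref{psi-VB}. Where you genuinely diverge is the continuation step, and there the paper's device is both simpler and avoids the difficulty you flag: it never extends the solution to $t=T^*$. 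Instead it uses the a priori estimate of Proposition $5.2$ in \cite{ABZ1}, which yields a local existence time $T_1$ that is \emph{uniform} for all data lying in the ball $B(0,L)$ of $H^{\sigma+\mez}\times H^\sigma$ and satisfying the separation condition with depth at least $1/K$; restarting from the datum $(\eta,\psi)(T^*-\tfrac{T_1}{2})$ then prolongs the solution to $T^*+\tfrac{T_1}{2}$ and contradicts \eqref{def:T*}, with no need for any convergence as $t\uparrow T^*$.

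By contrast, in your route the crucial point is only sketched, and it is not a cosmetic one: since \eqref{def:T*} asks for a solution in $C([0,T'];H^{\sigma+\mez}\times H^\sigma)$, producing $(\eta(T^*),\psi(T^*))\in H^{\sigma+\mez}\times H^\sigma$ by weak-$*$ compactness and continuity into $H^{\sigma-1}$ does not suffice — you must prove strong continuity of $(\eta,\psi)$ in the top norm up to $T^*$ before gluing, otherwise the glued map need not lie in the required class. Your proposed repair (solve backward from $T^*$, which is legitimate since the system is reversible under $(t,\psi)\mapsto(-t,-\psi)$, and identify the backward solution with the original one by a stability estimate in $H^{\sigma'+\mez}\times H^{\sigma'}$, $2+\frac d2<\sigma'<\sigma$, valid for solutions bounded in $H^{\sigma+\mez}\times H^\sigma$, using that the original solution does converge strongly in the lower norm as $t\uparrow T^*$ by interpolation) can indeed be completed with the contraction/uniqueness estimates of \cite{ABZ1}, but it is markedly heavier than the paper's argument and, as written, is an announced plan rather than a proof. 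I recommend replacing that step by the uniform-existence-time argument above; if you prefer your endpoint-extension route, you must carry out the backward-solving and lower-norm identification in detail.
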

\begin{proof}
Suppose that $T^*<+\infty$ and for some $\eps>0$
\[
K:=P_\eps(T^*)+\int_0^{T^*}Q_\eps(t)dt+\frac{1}{h(T^*)}<+\infty.
\]
Let $T\in [0, T^*)$ be arbitrary then $h(T)\ge h(T^*)\ge 1/K>0$. As $\sigma>2+\frac d2$, it follows from Proposition \ref{prop:grow} that
\bq\label{estforblowup}
 M_{\sigma,T}\le \cF\lp  M_{\sigma, 0}+\cH(0)+P_\eps(T) +\int_0^TQ_\eps(t)dt\rp=:L
\eq
for some increasing function $\cF:\xR^+\to \xR^+$ depending on $1/K$. On the other hand, from the a priori estimate in Proposition $5.2$, \cite{ABZ1} we deduce that the existence time for local solutions can be chosen uniformly for data lying in a bounded subset of $H^{\sigma+\mez}\times H^\sigma$ and satisfy uniformly the separation condition $(H_0)$. In particular, call $T_1$ be the time of existence for data in the ball $B(0, L)$ of  $H^{\sigma+\mez}\times H^\sigma$ whose surface is away from the bottom a distance (at least) $1/K$. Choosing $\eta(T^*-\frac{T_1}{2})$ as such a datum we can prolong the solution up to the time $T^*+\frac{T_1}{2}$. This contradicts the maximality of $T^*$ and thus the blow-up criterion \eqref{criterion:blowup} is proved.\\
 Finally, \eqref{criter2} is a consequence of \eqref{criterion:blowup} and the facts that
\bq\label{psi-VB}
\begin{aligned}
&\Vert \nabla \psi\Vert_{\Bl^0}\le C\Vert V\Vert_{\Bl^0}+C\Vert B\Vert_{\Bl^0}\Vert \nabla \eta\Vert_{\Cs^{\eps_*}},\\
&\Vert \nabla \psi\Vert_{\Bl^1}\le C\Vert V\Vert_{\Bl^1}+C\Vert B\Vert_{\Bl^1}\Vert \nabla \eta\Vert_{\Cs^{1+\eps_*}}.
\end{aligned}
\eq
\end{proof}
\hk Now we give the proof of Corollary \ref{intro:prop:regularity} which is stated again for the reader's convenience.
\begin{coro}\label{theo:regularity}
Let $T\in (0, +\infty)$ and $(\eta, \psi)$ be a distributional solution to system \eqref{ww} on the time interval $[0, T]$  such that  $\inf_{[0, T]}\dist(\eta(t), \Gamma)>0.$ Then the following property holds: if one knows a priori that for some $\eps_0>0$
\bq\label{condition:reg}
\sup_{[0, T]}\| (\eta(t), \nabla\psi(t))\|_{\Cs^{\frac{5}{2}+\eps_0}\times \Cs^{1}}<+\infty
\eq
 then $(\eta(0), \psi(0))\in  H^\infty(\xR^d)^2$ implies that $(\eta, \psi)\in  L^\infty([0, T]; H^\infty(\xR^d))^2$.
\end{coro}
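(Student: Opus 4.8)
\emph{Plan.} The natural route is to promote the a priori Zygmund control \eqref{condition:reg} into an exclusion of blow-up at \emph{every} Sobolev level, using the blow-up criterion of Theorem~\ref{theo:blowup} (equivalently the double-exponential bound of Proposition~\ref{prop:grow}). The starting remark is that \eqref{condition:reg}, together with the embeddings \eqref{CBC} (in particular $\Cs^{\frac52+\eps_0}\hookrightarrow\Cs^{2+\eps}$ and $\Cs^1\hookrightarrow\Bl^0$ for $0<\eps<\eps_0$) and the separation hypothesis $h_0:=\inf_{[0,T]}\dist(\eta(t),\Gamma)>0$, makes the three quantities $P_\eps(T)$, $\int_0^T Q_\eps(t)\,\d t$ and $1/h(T)$ occurring in Theorem~\ref{theo:blowup} finite, with a bound $K$ depending only on the data in \eqref{condition:reg} and on $h_0$.

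Next I would fix $\sigma>2+\frac d2$ and feed the smooth datum $(\eta(0),\psi(0))\in H^\infty\subset H^{\sigma+\mez}\times H^\sigma$, which is separated from the bottom at $t=0$, into the local Cauchy theory of \cite{ABZ1}; this produces a maximal solution in $C([0,T^*_\sigma);H^{\sigma+\mez}\times H^\sigma)$, with $T^*_\sigma$ defined by \eqref{def:T*}. The point of this step is to have an object to which Theorem~\ref{theo:blowup} literally applies. I would then identify this regular solution with the given distributional one on $[0,\min(T^*_\sigma,T))$ by invoking the uniqueness statement for \eqref{ww} (Proposition~$6.4$ of \cite{ABZ1}); here one uses that \eqref{condition:reg} keeps $\eta$ in $W^{2,\infty}$ and $\nabla\psi$ bounded, hence the velocity trace Lipschitz up to the boundary, which is the regularity needed for such a weak–strong comparison. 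Consequently the finiteness from the first step transfers to the regular solution on every $[0,T']$ with $T'<\min(T^*_\sigma,T)$.

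Finally I would argue by contradiction: if $T^*_\sigma\le T$, then (since $\sigma>2+\frac d2$) Theorem~\ref{theo:blowup} forces $P_\eps(T^*_\sigma)+\int_0^{T^*_\sigma}Q_\eps(t)\,\d t+1/h(T^*_\sigma)=+\infty$, whereas each term on the left is $\le K<\infty$ by the previous step (using $T^*_\sigma\le T$ and $\dist(\eta(t),\Gamma)\ge h_0$); contradiction. Hence $T^*_\sigma>T$, so $(\eta,\psi)\in C([0,T];H^{\sigma+\mez}\times H^\sigma)$; since $\sigma$ is arbitrary this gives $(\eta,\psi)\in L^\infty([0,T];H^\infty(\xR^d))^2$. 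The scheme is uniform in $\sigma$ precisely because the control quantities $P_\eps$, $Q_\eps$, $h$ in the blow-up criterion do not depend on $\sigma$.

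The step I expect to be the real obstacle is the identification of the given distributional solution with the regular solution built from the local theory, i.e. a weak–strong uniqueness statement at the regularity level dictated by \eqref{condition:reg}. Everything else is bookkeeping: the embeddings are immediate from \eqref{CBC} and Bernstein, and the continuation/blow-up mechanism is exactly Theorem~\ref{theo:blowup}. If instead one simply \emph{defines} the solution in the statement to be the maximal $H^\sigma$ solution issued from the smooth data, the identification step is vacuous and the argument collapses to the contradiction in the preceding paragraph.
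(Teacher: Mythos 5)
Your argument is correct and is essentially the paper's own proof: fix an arbitrary $\sigma>2+\frac d2$, launch the local Cauchy theory of \cite{ABZ1} from the smooth datum, identify the resulting maximal solution with the given one via the uniqueness statement of that theory, and exclude $T_\sigma\le T$ by the blow-up criterion of Theorem \ref{theo:blowup}, since \eqref{condition:reg} and the separation hypothesis keep all the controlling quantities finite; arbitrariness of $\sigma$ then gives $H^\infty$. The weak--strong identification you single out as the delicate point is treated in the paper in exactly the same (brief) way, by invoking the uniqueness statement of the $H^\sigma$ Cauchy theory.
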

\begin{proof}
Take $\sigma>2+\frac d2$ be arbitrary, it suffices to prove that if $(\eta(0), \psi(0))\in  H^{\sigma+\mez}\times H^{\sigma}$ then $(\eta, \psi)\in  L^\infty([0, T]; H^{\sigma+\mez}\times H^{\sigma})$. Since $\sigma>2+\frac{d}{2}$, according to the Cauchy theory in \cite{ABZ1} one has a maximal solution 
\[
(\eta, \psi)\in  L^\infty([0, T_\sigma); H^{\sigma+\mez}\times H^{\sigma}).
\]
By the uniqueness statement of this  Cauchy theory, we only need to show that $T_\sigma> T$. Suppose that $T_\sigma\le T<+\infty$ we get by applying \eqref{criter2} that for all $\eps>0$
\[
\sup_{t\in [0, T_\sigma)}\Vert(\eta, \nabla\psi)(t)\Vert_{\Cs^{\frac 52+\eps}\times \Cs^{1}}+\frac{1}{h(T_\sigma)}=+\infty.
\]
On the other hand, by our assumption, $h(T')\ge h(T_\sigma)\ge h(T)>0$ (by the assumption) for all $T'<T_\sigma$, hence for all $\eps>0$,
\[
\sup_{t\in [0, T_s)}\Vert(\eta, \nabla\psi)(t)\Vert_{\Cs^{\frac 52+\eps}\times \Cs^{1}}=+\infty,
\]
which contradicts \eqref{condition:reg}.
\end{proof}

\section{Contraction of the solution map}\label{section:contraction}
Our goal in this section is to prove a contraction estimate for two solutions to \eqref{ww} in  weaker norms. This will be used in the proof of the convergence of the approximate scheme and in establishing uniqueness for the Cauchy theory in our companion paper \cite{NgPo}. On the other hand, the proof will make use of the Strichartz estimate in the same paper. To get started, we have by straightforward computations the following assertion: $(\eta, \psi)$ is a solution to system \eqref{ww} if and only if 
\[
(\partial_t+T_V\cdot\nabla+\mathcal{L})\begin{pmatrix}\eta \\ \psi\end{pmatrix} =f(\eta, \psi)
\]
with
\bq\label{defi:L}
\mathcal{L}:=
\begin{pmatrix}
I &  0\\
T_B& I
\end{pmatrix}
\begin{pmatrix}
0 &  -T_\lambda\\
T_{\l}& 0
\end{pmatrix}
\begin{pmatrix}
I &  0\\
-T_B& I
\end{pmatrix},\quad
f(\eta, \psi):=\begin{pmatrix}
I &  0\\
T_B& I
\end{pmatrix}
\begin{pmatrix}
f^1\\
f^2
\end{pmatrix}
.
\eq
where
\bq\label{f1f2}
\begin{aligned}
f^1(\eta, \psi)&=G(\eta)\psi-\lp T_{\lambda}(\psi-T_B\eta)-T_V\cdot\nabla\eta \rp,\\
f^2(\eta, \psi)&=-\mez|\nabla\psi|^2+\mez\frac{(\nabla \eta \cdot \nabla\psi + G(\eta)\psi)^2}{1+ \vert \nabla \eta \vert^2}\\
\quad & +T_V\cdot\nabla \psi-T_BT_V\cdot\nabla\eta-T_BG(\eta)\psi- H(\eta)+T_{\l}\eta-g\eta.
\end{aligned}
\eq
Assume that $(\eta_1, \psi_1)$ and $(\eta_2, \psi_2)$ are two solutions of system \eqref{ww} on $[0, T]$ and satisfy 
\[
(\eta_j, \psi_j)\in L^\infty\lp[0, T]; H^{s+\mez}\times H^s\rp\cap L^p\lp[0 ,T]; W^{r+\mez,\infty}\times W^{r,\infty}\rp,\quad j=1,2
\]
with
\[
s>\tmez+\frac d2,~\quad r>2.
\]
Assume in addition that there exists $h>0$ such that
\[
\sup_{t\in [0, T]}\dist(\eta_j(t), \Gamma)\ge h\quad j=1,2.
\]
Denote for $j=1, 2$
\bq\label{MZj}
\begin{aligned}
&M^j_{\sigma,T}=\Vert (\eta^j, \psi^j)\Vert_{L^{\infty}([0, T]; H^{\sigma+\mez}\times H^\sigma)},\\ 
&M^j_{\sigma,0}=\Vert (\eta^j, \psi^j)\arrowvert_{t=0}\Vert_{H^{\sigma+\mez}\times H^\sigma},\\
&Z^j_{r, T}=\Vert (\eta^j, \psi^j)\Vert_{L^p([0, T]; W^{r+\mez, \infty}\times W^{r,\infty})}.
\end{aligned}
\eq
Set 
\[
\delta \eta=\eta_1-\eta_2,\quad \delta\psi=\psi_1-\psi_2, \quad \delta B=B_1-B_2,\quad \delta V=V_1-V_2.
\]
Define the following quantities
\bq\label{quantities:contraction}
\begin{aligned}
&P_S(t)=\lA \delta \eta(t)\rA_{H^{s-1}}+\lA \delta \psi(t)\rA_{H^{s-\tmez}},\\
&P_H(t)=\lA \delta\eta(t)\rA_{\Cs^{r-1}}+\lA \delta\psi(t)\rA_{\Cs^{r-\tmez}},\\
&P_{S, T}=\lA P_S\rA_{L^\infty([0, T])}, \quad P_{H,T}=\lA P_S\rA_{L^p([0, T])},\\ &P(t)=P_S(t)+P_H(t), \quad P_T=P_{S,T}+P_{H,T}.
\end{aligned}
\eq
\begin{nota}
Throughout this section, we write $A\les B$ if there exists a non-decreasing function $\cF:\xR^+\to \xR^+$ such that $A\le \cF(M^1_{s, T}, M^2_{s,T})B$.
\end{nota}
\subsection{Contraction estimate for $f^2$}
Recall that we consider $B,~V$ as functions of $(\eta, \psi)$ defined by \eqref{BV}.
\begin{lemm}\label{dB,dV}
We have for a.e. $t\in [0, T]$
\[
\lA \delta B(t)\rA_{\Cs^{-\mez}}+\lA \delta V(t)\rA_{\Cs^{-\mez}}\les P(t).
\]
\end{lemm}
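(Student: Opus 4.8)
The plan is to bound $\delta B$ and $\delta V$ in $\Cs^{-\mez}$ by going back to the defining formulas \eqref{BV} and comparing the two solutions term by term. Writing $B_j = \frac{\nabla\eta_j\cdot\nabla\psi_j + G(\eta_j)\psi_j}{1+|\nabla\eta_j|^2}$ and using the decomposition \eqref{decompose:B}, we have $B_j = K(\nabla\eta_j)\cdot\nabla\psi_j + L(\nabla\eta_j)G(\eta_j)\psi_j + G(\eta_j)\psi_j$ with $K,L$ smooth and vanishing at $0$. The differences of the algebraic factors $K(\nabla\eta_1)-K(\nabla\eta_2)$ and $L(\nabla\eta_1)-L(\nabla\eta_2)$ are controlled in $\Cs^{r-1}$ (hence in $L^\infty$, and a fortiori $\Cs^{-\mez}$) by $\|\delta\eta\|_{\Cs^{r-1}}\lesssim P_H(t)$ using the mean-value theorem together with the $C^\infty$-functional-calculus estimates in the appendix and the bound $M^j_{s,T}$ on the background solutions. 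Since all factors $\nabla\psi_j$, $G(\eta_j)\psi_j$, $K(\nabla\eta_j)$, $L(\nabla\eta_j)$ lie in $L^\infty\cap H^{s-1}$ uniformly, the product estimates \eqref{tame:H<0} reduce everything to controlling $\delta(\nabla\psi) = \nabla\delta\psi$ in $\Cs^{-\mez}$ and $\delta(G\psi) := G(\eta_1)\psi_1 - G(\eta_2)\psi_2$ in $\Cs^{-\mez}$. The first is immediate: $\|\nabla\delta\psi\|_{\Cs^{-\mez}}\lesssim \|\delta\psi\|_{\Cs^{\mez}}\lesssim \|\delta\psi\|_{H^{s-\tmez}} + \|\delta\psi\|_{\Cs^{r-\tmez}}\lesssim P(t)$, since $s-\tmez>\frac{d}{2}+1-\tmez$ embeds into $\Cs^{\mez}$ when $s>\tmez+\frac{d}{2}$ and $r-\tmez\ge\mez$ when $r>2$. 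The treatment of $\delta V = \nabla\delta\psi - \delta B\,\nabla\eta_1 - B_2\,\nabla\delta\eta$ then follows from the bound on $\delta B$ together with the product rule and the uniform bounds $\|\nabla\eta_1\|_{\Cs^{1+\eps_*}}, \|B_2\|_{L^\infty}\lesssim 1$.

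The crux is therefore the estimate
\[
\|G(\eta_1)\psi_1 - G(\eta_2)\psi_2\|_{\Cs^{-\mez}}\lesssim P(t).
\]
I would split this as $\big(G(\eta_1)-G(\eta_2)\big)\psi_1 + G(\eta_2)\,\delta\psi$. The second piece is handled directly by the Zygmund estimate \eqref{est:DN:C-mez:l} for the Dirichlet--Neumann operator: $\|G(\eta_2)\delta\psi\|_{\Cs^{-\mez}}\lesssim_{K_{\eta_2,\eps_0}}\|\nabla\delta\psi\|_{\Cs^{-\mez}} + E(\eta_2,\delta\psi)$, and $E(\eta_2,\delta\psi)\lesssim (1+\|\eta_2\|_{W^{1,\infty}})\|\delta\psi\|_{H^\mez}\lesssim \|\delta\psi\|_{H^{s-\tmez}}\lesssim P_S(t)$ via Remark \ref{rema:E} (using $s-\tmez\ge\mez$). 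The first piece, the difference of Dirichlet--Neumann operators acting on the fixed function $\psi_1$, is the main obstacle: one must either invoke the shape-derivative formula of Theorem \ref{shape} in integrated form, writing $\big(G(\eta_1)-G(\eta_2)\big)\psi_1 = \int_0^1 d_\eta G(\eta_2+t\delta\eta)\psi_1\cdot\delta\eta\,dt = -\int_0^1\big[G(\eta_t)(B_t^{\sharp}\delta\eta) + \dive(V_t^{\sharp}\delta\eta)\big]dt$ where $\eta_t = \eta_2+t\delta\eta$ and $B_t^{\sharp}, V_t^{\sharp}$ are the associated velocity components with Dirichlet datum $\psi_1$, or else run a direct elliptic comparison argument. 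With the integrated formula, the $\dive(V_t^\sharp\delta\eta)$ term contributes $\|V_t^\sharp\delta\eta\|_{\Cs^{\mez}}\lesssim \|V_t^\sharp\|_{L^\infty\cap\Cs^{\mez}}\|\delta\eta\|_{\Cs^{\mez}}\lesssim P(t)$ using $r-1>1>\mez$, and the term $G(\eta_t)(B_t^\sharp\delta\eta)$ is again bounded by \eqref{est:DN:C-mez:l}, requiring $\nabla(B_t^\sharp\delta\eta)\in\Cs^{-\mez}$ and $E(\eta_t, B_t^\sharp\delta\eta)\lesssim\|B_t^\sharp\delta\eta\|_{H^\mez}$, both of which follow from the uniform regularity $B_t^\sharp\in L^\infty\cap\Cs^{\mez}$ (inherited from the $\Bl^1$ bound on the velocity trace of $\psi_1$ via the elliptic estimates of Section \ref{section:DN}) and $\delta\eta\in H^{s-1}\cap\Cs^{r-1}$. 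The one delicate point is checking that the shape-derivative identity of Theorem \ref{shape}, stated there for $H^{s+\mez}$-data, can be integrated against the low-regularity difference $\delta\eta$; this is legitimate because the two reference solutions live in $H^{s+\mez}$, so each $\eta_t$ and $\psi_1$ has more than enough regularity for Theorem \ref{shape} to apply pointwise in $t$, and the resulting bound depends only on the quantities $M^j_{s,T}$ and on $P(t)$.

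Assembling these estimates, we obtain $\|\delta B(t)\|_{\Cs^{-\mez}}\lesssim P(t)$, and then, from $\delta V = \nabla\delta\psi - \delta B\,\nabla\eta_1 - B_2\,\nabla\delta\eta$ together with \eqref{tame:H<0} and the uniform bounds on $\eta_1, B_2$, also $\|\delta V(t)\|_{\Cs^{-\mez}}\lesssim P(t)$, which is the claimed estimate for a.e.\ $t\in[0,T]$. I expect the only real work to be the careful bookkeeping in the Dirichlet--Neumann difference, where one has to keep track of which norms of $\psi_1$ and $\eta_j$ are needed and confirm they are all controlled by $M^j_{s,T}$; the rest is routine paraproduct and embedding manipulation.
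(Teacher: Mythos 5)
Your proposal is correct and follows the paper's proof almost step for step: the decomposition \eqref{decompose:B} of $B$, the splitting of the Dirichlet--Neumann difference into $G(\eta_j)\delta\psi$ plus $[G(\eta_1)-G(\eta_2)]\psi_k$, the use of \eqref{est:DN:C-mez:l} together with Remark \ref{rema:E} for the first piece, the integrated shape-derivative formula of Theorem \ref{shape} for the second, and the reduction of $\delta V$ to $\delta B$ via \eqref{tame:H<0}. The only divergence is in how the shape-derivative integral is estimated: you stay in Zygmund spaces, applying \eqref{est:DN:C-mez:l} to $G(\widetilde\eta(m))(\widetilde B\delta\eta)$ and Hölder product rules to $\dive(\widetilde V\delta\eta)$, which forces you to verify $\Cs^{\mez}$ and $H^{\mez+\frac d2}$ control of the intermediate traces $\widetilde B,\widetilde V$; the paper instead estimates the whole difference $[G(\eta_1)-G(\eta_2)]\psi_2$ in $H^{s-2}$ by the plain Sobolev continuity of Theorem \ref{DN:ABZ} and then uses the embedding $H^{s-2}\hookrightarrow\Cs^{-\mez}$ (valid since $s>\tmez+\frac d2$), which is shorter, consumes only the Sobolev part $P_S(t)$ of the control, and produces the bound \eqref{diff:DN:s-2} that is reused later in the contraction argument. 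Your variant does go through (the intermediate surfaces $\widetilde\eta(m)$ satisfy the separation condition and $\widetilde B,\widetilde V$ are bounded in $H^{s-1}\hookrightarrow\Cs^{\mez+}$ by $M^j_{s,T}$ alone), but it is heavier than necessary at that step.
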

\begin{proof}
Assume the estimate for $\delta B$.  We have
\[
\delta V=\nabla \delta \psi-\delta B\nabla \eta_1-B_2\nabla\delta\eta.
\]
Obviously, 
\[
\|\nabla \delta \psi(t)\|_{\Cs^{-\mez}}\le \| \delta \psi(t)\|_{\Cs^{\mez}}\le \| \delta \psi(t)\|_{\C^{r-\tmez}}\le P_H(t).
\]
On the other hand, 
\[
\lA B_2\nabla\delta\eta(t)\rA_{\Cs^{-\mez}}\le \lA B_2\nabla\delta\eta(t)\rA_{L^\infty}\les \lA \delta\eta(t)\rA_{W^{1,\infty}}\les P_H(t)
\]
From the product rule \eqref{tame:H<0} for negative H\"older indices , we deduce
\[
\lA \delta B\nabla \eta_1(t)\rA_{\Cs^{-\mez}}\les \lA \delta B(t)\rA_{\Cs^{-\mez}}\lA \nabla\eta_1(t)\rA_{\Cs^{\mez+\eps}}\les P(t)
\]
with $\eps>0$ sufficiently small so that $\lA \nabla\eta_1(t)\rA_{\Cs^{\mez+\eps}}\le C\lA \nabla\eta_1(t)\rA_{H^{s-\mez}}.$
Therefore, we are left with the estimate for $\delta B$, for which we use again the formula \eqref{decompose:B}
\[
B=K(\nabla\eta)\cdot\nabla\psi+L(\nabla\eta)G(\eta)\psi+G(\eta)\psi
\]
where~$K$ and~$L$ are smooth functions, vanishing at $0$. Observe that $G(\eta)$ has order $1$, hence these three terms have the same regularity structure. We give the proof for the second one since it is a product with the  Dirichlet-Neumann operator
\begin{multline*}
L(\nabla\eta_1)G(\eta_1)\psi_1-L(\nabla\eta_2)G(\eta_2)\psi_2=[L(\nabla\eta_1)-L(\nabla\eta_2)]G(\eta_1)\psi_1\\
+L(\nabla\eta_2)[G(\eta_1)\psi_1-G(\eta_2)\psi_2].
\end{multline*}
Let us consider the more difficult term $L(\nabla\eta_2)[G(\eta_1)\psi_1-G(\eta_2)\psi_2$. By means of the product rule \eqref{tame:H<0} it suffices to estimate the $\Cs^{-\mez}$ norm of 
\[
G(\eta_1)\psi_1-G(\eta_2)\psi_2=G(\eta_1)\delta\psi-[G(\eta_1)-G(\eta_2)]\psi_2.
\]
The H\"older estimate \eqref{est:DN:C-mez:l} together with Remark \ref{rema:E} implies  that 
\[
\lA G(\eta_1)\delta\psi\rA_{\Cs^{-\mez}}\les \Vert \delta \psi\Vert_{\Cs^\mez}+\Vert \delta\psi\Vert_{H^\mez}\les \Vert \delta \psi\Vert_{\Cs^\mez}+\Vert \delta\psi\Vert_{H^{s-\tdm}},
\]
where we have used the fact that $s>2$.\\
 For the second term on the right-hand side, we apply Proposition \ref{shape} to have
\bq\label{diff:G}
 [G(\eta_1)-G(\eta_2)]\psi_2=\int_0^1 \left\{ G(\widetilde\eta(m))\big(\widetilde B\delta\eta(t)\big)+\div\big( \widetilde V(m)\delta\eta(t)\big) \right\}dm
\eq
where $\widetilde \eta(m)=\eta_1+m\delta\eta,~\widetilde B(m)=B(\widetilde \eta(m), \psi_2), ~\widetilde V(m)=V(\widetilde\eta(m), \psi_2)$. Theorem \ref{DN:ABZ} applied with $\sigma=s-2$ then yields
\bq\label{diff:DN:s-2}
\Vert [G(\eta_1)-G(\eta_2)]\psi_2\Vert_{H^{s-2}}\les \Vert \delta \eta\Vert_{H^{s-1}}.
\eq
The embedding $H^{s-2}\hookrightarrow \Cs^{-\mez}$ then concludes the proof.
\end{proof}
We introduce the following notation.
\begin{nota}
Let $f:\xR^d\to \xC^d$ be a  function of $u$, we set 
\[
\d_uf(u)\dot u=\lim_{\eps\to 0}\{f(u+\eps \dot u)-f(u)\}.
\]
\end{nota}
\begin{prop}\label{contraction:f2}
With $f^2$ defined in \eqref{f1f2}, it holds for a.e. $t\in [0, T]$ that
\[
\lA f^2(\eta_1,\psi_1)(t)-f^2(\eta_2, \psi_2)(t)\rA_{H^{s-\tmez}}\les P(t).
\]
\end{prop}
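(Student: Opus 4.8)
The plan is to estimate each of the six terms in the definition \eqref{f1f2} of $f^2$ separately, writing each as a difference evaluated at the two solutions and bounding it in $H^{s-\tmez}$ by $P(t)$. Throughout, $s>\tmez+\frac d2$ gives the Sobolev embeddings $H^{s-1}\hookrightarrow \Cs^{\mez+\eps}$ and $H^{s-\tmez}\hookrightarrow \Cs^{\eps}$ (roughly), and the ambient bounds $M^j_{s,T}$ are absorbed into the implicit constant in $\les$. First I would handle the two quadratic-type terms $-\mez|\nabla\psi|^2$ and $\mez(\nabla\eta\cdot\nabla\psi+G(\eta)\psi)^2/(1+|\nabla\eta|^2)$ together with the paraproduct terms $T_V\cdot\nabla\psi$, $-T_BT_V\cdot\nabla\eta$, $-T_BG(\eta)\psi$: each is a sum of (para)products of factors drawn from $\nabla\psi$, $\nabla\eta$, $B$, $V$, $G(\eta)\psi$, and smooth nonlinear functions thereof. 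Taking differences produces terms of the form (difference of one factor) times (the other factors at one of the solutions). The difference factors are controlled: $\|\delta\eta\|_{H^{s-1}}+\|\delta\psi\|_{H^{s-\tmez}}\le P_S(t)$ and $\|\delta\eta\|_{\Cs^{r-1}}+\|\delta\psi\|_{\Cs^{r-\tmez}}\le P_H(t)$ directly; $\|\delta B\|_{\Cs^{-\mez}}+\|\delta V\|_{\Cs^{-\mez}}\les P(t)$ by Lemma \ref{dB,dV}; and $\|G(\eta_1)\psi_1-G(\eta_2)\psi_2\|_{H^{s-2}}\les P(t)$ and $\|\,\cdot\,\|_{\Cs^{-\mez}}\les P(t)$ by the shape-derivative formula (Theorem \ref{shape}), Theorem \ref{DN:ABZ} with $\sigma=s-2$, and the Hölder Dirichlet–Neumann estimate \eqref{est:DN:C-mez:l}, exactly as in the proof of Lemma \ref{dB,dV}. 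The non-difference factors are bounded in $L^\infty$, in $H^{s-1}$, or in $\Cs^{\mez+\eps}$ uniformly in $t$ via Proposition \ref{est:DN:Sob}, Corollary \ref{est:v:C1}, Lemma \ref{est:VB}, and the nonlinear estimates of the Appendix.

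The mechanism for each term is then the product rule in negative Hölder/Sobolev spaces \eqref{tame:H<0} (and \eqref{tame:H}, \eqref{pest1} for the paraproducts): writing $uv=T_uv+T_vu+R(u,v)$ and using that a factor controlled in $\Cs^{-\mez}$ paired with a factor controlled in $\Cs^{\mez+\eps}\cap H^{s-1}$ lands in $H^{s-\tmez}$ (here the condition $s>2$, hence $s-\tmez>\mez$, is what makes the Dirichlet–Neumann difference term in $H^{s-2}\hookrightarrow H^{s-\tmez}$... — more precisely one uses $H^{s-2}\hookrightarrow \Cs^{-\mez}$ when $s>\tmez+\frac d2$, then multiplies). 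For the $\mez|\nabla\psi|^2$ difference $=\mez\nabla(\psi_1+\psi_2)\cdot\nabla\delta\psi$ one bounds $\nabla(\psi_1+\psi_2)$ in $H^{s-1}\cap\Cs^{\mez+\eps}$ and $\nabla\delta\psi$ in $H^{s-\tmez}\cap \Cs^{r-\tmez-1}$ and applies the bilinear estimate. The rational function $1/(1+|\nabla\eta|^2)$ and the smooth functions $K,L$ are handled by the tame nonlinear composition estimate, writing $F(\nabla\eta_1)-F(\nabla\eta_2)=\big(\int_0^1 F'(\nabla\eta_1+m\nabla\delta\eta)\,dm\big)\nabla\delta\eta$.

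The remaining two terms are the curvature term $-H(\eta)$ and the paralinearized curvature $T_\ell\eta$. For these I would invoke the paralinearization of the mean curvature: by Lemma \ref{lem:paracurv}, $H(\eta_j)=T_{\ell_j}\eta_j+f_j$ with $\ell_j=\ell_j^{(2)}+\ell_j^{(1)}$ depending on $\nabla\eta_j$, so that
\[
-H(\eta_1)+T_{\ell_1}\eta_1+H(\eta_2)-T_{\ell_2}\eta_2=-f_1+f_2,\qquad f_j\in H^s,
\]
but since we also need the difference $T_{\ell_1}\eta_1-T_{\ell_2}\eta_2=T_{\ell_1}\delta\eta+T_{\ell_1-\ell_2}\eta_2$, the combination of the two curvature terms in $f^2$ is $(T_{\ell_1}\eta_1-H(\eta_1))-(T_{\ell_2}\eta_2-H(\eta_2))=-f_1+f_2$. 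Wait — in $f^2$ the curvature contribution is $-H(\eta)+T_\ell\eta$, which is exactly $-f$ in Lemma \ref{lem:paracurv}, a term in $H^s$; so the difference is $-(f_1-f_2)$, and one only needs a contraction estimate for the remainder $f$ in $H^{s-\tmez}$ — a loss of $\tmez$ derivatives, which is comfortable because $f_j\in H^s$ and $f_j$ depends quasi-linearly on $\nabla\eta_j$ through symbols built from $\nabla\eta_j$, so differencing and using \eqref{tame:H}, the composition estimate, and $\|\delta\eta\|_{H^{s-1}}\le P_S$ gives $\|f_1-f_2\|_{H^{s-\tmez}}\les P(t)$. Finally the linear term $-g\eta$ contributes $-g\,\delta\eta$, bounded in $H^{s-\tmez}$ by $\|\delta\eta\|_{H^{s-1}}\le P_S(t)$. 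Summing all contributions yields the claim.

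I expect the main obstacle to be the term involving $G(\eta)\psi$ — both in the quadratic term $(\nabla\eta\cdot\nabla\psi+G(\eta)\psi)^2/(1+|\nabla\eta|^2)$ and in the paraproduct $-T_BG(\eta)\psi$ — because one must control $G(\eta_1)\psi_1-G(\eta_2)\psi_2$ in a negative-index space with a tame dependence, which forces one through the shape-derivative identity \eqref{diff:G} and the delicate Hölder Dirichlet–Neumann estimates \eqref{est:DN:C-mez:l}, \eqref{est:DN:C-mez} proved in Section \ref{section:DN}; this is precisely the step already carried out inside the proof of Lemma \ref{dB,dV}, so the work here is mostly to recognize that the same estimate applies and to organize the bookkeeping of which factor gets the derivative loss.
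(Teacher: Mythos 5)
Your overall architecture (differencing term by term, recognizing that the curvature pair $-H(\eta)+T_{\l}\eta$ is the paralinearization remainder of Lemma \ref{lem:paracurv}, and controlling $\delta B,\delta V,\delta(G(\eta)\psi)$ as in Lemma \ref{dB,dV}) is close in spirit to the paper, which organizes the same ingredients through the differential estimate \eqref{df2} and the splitting $f^2=I_1+I_2+I_3$. However, there is a genuine gap in your treatment of the group consisting of the two quadratic terms and the paraproducts $T_V\cdot\nabla\psi$, $-T_BT_V\cdot\nabla\eta$, $-T_BG(\eta)\psi$: you propose to estimate the difference of each of these five terms \emph{separately}, and that cannot close at this regularity. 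For instance $\delta(T_V\cdot\nabla\psi)$ contains $T_{V_2}\cdot\nabla\delta\psi$; since $\delta\psi$ is only controlled in $H^{s-\tmez}\cap \Cs^{r-\tmez}$, one has $\nabla\delta\psi\in H^{s-\frac52}$ (not $H^{s-\tmez}$, as written in your sketch), so $T_{V_2}\cdot\nabla\delta\psi$ lies only in $H^{s-\frac52}$ — a full derivative short of the target $H^{s-\tmez}$. The same failure occurs for $T_{B_2}G(\eta_1)\delta\psi$ (the Dirichlet--Neumann operator has order one, so Theorem \ref{DN:ABZ} gives only $H^{s-\frac52}$, and the $\Cs^{-\mez}$ and $H^{s-2}$ bounds from Lemma \ref{dB,dV} and \eqref{est:DN:C-mez:l} cannot upgrade a term where the difference sits in the high-frequency slot of the paraproduct), and for $\delta\big(\mez|\nabla\psi|^2\big)$, whose piece $T_{\nabla(\psi_1+\psi_2)}\cdot\nabla\delta\psi$ is again capped at $H^{s-\frac52}$.

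What rescues the estimate is precisely the cancellation among these five terms: their sum $I_2$ is (up to the commutator $T_{VB}-T_VT_B$) the remainder of the paralinearization in Lemma \ref{lem:paraquad}, hence one derivative smoother than its individual constituents, and the contraction estimate must be run on the group as a whole. This is the point of the paper's proof: it reduces to the derivative bound \eqref{df2}, observes that $I_2$ is that paralinearization remainder, and estimates $\d_\eta I_2\,\dot\eta+\d_\psi I_2\,\dot\psi$ by re-running the proof of Lemma \ref{lem:paraquad} (Theorem \ref{paralin} applied to $F(a,b,c)$), so that the dangerous parts recombine into expressions like $V\cdot\nabla\dot\psi-T_V\cdot\nabla\dot\psi$ and $BG(\eta)\dot\psi-T_BG(\eta)\dot\psi$, i.e.\ paraproducts and remainders with the difference in the low-frequency slot, which is where the Hölder norm $P_H$ and the condition $r>2$ produce the missing half/full derivative. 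You did perform the analogous regrouping for the curvature pair (and note that even there the symbol $\dot\l$ contains $\nabla^2\delta\eta$, so the bound genuinely requires $\|\delta\eta\|_{\Cs^{r-1}}$, not only $P_S$ as your final clause suggests); the missing idea is to do the same for the quadratic/Dirichlet--Neumann group instead of term-by-term product estimates.
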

\begin{proof}
It suffices to prove that 
\bq\label{df2}
\lA d_\eta f^2(\eta, \psi)\dot\eta+ \d_\psi f^2(\eta, \psi)\dot\psi \rA_{H^{s-\tmez}}\les \|\dot\eta\|_{H^{s-1}}+\| \dot\eta\|_{\Cs^{r-1}}+\| \dot\psi\|_{H^{s-\tmez}}+\| \dot\psi\|_{\Cs^{r-\tmez}}.
\eq
We have $f^2(\eta, \psi)=I_1+I_2+I_3$ with
\begin{align*}
I_1&:=H(\eta)+T_l\eta,\\
I_2&:=-\mez|\nabla\psi|^2+\mez\frac{(\nabla \eta \cdot \nabla\psi + G(\eta)\psi)^2}{1+ \vert \nabla \eta \vert^2}+T_V\cdot\nabla \psi-T_BT_V\cdot\nabla\eta-T_BG(\eta)\psi,\\
I_3&:=-g\eta.
\end{align*}
Observe that $\d_\psi I_1=\d_\psi I_3=0$. The estimate for $\d_\eta I_3\dot\eta=-g\dot\eta$ is obvious. Observe that $I_1$ and $I_2$ are the remainder of the paralinearization of nonlinear functions in Lemmas \ref{lem:paracurv} and  \ref{lem:paraquad}, respectively. Putting $f(x)=x(1+|x|^2)^{-1/2},~x\in \xR^d$, we have $-H(\eta)=\div f(\nabla \eta)$. Since
\[
-\d_\eta f(\nabla \eta)\dot\eta=f'(\nabla\eta)\nabla\dot\eta,
\]
it follows that 
\[
-\d_\eta H(\eta)\dot \eta=\div(f'(\nabla\eta)\nabla)\dot\eta+f'(\nabla\eta)\nabla\cdot\nabla\dot\eta.
\]
U sing the Bony decomposition we get 
\[
-\d_\eta H(\eta)\dot \eta=T_{i\div(f'(\nabla\eta)\xi)}\dot\eta+T_{-f'(\nabla\eta)\xi\cdot\xi}\dot\eta+R=T_{-\l}\dot\eta+R
\]
with $\| R\|_{H^{s-3/2}}\les  \lA \dot\eta\rA_{H^{s-1}}+\lA \dot\eta\rA_{\Cs^{r-1}}$.  The Leibnitz rule then implies
\[
\d_\eta I_1(\eta)\dot\eta=T_{\dot \l}\eta+R,\quad \dot \l:=d_\eta \l\dot\eta,
\]
 so we only need to show that $\| T_{\dot \l}\eta\|_{H^{s-3/2}}\les \lA \dot\eta\rA_{H^{s-1}}+\lA \dot\eta\rA_{\Cs^{r-1}}$. Indeed, observe that $\dot \l$ is of the form 
\[
\dot \l=F_1(\nabla\eta, \xi)\nabla\dot\eta+F_2(\nabla\eta, \xi)\nabla^2\dot\eta+F_3(\nabla\eta, \xi)\nabla\dot\eta\nabla^2\eta=:\sum_{j=1}^3G_j(x, \xi),
\]
where $F_j,~j=1,2,3$ are smooth in $\xR^d\times \xR^d\setminus\{0\}$, $F_1$ is homogeneous of order $2$ in $\xi$ and $F_2,~F_3$ are homogeneous of order $1$ in $\xi$. By virtue of Theorem \ref{theo:sc} $(i)$ and Proposition \ref{regu<0} we see that to obtain the desired bound for $\| T_{\dot \l}\eta\|_{H^{s-3/2}}$ it suffices to prove for $j=1,2, 3$
\[
\sup_{|\xi|=1}\|\partial_\xi^\alpha G_1(\cdot, \xi)\|_{L^\infty}+\sup_{|\xi|=1}\|\partial_\xi^\alpha G_j(\cdot, \xi)\|_{\Cs^{-1}}\les C_\alpha\lA \dot\eta\rA_{\Cs^{r-1}}\quad\forall \alpha\in \xN^d.
\]
This is true because (assuming without loss of generality that $F_j(0, \xi)=0$, for all $\xi$) uniformly in $|\xi|=1$,
\begin{align*}
&\lA F_1(\nabla\eta)\nabla\dot\eta\rA_{L^\infty}\les \lA \dot\eta\rA_{W^{1,\infty}}\les \lA \dot\eta\rA_{\Cs^{r-1}},\\
&\lA F_2(\nabla\eta)\nabla^2\dot\eta\rA_{\Cs^{-1}}\les \lA F_2(\nabla\eta)\rA_{\Cs^{1+\eps}}\lA \nabla^2\dot\eta\rA_{\Cs^{-1}}\les \lA \dot\eta\rA_{\Cs^{r-1}}\quad \eps\in(0, s-\tdm-\frac{d}{2}),\\
&\lA F_3(\nabla\eta)\nabla\dot\eta\nabla^2\eta\rA_{\Cs^{-1}}\les \lA F_3(\nabla\eta)\nabla\dot\eta\nabla^2\eta\rA_{L^\infty}\les \lA \dot\eta\rA_{W^{1,\infty}}\les \lA \dot\eta\rA_{\Cs^{r-1}}.
\end{align*}
We have shown the desired estimate for $I_1$. By inspecting the proof of Lemma  \ref{lem:paraquad}, the estimate for the $H^{s-3/2}$-norm of  $\d_\eta I_2\dot\eta+\d_\psi I_2\dot\psi$ can be obtained by the same method.
\end{proof}
\subsection{Contraction estimate for $f^1$}
Our goal in this subsection is to derive the following estimate.
\begin{prop}\label{contraction:f1}
With $f^1$ defined as in \eqref{f1f2}, it holds for a.e. $t\in [0, T]$ that
\[
\lA f^1(\eta_1,\psi_1)(t)-f^1(\eta_2, \psi_2)(t)\rA_{H^{s-1}}\les P_H(t)+P_S(t)Q(t)
\]
with 
\bq\label{Q(t)}
Q(t):=1+\sum_{j=1}^2\lA \eta_j(t)\rA_{\Cs^{r+\mez}}+\sum_{j=1}^2\lA \psi_j(t)\rA_{\Cs^r}.
\eq
\end{prop}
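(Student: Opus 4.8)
The strategy is to reduce, as usual, the estimate on the difference $f^1(\eta_1,\psi_1)-f^1(\eta_2,\psi_2)$ to a differential estimate: if we can bound $\|d_\eta f^1(\eta,\psi)\dot\eta + d_\psi f^1(\eta,\psi)\dot\psi\|_{H^{s-1}}$ by $P_H$-type norms of $(\dot\eta,\dot\psi)$ plus $Q(t)$ times $P_S$-type norms, then integrating along the segment $(\widetilde\eta(m),\widetilde\psi(m))=(\eta_2+m\delta\eta,\psi_2+m\delta\psi)$, $m\in[0,1]$, yields the claim. However, since $f^1$ involves $G(\eta)\psi$, which is only differentiable between specific Sobolev scales, I would rather split things term by term and use the exact identity \eqref{diff:G} from Theorem~\ref{shape} for the Dirichlet--Neumann piece rather than a pure chain rule. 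Write
\[
f^1(\eta,\psi)=G(\eta)\psi - T_\lambda(\psi - T_B\eta) + T_V\cdot\nabla\eta,
\]
and treat the three groups: (i) the Dirichlet--Neumann difference $G(\eta_1)\psi_1 - G(\eta_2)\psi_2$; (ii) the paradifferential term $T_{\lambda_1}(\psi_1-T_{B_1}\eta_1) - T_{\lambda_2}(\psi_2-T_{B_2}\eta_2)$; (iii) the transport term $T_{V_1}\cdot\nabla\eta_1 - T_{V_2}\cdot\nabla\eta_2$.

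\textbf{Term (i).} Split $G(\eta_1)\psi_1-G(\eta_2)\psi_2 = G(\eta_1)\delta\psi - \big(G(\eta_2)-G(\eta_1)\big)\psi_2$. For the first piece apply the tame Sobolev estimate for the Dirichlet--Neumann operator, Proposition~\ref{est:DN:Sob} (with $s$ there replaced by $s-\tmez$, which is still $>\tmez+\frac d2-\tmez=\frac d2$... more carefully one uses the scaling allowed), or directly Theorem~\ref{DN:ABZ} with $\sigma=s-\tmez$, to get $\|G(\eta_1)\delta\psi\|_{H^{s-\tmez}}\lesssim \|\delta\psi\|_{H^{s-\mez}}$; then, since $s-\mez<s-\tmez+$ one must instead keep the derivative count honest and land in $H^{s-1}$, using $\|\nabla\delta\psi\|_{\Cs^{-\mez}}$ or $\|\delta\psi\|_{H^{s-\tmez}}$ as appropriate — this is exactly the $P_H+P_S$ split. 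For the shape-derivative piece, use \eqref{diff:G}:
\[
\big(G(\eta_1)-G(\eta_2)\big)\psi_2 = \int_0^1\Big\{ G(\widetilde\eta(m))\big(\widetilde B(m)\delta\eta\big) + \dive\big(\widetilde V(m)\delta\eta\big)\Big\}\,dm,
\]
and estimate the integrand in $H^{s-1}$. The divergence term costs one derivative on $\delta\eta$, so one gets $\|\widetilde V(m)\delta\eta\|_{H^{s}}$, which by the product rule is controlled by $\|\widetilde V(m)\|_{\Cs^{s-\mez}}\|\delta\eta\|_{H^{s}}$ — but $\|\delta\eta\|_{H^s}$ is not among our quantities, so one must rebalance: put the high norm on $\widetilde V$ (hence on $\eta_j,\psi_j$, giving $Q(t)$) and the low norm $\|\delta\eta\|_{H^{s-1}}=P_S$ on the difference, using a paraproduct $T_{\delta\eta}\widetilde V + T_{\widetilde V}\delta\eta + R$. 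The $G(\widetilde\eta(m))(\widetilde B\delta\eta)$ term is handled by the tame Sobolev DN estimate and a product rule similarly. This is where the factor $Q(t)$ enters and why the estimate is \emph{not} symmetric between $P_H$ and $P_S$.

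\textbf{Terms (ii) and (iii).} These are paradifferential and handled by the standard telescoping $T_{a_1}b_1 - T_{a_2}b_2 = T_{\delta a}b_1 + T_{a_2}\delta b$, combined with the symbol difference estimates: $\lambda_j$ and $V_j$ depend on $(\eta_j,\psi_j)$ through $\nabla\eta_j$, $\nabla\psi_j$, $G(\eta_j)\psi_j$, so $\delta\lambda$, $\delta V$, $\delta B$ are estimated via Lemma~\ref{dB,dV} (giving $\|\delta B\|_{\Cs^{-\mez}}+\|\delta V\|_{\Cs^{-\mez}}\lesssim P(t)$) together with an analogous estimate for $\delta\lambda$ obtained from the explicit formula for $\lambda^{(1)},\lambda^{(0)}$ and nonlinear composition estimates. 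Then Theorem~\ref{theo:sc}~(i) and Proposition~\ref{regu<0} on negative-order symbols convert these symbol bounds into operator bounds $H^\mu\to H^{\mu-1}$; matching indices, the pieces where the high norm falls on $(\eta_j,\psi_j)$ produce the $P_S(t)Q(t)$ contribution and the pieces where it falls on the differences produce $P_H(t)$. One must also commute $T_{\delta B}$ past $T_{\lambda_2}$, using the composition calculus of Proposition~\ref{rema:class}/Theorem~\ref{theo:sc}~(ii), picking up only lower-order errors.

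\textbf{Main obstacle.} The delicate point is the divergence term $\dive(\widetilde V(m)\delta\eta)$ in the shape derivative: it threatens to require $\|\delta\eta\|_{H^s}$, one full derivative above $P_S$. Resolving it forces the asymmetric bound $P_H(t) + P_S(t)Q(t)$ — one systematically places every "extra" derivative onto the (bounded, smooth) pair $(\eta_j,\psi_j)$, absorbing it into $Q(t)$, while the genuinely small quantity is the low-regularity difference norm. Getting the bookkeeping of Sobolev versus Hölder (Zygmund) indices exactly right in each paraproduct decomposition — so that no term demands a norm stronger than $P_S$, $P_H$, or $Q$ — is the real work; each individual estimate is a routine application of Bony's calculus and the Dirichlet--Neumann estimates already established.
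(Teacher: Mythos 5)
Your reduction to a differential estimate and your use of the shape-derivative formula \eqref{diff:G} are the right starting points, but the term-by-term grouping (i)/(ii)/(iii) destroys the cancellations that are the entire content of the proposition, and at both places where you notice trouble the fix you propose does not work. For the $\delta\psi$-part: the only available difference norms are $\lA\delta\psi\rA_{H^{s-\tmez}}$ and $\lA\delta\psi\rA_{\Cs^{r-\tmez}}$, while $G(\eta_1)$ and $T_{\lambda_1}$ are elliptic operators of order one; hence $G(\eta_1)\delta\psi$ (your group (i), first piece) and $T_{\lambda_1}\delta\psi$ (inside your group (ii)) each separately require $\lA\delta\psi\rA_{H^{s}}$ to be placed in $H^{s-1}$. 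No use of Theorem \ref{DN:ABZ}, Proposition \ref{est:DN:Sob} or paraproduct rebalancing converts $H^{s-\tmez}\cap\Cs^{r-\tmez}$ control into an $H^{s-1}$ bound for either term alone: taking $\delta\psi$ to be a high-frequency, spatially spread perturbation of $\psi_2$ keeps all background norms bounded, makes $\lA G(\eta_1)\delta\psi\rA_{H^{s-1}}$ of order one, and sends $P_S+P_H$ to zero. What is true is that the \emph{combination} $G(\eta)\dot\psi-T_\lambda\bigl(\dot\psi-T_{B(\eta,\dot\psi)}\eta\bigr)-T_{V(\eta,\dot\psi)}\cdot\nabla\eta$ gains half a derivative relative to Proposition \ref{prop:paralin:DN}, mapping $\dot\psi\in H^{s-\tmez}\cap\Cs^{r-\tmez}$ into $H^{s-1}$ with tame constants; this is exactly the paper's estimate \eqref{DN:low}, and its proof is not "a routine application of estimates already established": it reruns the elliptic analysis and the paralinearization of $G(\eta)$ at this lower regularity (Proposition \ref{est:v:C-mez:a}, Proposition \ref{elliptic:ABZ} with $\sigma=s-\frac52$, the factorization of Lemma \ref{factor2}, the parabolic smoothing, and Step 2 of Proposition \ref{prop:paralin:DN} without the good unknown). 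Nothing in your proposal supplies this.

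The same issue recurs in the $\delta\eta$-part. You correctly single out $\dive(\widetilde V\delta\eta)$ as the obstruction, but the rebalancing you propose fails: in $\widetilde V\cdot\nabla\delta\eta=T_{\widetilde V}\cdot\nabla\delta\eta+T_{\nabla\delta\eta}\widetilde V+R(\widetilde V,\nabla\delta\eta)$ the paraproduct $T_{\widetilde V}\cdot\nabla\delta\eta$ carries the full frequency of $\delta\eta$, so its $H^{s-1}$ norm genuinely needs $\lA\delta\eta\rA_{H^{s}}$; no smoothness of $\widetilde V$ (hence no factor $Q(t)$), and no Hölder norm $\lA\delta\eta\rA_{\Cs^{r-1}}$, can absorb that extra Sobolev derivative. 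The paper's resolution is structural: in $\d_\eta f^1(\eta,\psi)\dot\eta$ the piece $-V\cdot\nabla\dot\eta$ is cancelled against $+T_V\cdot\nabla\dot\eta$ coming from the transport term (the term $I_2$, which becomes $-T_{\nabla\dot\eta}V-R(\nabla\dot\eta,V)$), and $-G(\eta)(B\dot\eta)-(\dive V)\dot\eta$ is cancelled against $+T_\lambda T_B\dot\eta$ (the term $I_5$) using the identity $G(\eta)B=-\dive V+R$ with $\lA R\rA_{H^{s-1}}\les 1$ together with the paralinearizations $G(\eta)(B\dot\eta)=T_{\lambda^{(1)}}(B\dot\eta)+F(\eta,B\dot\eta)$ and $G(\eta)B=T_{\lambda^{(1)}}B+F(\eta,B)$. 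Your grouping keeps these pieces in separate groups and therefore forfeits both cancellations, so the argument as written cannot close; only your treatment of the telescoped lower-order pieces via Lemma \ref{dB,dV} and the symbolic calculus (the analogues of $I_1$, $I_3$, $I_4$) matches the paper and is sound.
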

The key point is that the preceding estimate is tame with respect to the highest H\"older norms. Proposition \ref{contraction:f1} will be a consequence of 
\bq\label{est:f:deta}
\|\d_\eta f^1(\eta, \psi)\dot\eta\|_{H^{s-1}} \les \| \dot\eta\|_{H^{s-1}}\big(1+\lA \eta\rA_{\Cs^{r+\mez}}+\lA \psi\rA_{\Cs^r} \big)+\lA \dot\eta\rA_{\Cs^{r-1}}
\eq
for all $\dot \eta\in H^{s+\mez}\cap \Cs^{r+\mez}$, and
\bq\label{est:f:dpsi}
\| \d_\psi f^1(\eta, \psi)\dot\psi\|_{H^{s-1}}\les \| \dot\psi\|_{H^{s-\tmez}}\big(1+\lA \eta\rA_{\Cs^{r+\mez}} \big)+\| \dot\psi\|_{\Cs^{r-\tmez}}
\eq
for all $\dot\psi\in H^s\cap \Cs^r$.
\begin{lemm}
The estimate \eqref{est:f:deta} holds.
\end{lemm}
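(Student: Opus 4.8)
The plan is to differentiate $f^1(\eta,\psi) = G(\eta)\psi - \big(T_\lambda(\psi - T_B\eta) + T_V\cdot\nabla\eta\big)$ with respect to $\eta$ in the direction $\dot\eta$, term by term, and then check that every resulting piece is bounded in $H^{s-1}$ by the right-hand side of \eqref{est:f:deta}. The shape derivative (Theorem \ref{shape}) gives $d_\eta G(\eta)\psi\cdot\dot\eta = -G(\eta)(B\dot\eta) - \div(V\dot\eta)$, and the main subtlety, as already signalled in the introduction and in Proposition \ref{prop:paralin:DN}, is that these two terms are individually \emph{too singular} at our level of regularity (they cost a full derivative on $\dot\eta$ relative to where $G(\eta)\psi$ sits). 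So I would not estimate $d_\eta\big(G(\eta)\psi\big)\cdot\dot\eta$ by itself but rather exploit the cancellation with the derivatives of the paradifferential correction $-T_\lambda(\psi - T_B\eta) - T_V\cdot\nabla\eta$: precisely because $f^1$ is the \emph{remainder} of the paralinearization, its $\eta$-derivative should again be expressible, up to acceptable errors, as a paralinearization remainder for the corresponding linear expressions.

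Concretely, first I would compute $d_\eta\big(T_\lambda(\psi - T_B\eta)\big)\cdot\dot\eta = T_{\dot\lambda}(\psi - T_B\eta) + T_\lambda(-T_{\dot B}\eta - T_B\dot\eta)$ where $\dot\lambda = d_\eta\lambda\cdot\dot\eta$ and $\dot B = d_\eta B\cdot\dot\eta$, and similarly $d_\eta(T_V\cdot\nabla\eta)\cdot\dot\eta = T_{\dot V}\cdot\nabla\eta + T_V\cdot\nabla\dot\eta$ with $\dot V = d_\eta V\cdot\dot\eta$. The terms $T_{\dot\lambda}\psi$, $T_{\dot B}\eta$, $T_{\dot V}\cdot\nabla\eta$ involve symbols that depend linearly on $\nabla^2\dot\eta$ (through $\dot\lambda$, etc.) with coefficients smooth in $\nabla\eta$, so they are of the form "operator of order $\le 2$ applied to $\dot\eta$ times a coefficient in $\Cs^{r+\frac12-1}$"; by Theorem \ref{theo:sc}(i), Proposition \ref{regu<0}, the product rule \eqref{tame:H<0} and the paraproduct estimates one bounds them by $\|\dot\eta\|_{H^{s-1}}$ with a factor $\cF(\|\eta\|_{\Cs^{r+1/2}}) + \cF(\cdots)\|\psi\|_{\Cs^r}$, plus a $\|\dot\eta\|_{\Cs^{r-1}}$ contribution coming from the low-frequency/remainder parts — exactly the shape of \eqref{est:f:deta}. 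The genuinely delicate combination is $-G(\eta)(B\dot\eta) - \div(V\dot\eta)$ together with $T_\lambda T_B\dot\eta + T_V\cdot\nabla\dot\eta$; here I would apply Proposition \ref{prop:paralin:DN} (the paralinearization identity $G(\eta)f = T_\lambda(f - T_{\tilde B}\eta) + T_{\tilde V}\cdot\nabla\eta + F$, used now with $f$ replaced by the auxiliary functions appearing in the shape derivative, or more efficiently a direct paralinearization of $G(\eta)(B\dot\eta)$) to see that $G(\eta)(B\dot\eta) = T_\lambda(B\dot\eta) + (\text{lower order})$ and $T_\lambda(B\dot\eta) - T_\lambda T_B\dot\eta = T_\lambda(T_{\dot\eta}B + R(B,\dot\eta))$, a paraproduct remainder that costs only $\|B\|_{\Cs^{r-1}}\|\dot\eta\|_{H^{s-1}}$ after dividing by the order-$1$ operator $T_\lambda$ — i.e.\ the singular first-order pieces cancel and what remains is tame.

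Throughout, the workhorses are: Proposition \ref{est:DN:Sob} and Corollary \ref{est:v:C1} to control $G(\eta)$ of various inputs in $H^{s-1}$ and $\Bl^0$ tamely; the elliptic estimates of Section \ref{section:DN} if a direct paralinearization of $G(\eta)(B\dot\eta)$ is preferred over invoking Theorem \ref{shape}; Proposition \ref{inverse} / Theorem \ref{theo:sc} to move from $T_a u$ back to $u$ and to handle compositions and commutators of paradifferential operators; and the Bony decomposition with \eqref{tame:H<0}, \eqref{pest1}, \eqref{Bony1}/\eqref{Bony2} for all the product and remainder terms. I expect the main obstacle to be bookkeeping the cancellation cleanly: one must verify that $d_\eta\lambda\cdot\dot\eta$, $d_\eta B\cdot\dot\eta$, $d_\eta V\cdot\dot\eta$ really do combine with the shape-derivative terms so that no uncancelled operator of order $> 1$ hits $\dot\eta$, and that every surviving coefficient is controlled by $1 + \|\eta\|_{\Cs^{r+1/2}} + \|\psi\|_{\Cs^r}$ (not by a higher Hölder norm), which is what makes the final estimate tame and hence usable in the contraction argument. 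Once the cancellation is organised, each individual bound is a routine application of the paradifferential toolbox recalled in the appendix.
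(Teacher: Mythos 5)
Your overall strategy coincides with the paper's: differentiate $f^1$ term by term, use the shape derivative $d_\eta G(\eta)\psi\cdot\dot\eta=-G(\eta)(B\dot\eta)-\div(V\dot\eta)$, control $\dot B,\dot V$ in $H^{s-2}$ and $\dot\lambda$ as a symbol with $\Cs^{-1}$-type coefficients, and look for a cancellation between $-G(\eta)(B\dot\eta)-\div(V\dot\eta)$ and $T_\lambda T_B\dot\eta+T_V\cdot\nabla\dot\eta$. However, your resolution of that delicate block does not close, and this is a genuine gap. After writing $G(\eta)(B\dot\eta)=T_{\lambda^{(1)}}(B\dot\eta)+F(\eta,B\dot\eta)$ and $B\dot\eta-T_B\dot\eta=T_{\dot\eta}B+R(B,\dot\eta)$, only the symmetric remainder is harmless: $\lA T_{\lambda^{(1)}}R(B,\dot\eta)\rA_{H^{s-1}}\les \lA B\rA_{\Cs^{1}}\lA\dot\eta\rA_{H^{s-1}}$ by \eqref{Bony2}. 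The other piece, $T_{\lambda^{(1)}}T_{\dot\eta}B$, is \emph{not} of this type: $T_{\dot\eta}B$ is no smoother than $B\in H^{s-1}$ (a paraproduct with an $L^\infty$ coefficient gains nothing), so this term lies only in $H^{s-2}$, one full derivative short of \eqref{est:f:deta}, and it cannot be absorbed by $\lA\dot\eta\rA_{H^{s-1}}$ or $\lA\dot\eta\rA_{\Cs^{r-1}}$ times controlled quantities. Moreover you never account for the part $(\div V)\dot\eta$ of $\div(V\dot\eta)$ (only $V\cdot\nabla\dot\eta$ is cancelled against $T_V\cdot\nabla\dot\eta$, which is the easy Bony remainder), and $(\div V)\dot\eta$ is likewise only in $H^{s-2}$ since $V\in H^{s-1}$.

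These two bad terms must cancel \emph{each other}, and this requires two ingredients absent from your sketch: the identity $G(\eta)B=-\div V+R$ with $\lA R\rA_{H^{s-1}}\les 1$ (Lemma $2.12$ of \cite{ABZ1}, whose proof applies at this regularity), and the commutation $T_{\lambda^{(1)}}T_{\dot\eta}B=T_{\dot\eta}T_{\lambda^{(1)}}B-[T_{\dot\eta},T_{\lambda^{(1)}}]B$, where $[T_{\dot\eta},T_{\lambda^{(1)}}]$ is of order $0$ on $H^{s-1}$ by Theorem \ref{theo:sc} (ii) precisely because $r-1>1$. One then paralinearizes $G(\eta)B=T_{\lambda^{(1)}}B+F(\eta,B)$, replaces $T_{\lambda^{(1)}}B$ by $-\div V+R-F(\eta,B)$, and the resulting $T_{\dot\eta}\div V$ recombines with $-(\div V)\dot\eta$ into $-(\dot\eta-T_{\dot\eta})\div V=-T_{\div V}\dot\eta-R(\dot\eta,\div V)$, which is tame; the leftover pieces ($T_{\lambda^{(0)}}T_B\dot\eta$, $T_{\dot\eta}F(\eta,B)$, $F(\eta,B\dot\eta)$, $T_{\dot\eta}R$) are then routine. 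As your proposal stands, with $T_{\dot\eta}B$ lumped into a ``paraproduct remainder that costs only $\lA B\rA_{\Cs^{r-1}}\lA\dot\eta\rA_{H^{s-1}}$'', the estimate \eqref{est:f:deta} is not reached. Note also that the danger you flag — an uncancelled operator of order $>1$ hitting $\dot\eta$ — is not where the difficulty lies; the problematic terms are those where $\dot\eta$ occupies the low-frequency slot and the derivative loss falls on $B$ and $\div V$, which is exactly what the identity $G(\eta)B=-\div V+R$ is invoked to repair.
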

\begin{proof}
From the  definition of $f^1$ and Proposition \ref{shape} we have
\begin{align*}
\d_\eta f^1(\eta, \psi)\dot\eta&=-G(B\dot\eta)-\div(V\dot\eta)\\
&\quad -\left\{ T_{\dot\lambda}(\psi-T_B\eta)-T_{\lambda}T_{\dot B}\eta-T_{\lambda}T_B\dot\eta-T_{\dot V}\nabla\eta-T_V\nabla\dot\eta \right\}\\
&=\sum_{j=1}^5 I_j,
\end{align*}
where $\dot B:=\d_\eta B(\eta, \psi)\dot\eta$ (similarly for $\dot V,~\dot\lambda$)  and 
\begin{align*}
&I_1:=T_{\dot V}\nabla\eta,\quad I_2:=-V\nabla\dot\eta+T_V\nabla\dot\eta,\quad I_3:=-T_{\dot\lambda}(\psi-T_B\eta),\\
& I_4:=T_{\lambda}T_{\dot B}\eta,\quad I_5:=-G(B\dot\eta)-(\div V)\dot\eta+T_{\lambda}T_B\dot\eta.
\end{align*}
1. For $I_2$ we write $I_2=-T_{\nabla\dot\eta}V-R(\nabla\dot\eta,  V)$ and use  \eqref{Bony1}, \eqref{pest3} to estimate 
\[
\lA I_2\rA_{H^{s-1}}\les \lA V\rA_{H^{s-1}}\|\nabla\dot\eta\|_{L^\infty}\les \lA \dot\eta\rA_{\Cs^{r-1}}.
\]
2. Let us study $\dot B$ and $\dot V$. For the former, the only nontrivial point is 
\bq\label{dshape}
\d_\eta [G(\eta)\psi]\dot \eta=-G(\eta)(B\dot\eta)-\div(V\dot\eta).
\eq
It holds that
\[
\lA \d_\eta [G(\eta)\psi]\dot \eta\rA_{H^{s-2}}\les \lA \dot\eta\rA_{H^{s-1}}+\lA V\dot\eta\rA_{H^{s-1}}\les \lA \dot\eta\rA_{H^{s-1}}.
\]
Therefore, $\| \dot B\|_{H^{s-2}}\les \|\dot\eta\|_{H^{s-1}}$. This, together with the relation  $V=\nabla\psi-B\nabla\eta$, implies
\[
\| \dot B\|_{H^{s-2}}+\| \dot V\|_{H^{s-2}}\les \| \dot \eta\|_{H^{s-1}}.
\]
A a consequence, the paraproduct rule \eqref{boundpara} gives (keep in mind that $s>\tdm+\frac d2$)
\[
\lA I_1\rA_{H^{s-1}}\les \| \dot V\|_{H^{s-2}}\| \nabla \eta\|_{H^{s-\mez}}\les \| \dot\eta\|_{H^{s-1}}.
\]
Similarly, 
\[
\lA I_4\rA_{H^{s-1}}\les \|T_{\dot B}\eta\|_{H^s}\les \| \dot B\|_{H^{s-2}}\|\eta\|_{H^{s+\mez}}\les \|\dot \eta\|_{H^{s-1}}.
\]
3. For $I_3$ one estimates $\dot \lambda$ exactly as for $\dot l$ in the proof of Proposition \ref{contraction:f2}.\\
4. For $I_5$ we follow \cite{ABZ1} using the following cancellation in Lemma $2.12$, \cite{ABZ1} whose proof applies equally at our regularity level:
\[
G(\eta)B=-\div V+R,~\|R\|_{H^{s-1}}\les 1.
\]
On the other hand, applying Proposition $3.13$ in \cite{ABZ3} with $\eps=\mez$ $\sigma=s-\mez$ we obtain the following paralinearizations 
\[
G(\eta)(B\dot\eta)=T_{\lambda^{(1)}}B\dot\eta+F(\eta, B\dot\eta),\quad G(\eta)(B)=T_{\lambda^{(1)}}B+F(\eta, B)
\]
with 
\[
\lA F(\eta, B\dot\eta)\rA_{H^{s-1}}\les \lA \dot\eta\rA_{H^{s-1}},\quad \lA F(\eta, B)\rA_{H^{s-1}}\les 1.
\]
Then plugging these paralinearizations into the expression of $I_5$ gives  (see \cite{ABZ1} pages $482-483$ for details) $I_5=J_1+J_2$ with
\begin{gather*}
J_1=-T_{\lambda^{(1)}}\left(B\dot\eta-T_B\dot\eta-T_{\dot\eta}B\right),\\
J_2=T_{\lambda^{(0)}}T_B\dot\eta+[T_{\dot\eta}, T_{\lambda^{(1)}}]B+T_{\dot\eta}F(\eta, B)+(\dot\eta-T_{\dot\eta})\div V-F(\eta, B\dot\eta)-T_{\dot\eta}R.
\end{gather*}
Using \eqref{Bony2} we estimate
\[
\lA J_1\rA_{H^{s-1}}\les \lA R(B, \dot\eta)\rA_{H^s}\les \lA \dot\eta\rA_{H^{s-1}}\lA B\rA_{\Cs^{1}}\les  \| \dot\eta\|_{H^{s-1}}\Big(1+\lA \eta\rA_{\Cs^{r+\mez}}+\lA \psi\rA_{\Cs^r} \Big). 
\]
For $J_2$ we only need to take care of the commutator $[T_{\dot\eta}, T_{\lambda^{(1)}}]B$. Since $\| B\|_{H^{s-1}}\les 1$ it suffices to prove that $[T_{\dot\eta}, T_{\lambda^{(1)}}]$ has order $0$ and map  $H^{s-1}\to H^{s-1}$ with norm bounded by the right hand side of \eqref{est:f:deta}. This is in turn a consequence of Theorem \ref{theo:sc} (ii) and the fact that $r-1>1$.
\end{proof}
Finally, we prove
\begin{lemm}\label{lemm:est:f:dpsi}
The estimate \eqref{est:f:dpsi} holds.
\end{lemm}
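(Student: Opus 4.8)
The plan is to differentiate the explicit formula \eqref{f1f2} for $f^1$ in the $\psi$-variable and to exploit that, for fixed $\eta$, the maps $\psi\mapsto G(\eta)\psi$, $\psi\mapsto B$ and $\psi\mapsto V$ in \eqref{BV} are all \emph{linear}. Hence, writing $\dot B\defn B(\eta,\dot\psi)$ and $\dot V\defn V(\eta,\dot\psi)$, one has $\d_\psi B(\eta,\psi)\dot\psi=\dot B$, $\d_\psi V(\eta,\psi)\dot\psi=\dot V$, and---since $\lambda$ depends on $\eta$ only---
\[
\d_\psi f^1(\eta,\psi)\dot\psi=G(\eta)\dot\psi-T_\lambda\dot\psi+T_\lambda T_{\dot B}\eta+T_{\dot V}\cdot\nabla\eta .
\]
The first step is then to paralinearize $G(\eta)\dot\psi$. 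Because the proof of Proposition \ref{prop:paralin:DN} only uses the elliptic problem \eqref{elliptic:DN} with an arbitrary Dirichlet datum---the system \eqref{ww} itself does not enter---it applies verbatim with $\psi$ replaced by $\dot\psi$, yielding $G(\eta)\dot\psi=T_\lambda(\dot\psi-T_{\dot B}\eta)+T_{\dot V}\cdot\nabla\eta+\dot F$. Substituting this into the identity above, the principal term $T_\lambda\dot\psi$ and the good-unknown correction $T_\lambda T_{\dot B}\eta$ cancel, leaving $\d_\psi f^1(\eta,\psi)\dot\psi=\dot F+2\,T_{\dot V}\cdot\nabla\eta$, so the matter reduces to bounding these two terms in $H^{s-1}$.

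The main obstacle is the bound on $\dot F$: the error estimate of Proposition \ref{prop:paralin:DN} as stated is far too lossy, since it controls $\dot F$ only through $\lA\dot\psi\rA_{H^s}$ and $\lA\nabla\dot\psi\rA_{\Bl^1}$, whereas \eqref{est:f:dpsi} demands the tame, $\tmez$-derivative-losing bound
\[
\lA\dot F\rA_{H^{s-1}}\les\lA\dot\psi\rA_{H^{s-\tmez}}\big(1+\lA\eta\rA_{\Cs^{r+\mez}}\big)+\lA\dot\psi\rA_{\Cs^{r-\tmez}} .
\]
To obtain it I would re-run the proof of Proposition \ref{prop:paralin:DN} with the regularity of the Dirichlet datum $\dot\psi$ shifted down by $\tmez$ derivatives: the coefficients $\alpha,\beta,\gamma$, the good unknown $b$ and the factorizing symbols $a,A$ all depend on $\eta$ alone and are untouched, while each high Sobolev/Besov norm of $\psi$ occurring there is replaced by the corresponding norm of $\dot\psi$ with $\tmez$ fewer derivatives, Corollary \ref{est:v:C1} being replaced where needed by the $\Cs^{-\mez}$ elliptic estimates of Propositions \ref{est:v:C-mez:a} and \ref{est:v:C-mez}. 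Since $\dot F$ is linear in $\dot\psi$ this produces exactly the displayed bound; the tame dependence on $\lA\eta\rA_{\Cs^{r+\mez}}$ survives for the same structural reason as in the treatment of the term $I_5$ in the proof of \eqref{est:f:deta} just given, and using $r>2$, $s>2$ one checks that the low-order remainders collapse into $\lA\dot\psi\rA_{\Cs^{r-\tmez}}$. An alternative is to invoke the second-order paralinearization of the Dirichlet--Neumann operator on a general function, e.g. Proposition $3.14$ of \cite{ABZ1}, with the base index taken $\tmez$ below $s$.

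The remaining term $T_{\dot V}\cdot\nabla\eta$ is handled by elementary paradifferential calculus, arguing as for the $\delta\psi$-part of Lemma \ref{dB,dV}. From $\dot V=\nabla\dot\psi-\dot B\nabla\eta$, the Zygmund estimate \eqref{est:DN:C-mez:l} for $G(\eta)\dot\psi$, Remark \ref{rema:E}, the product rule \eqref{tame:H<0} and the inequalities $r>2$, $s>2$, one gets $\lA\dot B\rA_{\Cs^{-\mez}}+\lA\dot V\rA_{\Cs^{-\mez}}\les\lA\dot\psi\rA_{\Cs^{r-\tmez}}+\lA\dot\psi\rA_{H^{s-\tmez}}$; then the paraproduct rule \eqref{pest1} gives
\[
\lA T_{\dot V}\cdot\nabla\eta\rA_{H^{s-1}}\les\lA\dot V\rA_{\Cs^{-\mez}}\lA\nabla\eta\rA_{H^{s-\mez}}\les\lA\dot\psi\rA_{\Cs^{r-\tmez}}+\lA\dot\psi\rA_{H^{s-\tmez}} ,
\]
using $\lA\nabla\eta\rA_{H^{s-\mez}}=\lA\eta\rA_{H^{s+\mez}}\les 1$ in the convention of this section. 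Adding the two contributions yields \eqref{est:f:dpsi}.
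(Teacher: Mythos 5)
Your overall reduction and the peripheral estimates do match the paper's strategy: the differential is $G(\eta)\dot\psi-T_\lambda(\dot\psi-T_{\dot B}\eta)-T_{\dot V}\cdot\nabla\eta$ (your extra term $2\,T_{\dot V}\cdot\nabla\eta$ is only an artifact of a sign inconsistency between \eqref{f1f2} and the statement of Proposition \ref{prop:paralin:DN}), and your bounds $\lA\dot B\rA_{\Cs^{-\mez}}+\lA\dot V\rA_{\Cs^{-\mez}}\les \lA\dot\psi\rA_{H^{s-\tmez}}+\lA\dot\psi\rA_{\Cs^{r-\tmez}}$ together with the paraproduct estimate for $T_{\dot V}\cdot\nabla\eta$ are exactly how the paper disposes of these terms (cf.\ Lemma \ref{dB,dV}). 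The genuine gap lies in the central step: the tame $H^{s-1}$ bound on the paralinearization remainder when the Dirichlet datum is only in $H^{s-\tmez}\cap\Cs^{r-\tmez}$. You claim the proof of Proposition \ref{prop:paralin:DN} goes through ``verbatim with all indices shifted down by $\tmez$'' because ``the coefficients $\alpha,\beta,\gamma$, the good unknown $b$ and the symbols $a,A$ depend on $\eta$ alone''. This premise is false for $b$: the good unknown is built from $b=\partial_z v/\partial_z\rho$, where $v$ is the harmonic extension of the datum, so $b$ depends on $\dot\psi$. At the shifted regularity the estimates of Lemma \ref{est:b} ($\nabla_{x,z}b\in L^\infty$, $\nabla^2_{x,z}b\in\Cs^{-1}$) are unavailable, since they rest on $\nabla_{x,z}v\in\Bl^1$, i.e.\ on $\nabla\dot\psi\in\Bl^1$ via \eqref{dv:C1} --- a full $\tmez$ derivatives more than you have. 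Consequently the proof of Lemma \ref{paralin:eq:u} (the commutators $[T_\alpha,T_b]$, $[T_\beta,T_b]$ being of order $-1$, the identification $T_bT_{\partial_z\rho}\gamma\sim T_{\partial_z v}\gamma$, the control of $T_{\nabla_x b}\rho$), as well as the Step~2 paralinearizations in Proposition \ref{prop:paralin:DN} which need positive H\"older regularity of $\nabla_{x,z}v$, do not survive the index shift; the cancellation that produces $T_\lambda(\dot\psi-T_{\dot B}\eta)$ as principal part, and hence your formula for $\dot F$ with a tame bound, is not justified.

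This is precisely where the paper's proof departs from Proposition \ref{prop:paralin:DN}: at this regularity the good unknown is \emph{not} invoked at all. One proves only the weaker statement \eqref{DN:low1}, i.e.\ a tame $H^{s-1}$ bound on $G(\eta)\dot\psi-T_\lambda\dot\psi$, by paralinearizing the equation for $v$ itself: the term $T_{\partial_z v}\gamma$, which the good unknown was designed to eliminate, is now harmless because $\nabla_{x,z}v\in C(J;\Cs^{-\mez})$ (Proposition \ref{est:v:C-mez:a}) and $\nabla_{x,z}v\in X^{s-\frac52}$, so it can be absorbed into the $L^2(J;H^{s-\tmez})$ source term; the contributions $T_\lambda T_{\dot B}\eta$ and $T_{\dot V}\cdot\nabla\eta$ are then estimated \emph{separately} from $\dot B,\dot V\in\Cs^{-\mez}$, not generated by the paralinearization. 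Your fallback of quoting Proposition $3.14$ of \cite{ABZ1} with the base index lowered by $\tmez$ does not help either: that result needs the datum above $H^{2+\frac d2}$, i.e.\ $s-\tmez>2+\frac d2$, which is not implied by $s>\tmez+\frac d2$. So either drop the good unknown and prove \eqref{DN:low1} directly, as the paper does, or supply genuinely new estimates for $b$ at datum regularity $H^{s-\tmez}\cap\Cs^{r-\tmez}$; as written, the argument has a gap at its key step.
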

Write $B=B(\eta, \psi),~V=V(\eta, \psi)$. Since $G(\eta)\psi$ is linear with respect to $\psi$ we get
\[
\d_\psi f^1(\eta, \psi)\dot\psi=G(\eta)\dot\psi -T_\lambda(\dot\psi-T_{B(\eta, \dot\psi)}\eta)-T_{V(\eta, \dot\psi)}\cdot\nabla\eta=:R(\eta, \dot\psi).
\]
Estimate \eqref{est:f:dpsi} means that $R$ has order $-1/2$ with respect to $\dot\psi$ and map  $H^{s-3/2}$ to $H^{s-1}$. In fact,  Proposition \ref{prop:paralin:DN} shows that $R$ maps $H^s$ to $H^{s+1/2}$. Here, we will follow the proof of Proposition \ref{prop:paralin:DN} except that the good unknown will not be invoked. Lemma \ref{lemm:est:f:dpsi} is a consequence of the following.
\begin{lemm}
Let $d\ge 1,~h>0$ and 
\[
s>\tmez+\frac d2,\quad r>2.
\]
Then there exists a nondecreasing function $\cF:\xR^+\times \xR^+\to \xR^+$ such that for any $\eta \in H^{s+\mez}$ satisfying $\dist(\eta, \Gamma)\ge h>0$ and $\psi\in H^s\cap C^r$, there holds
\bq\label{DN:low}
\begin{aligned}
&\lA G(\eta)\psi-T_{\lambda}(\psi-T_B\eta)-T_V\cdot\nabla \eta\rA_{H^{s-1}}\\
&\quad \le \cF\big(\lA(\eta, \psi)\rA_{H^{s+\mez}\times H^s}\big)\lB \| \psi\|_{H^{s-\tmez}}\big(1+\lA \eta\rA_{\Cs^{r+\mez}}+\lA \psi\rA_{\Cs^r} \big)+\| \psi\|_{\Cs^{r-\tmez}}\rB.
\end{aligned}
\eq
\end{lemm}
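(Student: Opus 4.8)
The plan is to follow the proof of Proposition \ref{prop:paralin:DN} verbatim, with two modifications: (i) we keep track of the \emph{lower} H\"older regularity of $\psi$ (namely $\psi\in\Cs^r$ with $r>2$, rather than $\nabla\psi\in\Bl^1$), and (ii) we do \emph{not} introduce the good unknown, so that all dependence on $\psi$ stays linear and we can read off the order $-1/2$ in $\psi$. Concretely, reconsider the elliptic problem \eqref{elliptic:DN} with Dirichlet datum $\psi$, and set $v(x,z)=\phi(x,\rho(x,z))$ solving $\mathcal Lv=0$ as in \eqref{eq:v}. First I would record the two elliptic bounds we need. By Proposition \ref{elliptic:ABZ} together with \eqref{est:base:H} and Remark \ref{rema:E}, for any $z\in(-1,0)$,
\[
\lA\nabla_{x,z}v\rA_{X^{s-1}([z,0])}\les_{\cF(\lA\eta\rA_{H^{s+\mez}})}\lA\psi\rA_{H^{s-\tmez}}+\lA\psi\rA_{H^{s-1}}\les_{\cF}\lA\psi\rA_{H^{s-\tmez}},
\]
where the gain of $1/2$ derivative on $\psi$ comes from the fact that for $G(\eta)$ (equivalently $\nabla_{x,z}v\rvert_{z=0}$) one only needs $\psi\in H^{\sigma+1}$ to land in $H^{\sigma}$ with $\sigma=s-\tmez$. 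Second, by Corollary \ref{est:v:C1} with $r$ replaced by, say, $r-1\in(1,1+\eps_0)$ (using $\psi\in H^s\hookrightarrow H^s$ and $\nabla\psi\in\Cs^{r-1}\hookrightarrow\Bl^{r-1}$ after a harmless $\delta$-loss via \eqref{CBC}), one gets the H\"older elliptic bound $\lA\nabla_{x,z}v\rA_{C([z,0];\Bl^{r-1})}\les_{\cF}\lA\nabla\psi\rA_{\Bl^{r-1}}+E(\eta,\psi)\les_\cF \lA\psi\rA_{\Cs^r}+1$. These play the roles of \eqref{dv:H} and \eqref{dv:C1}.

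Next I would repeat the paralinearization of $\mathcal L$ (Lemma \ref{lemm:eq:v}) and the factorization into parabolic operators (Lemma \ref{factor2}), applied directly to $v$ rather than to $u=v-T_b\rho$. The remainder terms $R_1v,R_2v,R_3v$ are estimated in $L^2([z_0,0];H^{s-\tmez})$ by the same Bony-decomposition arguments as in Lemmas \ref{lemm:eq:v} and \ref{factor2}, using the coefficient estimates \eqref{alpha:S}, \eqref{alpha:C:0}, \eqref{alpha:C:2}, the elliptic bounds just recorded, and Theorem \ref{theo:sc}; the key point is that every such remainder is linear in the highest H\"older norm $\lA\eta\rA_{\Cs^{r+\mez}}+\lA\psi\rA_{\Cs^r}$ and multiplied by $\lA\psi\rA_{H^{s-\tmez}}$. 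Here the only subtlety, exactly as in the body of Proposition \ref{prop:paralin:DN}, is that without the good unknown one picks up the term $T_{\partial_z v}\gamma$; but this is no longer a problem because we are aiming at $H^{s-\tmez}$ (one derivative lower than $H^{s+\mez}$ in Lemma \ref{lemm:eq:v}), so $T_{\partial_z v}\gamma$ is controlled in $L^2(H^{s-\tmez})$ using $\gamma\in\wL^2\Cs^{s-\tmez-1+\eps}$ and $\partial_z v\in\wL^\infty H^{s-\tmez}$, and likewise $T_bT_{\partial_z\rho}\gamma$ never has to be manufactured. Thus one arrives at
\[
(\partial_z-T_a)(\partial_z-T_A)v=F_2,\qquad \lA F_2\rA_{L^2([z_0,0];H^{s-\tmez})}\les_\cF \lA\psi\rA_{H^{s-\tmez}}\big(1+\lA\eta\rA_{\Cs^{r+\mez}}+\lA\psi\rA_{\Cs^r}\big)+\lA\psi\rA_{\Cs^{r-\tmez}}.
\]

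Then I would run the two-step argument of the proof of Proposition \ref{prop:paralin:DN}: cut off with $\kappa$, apply Theorem \ref{parabolic:Sob} to the backward parabolic equation for $w=\kappa(z)(\partial_z-T_A)v$ to get $\lA\partial_z v-T_Av\rA_{X^{s-\tmez}([z_1,0])}\les\Pi$ (with $\Pi$ the right-hand side of \eqref{DN:low}), the interior bound on $[z_0,z_1]$ coming from standard elliptic regularity of the harmonic $\phi$ and contributing only $\lA\psi\rA_{H^\mez}\les\lA\psi\rA_{H^{s-\tmez}}$; then paralinearize $G(\eta)\psi=\zeta_1\partial_zv-\zeta_2\cdot\nabla_xv\rvert_{z=0}$ via Bony's formula and Theorem \ref{paralin}, and use $\partial_z v\sim T_A v$ to trade the normal derivative for the tangential operator $T_\Lambda$, where $\Lambda\rvert_{z=0}=\lambda$ as computed in Proposition \ref{prop:paralin:DN}. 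Since we did not introduce $b$, at $z=0$ we directly obtain $T_\lambda\psi - T_B$... — more precisely, tracking $\zeta_1 T_A v - \zeta_2\cdot\nabla v$ and using $A\rvert_{z=0}$ gives, after the paralinearizations, $G(\eta)\psi\sim T_\lambda\psi + T_{b\nabla\rho-\nabla v}\cdot\nabla\rho\rvert_{z=0} = T_\lambda\psi - T_V\cdot\nabla\eta$ up to the bad paraproduct $T_{\nabla\psi}\cdot$ which, reorganized using $\psi-T_B\eta$, reproduces $T_\lambda(\psi-T_B\eta)+T_V\cdot\nabla\eta$; the discrepancy terms are all of the form $T_{(\cdot)}\eta$ or $R(\cdot,\eta)$ with the symbol/argument controlled in $\Cs^{r-\tmez}$ or by $\lA\psi\rA_{H^{s-\tmez}}\lA\eta\rA_{\Cs^{r+\mez}}$, hence absorbed into $\Pi$. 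I expect the main obstacle to be exactly this bookkeeping at $z=0$: making sure that every error generated by \emph{not} using the good unknown (the $\rho$- and $\nabla_{x,z}\rho$-dependent terms, and the paraproduct remainders in $G(\eta)\psi$) is either of order $-1/2$ in $\psi$ or tame in the top H\"older norms with an $H^{s-\tmez}$ factor on $\psi$ — this is where the precise homogeneities of $\lambda^{(1)},\lambda^{(0)}$ and the coefficient estimates \eqref{alpha:C:0}--\eqref{alpha:C:2} have to be used carefully, and where one must invoke $r>2$ (so that $r-1>1$, needed for the commutator and remainder estimates, exactly as in the bound \eqref{est:f:deta}).
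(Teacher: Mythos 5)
There is a genuine gap, and it sits in the very first elliptic bound you record. The estimate $\lA\nabla_{x,z}v\rA_{X^{s-1}([z,0])}\les \cF(\lA\eta\rA_{H^{s+\mez}})\lA\psi\rA_{H^{s-\tmez}}$ is false: the trace of $\nabla_xv$ at $z=0$ is $\nabla\psi$, so membership in $X^{s-1}$ up to $z=0$ forces $\psi\in H^{s}$, and Proposition \ref{elliptic:ABZ} with $\sigma=s-1$ indeed requires $f\in H^{s}$. The mapping property $G(\eta):H^{\sigma+1}\to H^{\sigma}$ is a \emph{consequence} of elliptic regularity, not a device to gain half a derivative on $v$ itself (also the inequality $\lA\psi\rA_{H^{s-1}}\les\lA\psi\rA_{H^{s-\tmez}}$ goes the wrong way). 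This is not a cosmetic point: the whole content of \eqref{DN:low} is that the remainder is of order $-\mez$ in $\psi$, i.e.\ it maps $H^{s-\tmez}$ to $H^{s-1}$ tamely in the top H\"older norms. If you replace your bound by the correct one ($X^{s-1}$ costs $\lA\psi\rA_{H^{s}}$), your scheme outputs $\lA\psi\rA_{H^{s}}$ on the right-hand side; and your H\"older substitute $\nabla_{x,z}v\in \Bl^{r-1}$, which costs $\lA\psi\rA_{\Cs^{r}}$, produces terms of the form $\cF(\lA(\eta,\psi)\rA_{H^{s+\mez}\times H^{s}})\lA\psi\rA_{\Cs^{r}}$ that are \emph{not} multiplied by $\lA\psi\rA_{H^{s-\tmez}}$ and cannot be absorbed into the right-hand side of \eqref{DN:low} (note $H^{s}\not\hookrightarrow\Cs^{r}$ at this regularity). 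So the plan, as written, proves a weaker statement that is useless for the contraction estimate, where $\delta\psi$ only lies in $H^{s-\tmez}$.

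The paper avoids this by genuinely lowering the regularity at which the elliptic problem is solved: it uses $\lA\nabla_{x,z}v\rA_{X^{s-\frac 52}}\les\lA\psi\rA_{H^{s-\tdm}}$ (Proposition \ref{elliptic:ABZ} with $\sigma=s-\frac52$) together with the Zygmund bound $\lA\nabla_{x,z}v\rA_{C(J;\Cs^{-\mez})}\les\lA\psi\rA_{H^{s-\tmez}}+\lA\psi\rA_{\Cs^{r-\tmez}}$ from Proposition \ref{est:v:C-mez:a} and Remark \ref{rema:E}, and it aims all remainder estimates at $L^2(J;H^{s-\tdm})\subset Y^{s-1}$, so that typical products are bounded as $\lA T_{\Delta_xv}(\alpha-h^2)\rA_{L^2H^{s-\tmez}}\les\lA\Delta_xv\rA_{L^\infty\Cs^{-\tmez}}\lA\alpha-h^2\rA_{L^2H^{s}}$; the parabolic step then yields $\partial_zv-T_Av\in X^{s-1}$, which is exactly the target space. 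A second structural difference: the paper does not attempt to reproduce the good-unknown cancellations at $z=0$. It first peels off $T_\lambda T_B\eta$ and $T_V\cdot\nabla\eta$ by the direct estimate $\lA T_\lambda T_B\eta\rA_{H^{s-1}}+\lA T_V\cdot\nabla\eta\rA_{H^{s-1}}\les\lA B\rA_{\Cs^{-\mez}}+\lA V\rA_{\Cs^{-\mez}}$, where $B,V$ are of order $-\mez$ in $\psi$ as in Lemma \ref{dB,dV}, and thereby reduces the lemma to estimating $G(\eta)\psi-T_\lambda\psi$ alone. In your proposal this reduction is left as vague bookkeeping at $z=0$ (``reorganized using $\psi-T_B\eta$''), which is precisely the part that needs to be made precise once the good unknown is dropped; with the correct low-regularity elliptic inputs above, the rest of your outline does match the paper's argument.
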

\begin{proof}
We first apply Theorem \ref{theo:sc} (i) to have
\[
\lA T_{\lambda} T_B\eta\rA_{H^{s-1}}+\lA T_V\cdot\nabla\eta\rA_{H^{s-1}}\le \cF\big(\lA(\eta, \psi)\rA_{H^{s+\mez}\times H^s}\big)\lB\lA B\rA_{\Cs^{-\mez}}+\lA V\rA_{\Cs^{-\mez}}\rB.
\]
On the other hand, as in Lemma \ref{dB,dV} it holds that
\[
 \lA B\rA_{\Cs^{-\mez}}+\lA V\rA_{\Cs^{-\mez}}\le \cF\big(\lA(\eta, \psi)\rA_{H^{s+\mez}\times H^s}\big)\lB \| \psi\|_{H^{s-\tmez}}+\| \psi\|_{\Cs^{r-\tmez}}\rB.
\]
Therefore, the proof of \eqref{DN:low} reduces to showing 
\bq\label{DN:low1}
\begin{aligned}
&\lA G(\eta)\psi-T_{\lambda}\psi\rA_{H^{s-1}}\\
&\quad\le \cF\big(\lA(\eta, \psi)\rA_{H^{s+\mez}\times H^s}\big)\lB \| \psi\|_{H^{s-\tmez}}\big(1+\lA \eta\rA_{\Cs^{r+\mez}}+\lA \psi\rA_{\Cs^r} \big)+\| \psi\|_{\Cs^{r-\tmez}}\rB.
\end{aligned}
\eq
To this end, let $\phi$ be the solution to \eqref{elliptic:DN}. Let $v$ be as in \eqref{defi:v}, which satisfies equation~\eqref{eq:v}.\\
  Let $z_0\in (-1, 0)$ and denote $J=[z_0, 0]$. Let $\Pi$ denote the right-hand side of \eqref{DN:low1}. Again, to alleviate notations, we will write $A\les B$ provided 
\[
A\le \cF\big(\lA(\eta, \psi)\rA_{H^{s+\mez}\times H^s}\big)B.
\]
 According to Proposition \ref{est:v:C-mez:a} and Remark \ref{rema:E} (notice that $s-\tdm>\mez$), there holds
\bq\label{v:C}
\lA \nabla_{x,z}v\rA_{C(J; \Cs^{-\mez})}\les \| \psi\|_{H^{s-\tmez}}+\| \psi\|_{\Cs^{r-\tmez}}.
\eq
On the other hand, applying Proposition \ref{elliptic:ABZ} with $\sigma=s-\frac 52\ge -\mez$ gives
\bq\label{v:S}
\lA \nabla_{x,z}v\rA_{X^{s-\frac 52}(I)}\les \Vert \psi\Vert_{H^{s-\tdm}}.
\eq
 We will write $g_1\sim_E g_2$ if the $E$-norm of $g_1-g_2$ is bounded by $\Pi$. As in Proposition \ref{paralin:eq:u}, we set
\[
P:= \partial_z^2+T_\alpha\Delta_x +T_\beta\cdot\nabla_x\partial_z-T_{\gamma}\partial_z.
\]
 In view of equation \eqref{eq:v}, we have 
\[
0=(\partial_z^2+\alpha\Delta_x+\beta\cdot\nabla_x\partial_z-\gamma\partial_z)v=Pv+Qv
\]
with 
\begin{align*}
Qv&:=[T_{\Delta v}(\alpha-h^2)+R(\Delta v, \alpha-h^2)]+[(h^2-T_{h^2})\Delta_xv]+\\
&\quad [T_{\nabla\partial_z v}\beta+R(\nabla\partial_zv, \beta)]-[T_{\partial_z v}\gamma+R(\partial_z v, \gamma)].
\end{align*}
For the first bracket, we use \eqref{pest1}, \eqref{Bony2} and \eqref{v:C} to have
\[
\lA T_{\Delta v}(\alpha-h^2)\rA_{L^2H^{s-\tmez}}+\lA R(\Delta v, \alpha-h^2)\rA_{L^2H^{s-\tmez}}\les \lA \Delta v\rA_{L^\infty \Cs^{-\frac 32}}\lA \alpha-h^2\rA_{L^2H^s}\les \Pi.
\]
The other terms can be estimated along the same lines. Consequently,
\[
Pu\sim_{L^2(J; H^{s-\tdm})} 0.
\]
 Next, with two symbols $a, A$ defined in Lemma \ref{factor2}, the proof of Lemma \ref{factor2} shows that
\[
Pv=(\partial_z-T_a)(\partial_z-T_A)v+(T_aT_A-T_{\alpha}\Delta_x)v.
\]
According to the symbolic estimate \eqref{TaTA} and Theorem \ref{theo:sc} (ii), $(T_aT_A-T_{\alpha}\Delta_x)$ has order $\mez$ and 
\[
\Vert (T_aT_A-T_{\alpha}\Delta_x)v\Vert_{L^2H^{s-\tdm})}\les (1+\Vert \eta\Vert_{\Cs^{\frac 52}})\Vert \nabla v\Vert_{L^2H^{s-2}}\les (1+\Vert \eta\Vert_{\Cs^{\frac 52}})\Vert \psi\Vert_{L^2H^{s-\tdm}},
\]
owing to \eqref{v:S}. We have proved that
\[
(\partial_z-T_a)(\partial_z-T_A)v\sim_{L^2(J; H^{s-\tdm})} 0,
\]
which implies
\[
(\partial_z-T_a)(\partial_z-T_A)v\sim_{Y^{s-1}(J)} 0.
\]
With this result, one can follow exactly Step 1. of the proof of Proposition \ref{prop:paralin:DN} and obtain for some $I=[z_1, 0)$, $z_1\in (z_0, 0)$ that
\bq\label{gain:DN:low}
\lA \partial_zv-T_Av\rA_{X^{s-1}(I)}\les \Pi.
\eq
This allow us  to replace the normal derivative $\partial_zv$ with the "tangential derivative" $T_Av$, leaving a term $\sim_{X^{s-1}(I)} 0$.  Therefore, we deduce by using  the Bony decomposition and the estimates \eqref{v:C}-\eqref{v:S} that
\begin{align*}
\frac{1+\la\nabla\rho\ra^2}{\partial_z\rho}\partial_zv-\nabla\rho\cdot\nabla v&\sim_{X^{s-1}(I)} T_{\frac{1+\la\nabla\rho\ra^2}{\partial_z\rho}}\partial_zv-T_{\nabla \rho}\nabla v\\
&\sim_{X^{s-1}(I)} T_{\frac{1+\la\nabla\rho\ra^2}{\partial_z\rho}}T_{A}v-T_{\nabla\rho}\nabla v\\
&\sim_{X^{s-1}(I)}\sim T_{\frac{1+\la\nabla\rho\ra^2}{\partial_z\rho}A}v-T_{\nabla \rho}\nabla v\\
&\sim_{X^{s-1}(I)} T_{\Lambda}v
\end{align*}
with $\Lambda=\frac{1+\la\nabla\rho\ra^2}{\partial_z\rho}A-i\nabla\eta\cdot\xi$ satisfying  $\Lambda\arrowvert_{z=0}=\lambda$.  The proof of \eqref{DN:low1} is complete.
\end{proof}
\subsection{Contraction estimate for the solution map}
In views of the notations \eqref{f1f2}, \eqref{quantities:contraction} and \eqref{Q(t)} , we have proved in subsections~$5.1$, $5.2$ the following result for a.e. $t\in [0, T]$,
\[
\lA f(\eta_1, \psi_1)(t)-f(\eta_2, \psi_2)(t)\rA_{H^{s-1}\times H^{s-\tmez}}\le  \cF\lp M^1_{s, T}, M^2_{s,T}\rp\lp P_H(t)+P_S(t)Q(t)\rp.
\]
Consequently, this together with Lemma \ref{dB,dV} implies that the difference of two solutions satisfies
\bq\label{eq:difference}
(\partial_t+T_{V_1}\cdot\nabla+\mathcal{L}_1)\begin{pmatrix}\delta\eta \\ \delta\psi\end{pmatrix} =\begin{pmatrix}g_1 \\ g_2\end{pmatrix} 
\eq
\bq\label{contraction:g1g2}
\lA (g_1(t), g_2(t))\rA_{H^{s-1}\times H^{s-\tmez}}\le  \cF\lp M^1_{s, T}, M^2_{s,T}\rp\Big( P_H(t)+P_S(t)Q(t)\Big).
\eq
\subsubsection{Symmetrization}
Now, as in Section \ref{section:sym} we symmetrize \eqref{eq:difference} using the symmetrizer 
\[
S_1=
\begin{pmatrix}
T_{p_1} &0\\
0&T_{q_1}
\end{pmatrix}
\begin{pmatrix}
I &0\\
-T_{B_1}&I
\end{pmatrix}.
\]
{\it The dispersive part $\mathcal{L}$}. Recall the Definition \ref{equi:operators} on the equivalence of two families of operators $A(t)$ and $B(t)$,~$t\in [0, T]$: $A\sim B$ if
\[
\lA A(t)-B(t)\rA_{H^{\mu}\to H^{\mu-m+\tdm}}\le \cF\lp\lA \eta(t)\rA_{H^{s+\mez}}\rp\lp1+\lA \eta(t)\rA_{C^{r+\mez}}\rp.
\]
By virtue of Proposition \ref{pq} we obtain 
\begin{align*}
&\begin{pmatrix} 
T_{p_1} &0\\
0&T_{q_1}
\end{pmatrix}
\begin{pmatrix}
I &0\\
-T_{B_1}&I
\end{pmatrix}
\begin{pmatrix}
I &  0\\
T_{B_1}& I
\end{pmatrix}
\begin{pmatrix}
0 &  -T_{\lambda_1}\\
T_{\l_1}& 0
\end{pmatrix}\\
&=\begin{pmatrix} 
T_{p_1} &0\\
0&T_{q_1}
\end{pmatrix}
\begin{pmatrix}
0 &  -T_{\lambda_1}\\
T_{\l_1}& 0
\end{pmatrix}
= \begin{pmatrix} 
0 & -T_{p_1}T_{\lambda_1}\\
T_{q_1}T_{\l_1}& 0
\end{pmatrix}\\ 
&\sim \begin{pmatrix} 
0 & -T_{\gamma_1} T_{q_1}\\
T_\gamma T_{p_1}& 0
\end{pmatrix}
=\begin{pmatrix} 
0 & -T_{\gamma_1} \\
T_{\gamma_1} & 0
\end{pmatrix}
\begin{pmatrix}
T_{p_1} & 0\\
0 & T_{q_1}
\end{pmatrix}.
\end{align*}
Consequently (see \eqref{defi:L} for the definition of $\mathcal{L}$)
\[
S_1\mathcal{L}_1\sim 
\begin{pmatrix}
0 & -T_{\gamma_1} \\
T_{\gamma_1} & 0
\end{pmatrix}
\begin{pmatrix}
T_{p_1} & 0\\
0 & T_{q_1}
\end{pmatrix}
\begin{pmatrix}
I & 0\\
-T_{B_1} & I
\end{pmatrix}.
\]
Therefore, if we set 
\[
\Phi_1:=T_{p_1}\delta \eta,\quad \Phi_2:=T_{q_1}(\delta \psi-T_{B_1}\delta \eta),
\]
then $\Phi_1,~\Phi_2$ satisfy 
\[
S_1\mathcal{L}_1
\begin{pmatrix}
\delta\eta\\
\delta\psi
\end{pmatrix}
\sim 
\begin{pmatrix}
-T_{\gamma_1}\Phi_2\\
T_{\gamma_1}\Phi_1
\end{pmatrix}
,
\]
meaning that 
\[
\lA  S_1\mathcal{L}_1
\begin{pmatrix}
\delta\eta\\
\delta\psi
\end{pmatrix}
-
\begin{pmatrix}
-T_{\gamma_1}\Phi_2\\
T_{\gamma_1}\Phi_1
\end{pmatrix}
\rA_{H^{s-\tmez}}(t)
\le \cF\lp M_{s,T}^1, M_{s,T}^2\rp\left(1+\lA  \eta_1(t)\rA_{\Cs^{r+\mez}}\right)P_{S}(t).
\]
{\it The convection part $\partial_t+T_{V_1}\nabla$}:  one proceeds as in the proof Proposition \ref{sym:prop} and obtain
\[
S_1\lp \partial_t+T_{V_1}\cdot\nabla\rp \begin{pmatrix}\delta\eta \\ \delta\psi\end{pmatrix} 
=\lp \partial_t+T_{V_1}\cdot\nabla\rp S_1\begin{pmatrix}\delta\eta \\ \delta\psi\end{pmatrix} +R
=\lp \partial_t+T_{V_1}\cdot\nabla\rp \begin{pmatrix}\Phi_1 \\ \Phi_2\end{pmatrix} +R
\]
where the remainder $R$ verifies
\[
\lA R(t)\rA_{H^{s-\mez}\times H^{s-\tmez}}\le \cF\lp M_{s,T}^1, M_{s,T}^2\rp\left(1+\lA  \eta_1\rA_{C^{r+\mez}}+\lA  \psi_1\rA_{C^{r}}\right)P_{S}(t).
\]
A combination of two parts yields
\bq\label{contraction:eq:sym}
\left\{
\begin{aligned}
&\partial_t\Phi_1+T_{V_1}\cdot\nabla\Phi_1- T_{\gamma_1}\Phi_2=F_1+G_1,\\
&\partial_t\Phi_2+T_{V_1}\cdot\nabla\Phi_2+T_{\gamma_1}\Phi_2=F_2+G_2
\end{aligned}
\right.
\eq
where for a.e. $t\in [0, T]$, 
\bq\label{contraction:F1F2}
\begin{aligned}
\lA (F_1, F_2)\rA_{ H^{s-\tmez}\times H^{s-\tmez}}&\le \cF\lp M_{s,T}^1, M_{s,T}^2\rp\Big(1+\lA  \eta_1\rA_{C^{r+\mez}}+\lA  \psi_1\rA_{C^{r}}\Big)P_{S}(t)\\
& \le \cF\lp M^1_{s, T}, M^2_{s,T}\rp Q(t)P_S(t),
\end{aligned}
\eq
and from \eqref{eq:difference}
\[
\begin{pmatrix}G_1 \\ G_2\end{pmatrix} =\begin{pmatrix}T_{p_1}g_1 \\ T_{q_1}(g_2-T_{B_1}g_1)\end{pmatrix}.
\]
It follows from \eqref{contraction:g1g2} that $(G_1, G_2)$ also satisfy
\bq\label{contraction:G1G2}
\lA (G_1, G_2)\rA_{ H^{s-\tmez}\times H^{s-\tmez}}\le \cF\lp M^1_{s, T}, M^2_{s,T}\rp\Big(P_H(t)+P_S(t)Q(t)\Big).
\eq
\subsubsection{Contraction estimates}
Put $\Phi:=\Phi_1+i\Phi_2$, then 
\bq\label{contraction:Phi}
\partial_t\Phi+T_{V_1}\cdot\nabla\Phi+iT_{\gamma_1}\Phi=F+G:=(F_1+iF_2)+(G_1+iG_2).
\eq
We are now back to the situation of Proposition \ref{singleeq:Phi}: we shall conjugate \eqref{contraction:Phi} with an operator of order $s-3/2$ and then perform an $L^2$-energy estimate. As in \eqref{def:wp}, we choose 
\[
\wp_1=(\gamma_1^{(3/2)})^{2(s-\tmez)/3},\quad \varphi=T_{\wp_1}\Phi.
\]
After conjugating with $T_{\wp_1}$, one obtains
\bq\label{contraction:eq:varphi}
\lp \partial_t+T_{V_1}\cdot\nabla+iT_{\gamma_1}\rp\varphi=T_{\wp_1}(F+G)+H
\eq
with
\[
	H:=T_{\partial_t\wp_1}\Phi+[T_{V_1}\cdot\nabla, T_{\wp_1}]\Phi+i[T_{\gamma_1}, T_{\wp_1}]\Phi.
\]
It is easy to see as in the proof of Proposition \ref{L2varphi} that
\bq\label{contraction:H}
\begin{aligned}
\lA H(t)\rA_{H^{s-\tmez}}&\le  \cF\lp M^1_{s, T}, M^2_{s,T}\rp Q(t)\lA \Phi(t)\rA_{H^{s-\tmez}}\\
&\le \cF\lp M_{s,T}^1, M_{s,T}^2\rp Q(t)\big(\lA\ph(t)\rA_{L^2}+\lA \Phi(t)\rA_{L^2}\big),
\end{aligned}
\eq
where we have applied Lemma \ref{inverse} in the second line.\\
On the other hand, from the  estimates \eqref{contraction:F1F2}, \eqref{contraction:G1G2} for $F,~G$ we get
\bq\label{contraction:F+G}
\lA T_{\wp_1}(F+G)\rA_{L^2}\le \cF\lp M^1_{s, T}, M^2_{s,T}\rp\Big( P_H(t)+P_S(t)Q(t)\Big).
\eq
Now, multiplying both sides of \eqref{contraction:eq:varphi} by $\varphi$ and using \eqref{contraction:H}, \eqref{contraction:F+G}, \eqref{conjugate:V}, \eqref{conjugate:gamma} lead to
\[
\begin{aligned}
&\frac{\d}{\d t}\lA\ph(t)\rA_{L^2}^2\leq \cF\lp M_{s,T}^1, M_{s,T}^2\rp\times \\
&\left\{ \Big[ P_H(t)+Q(t)P_S(t)+Q(t)\lA \Phi(t)\rA_{L^2}\Big]\lA\ph(t)\rA_{L^2}
+ Q(t)\lA\ph(t)\rA^2_{L^2}\right\}.
\end{aligned}
\]
Notice that
\[
\lA \Phi(t)\rA_{L^2}\le \cF\lp M_{s,T}^1, M_{s,T}^2\rp P_S(t),\quad \int_0^T Q(t)dt\le T+ T^{\frac{1}{p'}}\big(Z^1_{r,T}+Z^2_{r,T}\big),
\]
with $\frac{1}{p'}=1-\frac{1}{p}>0$ (recall the  notation $Z^j_{r,T}$ in \eqref{MZj}). Gr\"onwall's lemma  then implies (see Notation \ref{quantities:contraction}) for all $t\le T\le 1$
\bq\label{contraction:est:energy}
\begin{aligned}
\lA \varphi(t)\rA_{L^2}&\le \cF(...)\lp \lA \varphi(0)\rA_{L^2}+\int_0^t\lb Q(m) P_S(m)+P_H(m)\rb dm\rp\\
& \le \cF(...)\lp \lA \varphi(0)\rA_{L^2}+T^{\frac{1}{p'}} \lb (1+Z^1_{r,T}+Z^2_{r,T}) P_{S,T}+P_{H,T}\rb \rp\\
 &\le \cF(...)\lp P_S(0)+T^{\frac{1}{p'}}P_T \rp
\end{aligned}
\eq
where 
\[
\cF(...)=\cF\lp M_{s,T}^1, M_{s,T}^2, Z^1_{r,T}, Z^2_{r,T}\rp.
\]
We want to show that $\Vert (\delta \eta, \delta \psi)\Vert_{H^{s-1}\times H^{s-\tdm}}$ is also controlled by the right-hand side of \eqref{contraction:est:energy}. To this end, one uses again  Proposition~\ref{inverse} to have
	\begin{gather*}
	\lA\delta\eta\rA_{H^{s-1}}\les \lA T_\wp T_p\delta\eta\rA_{L^2}+\lA\delta\eta\rA_{H^{-\mez}},\\
	\lA\delta\psi\rA_{H^{s-\tmez}}\les\lA T_\wp T_q\delta\psi\rA_{L^2}+\lA\delta\psi\rA_{H^{-\mez}}.
	\end{gather*}
Then, in view of \eqref{contraction:est:energy} it remains to estimate $\|\delta\eta\|_{H^{-\mez}}$ and $\|\delta\psi\|_{H^{-\mez}}$. Indeed, we write
\begin{align*}
\lA \delta\eta(t)\rA_{H^{-\mez}}&\le \lA \delta\eta(0)\rA_{H^{-\mez}}+\lA \delta\eta(t)-\delta\eta(0)\rA_{H^{-\mez}}\\
&\le \lA \delta\eta(0)\rA_{H^{-\mez}}+\lA \int_0^t\frac{d}{dt}\delta\eta(m)dm\rA_{H^{-\mez}} \\
&\le \lA \delta\eta(0)\rA_{H^{-\mez}}+T\sup_{t\in [0, T]}\lA \frac{d}{dt}\delta\eta(t)\rA_{H^{-\mez}}.
\end{align*}
The last term can be written as
\[
\frac{d}{dt}\delta\eta(t)=G(\eta_1(t))\psi_1(t)-G(\eta_2(t))\psi_2(t)=G(\eta_1)\delta\psi+[G(\eta_1(t))-G(\eta_2(t))]\psi_2(t).
\]
The Sobolev estimate for the Ditichlet-Neumann operator in Theorem \ref{DN:ABZ} applied with $\sigma=\mez$ gives
\[
\lA G(\eta_1)\delta\psi\rA_{H^{-\mez}}\les \lA\delta\psi\rA_{H^{\mez}}\les \lA\delta\psi\rA_{H^{s- \frac 32}}.
\]
On the other hand, according to \eqref{diff:DN:s-2}
\[
\Vert [G(\eta_1)-G(\eta_2)]\psi_2\Vert_{L^2}\le \Vert [G(\eta_1)-G(\eta_2)]\psi_2\Vert_{H^{s-2}}\les \Vert \delta \eta\Vert_{H^{s-1}}.
\]
Therefore, 
\[
\Vert \delta\eta\Vert_{H^{-\mez}}\les \Vert \delta\eta(0)\Vert_{H^{-\mez}}+TP_{S,T}.
\]
Using the second equation of \eqref{ww:1} and arguing as above, we find that 
\[
\Vert \delta\psi\Vert_{H^{-\mez}}\les \Vert \delta\psi(0)\Vert_{H^{-\mez}}+TP_{S,T}.
\]
Putting the above estimates together, we end up with 
\[
\lA (\delta\eta(t), \delta\psi(t)) \rA_{H^{s-1}\times H^{s-\tmez}} \le \cF\lp M_{s,T}^1, M_{s,T}^2, Z_{r,T}^1, Z_{r,T}^2\rp\lp P_S(0)+T^{\frac{1}{p'}}P(t)\rp,
\]
which implies (recall that we are assuming $s>\tdm+\frac d2$, $r>2$)
\bq\label{contraction:Sobolev}
P_{S,T}\le \cF\lp M_{s,T}^1, M_{s,T}^2, Z_{r,T}^1, Z_{r,T}^2\rp\lp P_S(0)+T^{\frac{1}{p'}}  P_{T}\rp.
\eq
Observe that \eqref{contraction:Sobolev} is an a priori estimate for the Sobolev norm of the difference of two solutions. To close this estimate, we seek a similar estimate for the H\"older norm, {\it i.e.}, for $P_{H, T}$. This is achieved by applying the Strichartz estimates in our companion paper \cite{NgPo} to the dispersive equation \eqref{contraction:Phi}. According to Theorem $1.1$ of \cite{NgPo}, if $u$ is a solution to the problem 
\[
\big(\partial_t+T_{V_1}\cdot\nabla+iT_{\gamma_1}\big)u=f
\]
 with $f\in L^\infty([0, T]; H^\sigma)$, $\sigma\in \xR$, then it holds that
\bq\label{Strichartz}
\lA u\rA_{L^pW^{\sigma-\frac d2+\mu}}
\le \cF(Z_{r,T}^1)\lp \lA f\rA_{L^pH^{\sigma}}+\lA u\rA_{L^\infty H^{\sigma}}\rp
\eq
where,
\bq\label{mu,p}
\begin{cases}
\mu=\frac{3}{20},~p=4\quad \text{when}~d=1,\\
\mu=\frac{3}{10},~p=2\quad \text{when}~d\ge 2.
\end{cases}
\eq
Applying this result to $u=\Phi$ with $\sigma=s-\tdm$ leads to
\begin{align*}
\lA \Phi\rA_{L^pW^{s-\tmez-\frac d2+\mu}}
&\le \cF(Z_{r,T}^1)\lp \lA F+G\rA_{L^pH^{s-\tmez}}+\lA \Phi\rA_{L^\infty H^{s-\tmez}}\rp,
\end{align*}
This, combined with \eqref{contraction:F1F2} and \eqref{contraction:G1G2}, implies for any $2<r<r'<s-\frac d2+\mu$
\begin{align*}
\lA \Phi\rA_{L^pW^{r'-\tmez}}&\le\cF\lp M_{s,T}^1, M_{s,T}^2, Z_{r,T}^1, Z_{r,T}^2\rp\lp P_T+\lA \Phi\rA_{L^\infty H^{s-\tmez}}\rp\\
&\le  \cF\lp M_{s,T}^1, M_{s,T}^2, Z_{r,T}^1, Z_{r,T}^2\rp P_T.
\end{align*}
By interpolating between $r'$ and some lower index, we gain a multiplication factor of the form $T^\delta$, $\delta>0$ on the right-hand side. Then using the symbolic calculus in Theorem \ref{theo:sc} to go back from $\Phi$ to $\delta\eta$, $\delta\psi$ we obtain
\bq\label{apriori:Holder}
P_{H, T}\le \cF\lp M_{s,T}^1, M_{s,T}^2, Z_{r,T}^1, Z_{r,T}^2\rp T^{\delta}  P_{T}.
\eq
Combining \eqref{contraction:Sobolev} and \eqref{apriori:Holder} we end up with a closed a priori estimate for the difference of two solutions of \eqref{ww} in terms of Sobolev norm and Strichartz norm: for any $T\le 1$, there holds
\[
P_T\le \cF\lp M^1_{s,T}, M^2_{s,T}, Z^1_{r,T}, Z^2_{r,T}\rp \lp P_{S}(0)+T^\delta P_T\rp.
\]
This implies $P_{T_1}\le\cF(...) P_S(0)$ for some $T_1>0$ sufficiently small, depending only on $\cF(...)$.
Then iterating this estimate between $[T_1, 2T_1],...,[T-T_1, T]$ we obtain the following result.
\begin{theo}\label{theo:contraction}
Let $(\eta_j, \psi_j)$,~$j=1,2$ be two solutions to \eqref{ww} on $I=[0, T],~0<T\le 1$ such that 
\[
(\eta_j, \psi_j)\in L^\infty(I; H^{s+\mez}(\xR^d)\times H^s(\xR^d))\cap L^p(I; W^{r+\mez}(\xR^d)\times W^{r,\infty}(\xR^d))
\]
with 
\bq\label{gain:flowmap}
s>\tmez+\frac d2,~\quad 2<r<s-\frac d2+\mu;
\eq
where $\mu,~p$ are given by \eqref{mu,p} and such that $\inf_{t\in [0, T]}\dist(\eta_j(t), \Gamma)>h>0.$ Set
\[
M^j_{s,T}:=\Vert (\eta_j, \psi_j)\Vert_{L^{\infty}([0, T]; H^{s+\mez}\times H^s)}, \quad  Z^j_{r,T}:=\Vert (\eta_j, \psi_j)\Vert_{L^p([0, T]; W^{r+\mez, \infty}\times W^{r, \infty})}.
\]
Consider the differences $\delta\eta:=\eta_1-\eta_2,~ \delta\psi:=\psi_1-\psi_2$ and their norms in Sobolev space and H\"older space:
\[
P_{T}:=\lA (\delta\eta, \delta\psi)\rA_{L^\infty(I; H^{s-1}\times H^{s-\tmez})}+\lA (\delta\eta, \delta\psi)\rA_{L^p(I; W^{r-1,\infty}\times W^{r-\tmez,\infty})}.
\]
Then there exists a non-decreasing  function $\cF:\xR^+\times \xR^+\to\xR^+$ depending only on $d,~r,~s,~p,~\mu,~h$ such that
\[
P_T\le \cF_h\lp M^1_{s,T}, M^2_{s,T}, Z^1_{r,T}, Z^2_{r,T}\rp\lA (\delta\eta, \delta\psi)\arrowvert_{t=0}\rA_{H^{s-1}\times H^{s-\tmez}}.
\]
\end{theo}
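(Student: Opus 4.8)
The plan is to view the difference of the two solutions as the unknown of a linear dispersive system obtained by subtracting the two copies of \eqref{ww}, to symmetrize it exactly as in Section~\ref{section:sym}, and then to combine an $L^2$ energy estimate at the level $s-\tmez$ with the Strichartz estimate of \cite{NgPo} in order to close the bound. First I would recall that $(\eta,\psi)$ solves \eqref{ww} if and only if it solves $(\partial_t+T_V\cdot\nabla+\mathcal L)(\eta,\psi)=f(\eta,\psi)$ with $\mathcal L$, $f$ as in \eqref{defi:L}--\eqref{f1f2}. Subtracting the systems for $(\eta_1,\psi_1)$ and $(\eta_2,\psi_2)$ gives the equation \eqref{eq:difference} for $(\delta\eta,\delta\psi)$ driven by $(V_1,\mathcal L_1)$, with a source $(g_1,g_2)$ collecting $f(\eta_1,\psi_1)-f(\eta_2,\psi_2)$ together with the errors produced by replacing $(V_2,\mathcal L_2)$ by $(V_1,\mathcal L_1)$. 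The contraction estimates for $f^1$ and $f^2$ (Propositions~\ref{contraction:f1} and \ref{contraction:f2}), together with the low-regularity bound on $(\delta V,\delta B)$ of Lemma~\ref{dB,dV}, yield \eqref{contraction:g1g2}: $\Vert(g_1,g_2)\Vert_{H^{s-1}\times H^{s-\tmez}}\les P_H(t)+P_S(t)Q(t)$ with $Q$ as in \eqref{Q(t)}. The decisive feature is that this bound is \emph{tame}, i.e. linear in the highest Hölder norms of $(\eta_j,\psi_j)$, which is what will make the $Q$-factors appear only under a time integral.

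Next I would symmetrize \eqref{eq:difference} with the symmetrizer $S_1$ built from the first solution. Using Proposition~\ref{pq} and the symbolic calculus of Propositions~\ref{rema:class} and~\ref{theo:sc}, exactly as in the proof of Proposition~\ref{sym:prop} but tracking the weaker norms and keeping every remainder linear in $\Vert\eta_1\Vert_{\Cs^{r+\mez}}+\Vert\psi_1\Vert_{\Cs^r}$, I obtain the symmetrized system \eqref{contraction:eq:sym} for $\Phi_1=T_{p_1}\delta\eta$ and $\Phi_2=T_{q_1}(\delta\psi-T_{B_1}\delta\eta)$, with source $(F_1,F_2)+(G_1,G_2)$ satisfying \eqref{contraction:F1F2} and \eqref{contraction:G1G2}. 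Setting $\Phi=\Phi_1+i\Phi_2$, conjugating with $T_{\wp_1}$ where $\wp_1=(\gamma_1^{(3/2)})^{2(s-\tmez)/3}$, and writing $\varphi=T_{\wp_1}\Phi$, I reach \eqref{contraction:eq:varphi} with commutator remainder bounded as in \eqref{contraction:H}. An $L^2$ energy estimate then applies, using that $T_{V_1}\cdot\nabla$ is essentially skew-adjoint and $(T_{\gamma_1})^*\sim T_{\gamma_1}$ by Proposition~\ref{pq}, to produce a differential inequality for $\Vert\varphi\Vert_{L^2}^2$; Grönwall's lemma together with $\int_0^T Q(t)\,dt\le T+T^{1/p'}(Z^1_{r,T}+Z^2_{r,T})$ yields \eqref{contraction:est:energy}, $\Vert\varphi(t)\Vert_{L^2}\les P_S(0)+T^{1/p'}P_T$. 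Inverting the symmetrizer via Proposition~\ref{inverse} recovers $\Vert\delta\eta\Vert_{H^{s-1}}+\Vert\delta\psi\Vert_{H^{s-\tmez}}$ modulo the low-frequency pieces $\Vert\delta\eta\Vert_{H^{-\mez}}$ and $\Vert\delta\psi\Vert_{H^{-\mez}}$, which I bound directly from the evolution equations \eqref{ww} (i.e. $\partial_t\delta\eta=G(\eta_1)\delta\psi+[G(\eta_1)-G(\eta_2)]\psi_2$ and the analogue for $\delta\psi$), using \eqref{diff:DN:s-2} and Theorem~\ref{DN:ABZ}, by $\Vert\delta\eta(0)\Vert_{H^{-\mez}}+\Vert\delta\psi(0)\Vert_{H^{-\mez}}+TP_{S,T}$. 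This produces the a priori estimate \eqref{contraction:Sobolev} for the Sobolev part, which is \emph{not yet closed}: the quantity $P_T=P_{S,T}+P_{H,T}$ still appears on the right.

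To close the loop I would use the Strichartz estimate \eqref{Strichartz} of \cite{NgPo} applied to $u=\Phi$ with $\sigma=s-\tmez$. Combined with \eqref{contraction:F1F2}--\eqref{contraction:G1G2} it controls $\Vert\Phi\Vert_{L^pW^{s-\tmez-\frac d2+\mu}}$, hence $\Vert\Phi\Vert_{L^pW^{r'-\tmez}}$ for any $2<r'<s-\frac d2+\mu$, by $\cF(\dots)P_T$; interpolating between the index $r'$ and a lower one gains a factor $T^\delta$ with $\delta>0$, and transferring from $\Phi$ back to $(\delta\eta,\delta\psi)$ via Theorem~\ref{theo:sc} gives the Hölder a priori estimate \eqref{apriori:Holder}, $P_{H,T}\le\cF(\dots)T^\delta P_T$. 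Adding \eqref{contraction:Sobolev} and \eqref{apriori:Holder} gives $P_T\le\cF(\dots)(P_S(0)+T^\delta P_T)$, where $\cF$ depends only on $M^1_{s,T},M^2_{s,T},Z^1_{r,T},Z^2_{r,T}$ and $h$, so on $[0,T_1]$ with $T_1$ small depending only on this $\cF$ one absorbs the last term and gets $P_{T_1}\les P_S(0)$; iterating over $[T_1,2T_1],\dots,[T-T_1,T]$ and summing yields the stated inequality on $[0,T]$.

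I expect the main obstacle to be structural rather than computational: the Sobolev-level estimate \eqref{contraction:Sobolev} genuinely cannot be closed by itself because the Hölder norm $P_{H,T}$ of the difference really enters, so the Strichartz input of \cite{NgPo} is indispensable, and this is precisely why the theorem is restricted to $2<r<s-\frac d2+\mu$ with $\mu<\mez$ the (sub-optimal) gain available there. The technical heart is to keep every paralinearization and symmetrization error in the passage from \eqref{eq:difference} to \eqref{contraction:eq:varphi} tame in the top Hölder norms of $(\eta_j,\psi_j)$, so that the $Q(t)$-factors are only integrated in time and thereby produce the small powers $T^{1/p'}$ and $T^\delta$; by contrast, the low-frequency estimates, the inversion of the symmetrizer, and the final absorption and iteration are routine.
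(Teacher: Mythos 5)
Your proposal follows essentially the same route as the paper's own proof: reformulation via \eqref{defi:L}--\eqref{f1f2}, the tame contraction bounds of Propositions \ref{contraction:f2} and \ref{contraction:f1} together with Lemma \ref{dB,dV} to get \eqref{contraction:g1g2}, symmetrization with $S_1$ and conjugation by $T_{\wp_1}$ leading to the $L^2$ energy estimate \eqref{contraction:est:energy}, inversion of the symmetrizer with the low-frequency $H^{-\mez}$ pieces handled through the equations, and finally the Strichartz estimate \eqref{Strichartz} plus interpolation to obtain \eqref{apriori:Holder}, absorption for small time and iteration. The argument is correct and matches the paper's proof in both structure and the key technical points (tameness in the top H\"older norms, the indispensability of the Strichartz input to close $P_{H,T}$).
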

\begin{rema}\label{rema:contraction}
If the Strichartz estimate \eqref{Strichartz} had been proved with a gain of $\mu'$ derivative, $\mu'\in (0, \mez]$, then Theorem \ref{theo:contraction} would have held with $\mu=\mu'$ in \eqref{gain:flowmap}.
\end{rema}
\section{Appendix: Paradifferential calculus and technical results}\label{Appendix}
\subsection{Paradifferential operators}
\begin{defi}\label{defi:PL}
1. (Littlewood-Paley decomposition) Let~$\kappa\in C^\infty_0({\mathbf{R}}^d)$ be such that
$$
\kappa(\theta)=1\quad \text{for }\la \theta\ra\le 1.1,\qquad
\kappa(\theta)=0\quad \text{for }\la\theta\ra\ge 1.9.
$$
Define
\begin{equation*}
\kappa_k(\theta)=\kappa(2^{-k}\theta)\quad\text{for }k\in \xZ,
\qquad \varphi_0=\kappa_0,\quad\text{ and }
\quad \varphi_k=\kappa_k-\kappa_{k-1} \quad\text{for }k\ge 1.
\end{equation*}
Given a temperate distribution $u$, we  introduce 
\begin{align*}
&S_k u=\kappa_k(D_x)u\quad\text{for}~ k\in \xZ,\\
&\Delta_0u=S_0u,\quad \Delta_k u=S_k u-S_{k-1}u\quad \text{for}~k\ge 1.
\end{align*}
 Then we have the formal dyadic partition of unity
$$
u=\sum_{k=0}^{\infty}\Delta_k u.
$$
2. (Zygmund spaces) Let $s\in \xR$ and $p,~q\in [1, \infty]$. The Besov  space~$B^s_{p, q}(\xR^d)$ is defined as the space of all the tempered distributions~$u$ satisfying
$$
\lA u\rA_{B^s_{p,q}}\defn \Vert \big(2^{js}\lA \Delta_j u\rA_{L^p(\xR^d)}\big)_{j=0}^\infty\Vert_{\ell^q}<+\infty.
$$
When $p=q=\infty$, $B^s_{p,q}$ becomes the Zygumd space denoted by $\Cs^s$.\\
3. (H\"older spaces) For~$k\in\xN$, we denote by $W^{k,\infty}({\mathbf{R}}^d)$ the usual Sobolev spaces.
For $\rho= k + \sigma$ with $k\in \xN$ and $\sigma \in (0,1)$, $W^{\rho,\infty}({\mathbf{R}}^d)$
denotes the space of all function $u\in W^{k, \infty}(\xR^d)$ such that all the $k^{th}$ derivatives of $u$ are  $\sigma$-H\"older continuous on $\xR^d$.
\end{defi}
Let us review notations and results about Bony's paradifferential calculus (see \cite{Bony,Hormander,MePise}). Here we follow the presentation of M\'etivier in \cite{MePise} (se also \cite{ABZ3}, \cite{ABZ4}).
\begin{defi}
1. (Symbols) Given~$\rho\in [0, \infty)$ and~$m\in\xR$,~$\Gamma_{\rho}^{m}({\mathbf{R}}^d)$ denotes the space of
locally bounded functions~$a(x,\xi)$
on~${\mathbf{R}}^d\times({\mathbf{R}}^d\setminus 0)$,
which are~$C^\infty$ with respect to~$\xi$ for~$\xi\neq 0$ and
such that, for all~$\alpha\in\xN^d$ and all~$\xi\neq 0$, the function
$x\mapsto \partial_\xi^\alpha a(x,\xi)$ belongs to~$W^{\rho,\infty}({\mathbf{R}}^d)$ and there exists a constant
$C_\alpha$ such that,
\begin{equation*}
\forall\la \xi\ra\ge \mez,\quad 
\lA \partial_\xi^\alpha a(\cdot,\xi)\rA_{W^{\rho,\infty}(\xR^d)}\le C_\alpha
(1+\la\xi\ra)^{m-\la\alpha\ra}.
\end{equation*}
Let $a\in \Gamma_{\rho}^{m}({\mathbf{R}}^d)$, we define the semi-norm
\begin{equation}\label{defi:norms}
M_{\rho}^{m}(a)= 
\sup_{\la\alpha\ra\le d/2+1+\rho ~}\sup_{\la\xi\ra \ge 1/2~}
\lA (1+\la\xi\ra)^{\la\alpha\ra-m}\partial_\xi^\alpha a(\cdot,\xi)\rA_{W^{\rho,\infty}({\mathbf{R}}^d)}.
\end{equation}
2. (Paradifferential operators) Given a symbol~$a$, we define
the paradifferential operator~$T_a$ by
\begin{equation}\label{eq.para}
\widehat{T_a u}(\xi)=(2\pi)^{-d}\int \chi(\xi-\eta,\eta)\widehat{a}(\xi-\eta,\eta)\psi(\eta)\widehat{u}(\eta)
\, d\eta,
\end{equation}
where
$\widehat{a}(\theta,\xi)=\int e^{-ix\cdot\theta}a(x,\xi)\, dx$
is the Fourier transform of~$a$ with respect to the first variable; 
$\chi$ and~$\psi$ are two fixed~$C^\infty$ functions such that:
\begin{equation}\label{cond.psi}
\psi(\eta)=0\quad \text{for } \la\eta\ra\le \frac{1}{5},\qquad
\psi(\eta)=1\quad \text{for }\la\eta\ra\geq \frac{1}{4},
\end{equation}
and~$\chi(\theta,\eta)$  is defined by
$
\chi(\theta,\eta)=\sum_{k=0}^{+\infty} \kappa_{k-3}(\theta) \varphi_k(\eta).
$
\end{defi}
\begin{rema}\label{TwT} We make the following remarks on the preceding definition.\\ 
1. The cut-off $\chi$ satisfies the following localization property (see \cite{MePise}, page $73$) for some $0<\eps_1<\eps_2<1$
\[
\left\{ 
\begin{aligned}
&\chi(\theta, \eta)=1\quad\text{for}~|\theta|\le \eps_1(1+|\eta|)\\
&\chi(\theta, \eta)=0\quad\text{for}~|\theta|\ge \eps_2(1+|\eta|).
\end{aligned}
\right.
\]
Therefore, in the definition of $T_au$, on the Fourier side, $T_au$ keeps only the regime where $u$ has higher frequency then $a$. In particular, when $a=1$, we have $T_1u=\psi(D_x)u$, hence 
\[
(T_1-1):~H^{-\infty}\to H^\infty,\quad \Cs^{-\infty}\to \Cs^\infty.
\]
2. As usual, the paraproduct $\widetilde T_au$ is defined by 
\[
\widetilde T_au=\sum_{k=0}^{+\infty} S_{k-3}a \Delta_ku.
\]
On the Fourier side, $\widetilde T_au$ is thus given by the formula \eqref{eq.para} with $\psi\equiv 1$. Consequently
\[
(T_a-\widetilde T_a)u=\widetilde T_a((1-\psi(D_x))u)
\]
and thus using the fact that for any $m>0$ (see Theorem $2.82$, \cite{BCD}), 
\[
\Vert \widetilde T_a v\Vert_{H^s}\le C\Vert a\Vert_{\Cs^{-m}}\Vert v\Vert_{H^{s+m}}
\]
we obtain
\[
(T_a-\widetilde T_a): ~H^{-\infty}\to H^\infty,\quad \Cs^{-\infty}\to \Cs^\infty
\]
provided $a\in \Cs^{-\infty}$. For this reason, we do not distinguish $T_au$ and $\widetilde T_au$ in this paper.
\end{rema}
\begin{defi}\label{defi:order}
Let~$m\in\xR$.
An operator~$T$ is said to be of  order~$\leo m$ if, for all~$\mu\in\xR$,
it is bounded from~$H^{\mu}$ to~$H^{\mu-m}$. 
\end{defi}
Symbolic calculus for paradifferential operators is summarized in the following theorem.
\begin{theo}\label{theo:sc}(Symbolic calculus)
Let~$m\in\xR$ and~$\rho\in [0, \infty)$. \\
$(i)$ If~$a \in \Gamma^m_0({\mathbf{R}}^d)$, then~$T_a$ is of order~$\leo m$. 
Moreover, for all~$\mu\in\xR$ there exists a constant~$K$ such that
\begin{equation}\label{esti:quant1}\lA T_a \rA_{H^{\mu}\rightarrow H^{\mu-m}}\le K M_{0}^{m}(a).
\end{equation}
$(ii)$ If~$a\in \Gamma^{m}_{\rho}({\mathbf{R}}^d), b\in \Gamma^{m'}_{\rho}({\mathbf{R}}^d)$ with $\rho>0$. Then 
$T_a T_b -T_{a \sharp b}$ is of order~$\leo m+m'-\rho$ where
\[
a\sharp b:=\sum_{|\alpha|<\rho}\frac{(-i)^{\alpha}}{\alpha !}\partial_{\xi}^{\alpha}a(x, \xi)\partial_x^{\alpha}b(x, \xi).
\] 
Moreover, for all~$\mu\in\xR$ there exists a constant~$K$ such that
\begin{equation}\label{esti:quant2}
\lA T_a T_b  - T_{a \sharp b}   \rA_{H^{\mu}\rightarrow H^{\mu-m-m'+\rho}}
\le 
K M_{\rho}^{m}(a)M_{0}^{m'}(b)+K M_{0}^{m}(a)M_{\rho}^{m'}(b).
\end{equation}
$(iii)$ Let~$a\in \Gamma^{m}_{\rho}({\mathbf{R}}^d)$ with $\rho >0$. Denote by 
$(T_a)^*$ the adjoint operator of~$T_a$ and by~$\overline{a}$ the complex conjugate of~$a$. Then 
$(T_a)^* -T_{a^*}$ is of order~$\leo m-\rho$ where
\[
a^*=\sum_{|\alpha|<\rho}\frac{1}{i^{|\alpha|}\alpha!}\partial_{\xi}^{\alpha}\partial_x^{\alpha}\overline{a}.
\]
Moreover, for all~$\mu$ there exists a constant~$K$ such that
\begin{equation}\label{esti:quant3}
\lA (T_a)^*   - T_{a^*}   \rA_{H^{\mu}\rightarrow H^{\mu-m+\rho}}\le 
K M_{\rho}^{m}(a).
\end{equation}
\end{theo}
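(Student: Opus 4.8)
\textbf{A proof plan for Theorem \ref{theo:sc} (symbolic calculus).}
The plan is to follow the now-classical route of Bony and M\'etivier (see \cite{Bony, MePise}), reducing all three statements to uniform estimates on dyadic Littlewood--Paley blocks. The basic observation is that, by the localization property of the cut-off $\chi$ recalled in Remark \ref{TwT}, for every $k\ge 0$ the block $T_a\Delta_k u$ is spectrally supported in a dyadic shell of fixed width around $\{|\xi|\sim 2^k\}$ (with $\Delta_k u$ localized there as well), so that $T_a u=\sum_k T_a\Delta_k u$ is an almost-orthogonal sum in every $H^\mu$. Thus it suffices to estimate a single block, and the $\xi$-smoothness of the symbol enters only through finitely many $\xi$-derivatives, which is exactly what the seminorms $M^m_\rho(a)$ control.

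First I would prove $(i)$. Writing $T_a\Delta_k u$ as a convolution in $x$ against the kernel obtained from \eqref{eq.para} with $u$ replaced by $\Delta_k u$, the smoothness of $\xi\mapsto a(\cdot,\xi)$ together with the bound on $\partial_\xi^\alpha a$ for $|\alpha|\le d/2+1$ gives an $L^1_x$-bound on this kernel of the form $\les 2^{mk}M^m_0(a)$, uniformly in $k$. Young's inequality then yields $\lA T_a\Delta_k u\rA_{L^2}\les 2^{mk}M^m_0(a)\lA\Delta_k u\rA_{L^2}$; squaring, multiplying by $2^{2(\mu-m)k}$, summing in $k$ and invoking almost-orthogonality produces \eqref{esti:quant1}.

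The main obstacle will be $(ii)$. Here the plan is to compute the full amplitude of $T_aT_b$: inserting the defining formula \eqref{eq.para} twice, the amplitude is, modulo the spectral cut-offs, $\widehat a(\xi-\zeta,\zeta)\,\widehat b(\zeta-\eta,\eta)$, which I would Taylor-expand in the $\zeta$-slot of $\widehat a$ around $\zeta=\eta$ to order $\rho$. The contributions of order $<\rho$ reassemble exactly into $T_{a\sharp b}$, while the integral remainder carries $\lceil\rho\rceil$ extra derivatives: these either fall on $a$ in the $x$-variable --- hence are absorbed by its $W^{\rho,\infty}$-regularity, giving the factor $M^m_\rho(a)M^{m'}_0(b)$ --- or, after an integration by parts, on $b$, giving $M^m_0(a)M^{m'}_\rho(b)$. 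Showing that this remainder operator is of order $m+m'-\rho$ is then done by the same dyadic kernel estimate as in $(i)$. The delicate bookkeeping, and the point I expect to cost the most care, is twofold: $(a)$ verifying that the two frequency cut-offs $\psi$, $\chi$ coming from the two factors combine so that only the ``low--high'' regime (symbol low-frequency, test function high-frequency) contributes modulo smoothing operators; and $(b)$ estimating the mixed-derivative remainder uniformly in the dyadic index.

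Finally, $(iii)$ is proved by the same Taylor-expansion technique applied to the adjoint: one writes $(T_a)^*$ via the transposed amplitude involving $\overline{a}$, expands in the frequency variable to order $\rho$, the terms of order $<\rho$ giving $T_{a^*}$ and the remainder being of order $m-\rho$ by the now-familiar kernel bound; alternatively $(iii)$ can be deduced from $(ii)$ by a duality argument once $(i)$ is available. In all three cases the constants are easily seen to depend only on the finitely many seminorms $M^\bullet_\bullet$ displayed in \eqref{esti:quant1}--\eqref{esti:quant3}.
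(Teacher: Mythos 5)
This theorem is not proved in the paper: it is recalled as a classical result, following the presentation of M\'etivier \cite{MePise} (going back to Bony \cite{Bony}), so there is no in-paper argument to compare against. Your plan --- almost-orthogonality of the spectrally localized blocks $T_a\Delta_k u$ plus a kernel bound for $(i)$, Taylor expansion of the composed amplitude to order $\rho$ with the remainder absorbed by the $W^{\rho,\infty}$ regularity of the symbols for $(ii)$, and the analogous expansion (or duality) for the adjoint in $(iii)$ --- is precisely the standard proof in those cited references, so it follows essentially the same approach as the paper's source.
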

We also need the following definition for symbols with negative regularity.
\begin{defi}\label{defi:symbol<0}
For~$m\in \xR$ and~$\rho\in (-\infty, 0)$,~$\Gamma^m_\rho(\xR^d)$ denotes the space of
distributions~$a(x,\xi)$
on~${\mathbf{R}}^d\times(\xR^d\setminus 0)$,
which are~$C^\infty$ with respect to~$\xi$ and 
such that, for all~$\alpha\in\xN^d$ and all~$\xi\neq 0$, the function
$x\mapsto \partial_\xi^\alpha a(x,\xi)$ belongs to $C^\rho_*({\mathbf{R}}^d)$ and there exists a constant
$C_\alpha$ such that,
\begin{equation}
\forall\la \xi\ra\ge \mez,\quad \lA \partial_\xi^\alpha a(\cdot,\xi)\rA_{C^\rho_*}\le C_\alpha
(1+\la\xi\ra)^{m-\la\alpha\ra}.
\end{equation}
For~$a\in \Gamma^m_\rho$, we define 
\begin{equation}
M_{\rho}^{m}(a)= 
\sup_{\la\alpha\ra\le 2(d +2)+\vert \rho \vert   ~}\sup_{\la\xi\ra \ge 1/2~}
\lA (1+\la\xi\ra)^{\la\alpha\ra-m}\partial_\xi^\alpha a(\cdot,\xi)\rA_{C^{\rho}_*({\mathbf{R}}^d)}.
\end{equation}
\end{defi}
\begin{prop}[see \protect{\cite[Proposition~2.12]{ABZ3}}]\label{regu<0}
Let~$\rho<0$,~$m\in \xR$ and~$a\in \dot{ \Gamma}^m_\rho$. Then the operator~$T_a$ is of order~$m-\rho$:
\begin{equation}\label{niSbis}
\| T_a \|_{H^s \rightarrow H^{s-(m- \rho)}}\leq C M_{\rho}^{m}(a),\qquad
\| T_a \|_{C^s_* \rightarrow C^{s-(m- \rho)}_*}\leq C M_{\rho}^{m}(a).
\end{equation}
\end{prop}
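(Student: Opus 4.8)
Here is the approach I would take; it is the classical dyadic argument of Métivier, the only new input being that the negative exponent $\rho$ is absorbed through Bernstein's inequality.

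\textbf{Step 1: dyadic decomposition with genuine annuli.} Unwinding \eqref{eq.para} and the definition $\chi(\theta,\eta)=\sum_{k}\kappa_{k-3}(\theta)\varphi_k(\eta)$, I would write $T_au=\sum_{k\ge k_0}\Sigma_k$, where $k_0$ is fixed by $\supp\psi$ and $\Sigma_k=\sigma_k(x,D_x)u$ with $\sigma_k(x,\eta)=\big(S_{k-3}a(\cdot,\eta)\big)(x)\,\varphi_k(\eta)\psi(\eta)$. The key structural point, exactly as in the proof of Theorem \ref{theo:sc}(i), is the $-3$ shift: on the support of $\kappa_{k-3}(\xi-\eta)\varphi_k(\eta)$ one has $|\xi-\eta|\le 2^{k-2}$ while $|\eta|\sim 2^k$, so $\widehat{\Sigma_k}$ is supported in a \emph{dyadic annulus} $|\xi|\sim 2^k$, not merely in a ball. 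This is precisely what will make the final estimate valid for all $s\in\xR$ (and not just for $s>m-\rho$), and it is where the argument departs from a naive ``type $(1,1)$ symbol'' estimate, which would fail for nonpositive output index. Since $\varphi_k$ localises $\eta\sim 2^k$, in the identity $\Sigma_k=\sigma_k(x,D_x)u$ one may freely replace $u$ by $\widetilde\Delta_ku:=\sum_{|k'-k|\le 1}\Delta_{k'}u$.

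\textbf{Step 2: estimating one block.} For $\eta$ in the support of $\varphi_k$ I would bound $\sigma_k$ using $\|\Delta_\ell g\|_{L^\infty}\le 2^{-\ell\rho}\|g\|_{\Cs^\rho}$ together with $\sum_{\ell\le k}2^{-\ell\rho}\les 2^{-k\rho}$ (convergent because $-\rho>0$) and Bernstein's inequality, which gives for any multi-indices $\alpha,\beta$, with only finitely many $\eta$-derivatives involved (those allowed by $M_\rho^m$),
\[
\big\|\partial_x^\beta\partial_\eta^\alpha\sigma_k(\cdot,\eta)\big\|_{L^\infty_x}\les 2^{k|\beta|}\,2^{-k\rho}\,(1+|\eta|)^{m-|\alpha|}\,M_\rho^m(a)\les 2^{k|\beta|}\,2^{k(m-\rho)}\,M_\rho^m(a).
\]
Integrating by parts in $\eta$ in the kernel $K_k(x,y)=(2\pi)^{-d}\int\sigma_k(x,\eta)e^{i(x-y)\eta}\,d\eta$ (only $|\eta|\sim 2^k$ contributes) yields $|K_k(x,y)|\les 2^{kd}\,2^{k(m-\rho)}M_\rho^m(a)\,(1+2^k|x-y|)^{-d-1}$, and the same for $x$-derivatives; hence $\sup_x\int|K_k(x,y)|\,dy+\sup_y\int|K_k(x,y)|\,dx\les 2^{k(m-\rho)}M_\rho^m(a)$, and Schur's lemma gives
\[
\|\Sigma_k\|_{L^2}\les 2^{k(m-\rho)}M_\rho^m(a)\,\|\widetilde\Delta_ku\|_{L^2},\qquad \|\Sigma_k\|_{L^\infty}\les 2^{k(m-\rho)}M_\rho^m(a)\,\|\widetilde\Delta_ku\|_{L^\infty}.
\]

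\textbf{Step 3: reassembly.} Since the blocks are annulus-localised, $\Delta_j(T_au)=\sum_{|k-j|\le N}\Delta_j\Sigma_k$ for a fixed $N$, so for every $s\in\xR$, writing $t=s-(m-\rho)$ and using $2^{jt}2^{k(m-\rho)}=2^{ks}2^{(j-k)t}$ with $2^{(j-k)t}\les 1$ on $|j-k|\le N$,
\[
2^{jt}\|\Delta_j(T_au)\|_{L^2}\les M_\rho^m(a)\sum_{|k-j|\le N}2^{ks}\|\widetilde\Delta_ku\|_{L^2};
\]
taking $\ell^2$ in $j$ gives $\|T_au\|_{H^{s-(m-\rho)}}\les M_\rho^m(a)\|u\|_{H^s}$, and the identical computation with $\sup_j$ and $L^\infty$ in place of $\sum_j$ and $L^2$ gives the Zygmund estimate $\|T_au\|_{\Cs^{s-(m-\rho)}}\les M_\rho^m(a)\|u\|_{\Cs^s}$. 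The finitely many blocks with $2^k\les 1$ (where $|\eta|$ may dip below $1/2$) form a smoothing operator and are absorbed trivially. There is no serious obstacle in this argument; the one point that genuinely must be got right is Step 1 — carrying the $-3$ shift through so that each $\Sigma_k$ sits in an annulus rather than a ball, which is exactly the mechanism that keeps the estimate uniform in $s$.
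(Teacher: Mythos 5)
Your argument is correct, and it is essentially the standard proof of this result, which the paper does not reprove but quotes from Alazard--Burq--Zuily (Proposition 2.12 of \cite{ABZ3}): the two key ingredients there are exactly yours, namely the spectral localization of each block $S_{k-3}a\,\Delta_k u$ in a dyadic annulus $|\xi|\sim 2^k$ (coming from the cutoff $\chi$), and the bound $\|S_{k}a(\cdot,\xi)\|_{L^\infty}\lesssim 2^{-k\rho}M^m_\rho(a)(1+|\xi|)^m$ valid because $\rho<0$, followed by kernel/Schur estimates on each block and resummation in $\ell^2$ (Sobolev) or $\ell^\infty$ (Zygmund). So there is nothing to correct; your proposal reproduces the cited proof.
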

\begin{rema}\label{rema:low}
In the definition \eqref{eq.para} of paradifferential operators, the cut-off $\psi$ removes the low frequency part of $u$. Therefore, estimates pertaining to $T_au$ can be relaxed, for example, when $a\in \Gamma^m_0$ and $u\in \mathcal{S}'$ such that $\nabla u\in H^{\sigma+m-1}$ we have
\[
\Vert T_a u\Vert_{H^\sigma}\le CM_0^m(a)\Vert \nabla u\Vert_{H^{\sigma+m-1}}. 
\]
\end{rema}
\begin{nota} \label{symbolnorm:z}
Let $I\subset \xR$ and $a(z, x, \xi):I\times \xR^d\times \xR^d\to \xC$ be a family of symbols parametrized by $z\in I$. We denote
\[
\cM(a)=\sup_{z\in I}M^m(\rho)(a(z, \cdot, \cdot)).
\]
The set of such $a$ with $\cM(a)<\infty$ is denoted by $\Gamma^m_\rho(\xR^d\times I)$.
\end{nota}
\subsection{Paraproducts}
Given two functions~$a,b$ defined on~${\mathbf{R}}^d$ we define the remainder 
\bq\label{Bony:dep}
R(a,u)=au-T_a u-T_u a.
\eq
We shall use frequently various estimates about paraproducts (see chapter 2 in~\cite{BCD},~\cite{BaCh} and \cite{ABZ3}) which are recalled here.
\begin{theo}\label{pproduct}
\begin{enumerate}
\item  Let~$\alpha,\beta\in \xR$. If~$\alpha+\beta>0$ then
\begin{align}
&\lA R(a,u) \rA _{H^{\alpha + \beta-\frac{d}{2}}({\mathbf{R}}^d)}
\leq K \lA a \rA _{H^{\alpha}({\mathbf{R}}^d)}\lA u\rA _{H^{\beta}({\mathbf{R}}^d)},\label{Bony1} \\ 
&\lA R(a,u) \rA _{H^{\alpha + \beta}({\mathbf{R}}^d)} \leq K \lA a \rA _{C^{\alpha}_*({\mathbf{R}}^d)}\lA u\rA _{H^{\beta}({\mathbf{R}}^d)}\label{Bony2},\\
&\lA R(a,u) \rA _{C_*^{\alpha + \beta}({\mathbf{R}}^d)}\leq K \lA a \rA _{C^{\alpha}_*({\mathbf{R}}^d)}\lA u\rA _{C_*^{\beta}({\mathbf{R}}^d)}.\label{Bony3}
\end{align}
\item Let~$s_0,s_1,s_2$ be such that 
$s_0\le s_2$ and~$s_0 < s_1 +s_2 -\frac{d}{2}$, 
then
\begin{equation}\label{boundpara}
\lA T_a u\rA_{H^{s_0}}\le K \lA a\rA_{H^{s_1}}\lA u\rA_{H^{s_2}}.
\end{equation}
If in addition to the conditions above, $s_1+s_2>0$ then 
\bq\label{boundpara2}
\lA au-T_ua\rA_{H^{s_0}}\le K \lA a\rA_{H^{s_1}}\lA u\rA_{H^{s_2}}.
\eq
\item  Let~$m>0$ and~$s\in \xR$. Then
\begin{align}
&\lA T_a u\rA_{H^{s-m}}\le K \lA a\rA_{C^{-m}_*}\lA u\rA_{H^{s}}\label{pest1},\\ 
&\lA T_a u\rA_{C_*^{s-m}}\le K \lA a\rA_{C^{-m}_*}\lA u\rA_{C_*^{s}}\label{pest2},\\
&\lA T_a u\rA_{C_*^s}\le K \lA a\rA_{L^\infty}\lA u\rA_{C_*^{s}}\label{pest3}.
\end{align}
\end{enumerate}
\end{theo}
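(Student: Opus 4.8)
The plan is to prove Theorem~\ref{pproduct} directly from the Littlewood--Paley decomposition of Definition~\ref{defi:PL}, using only Bernstein's inequalities and two elementary summation lemmas for dyadically localized series. First I would recall Bony's decomposition $au=T_au+T_ua+R(a,u)$ together with the explicit shape of the remainder, $R(a,u)=\sum_{j\ge0}\Delta_j a\,\widetilde\Delta_j u$ with $\widetilde\Delta_j:=\sum_{|k-j|\le2}\Delta_k$ (up to harmless fixed constants and the distinction between $T$ and $\widetilde T$ discussed in Remark~\ref{TwT}, which plays no role here). The two structural facts that drive everything are: each paraproduct block $S_{k-3}a\,\Delta_k u$ has Fourier support in an annulus $\{|\xi|\sim2^k\}$, whereas each remainder block $\Delta_j a\,\widetilde\Delta_j u$ has Fourier support merely in a ball $\{|\xi|\le C2^j\}$. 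Accordingly I would use: if $f=\sum_k f_k$ with $\widehat{f_k}$ supported in $\{|\xi|\sim2^k\}$ then $\lA f\rA_{B^s_{p,q}}\les\lA(2^{ks}\lA f_k\rA_{L^p})_k\rA_{\ell^q}$ for \emph{every} $s\in\xR$, while if $\widehat{f_k}$ is supported only in a ball $\{|\xi|\le C2^k\}$ the same conclusion holds provided $s>0$. This positivity requirement is the exact origin of the hypotheses $\alpha+\beta>0$ and $s_1+s_2>0$ in the statement.

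For the three remainder estimates I would bound a single block $\Delta_j a\,\widetilde\Delta_j u$ by Hölder's inequality and read off the decay exponent from Bernstein. For \eqref{Bony1}, $\lA\Delta_j a\,\widetilde\Delta_j u\rA_{L^1}\le\lA\Delta_j a\rA_{L^2}\lA\widetilde\Delta_j u\rA_{L^2}\les 2^{-j(\alpha+\beta)}\alpha_j\beta_j$ with $(\alpha_j),(\beta_j)\in\ell^2$, hence $(\alpha_j\beta_j)\in\ell^1$; since $\alpha+\beta>0$ the ball summation lemma gives $R(a,u)\in B^{\alpha+\beta}_{1,1}$, and the Sobolev embedding $B^{\alpha+\beta}_{1,1}(\xR^d)\hookrightarrow H^{\alpha+\beta-\frac{d}{2}}(\xR^d)$ concludes. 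Estimate \eqref{Bony2} is the same computation with $\lA\Delta_j a\,\widetilde\Delta_j u\rA_{L^2}\le\lA\Delta_j a\rA_{L^\infty}\lA\widetilde\Delta_j u\rA_{L^2}\les 2^{-j(\alpha+\beta)}\lA a\rA_{\Cs^\alpha}\beta_j$, which lands directly in $B^{\alpha+\beta}_{2,2}=H^{\alpha+\beta}$, and \eqref{Bony3} uses $L^\infty\times L^\infty\to L^\infty$, landing in $B^{\alpha+\beta}_{\infty,\infty}=\Cs^{\alpha+\beta}$; in all three cases the exponents add up exactly so that the single hypothesis $\alpha+\beta>0$ suffices.

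For the paraproduct estimates I would instead invoke the annulus lemma, which needs no sign condition, so the only thing to control is $\lA S_{k-3}a\rA_{L^\infty}$ against $\lA\Delta_k u\rA$. When $a$ is measured in an $L^\infty$-type norm this is immediate, $\lA S_{k-3}a\rA_{L^\infty}\le\lA a\rA_{L^\infty}$, which proves \eqref{pest3} at once; for $a\in\Cs^{-m}$ with $m>0$, summing the Bernstein bounds $\lA\Delta_j a\rA_{L^\infty}\les 2^{jm}\lA a\rA_{\Cs^{-m}}$ over $j\le k-3$ gives $\lA S_{k-3}a\rA_{L^\infty}\les 2^{km}\lA a\rA_{\Cs^{-m}}$, hence \eqref{pest1} and \eqref{pest2} (the latter with the Hölder-space annulus lemma). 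For \eqref{boundpara} the only subtle case is $s_1\le\frac{d}{2}$: there $\lA S_{k-3}a\rA_{L^\infty}\les\sum_{j\le k-3}2^{j(\frac{d}{2}-s_1)}\alpha_j$ with $(\alpha_j)\in\ell^2$, which is a discrete convolution against a summable kernel when $s_1<\frac{d}{2}$ and hence $\les 2^{k(\frac{d}{2}-s_1)}$ times an $\ell^2$ sequence, with only a harmless factor $\sqrt k$ in the borderline case $s_1=\frac{d}{2}$; combined with $\lA\Delta_k u\rA_{L^2}=2^{-ks_2}u_k$ this places $T_au$ in $H^{s_1+s_2-\frac{d}{2}}$, and the strict inequality $s_0<s_1+s_2-\frac{d}{2}$, together with $s_0\le s_2$ to cover the case $s_1>\frac{d}{2}$, lets one descend to $H^{s_0}$. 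Finally \eqref{boundpara2} follows by writing $au-T_au=T_ua+R(a,u)$ and invoking \eqref{boundpara} with the roles of $a$ and $u$ exchanged together with \eqref{Bony1}, the extra hypothesis $s_1+s_2>0$ being precisely what makes the remainder admissible.

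The step I expect to demand the most care is the borderline bookkeeping in \eqref{boundpara}: one must track how the logarithmic/polynomial loss in Bernstein's inequality for $S_{k-3}a$ interacts with the decay of $\lA\Delta_k u\rA_{L^2}$ and check that $\ell^2$-summability is genuinely rescued by the \emph{strict} inequality $s_0<s_1+s_2-\frac{d}{2}$ rather than the non-strict one. Everything else reduces to routine applications of Hölder's inequality, Bernstein's inequalities, Young's inequality for discrete convolutions and the two summation lemmas above; no new idea is needed beyond the classical theory of \cite{BCD,BaCh,ABZ3}.
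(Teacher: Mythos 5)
The paper does not actually prove Theorem \ref{pproduct}: it is recalled verbatim from the references it cites (\cite{BCD}, \cite{BaCh}, \cite{ABZ3}), and your Littlewood--Paley argument --- H\"older on each dyadic block, Bernstein, the annulus and ball summation lemmas, and the embedding $B^{\alpha+\beta}_{1,1}(\xR^d)\hookrightarrow H^{\alpha+\beta-\frac d2}(\xR^d)$ for \eqref{Bony1} --- is exactly the classical proof found there. Items (1) and (3) are correct as you sketch them, and so is \eqref{boundpara}, including the bookkeeping at the borderline $s_1=\frac d2$ (where the $\sqrt k$ loss is absorbed because $s_0<s_2$ is then strict) and the use of $s_0\le s_2$ when $s_1>\frac d2$.

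The one step that fails as written is \eqref{boundpara2}. The quantity in the statement is $au-T_ua$, whose Bony decomposition is $T_au+R(a,u)$; you instead wrote $au-T_au=T_ua+R(a,u)$ and proposed to bound $T_ua$ by \eqref{boundpara} with the roles of $a$ and $u$ exchanged. That application would require $s_0\le s_1$, which is not among the hypotheses (only $s_0\le s_2$ is), and indeed the inequality $\lA au-T_au\rA_{H^{s_0}}\le K\lA a\rA_{H^{s_1}}\lA u\rA_{H^{s_2}}$ is false in general: take $d=1$, $s_1=0$, $s_2=11$, $s_0=5$, so all the stated hypotheses hold; let $u$ be a Schwartz function equal to $1$ on an open set and $a\in L^2$ generic. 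Then $T_au\in H^{s_0}$ by \eqref{boundpara} and $R(a,u)\in H^{s_1+s_2-\frac12}\subset H^{s_0}$ by \eqref{Bony1}, so the claimed bound would force $au\in H^{s_0}$, i.e. $a\in H^{5}_{\mathrm{loc}}$ on the set where $u=1$, which fails. The repair is immediate and needs no role exchange: for the correct quantity, bound $T_au$ by \eqref{boundpara} exactly as stated and $R(a,u)$ by \eqref{Bony1} (this is where $s_1+s_2>0$ enters), then use $s_0<s_1+s_2-\frac d2$ to embed $H^{s_1+s_2-\frac d2}\hookrightarrow H^{s_0}$. With this correction your proof is complete and coincides with the standard one in the cited references.
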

\begin{prop}\label{tame}
\begin{enumerate}
\item  If $s_0, s_1, s_2\in \xR$ satisfying $s_1+s_2> 0$, $s_0\le s_1$, $s_0\le s_2$ and $s_0< s_1+s_2-\frac{d}{2}$, then 
\begin{equation}\label{pr}
\lA u_1 u_2 \rA_{H^{s_0}}\le K \lA u_1\rA_{H^{s_1}}\lA u_2\rA_{H^{s_2}}.
\end{equation}
\item If $s\ge 0$ then 
\bq\label{tame:S}
\lA u_1u_2\rA_{H^s}\le K(\lA u_1\rA_{H^s}\lA u_2\rA_{L^{\infty}}+\lA u_2\rA_{H^s}\lA u_1\rA_{L^{\infty}}).
\eq
\item If $s\ge 0$ then 
\bq\label{tame:H}
\lA u_1u_2\rA_{C_*^s}\le K(\lA u_1\rA_{C_*^s}\lA u_2\rA_{L^{\infty}}+\lA u_2\rA_{C_*^s}\lA u_1\rA_{L^{\infty}}).
\eq
\item Let $\beta>\alpha>0$. Then
\bq\label{tame:H<0}
\lA u_1u_2\rA_{C_*^{-\alpha}}\le K\lA u_1\rA_{C_*^{\beta}}\lA u_2\rA_{C_*^{-\alpha}}.
\eq
\item Let~$s\ge 0$ and $F\in C^\infty(\xC^N)$ satisfying~$F(0)=0$. 
Then there exists a nondecreasing function~$\mathcal{F}\colon\xR_+\rightarrow\xR_+$ 
such that, for all~$U\in H^s({\mathbf{R}}^d)^N\cap L^\infty(\xR^d)^N$,
\begin{equation}\label{F(u):H}
\lA F(U)\rA_{H^s}\le \mathcal{F}\bigl(\lA U\rA_{L^\infty}\bigr)\lA U\rA_{H^s}.
\end{equation}
\item Let $F\in C^\infty(\xC^N)$ satisfying~$F(0)=0$, $s>0$,  and $p, r\in [1, \infty]$. Then there exists a nondecreasing function~$\mathcal{F}\colon\xR_+\rightarrow\xR_+$ 
such that for all $u\in B^s_{p,r}(\xR^d)^N\cap L^\infty(\xR^d)^N$,
\bq\label{F(u):C}
\Vert F\circ u\Vert_{B^s_{p,r}}\le \cF(\Vert u\Vert_{L^\infty})\Vert u\Vert_{B^s_{p,r}}.
\eq
\end{enumerate}
\end{prop}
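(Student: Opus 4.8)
The plan is to split Proposition~\ref{tame} into two groups: items (1)--(4), which follow from Bony's decomposition
\[
u_1u_2=T_{u_1}u_2+T_{u_2}u_1+R(u_1,u_2)
\]
(see \eqref{Bony:dep}) together with the estimates already recorded in Theorem~\ref{pproduct}, and items (5)--(6), which are Moser-type composition estimates (cf.\ \cite{BCD}) that I would prove by a dyadic telescoping argument. The only genuinely delicate point is the high-frequency control of the variable coefficients produced by that telescoping, and this is where I expect the main work to lie; the rest is bookkeeping with Theorem~\ref{pproduct}.

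For (1) I would bound the two paraproducts by \eqref{boundpara} applied with the factors in either order (the hypotheses $s_0\le s_1$, $s_0\le s_2$, $s_0<s_1+s_2-\frac d2$ are exactly what \eqref{boundpara} needs) and the remainder by \eqref{Bony1} with exponents $(s_1,s_2)$, which is licit since $s_1+s_2>0$, then use $H^{s_1+s_2-d/2}\hookrightarrow H^{s_0}$. For (2) and (3) the case $s=0$ is H\"older's inequality ($L^2\cdot L^\infty$, resp.\ $L^\infty\cdot L^\infty\hookrightarrow C^0_*$); for $s>0$ one observes that a paraproduct with an $L^\infty$ symbol has order $0$, i.e.\ $M^0_0(u_i)=\|u_i\|_{L^\infty}$, so $\|T_{u_1}u_2\|_{H^s}\les\|u_1\|_{L^\infty}\|u_2\|_{H^s}$ by Theorem~\ref{theo:sc}(i) and $\|T_{u_1}u_2\|_{C^s_*}\les\|u_1\|_{L^\infty}\|u_2\|_{C^s_*}$ by \eqref{pest3}, together with the elementary dyadic bound $2^{ks}\|\Delta_kR(u_1,u_2)\|_{L^p}\les\|u_2\|_{L^\infty}\sum_{j\ge k-N}2^{(k-j)s}\,2^{js}\|\Delta_ju_1\|_{L^p}$, where the geometric factor $\sum_{n\le N}2^{ns}$ converges precisely because $s>0$. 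For (4), since $\beta>0$ we have $u_1\in L^\infty$, hence $\|T_{u_1}u_2\|_{C^{-\alpha}_*}\les\|u_1\|_{L^\infty}\|u_2\|_{C^{-\alpha}_*}$ by \eqref{pest3}; next $\|T_{u_2}u_1\|_{C^{\beta-\alpha}_*}\les\|u_2\|_{C^{-\alpha}_*}\|u_1\|_{C^\beta_*}$ by \eqref{pest2} (with $m=\alpha$) and $C^{\beta-\alpha}_*\hookrightarrow C^{-\alpha}_*$; and $\|R(u_1,u_2)\|_{C^{\beta-\alpha}_*}\les\|u_1\|_{C^\beta_*}\|u_2\|_{C^{-\alpha}_*}$ by \eqref{Bony3}, valid since $\beta-\alpha>0$, again followed by the embedding into $C^{-\alpha}_*$.

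For (5) and (6) I would use the telescoping decomposition, with the convention $S_{-1}=0$,
\[
F(u)=\sum_{j\ge 0}\big(F(S_ju)-F(S_{j-1}u)\big)=\sum_{j\ge 0}m_j\,\Delta_ju,\qquad m_j:=\int_0^1F'\big(S_{j-1}u+t\,\Delta_ju\big)\,dt,
\]
where the first term is $F(S_0u)=F(S_0u)-F(0)$ by the hypothesis $F(0)=0$. Each argument of $F'$ has $L^\infty$-norm $\les\|u\|_{L^\infty}$, so $\|m_j\|_{L^\infty}\le\cF(\|u\|_{L^\infty})$ with $\cF$ nondecreasing. The key step — which I expect to be the main obstacle — is that $S_{j-1}u+t\Delta_ju$ has Fourier support in a ball $\{|\xi|\les 2^j\}$, so Bernstein's inequality and the chain rule give $\|\partial_x^\alpha m_j\|_{L^\infty}\le\cF(\|u\|_{L^\infty})2^{j|\alpha|}$ for every $\alpha$, hence $\|\Delta_k m_j\|_{L^\infty}\les\cF(\|u\|_{L^\infty})2^{-L(k-j)}$ for all $L\in\xN$ and $k\ge j$. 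Splitting $m_j=S_km_j+(m_j-S_km_j)$ one deduces that $\Delta_k(m_j\Delta_ju)$ is essentially supported in $k\les j$ and, quantitatively, $2^{ks}\|\Delta_k(m_j\Delta_ju)\|_{L^p}\les\cF(\|u\|_{L^\infty})\,b_{k-j}\,2^{js}\|\Delta_ju\|_{L^p}$ with $b_n=2^{ns}$ for $n\le 0$ and $b_n=2^{n(s-L)}$ for $n>0$. Choosing $L>s$ makes $(b_n)_{n\in\xZ}\in\ell^1$, and then Young's inequality $\ell^1\ast\ell^r\to\ell^r$ (with $r=2$ in the Sobolev case) yields $\|F(u)\|_{B^s_{p,r}}\les\cF(\|u\|_{L^\infty})\|u\|_{B^s_{p,r}}$. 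For \eqref{F(u):H} the borderline case $s=0$ is instead handled directly via $F(U)=\big(\int_0^1F'(tU)\,dt\big)U$ and $\|F(U)\|_{L^2}\le\cF(\|U\|_{L^\infty})\|U\|_{L^2}$, while for \eqref{F(u):C} the assumption $s>0$ is exactly what guarantees summability of the $n\le 0$ part of $(b_n)$; taking $r=\infty$ throughout reproduces the corresponding bound for $C^s_*=B^s_{\infty,\infty}$ used elsewhere in the paper.
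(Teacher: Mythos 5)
The paper itself contains no proof of Proposition \ref{tame}: these estimates are simply recalled from the literature (chapter 2 of the Bahouri--Chemin--Danchin book and the references quoted before Theorem \ref{pproduct}), so your proposal has to be judged on its own, and on its own it is essentially the standard argument from exactly those references, carried out correctly. For (1)--(4) your bookkeeping with Bony's decomposition checks out: \eqref{boundpara} applies to both paraproducts and \eqref{Bony1} to the remainder under the stated hypotheses; for (2)--(3) with $s>0$ the $L^\infty$-symbol bound plus your convolution estimate on the remainder works (alternatively you could simply quote \eqref{Bony2}--\eqref{Bony3} with exponents $(0,s)$ and use $\Vert u\Vert_{\Cs^0}\les\Vert u\Vert_{L^\infty}$); and for (4) the combination of \eqref{pest3}, \eqref{pest2} and \eqref{Bony3} with the embedding $\Cs^{\beta-\alpha}\hookrightarrow\Cs^{-\alpha}$ is exactly right. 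For (5)--(6), the telescoping $F(u)=\sum_j m_j\Delta_j u$ with $m_j=\int_0^1F'(S_{j-1}u+t\Delta_j u)\,dt$, the Bernstein bound $\Vert\partial_x^\alpha m_j\Vert_{L^\infty}\le\cF(\Vert u\Vert_{L^\infty})2^{j|\alpha|}$, and the Young-convolution step are precisely Meyer's proof of the composition theorem as presented in \cite{BCD}; your separate treatment of $s=0$ in \eqref{F(u):H} via $F(U)=\bigl(\int_0^1F'(tU)\,dt\bigr)U$ is the right way to reach the endpoint $H^0=L^2$.

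One caveat concerns the endpoint $s=0$ of \eqref{tame:H}: your one-liner $L^\infty\cdot L^\infty\hookrightarrow\Cs^0$ yields $\Vert u_1u_2\Vert_{\Cs^0}\les\Vert u_1\Vert_{L^\infty}\Vert u_2\Vert_{L^\infty}$, which is \emph{not} dominated by the right-hand side of \eqref{tame:H}, whose factors $\Vert u_i\Vert_{\Cs^0}$ are strictly weaker than $\Vert u_i\Vert_{L^\infty}$. In fact no argument can close that case: with $u_1=u_2=\sum_{j=1}^{J}\epsilon_j\cos(2^jx_1)$ for a suitable choice of signs, the low-frequency block of $u_1u_2$ contains the mean value $J/2$, so the left-hand side is $\gtrsim J$, while $\Vert u_i\Vert_{\Cs^0}=O(1)$ and (by Salem--Zygmund) $\Vert u_i\Vert_{L^\infty}=O(\sqrt{J\log J})$, so the right-hand side is $o(J)$. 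Thus item (3) should be read (and is only ever used in the paper) with $s>0$, which is exactly the range your proof covers; at $s=0$ the correct statement is the one you actually proved, with $L^\infty$ norms on the right.
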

\begin{theo}[see \protect{\cite[Theorem~2.92]{BCD}}]\label{paralin}(Paralinearization)
Let $r,~\rho$ be positive real numbers and $F$ be a $C^{\infty}$ function on $\xR$ such that $F(0)=0$. Assume that $\rho$ is not an integer. For any $u\in H^{\mu}(\xR^d)\cap C_*^{\rho}(\xR^d)$ we have
\[
\lA F(u)-T_{F'(u)}u\rA_{ H^{\mu+\rho}(\xR^d)}\le C(\lA u\rA_{L^{\infty}(\xR^d)})\lA u\rA_{C_*^{\rho}(\xR^d)}\lA u\rA_{H^{\mu}(\xR^d)}.
\]
\end{theo}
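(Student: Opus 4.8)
The plan is to prove this by the Littlewood--Paley decomposition of Definition~\ref{defi:PL}, along the classical lines of Bony and Meyer. The starting point is a telescoping identity: writing $F(u)=F(S_0u)+\sum_{q\ge 0}\big(F(S_{q+1}u)-F(S_qu)\big)$ and using $S_{q+1}u=S_qu+\Delta_{q+1}u$ together with the fundamental theorem of calculus, one gets
\[
F(S_{q+1}u)-F(S_qu)=m_q\,\Delta_{q+1}u,\qquad m_q:=\int_0^1 F'\big(S_qu+\tau\Delta_{q+1}u\big)\,d\tau .
\]
After reindexing and recalling that, modulo a smoothing operator (Remark~\ref{TwT}), $T_{F'(u)}u=\sum_q S_{q-3}(F'(u))\,\Delta_qu$, one obtains the basic decomposition
\[
F(u)-T_{F'(u)}u=\widetilde F_0+\sum_{q\ge 1}\sigma_q\,\Delta_qu,\qquad \sigma_q:=m_{q-1}-S_{q-3}\big(F'(u)\big),
\]
where $\widetilde F_0$ collects the finitely many low--frequency terms. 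The piece $\widetilde F_0$ is disposed of directly: since $\widehat{S_0u}$ is compactly supported, $\lA S_0u\rA_{H^{t}}\les\lA u\rA_{H^\mu}$ for every fixed real $t$, and $\lA S_0u\rA_{L^\infty}\les\lA u\rA_{L^\infty}$, so the tame composition estimate~\eqref{F(u):H} gives $\lA\widetilde F_0\rA_{H^{\mu+\rho}}\les\cF(\lA u\rA_{L^\infty})\lA u\rA_{H^\mu}$ (this uses $\mu+\rho\ge 0$, which is within the hypotheses).

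The heart of the matter is two estimates on the multiplier $\sigma_q$. First, a smallness bound $\lA\sigma_q\rA_{L^\infty}\les\cF(\lA u\rA_{L^\infty})\,2^{-q\rho}\lA u\rA_{\Cs^\rho}$: one inserts the intermediate terms $F'(S_{q-1}u)$ and $F'(u)$, controls $m_{q-1}-F'(S_{q-1}u)$ by a mean value estimate on $F''$ against $\lA\Delta_qu\rA_{L^\infty}\les 2^{-q\rho}\lA u\rA_{\Cs^\rho}$, controls $F'(S_{q-1}u)-F'(u)$ by $\lA(\mathrm{Id}-S_{q-1})u\rA_{L^\infty}\les 2^{-q\rho}\lA u\rA_{\Cs^\rho}$, and controls $F'(u)-S_{q-3}(F'(u))$ by Bernstein's inequality together with the composition estimate~\eqref{F(u):C} for $F'(u)\in\Cs^\rho$ (it is here that the hypothesis $\rho\notin\xN$ enters, so that $\Cs^\rho$ is an honest H\"older space). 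Second, a ``frequency localization'' bound expressing that $\sigma_q$ essentially lives at frequencies $\lesssim 2^q$: for $p\ge q+3$ the term $S_{q-3}(F'(u))$ drops out, and one shows $\lA(\mathrm{Id}-S_{p-2})\sigma_q\rA_{L^\infty}=\lA(\mathrm{Id}-S_{p-2})m_{q-1}\rA_{L^\infty}\les\cF(\lA u\rA_{L^\infty})\,2^{-(p-q)N}2^{-q\rho}\lA u\rA_{\Cs^\rho}$ for any fixed $N$. This is obtained by Bony--decomposing $m_{q-1}$ (after subtracting $F'(S_{q-1}u)$, which is $F'$ of a function band-limited at $2^q$) and observing that the genuinely high-frequency contributions always carry the factor $\Delta_qu$ in $L^\infty$, hence a single power of $\lA u\rA_{\Cs^\rho}$ together with the gain $2^{-q\rho}$, while the remaining band-limited factors contribute only the harmless decay $2^{-(p-q)N}$ via Bernstein. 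Keeping track so that only \emph{one} factor of $u$ is measured in $\Cs^\rho$ (the rest being absorbed into $\cF(\lA u\rA_{L^\infty})$) is the delicate point; it works because the remainder has a bilinear structure.

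With these two bounds in hand, the conclusion follows from an almost--orthogonality summation. Fix a block index $p$ and write $\Delta_p\big(\sum_{q\ge1}\sigma_q\Delta_qu\big)=\sum_{q\ge p}\Delta_p(\sigma_q\Delta_qu)+\sum_{1\le q<p}\Delta_p(\sigma_q\Delta_qu)$. In the range $q\ge p$ the $L^\infty$ bound on $\sigma_q$ suffices: $\lA\Delta_p(\sigma_q\Delta_qu)\rA_{L^2}\les\cF(\lA u\rA_{L^\infty})\,2^{-q\rho}\lA u\rA_{\Cs^\rho}\,2^{-q\mu}c_q\lA u\rA_{H^\mu}$ with $(c_q)\in\ell^2$, and summing the geometric series in $q\ge p$ (convergent since $\mu+\rho>0$) gives $\les 2^{-p(\mu+\rho)}d_p\,\lA u\rA_{\Cs^\rho}\lA u\rA_{H^\mu}$ with $(d_p)\in\ell^2$. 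In the range $q<p$ the contribution is nonzero only through the high-frequency part of $\sigma_q$, so $\lA\Delta_p(\sigma_q\Delta_qu)\rA_{L^2}\les\lA(\mathrm{Id}-S_{p-2})\sigma_q\rA_{L^\infty}\lA\Delta_qu\rA_{L^2}\les\cF(\lA u\rA_{L^\infty})\,2^{-(p-q)N}2^{-q(\mu+\rho)}c_q\lA u\rA_{\Cs^\rho}\lA u\rA_{H^\mu}$, and choosing $N>\mu+\rho$ the sum over $q<p$ converges by discrete Young's inequality, again yielding $\les 2^{-p(\mu+\rho)}d_p$ with $(d_p)\in\ell^2$. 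Taking the $\ell^2$ norm in $p$ gives $\lA F(u)-T_{F'(u)}u\rA_{H^{\mu+\rho}}\les\cF(\lA u\rA_{L^\infty})\lA u\rA_{\Cs^\rho}\lA u\rA_{H^\mu}$. The main obstacle, as indicated, is the combined bookkeeping in the previous paragraph: making quantitative the statement that $\sigma_q$ is ``smooth at scale $2^{-q}$'' with a gain $2^{-q\rho}$ coming from a single copy of $u$ in $\Cs^\rho$, while the constant stays a function of $\lA u\rA_{L^\infty}$ alone; the positivity and non-integrality of $\rho$ are precisely what supply the $2^{-q\rho}$ gains.
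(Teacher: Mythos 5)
The paper itself gives no proof of this statement: it is quoted from Bahouri--Chemin--Danchin (Theorem 2.92), with only a remark that the restriction $\rho\notin\xN$ can be removed following M\'etivier. Your outline is precisely the classical proof behind that citation --- telescoping $F(u)=F(S_0u)+\sum_q\big(F(S_{q+1}u)-F(S_qu)\big)$, comparing the multiplier $m_{q-1}$ with $S_{q-3}\big(F'(u)\big)$, an $L^\infty$ smallness bound, a spectral quasi-localization bound, and an almost-orthogonal summation --- so in strategy you are aligned with the source, and steps (a), (b) and the final summation are fine.

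There is, however, a genuine gap at the crux, namely your localization bound on $(\mathrm{Id}-S_{p-2})\sigma_q=(\mathrm{Id}-S_{p-2})m_{q-1}$. The heuristic you offer --- that ``the genuinely high-frequency contributions always carry the factor $\Delta_qu$'' --- is not correct as stated: writing $m_{q-1}=F'(S_{q-1}u)+O(\Delta_qu)$, only the $O(\Delta_qu)$ part carries that factor; the part $F'(S_{q-1}u)$ does not, and while its frequencies near $2^p$, $p\ge q+3$, do decay like $2^{-(p-q)N}\mathcal F(\lA u\rA_{L^\infty})$ by Bernstein and Fa\`a di Bruno, extracting \emph{in addition} the single factor $2^{-q\rho}\lA u\rA_{\Cs^\rho}$ is exactly the hard point. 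It is true, but it requires the careful bookkeeping you only allude to: in the Fa\`a di Bruno expansion of $\nabla^N F'(S_{q-1}u)$ one must bound exactly one factor $\nabla^{k_i}S_{q-1}u$ by $2^{q(k_i-\rho)}\lA u\rA_{\Cs^\rho}$ (using $\lA S_{q-1}u\rA_{\Cs^\rho}\les\min\big(\lA u\rA_{\Cs^\rho},2^{q\rho}\lA u\rA_{L^\infty}\big)$ together with interpolation when all $k_i<\rho$) and the remaining factors by $2^{qk_i}\lA u\rA_{L^\infty}$; alternatively one follows the textbook route of splitting $\sigma_q$ into three pieces, $\big[m_{q-1}-F'(S_{q-1}u)\big]$, $(\mathrm{Id}-S_{q-3})F'(S_{q-1}u)$ and $S_{q-3}\big[F'(S_{q-1}u)-F'(u)\big]$, each with its own argument. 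As written, the key estimate is asserted rather than proved, and the reason given for it would not survive scrutiny. A second, smaller point concerns the low-frequency block: your bound on $\widetilde F_0$ is $\mathcal F(\lA u\rA_{L^\infty})\lA u\rA_{H^\mu}$, with no $\Cs^\rho$ factor, and indeed no argument can do better --- for $F(y)=y$ one has $F(u)-T_{F'(u)}u=(1-\psi(D_x))u$, which is linear in $u$, so the product-form inequality cannot hold literally for this piece unless one normalizes $F'(0)=0$ or accepts an extra additive term $\mathcal F(\lA u\rA_{L^\infty})\lA u\rA_{H^\mu}$ (harmless in all the paper's applications, where the H\"older-type factor is bounded below by $1$). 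You should state this caveat instead of folding $\widetilde F_0$ silently into the claimed product bound.
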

\begin{rema}
In Theorem  $2.92$, \cite{BCD}, there is a restriction that $\rho$ is not an integer. In fact, by following the proof of  the same result (but qualitative) in Theorem $5.2.4$, \cite{MePise} one can check that this restriction can be dropped.
\end{rema}
\begin{lemm}\label{pest:n}
 Let  $s, r, \alpha\in \xR$ satisfy 
\[
\text{either}\quad r\le 0,~s<\alpha+r\quad\text{or}\quad r>0,~s<\alpha.
\]
 Then there exists $C>0$ such that 
\[
\Vert T_au\Vert_{H^s}\le C\Vert a\Vert_{L^r}\Vert u\Vert_{\Cs^\alpha}.
\]
\end{lemm}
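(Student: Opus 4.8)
The plan is to use the Littlewood--Paley characterization of the paraproduct together with the spectral localization of $T_a u$. First I would recall the definition $T_a u = \sum_{k} S_{k-3}a\,\Delta_k u$ (up to the harmless smoothing correction discussed in Remark \ref{TwT}), so that each dyadic block $\Delta_j(T_a u)$ picks up only the terms with $k$ in a bounded window $|k-j|\le N_0$ around $j$; this is the standard consequence of the frequency support of $\chi$. Hence $\|\Delta_j(T_a u)\|_{L^2}\lesssim \sum_{|k-j|\le N_0}\|S_{k-3}a\|_{L^{?}}\|\Delta_k u\|_{L^\infty}$, and the whole point is to estimate $\|S_{k-3}a\|$ in the appropriate space in terms of $\|a\|_{L^r}$ and then sum against $\|\Delta_k u\|_{L^\infty}\lesssim 2^{-k\alpha}\|u\|_{\Cs^\alpha}$.

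The core of the argument is a Bernstein-type bound for $S_{k-3}a$ when $a\in L^r$. I would split into the two regimes. \emph{Case $r\le 0$:} here $L^r$ is to be read as the Besov-type space appearing elsewhere in the paper, so $a$ has negative regularity; Bernstein's inequality for the low-frequency cutoff gives $\|S_{k-3}a\|_{L^\infty}\lesssim 2^{k(\frac d2 - r - \text{something})}\dots$ — more precisely one gains the factor $2^{-kr}$ (times a $\log$ or a summable geometric factor coming from summing the dyadic pieces below $k$, which converges precisely because $r\le 0$ makes the series geometric or at worst logarithmically divergent but then absorbed by the strict inequality $s<\alpha+r$). \emph{Case $r>0$:} then $S_{k-3}a$ is uniformly bounded, $\|S_{k-3}a\|_{L^\infty}\lesssim \|a\|_{L^r}$, with no $k$-dependence, and the summation condition becomes simply $s<\alpha$. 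In both cases one plugs the bound into
\[
2^{js}\|\Delta_j(T_a u)\|_{L^2}\lesssim \|a\|_{L^r}\|u\|_{\Cs^\alpha}\sum_{|k-j|\le N_0}2^{js}2^{-k\alpha}\cdot(\text{growth factor in }k),
\]
and the strict inequality in the hypothesis guarantees that the $\ell^2_j$ (indeed $\ell^\infty_j$, hence $\ell^2_j$ after reindexing, but one must be careful: $H^s$ requires $\ell^2$) norm over $j$ is finite. Actually, since there are only finitely many $k$ per $j$, $\|T_a u\|_{H^s}^2 = \sum_j 2^{2js}\|\Delta_j(T_a u)\|_{L^2}^2$ and each summand is dominated by $2^{2j(s-\alpha-r_+)}$ (with $r_+=\max(r,0)$ roughly), so the geometric series in $j$ converges under the stated strict inequality.

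The main obstacle I anticipate is pinning down exactly what the space ``$L^r$'' denotes in the statement — the notation is nonstandard here and, judging from the hypotheses on $r$ being allowed negative, it must be one of the Besov/Zygmund-type scales used throughout (most plausibly $\Cs^r$ or a related space), so the Bernstein estimate for $S_{k-3}a$ has to be stated in that scale. Once that is fixed, the rest is a routine bookkeeping of dyadic sums, and the strict inequalities in the hypothesis are exactly the conditions that make the two geometric series (one in $k$ below the cutoff, one in $j$) converge. I would present the proof in the two cases $r\le 0$ and $r>0$ separately, each a three-line dyadic estimate, after the one-line reduction via the frequency localization of the paraproduct.
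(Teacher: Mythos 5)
Your overall set-up (dyadic decomposition of $T_au$, almost orthogonality of the blocks $S_{k-3}a\,\Delta_k u$, the bound $\Vert\Delta_k u\Vert_{L^\infty}\les 2^{-k\alpha}\Vert u\Vert_{\Cs^\alpha}$, geometric summation under the strict inequalities) is the same as the paper's, but there is a genuine gap precisely at the point you flag and leave open: the identification of the space ``$L^r$''. It is the Sobolev space $H^r$ -- the paper's proof ends with $\Vert a\Vert_{H^r}$, and this is how the lemma is used later (e.g.\ for $T_Ua$ with $U\in C(I;H^{-\mez})$, i.e.\ $r=-\mez$). Your fallback guess $\Cs^r$ makes the statement false: take $a\equiv 1$ and $u(x)=\sin(x_1)$, which lies in every $\Cs^\alpha$; then $T_au=\psi(D_x)u$ is essentially $u$ itself and is not in $L^2(\xR^d)$, so no bound by $\Vert a\Vert_{\Cs^r}\Vert u\Vert_{\Cs^\alpha}$ is possible. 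Structurally, with both factors measured only in H\"older/Zygmund scales there is no square-integrable quantity from which an $H^s$ bound could come.

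Related to this, the norms in your dyadic step are misplaced. To use $\Vert\Delta_k u\Vert_{L^\infty}\les 2^{-k\alpha}\Vert u\Vert_{\Cs^\alpha}$ you must estimate $\Vert S_{k-3}a\,\Delta_ku\Vert_{L^2}\le\Vert S_{k-3}a\Vert_{L^2}\Vert\Delta_ku\Vert_{L^\infty}$, whereas your case analysis bounds $\Vert S_{k-3}a\Vert_{L^\infty}$ by Bernstein (incurring a $2^{kd/2}$ factor); that pairs with $\Vert\Delta_ku\Vert_{L^2}$, which is not controlled by $\Vert u\Vert_{\Cs^\alpha}$, and the dimensional factor would in any case spoil the exponents. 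Moreover, for $r>0$ the claim $\Vert S_{k-3}a\Vert_{L^\infty}\les\Vert a\Vert_{H^r}$ fails when $r\le d/2$. The correct mechanism is elementary: write $S_{k-3}a=\sum_{j\le k-3}\Delta_j a$ and apply Cauchy-Schwarz with weights $2^{rj}$, so that $\Vert S_{k-3}a\Vert_{L^2}^2\le\big(\sum_{j\le k-3}2^{2rj}\Vert\Delta_ja\Vert^2_{L^2}\big)\big(\sum_{j\le k-3}2^{-2rj}\big)$. For $r\le 0$ the second factor is $\les 2^{2(-r+\eps)k}$ with $\eps\in(0,\alpha+r-s)$ absorbing the borderline case $r=0$ (this is where the strictness of $s<\alpha+r$ is used), and summing $2^{2sk}\Vert S_{k-3}a\Vert^2_{L^2}\Vert\Delta_ku\Vert^2_{L^\infty}$ over $k$ gives $\Vert a\Vert^2_{H^r}\Vert u\Vert^2_{\Cs^\alpha}$ after exchanging the $j$ and $k$ sums; for $r>0$ both $\sum_j 2^{-2rj}$ and $\sum_{k}2^{2(s-\alpha)k}$ converge and the same conclusion follows. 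With the space corrected to $H^r$ and the $L^2$/$L^\infty$ roles fixed as above, your outline becomes the paper's proof.
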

\begin{proof}
We have by the definition of paraproducts  (see  Definition \ref{defi:PL} and Remark \ref{TwT}),
\begin{align*}
\Vert T_au\Vert_{H^s}^2&\les\sum_{k\ge 0}2^{2sk}\Vert S_{k-3}a\Delta_ku\Vert_{L^2}^2\les\sum_{k\ge 0}2^{2sk}\Vert S_{k-3}a\Vert^2_{L^2}\Vert\Delta_ku\Vert^2_{L^\infty}.
\end{align*}
For small $k$, we have the easy estimate 
\[
\sum_{k=0}^3 2^{2sk}\Vert S_{k-3}a\Vert^2_{L^2}\Vert\Delta_ku\Vert^2_{L^\infty}\les \Vert a\Vert^2_{L^2}\Vert u\Vert^2_{\Cs^\alpha}.
\]
Consider the case $r\le 0$ and $s<\alpha+r$.  Pick $\eps\in (0, \alpha+r-s)$. For $k\ge 4$, using $S_{k-3}a= \sum_{j=0}^{k-3}\Delta_j a$ we can apply the H\"older inequality to estimate (notice that $r\le 0$)
\begin{align*}
\sum_{k\ge 4}2^{2s k}\Vert S_{k-3}a\Vert^2_{L^2}\Vert\Delta_ku\Vert^2_{L^\infty}& \les \Vert u\Vert_{\Cs^\alpha}^2\sum_{k\ge 4}2^{2(s-\alpha)k}\Big(\sum_{j=0}^{k-3}\Vert \Delta_ja\Vert_{L^2}\Big)^2\\
&\les \Vert u\Vert_{\Cs^\alpha}^2\sum_{k\ge 4}2^{2(s-\alpha)k}\sum_{j=0}^{k-3}2^{2rj}\Vert \Delta_j a\Vert^2_{L^2}\sum_{j=0}^{k-3}2^{-2rj}\\
&\les \Vert u\Vert_{\Cs^\alpha}^2\sum_{k\ge 4}2^{2(s-\alpha-r+\eps)k}\sum_{j=0}^{k-3}2^{2rj}\Vert \Delta_j a\Vert^2_{L^2}\\
&\les \Vert u\Vert_{\Cs^{\alpha}}^2\Vert a\Vert^2_{H^r}.
\end{align*}
Now, if $r>0$ and $s<\alpha$, in the second line of the preceding estimate we observe that the series
\[
\sum_{j\ge 0}2^{-2rj},\quad \sum_{k\ge 4}2^{2(s-\alpha)k} 
\]
converge. This concludes the proof in the second case.
\end{proof}
\subsection{Paradifferential calculus in Besov spaces}
Concerning the symbolic calculus in Besov spaces, we have the following results.
\begin{lemm}[see \protect{\cite[Lemma 2.6]{WaZh}}]\label{sc:B}
Let $s, m, m'\in \xR$, $q\in [1, \infty]$ and $\rho\in [0, 1]$.
\begin{itemize}
\item[(i)] If $a\in \Gamma^m_0(\xR^d)$ then 
\[
\Vert T_a\Vert_{B^s_{\infty, q}\to B^{s-m}_{q, \infty}}\le C M^m_0(a).
\]
\item[(ii)] If $a\in \Gamma^m_0(\xR^d)$, $b\in \Gamma^{m'}_0(\xR^d)$ then 
\[
\Vert T_aT_b-T_{ab}\Vert_{B^s_{\infty, q}\to B^{s-(m+m')+\rho}_{q, \infty}}\le C M^m_\rho(a)M^{m'}_0(b)+CM^m_0(a)M^{m'}_\rho(b).
\]
\end{itemize}
\end{lemm}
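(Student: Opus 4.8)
The plan is to run the proof of the Sobolev symbolic calculus of Theorem~\ref{theo:sc} entirely at the level of Littlewood--Paley blocks, replacing each use of Plancherel's identity by the Bernstein inequalities and by elementary kernel bounds that are \emph{uniform in} $p\in[1,\infty]$. The one genuinely new ingredient is the following remark, which is the $L^p$-counterpart of \eqref{esti:quant1}: if $b(x,\xi)$ is a symbol whose $\xi$-support lies in a fixed dyadic shell $\{|\xi|\sim2^{k}\}$ and which satisfies $\sup_{|\alpha|\le d/2+1}\sup_{x,\,\xi}(1+|\xi|)^{|\alpha|-m}|\partial_\xi^\alpha b(x,\xi)|\le A$, then $\|b(x,D)w\|_{L^p}\les A\,2^{km}\|w\|_{L^p}$ for every $p\in[1,\infty]$, with implicit constant depending only on $d$. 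This follows by integrating by parts in $\xi$ to get $|K_b(x,y)|\les A\,2^{km}\,2^{kd}\langle 2^{k}(x-y)\rangle^{-N}$ for the Schwartz kernel of $b(x,D)$, and then invoking Schur's test; it needs no regularity of $b$ in $x$, which is exactly why $\Gamma^m_0$-seminorms suffice in part (i). When $b$ is moreover $\rho$-H\"older in $x$ ($0<\rho\le1$), the same computation applied to $b(x,\xi)-b(x',\xi)$ produces the extra decay responsible for the gain in part (ii).

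For \textbf{part (i)} I would first use Remark~\ref{TwT} to reduce to $T_au=\sum_{k\ge0}a_k(x,D)\,\varphi_k(D)\psi(D)u$ modulo an operator mapping $H^{-\infty}$ to $H^{\infty}$ (hence every Besov space to every other), where $a_k=\kappa_{k-3}(D_x)a$ has $x$-frequencies $<\eps 2^{k}$. Then the $k$-th summand has spectrum in a fattened shell $\{|\xi|\sim2^{k}\}$, and the remark above gives $\|a_k(x,D)\varphi_k(D)\psi(D)u\|_{L^{p}}\les M^m_0(a)\,2^{km}\|\Delta_k u\|_{L^{p}}$ for the relevant Lebesgue exponent $p$ — the Lebesgue index being simply transported from source to target because the block estimate holds in every $L^p$ with the same constant. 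Applying $\Delta_j$ kills all but the terms with $|j-k|\le N_0$, whence
\[
2^{(s-m)j}\,\|\Delta_j T_au\|_{L^{p}}\les M^m_0(a)\sum_{|k-j|\le N_0}2^{sk}\,\|\Delta_k u\|_{L^{p}} .
\]
Taking $\ell^{\infty}_{j}$ on the left and $\ell^{q}_{k}$ on the right, and using that convolution against a fixed finitely supported sequence is bounded on every $\ell^{q}$, yields the announced bound; the low-frequency block is handled directly by Bernstein. This is the $B$-space transcription of \eqref{esti:quant1}.

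For \textbf{part (ii)} I would mimic the proof of Theorem~\ref{theo:sc}(ii), reading the estimate as valid whenever its right-hand side is finite (i.e.\ whenever $a$ or $b$ lies in the corresponding $\Gamma^{\bullet}_\rho$ class). Decompose $T_aT_b u=\sum_{k,k'}a_k(x,D)\,\varphi_k(D)\psi(D)\,b_{k'}(x,D)\varphi_{k'}(D)\psi(D)u$ and split according to whether $k'\ge k-N_1$ (the ``main'' interaction) or $k'<k-N_1$ (the ``off-diagonal'' one). In the main regime one writes $a_k(x,D)b_{k'}(x,D)-(\text{symbol }a_k b_{k'})(x,D)$; since $\rho\le1$ the composition expansion \eqref{esti:quant2} has only the leading term $ab$, and the remainder block is precisely where the $\rho$-modulus of continuity of $a$ (or of $b$) in $x$ is exploited, via the $b(x,\xi)-b(x',\xi)$ kernel estimate, to extract the factor $2^{-\rho k}$; summing as in part (i) gives the contribution bounded by $M^m_\rho(a)M^{m'}_0(b)+M^m_0(a)M^{m'}_\rho(b)$. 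In the off-diagonal regime the $x$-frequencies of $b_{k'}$ being much smaller than $2^k$ confine the spectrum of the product's $k$-th piece to $\{|\xi|\sim2^k\}$, and integrating by parts $N$ times in $\xi$ beats $2^{-N(k-k')}$, showing this term is already smoothing of order $\rho$ relative to the source; one then reassembles exactly as before, and checks that the symbol $(ab)(x,\xi)$ reproduced this way is $T_{ab}$ up to the same type of remainder.

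The \textbf{main obstacle} is the bookkeeping in part (ii): one must track which of the frequency regimes ($a$-block $\gg$, $\sim$, or $\ll$ $b$-block, in interplay with the output frequency) actually contributes, and in the resonant regime extract the $2^{-\rho k}$ gain cleanly from the $\rho$-H\"older seminorm of the symbol rather than from any $\xi$-smoothing. Everything else — the uniform-in-$p$ boundedness of frequency-localized symbol operators, the reassembly of dyadically localized pieces into a Besov norm, and the low-frequency block — is routine and parallels the $L^2$-based arguments of Theorem~\ref{theo:sc} and of Lemma~\ref{pest:n}.
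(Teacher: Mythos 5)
The paper does not actually prove this lemma: it is quoted from \cite[Lemma 2.6]{WaZh}, so there is no in-paper argument to compare yours with. Your sketch follows the standard Littlewood--Paley route for symbolic calculus in $L^\infty$-based (indeed all $L^p$-based) Besov spaces -- essentially what Wang--Zhang do -- and its architecture is sound: uniform-in-$p$ bounds for the spectrally localized blocks $(S_{k-3}a)(x,D)\Delta_k u$, localization of each block in a fattened shell, reassembly via convolution inequalities on $\ell^q$, and, for (ii) with $\rho\le 1$ so that $a\sharp b=ab$, extraction of the factor $2^{-\rho k}$ from the $\rho$-H\"older modulus of continuity of the symbol in $x$ in the resonant regime. (The target space printed in the statement, $B^{s-m}_{q,\infty}$, is a misprint for $B^{s-m}_{\infty,q}$; the latter is what your argument produces.)

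One justification in your key remark is wrong as written: from bounds on $\partial_\xi^\alpha b$ for $|\alpha|\le d/2+1$ only, integration by parts does not give $|K_b(x,y)|\les A\,2^{km}2^{kd}\langle 2^k(x-y)\rangle^{-N}$ with $N$ large; you only reach decay of order $d/2+1$, which is not integrable in $y$ for $d\ge 2$, so Schur's test does not apply directly. The standard repair costs nothing: estimate $\int |K_b(x,x-z)|\,dz$ by Cauchy--Schwarz against the weight $\langle 2^k z\rangle^{d/2+1}$, then use Plancherel in $z$ and the shell support (volume $\sim 2^{kd}$) to get $\int|K_b(x,x-z)|\,dz\les A\,2^{km}$ with exactly the $d/2+1$ $\xi$-derivatives controlled by the seminorm \eqref{defi:norms}. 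The same device is needed in part (ii) when you bound the kernel associated with $b(x,\xi)-b(x',\xi)$ to produce the $2^{-\rho k}$ gain. With that correction your outline goes through; the remaining work in (ii) is, as you acknowledge, the bookkeeping of the frequency regimes, parallel to the $L^2$-based proof of Theorem \ref{theo:sc}.
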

\begin{lemm}[see \protect{\cite[Lemma 2.10]{Wa2}}]\label{pr:B}
1. Let $s\in \xR$ and $p, q\in [1, \infty]$. Then for any $\sigma>0$ we have
\bq
\Vert T_au\Vert_{B^s_{p,q}}\le K\min \big(\Vert a\Vert_{L^\infty}\Vert u\Vert_{B^{s}_{p,q}}, \Vert a\Vert_{\Cs^{-\sigma}}\Vert u\Vert_{B^{s+\sigma}_{p,q}} \big) \label{pest:B}.
\eq
2. Let $s>0$ and $p, q\in [1, \infty]$. The for any $\sigma\in \xR$, we have
\bq
\Vert R(a, u)\Vert_{B^s_{p,q}}\le K\Vert a\Vert_{\Cs^{\sigma}}\Vert u\Vert_{B^{s-\sigma}_{p,q}}  \label{Bony:B}.
\eq
\end{lemm}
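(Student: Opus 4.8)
The plan is to prove Lemma~\ref{pr:B} by a direct Littlewood--Paley computation, in complete parallel with the proofs of the $H^s$-analogues recorded in Theorem~\ref{pproduct} (in particular \eqref{pest1} and \eqref{Bony2}); the only new feature is that one keeps $L^p$-norms of the dyadic blocks of $u$ instead of $L^2$-norms, which is immaterial because $a$ is always measured in an $L^\infty$-based (Zygmund) norm. Recall Bony's decomposition: with $S_{k-3}=\kappa_{k-3}(D_x)$ and $\Delta_k$ as in Definition~\ref{defi:PL}, one has $T_a u=\sum_{k\ge 0}S_{k-3}a\,\Delta_k u$ and $R(a,u)=au-T_au-T_ua=\sum_{k\ge 0}\Delta_k a\,\widetilde\Delta_k u$, where $\widetilde\Delta_k=\sum_{|\ell|\le 2}\Delta_{k+\ell}$.

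First I would treat the paraproduct bound \eqref{pest:B}. Since $\widehat{S_{k-3}a}$ is supported in a ball of radius $\lesssim 2^{k-3}$ while $\widehat{\Delta_k u}$ lives in an annulus $\{|\xi|\sim 2^k\}$, the block $S_{k-3}a\,\Delta_k u$ is spectrally localized in an annulus $\{|\xi|\sim 2^k\}$; hence by the classical summation lemma for annulus-localized series (valid for every $s\in\xR$; see e.g.\ \cite[Ch.~2]{BCD}) one has $\|T_a u\|_{B^s_{p,q}}\les\big\|\big(2^{ks}\|S_{k-3}a\,\Delta_k u\|_{L^p}\big)_{k\ge0}\big\|_{\ell^q}$. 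The first estimate in \eqref{pest:B} then follows from $\|S_{k-3}a\,\Delta_k u\|_{L^p}\le\|a\|_{L^\infty}\|\Delta_k u\|_{L^p}$ and taking $\ell^q$-norms. For the second estimate, use $\|\Delta_j a\|_{L^\infty}\le 2^{j\sigma}\|a\|_{\Cs^{-\sigma}}$ and sum the geometric series (here $\sigma>0$ is what makes it converge) to get $\|S_{k-3}a\|_{L^\infty}\les 2^{k\sigma}\|a\|_{\Cs^{-\sigma}}$, whence $2^{ks}\|S_{k-3}a\,\Delta_k u\|_{L^p}\les\|a\|_{\Cs^{-\sigma}}\,2^{k(s+\sigma)}\|\Delta_k u\|_{L^p}$; the $\ell^q$-norm of the right-hand side is $\|a\|_{\Cs^{-\sigma}}\|u\|_{B^{s+\sigma}_{p,q}}$.

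For the remainder bound \eqref{Bony:B} the essential change is that $\Delta_k a\,\widetilde\Delta_k u$ is only spectrally localized in a \emph{ball} $\{|\xi|\lesssim 2^k\}$, so one must invoke the ball-localized summation lemma, which holds precisely because $s>0$: $\|R(a,u)\|_{B^s_{p,q}}\les\big\|\big(2^{ks}\|\Delta_k a\,\widetilde\Delta_k u\|_{L^p}\big)_{k\ge0}\big\|_{\ell^q}$. Then $\|\Delta_k a\,\widetilde\Delta_k u\|_{L^p}\le\|\Delta_k a\|_{L^\infty}\|\widetilde\Delta_k u\|_{L^p}\le 2^{-k\sigma}\|a\|_{\Cs^{\sigma}}\|\widetilde\Delta_k u\|_{L^p}$, so $2^{ks}\|\Delta_k a\,\widetilde\Delta_k u\|_{L^p}\le\|a\|_{\Cs^{\sigma}}\,2^{k(s-\sigma)}\|\widetilde\Delta_k u\|_{L^p}$, and a harmless reindexing of the finitely many shifts in $\widetilde\Delta_k$ yields $\|a\|_{\Cs^{\sigma}}\|u\|_{B^{s-\sigma}_{p,q}}$. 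There is no substantive obstacle in this argument; the only points demanding care are selecting the correct spectral-summation lemma in each case -- annulus-localized blocks for the paraproduct (any $s$) versus ball-localized blocks for the remainder (needs $s>0$) -- and observing that the geometric series in the second half of \eqref{pest:B} converges only thanks to $\sigma>0$, which is exactly the stated hypothesis. All of this is uniform in $p,q\in[1,\infty]$.
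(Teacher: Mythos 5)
Your argument is correct: the paper itself gives no proof of Lemma \ref{pr:B} (it is quoted from \cite[Lemma 2.10]{Wa2}), and your dyadic computation — annulus-localized summation for the paraproduct blocks $S_{k-3}a\,\Delta_k u$ (any $s$, with $\sigma>0$ only needed to sum $\|S_{k-3}a\|_{L^\infty}\les 2^{k\sigma}\|a\|_{\Cs^{-\sigma}}$), versus ball-localized summation for the remainder blocks (whence the hypothesis $s>0$) — is exactly the standard proof one finds in the cited reference and in \cite[Ch.~2]{BCD}. The only point worth making explicit is that you are estimating the paraproduct $\widetilde T_a u=\sum_k S_{k-3}a\,\Delta_k u$ rather than the paradifferential operator $T_a$ of \eqref{eq.para}; by Remark \ref{TwT} the difference is smoothing and controlled by $\|a\|_{\Cs^{-\sigma}}$, so the identification is legitimate and your estimates transfer to $T_a$ as stated.
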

To deal with time-dependent distributions, we use the Chemin-Lerner spaces defined as follows (see Chapter 2, \cite{BCD}).
\begin{defi}\label{Chemin-Lerner}
For $T>0$, $s\in \xR$, and $p, q, r\in [1, \infty]$, we set
\bq
\Vert u\Vert_{\wL^q([0, T]; B^s_{p, r})}\defn \Vert \big(2^{js}\lA \Delta_j u\rA_{L^q([0, T]; L^p(\xR^d))}\big)_{j=0}^\infty\Vert_{\ell^r}.
\eq
Again, when $p=r=\infty$, we denote $\wL^q([0, T]; B^s_{p, r})=\wL^q([0, T]; \Cs^s)$. Notice that $\Vert u\Vert_{\wL^\infty(I; \Cs^s)}=\Vert u\Vert_{L^\infty(I; \Cs^s)}$.
\end{defi}
The next lemma then follows easily from the proof Lemma \ref{sc:B}.
\begin{lemm}[see \protect{\cite[Lemma 2.6]{WaZh}}]\label{sc:CL}
Let $s, m, m'\in \xR$, $p, q\in [1, \infty]$, $\rho\in [0, 1]$ and $I=[0, T]$.
\begin{itemize}
\item[(i)] If $a\in \Gamma^m_0(I\times\xR^d)$ then 
\[
\Vert T_a\Vert_{\wL^p(I; B^s_{\infty, q})\to \wL^p(I; B^{s-m}_{\infty, q})}\le C \cM^m_0(a).
\]
\item[(ii)] If $a\in \Gamma^m_0(I\times \xR^d)$, $b\in \Gamma^{m'}_0(I\times \xR^d)$ then 
\[
\Vert T_aT_b-T_{ab}\Vert_{\wL^p(I; B^s_{\infty, q})\to \wL^p(I; B^{s-(m+m')+\rho}_{\infty, q})}\le C \cM^m_\rho(a)\cM^{m'}_0(b)+C\cM^m_0(a)\cM^{m'}_\rho(b).
\]
\end{itemize}
\end{lemm}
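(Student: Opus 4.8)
The plan is to repeat the proof of Lemma~\ref{sc:B} essentially verbatim, carrying the time variable $t\in I$ along as an inert parameter inside the Littlewood--Paley decomposition; the point (as the sentence preceding the statement indicates) is that that proof is organized frequency block by frequency block, so the time integration slips in at no cost. First I would recall the structure of the proof of Lemma~\ref{sc:B}: by Remark~\ref{TwT} one works with the paraproduct $T_a u=\sum_{k\ge 0}S_{k-3}a\,\Delta_k u$, each summand is spectrally supported in an annulus $\{|\xi|\sim 2^k\}$, hence $\Delta_j(T_au)$ only receives contributions from the indices $k$ with $|k-j|\le N_0$ for some absolute $N_0$, and one bounds, for a.e.\ $t\in I$, $\|\Delta_j(T_au)(t)\|_{L^\infty}\les M^m_0(a(t))\sum_{|k-j|\le N_0}2^{mk}\|\Delta_k u(t)\|_{L^\infty}$; the target Besov norm is then recovered by multiplying by $2^{j(s-m)}$ and summing in $\ell^q_j$. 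For part (ii) one starts instead from the decomposition of $T_aT_b-T_{ab}$ used there, whose block estimates only involve $M^m_\rho(a(t))$, $M^{m'}_0(b(t))$ and the symmetric pair.

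From there the Chemin--Lerner version is a matter of bookkeeping. I would take the $L^p$-norm in $t$ of the block inequality above, use the triangle inequality in $L^p(I)$, and replace the pointwise semi-norms $M^m_0(a(t))$ (resp.\ $M^m_\rho(a(t))$, $M^{m'}_0(b(t))$) by their suprema over $t\in I$, which are by definition $\cM^m_0(a)$ (resp.\ $\cM^m_\rho(a)$, $\cM^{m'}_0(b)$) in the sense of Notation~\ref{symbolnorm:z}. This yields $\|\Delta_j(T_au)\|_{L^p(I;L^\infty)}\les \cM^m_0(a)\sum_{|k-j|\le N_0}2^{mk}\|\Delta_k u\|_{L^p(I;L^\infty)}$; multiplying by $2^{j(s-m)}$ and summing in $\ell^q_j$ (the finite frequency shift $|k-j|\le N_0$ being harmless, as usual) gives (i), and the same manipulation applied to the $T_aT_b-T_{ab}$ decomposition gives (ii), with the obvious bookkeeping of Besov third indices.

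The reason this is painless --- and the thing I would stress --- is that in both (i) and (ii) the time-integrability exponent is the \emph{same} ($p$) on the source and the target, so no H\"older inequality in time is required and nothing is lost by moving the $L^p_t$-norm inside the dyadic sums. Consequently there is no genuine obstacle; the only step that deserves a moment's care is confirming that the cited proof of Lemma~\ref{sc:B} really does reduce, \emph{before} the final $\ell^q$ summation, to frequency-localized $L^\infty_x$ estimates in which time enters only through the symbol semi-norms. Once that is checked, the statement for arbitrary $p,q\in[1,\infty]$ and $\rho\in[0,1]$, including the $\wL^q$-spaces obtained with $p=r=\infty$, follows by the identical computation.
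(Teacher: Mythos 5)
Your proposal is correct and follows essentially the same route as the paper, which simply observes that Lemma \ref{sc:CL} ``follows easily from the proof of Lemma \ref{sc:B}'': since that proof reduces to frequency-localized $L^\infty_x$ estimates in which $t$ enters only through the symbol semi-norms, one takes the $L^p$-norm in time block by block, bounds $M^m_\rho(a(t))$ by $\cM^m_\rho(a)$, and sums in $\ell^q_j$ exactly as you describe. The key point you stress---that the time exponent is the same on source and target, so no H\"older inequality in time is needed---is precisely why the paper offers no further argument.
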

Finally, the following lemma is a direct consequence of Lemma \ref{pest:B}.
\begin{lemm}[see \protect{\cite[Lemmas 2.17, 218]{Wa2}}] 
Let $I=[0, T]$.\\
1.  Let $s\in \xR$ and $q, q_1, q_2, r\in [1, \infty]$ with $\frac 1q=\frac{1}{q_1}+\frac {1}{q_2}$. Then for any $\sigma>0$ we have
\bq
\Vert T_au\Vert_{\wL^q(I; B^s_{\infty,r})}\le K\min \big(\Vert a\Vert_{\wL^{q_1}(I; L^\infty)}\Vert u\Vert_{\wL^{q_2}(I; B^{s}_{\infty,r})},\Vert a\Vert_{\wL^{q_1}(I; \Cs^{-\sigma})}\Vert u\Vert_{\wL^{q_2}(I; B^{s+\sigma}_{\infty,r})} \big) \label{pest:CL}.
\eq
2. Let $s>0$ and $q, q_1, q_2, r\in [1, \infty]$ with $\frac 1q=\frac{1}{q_1}+\frac {1}{q_2}$. Then for any $\sigma\in \xR$ we have\bq
\Vert R(a, u)\Vert_{\wL^q(I; B^s_{\infty,r})}\le K\Vert a\Vert_{\wL^{q_1}(I; \Cs^{-\sigma})}\Vert u\Vert_{\wL^{q_2}(I; B^{s+\sigma}_{\infty,r})}. \label{Bony:CL}
\eq
\end{lemm}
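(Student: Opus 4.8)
The plan is to adapt, essentially verbatim, the proofs of the static paraproduct estimates \eqref{pest:B} and \eqref{Bony:B} of Lemma \ref{pr:B}, the only new ingredient being a Hölder inequality in the time variable performed at the level of each dyadic block; this is exactly what converts the single Lebesgue exponent into the triple $\tfrac1q=\tfrac1{q_1}+\tfrac1{q_2}$. Throughout I would not distinguish $T_a$ and $\widetilde T_a$ (cf.\ Remark \ref{TwT}).

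For \eqref{pest:CL}, I would first write $T_a u=\sum_{k\ge0}S_{k-3}a\,\Delta_k u$ and recall the frequency localization $\Delta_j(S_{k-3}a\,\Delta_k u)=0$ for $|j-k|>N_0$ with $N_0$ a fixed integer depending only on the Littlewood--Paley cut-offs. Then I would bound $\|\Delta_j T_a u\|_{L^q(I;L^\infty)}$ by the finite sum over $|k-j|\le N_0$ of $\|S_{k-3}a\|_{L^{q_1}(I;L^\infty)}\|\Delta_k u\|_{L^{q_2}(I;L^\infty)}$, using Hölder in time and the uniform $L^\infty$-boundedness of $\Delta_j$. For the first term in the minimum, $\|S_{k-3}a\|_{L^{q_1}(I;L^\infty)}\le C\|a\|_{\wL^{q_1}(I;L^\infty)}$; for the second, I would use $\sigma>0$ together with $S_{k-3}a=\sum_{j'\le k-4}\Delta_{j'}a$ to gain the factor $\|S_{k-3}a\|_{L^{q_1}(I;L^\infty)}\le C2^{\sigma k}\|a\|_{\wL^{q_1}(I;\Cs^{-\sigma})}$. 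Multiplying by $2^{js}$ (with $2^{js}\sim 2^{ks}$ on the band $|k-j|\le N_0$) and taking the $\ell^r$ norm in $j$ --- a finite-band convolution in the index, hence bounded on $\ell^r$ --- yields \eqref{pest:CL}.

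For \eqref{Bony:CL}, I would write $R(a,u)=\sum_{|k-k'|\le1}\Delta_k a\,\Delta_{k'}u$ and observe that each summand is frequency-localized in a ball of radius $\lesssim 2^k$, so $\Delta_j$ of it vanishes unless $k\ge j-N_1$ for a fixed $N_1$. After Hölder in time I would reach the bound $2^{js}\|\Delta_j R(a,u)\|_{L^q(I;L^\infty)}\lesssim\sum_{k\ge j-N_1}2^{(j-k)s}\big(2^{\sigma k}\|\Delta_k a\|_{L^{q_1}(I;L^\infty)}\big)\big(2^{(s-\sigma)k}\|\Delta_{k'}u\|_{L^{q_2}(I;L^\infty)}\big)$, where $k'\sim k$. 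Since $s>0$, the weight $2^{(j-k)s}\mathbf 1_{\{k\ge j-N_1\}}$ is summable uniformly in $j$, so Young's inequality for series in the $j$-index closes the estimate and gives \eqref{Bony:CL}.

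No genuine obstacle arises here: the whole argument is a transcription of the static proof of Lemma \ref{pr:B}. The single point requiring care --- the closest thing to a ``hard part'' --- is that in the Chemin--Lerner norm the $\ell^r$-summation over dyadic blocks sits \emph{outside} the $L^q$-norm in time, so the Hölder split in $t$ must be carried out block by block \emph{before} summing in $j$; this is precisely why the relation $\tfrac1q=\tfrac1{q_1}+\tfrac1{q_2}$ is imposed, and it is also why the finite-band frequency interaction (for the paraproduct) and the one-sided geometric decay from $s>0$ (for the remainder) are needed so that the final $j$-summation remains a bounded operation on $\ell^r$.
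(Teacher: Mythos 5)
Your proof is correct and follows essentially the same route as the paper, which gives no independent argument but simply declares the lemma a direct consequence of the static Besov estimates of Lemma \ref{pr:B} (citing \cite{Wa2}): the intended justification is exactly your blockwise argument, namely the dyadic decomposition of the paraproduct and remainder, H\"older in time on each block using $\tfrac1q=\tfrac1{q_1}+\tfrac1{q_2}$, and then the $\ell^r$ summation (finite-band interaction for $T_au$, one-sided geometric decay from $s>0$ for $R(a,u)$). Your write-up fills in those standard details accurately, so there is nothing to correct.
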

\subsection{Parabolic regularity}
Define the following interpolation spaces
\bq
\begin{aligned}
	X^\mu(J)&=C^0_z(I;H^\mu(\R^d))\cap L^2_z(J;H^{\mu+\mez}(\R^d)),\\
	Y^\mu(J)&=L^1_z(I;H^\mu(\R^d))+L^2_z(J;H^{\mu-\mez}(\R^d)).
\end{aligned}
\eq
\begin{theo}[see \protect{\cite[Proposition~2.18]{ABZ3}}]\label{parabolic:Sob}
Let $\rho\in (0, 1),~J=[z_0, z_1]\subset \xR,~p\in \Gamma^1_{\rho}(\xR^d\times J),~q\in \Gamma^0_0(\xR^d\times J)$ with the assumption that 
\[
\RE p(z; x, \xi)\ge c|\xi|,
\]
for some constant $c>0$. Assume that $w$ solves
\[
\partial_zw+T_pw=T_qw+f,\quad w\arrowvert_{z=z_0}=w_0.
\]
Then for any $r\in\R$, if $f\in Y^r(J)$ and $w_0\in H^r$, we have~$w\in X^r(J)$ and

\[
\lA w\rA_{X^r(J)}\le K\left\{ \lA w_0\rA_{ H^r}+\lA f\rA_{ Y^r(J)}\right\}.
\]
for some constant $K=K(\mathcal{M}^1_{\rho}(\rho), \mathcal{M}^0_0(q), c^{-1})$ nondecreasing in each argument.
\end{theo}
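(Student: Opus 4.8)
The plan is to reduce the statement to an $L^2$-energy estimate for $\langle D_x\rangle^r w$, using the parabolic positivity of the principal symbol through a paradifferential Gårding inequality, and then to upgrade the resulting a priori bound to a genuine regularity statement by a routine approximation argument.

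The first step is the coercivity estimate. Since $\RE p(z;x,\xi)\ge c|\xi|$ and $p\in\Gamma^1_\rho$, the symbol $b:=\sqrt{\RE p}$ belongs to $\Gamma^{1/2}_\rho$ and is elliptic of order $1/2$, so by Theorem~\ref{theo:sc}$(ii)$ one has $T_bT_b=T_{\RE p}+R_0$ with $R_0$ of order $1-\rho$, and $T_{1/b}$ is a parametrix yielding $\|u\|_{H^{1/2}}\le C(\|T_bu\|_{L^2}+\|u\|_{L^2})$. Together with Theorem~\ref{theo:sc}$(iii)$ (giving $(T_b)^*=T_b+R_1$, $R_1$ of order $1/2-\rho$) this produces $\RE(T_{\RE p}u,u)_{L^2}=\|T_bu\|_{L^2}^2+O(\|u\|_{H^{(1-\rho)/2}}^2)\ge \tfrac{c'}{2}\|u\|_{H^{1/2}}^2-C\|u\|_{L^2}^2$. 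Since $\IM p$ is real-valued, $T_{\IM p}$ is self-adjoint modulo an operator of order $1-\rho$, hence $\RE(iT_{\IM p}u,u)_{L^2}=O(\|u\|_{H^{(1-\rho)/2}}^2)$, and altogether
\[
\RE(T_pu,u)_{L^2}\ \ge\ \tfrac{c'}{2}\|u\|_{H^{1/2}}^2-C\|u\|_{L^2}^2,\qquad c',C \text{ depending only on }\mathcal M^1_\rho(p),\,c^{-1}.
\]

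Next I set $w_r:=\langle D_x\rangle^r w$ and commute $\langle D_x\rangle^r$ through the equation. Because $\langle\xi\rangle^r$ is $x$-independent, Theorem~\ref{theo:sc}$(ii)$ gives that $[\langle D_x\rangle^r,T_p]$ is of order $r+1-\rho$ and $[\langle D_x\rangle^r,T_q]$ of order $r$, so $w_r$ solves $\partial_zw_r+T_pw_r=T_qw_r+g_r$ with $g_r=\langle D_x\rangle^rf+[\langle D_x\rangle^r,T_q]w-[\langle D_x\rangle^r,T_p]w$. Assuming first $w\in X^r(J)$, take the real part of the $L^2(\xR^d)$-pairing of this equation with $w_r(z)$ and use the Gårding inequality to get $\tfrac{d}{dz}\|w_r\|_{L^2}^2+c'\|w_r\|_{H^{1/2}}^2\le C\|w_r\|_{L^2}^2+2|(g_r,w_r)_{L^2}|$; the term $(T_qw_r,w_r)$ is absorbed into $C\|w_r\|_{L^2}^2$ because $q$ has order $0$. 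For the forcing: $[\langle D_x\rangle^r,T_q]w$ lies in $H^{1/2}\subset H^{-1/2}$ with norm $\lesssim\|w\|_{H^{r+1/2}}$, pairs against $\|w_r\|_{H^{1/2}}$ and is handled by Young; $[\langle D_x\rangle^r,T_p]w$ lies in $H^{\rho-1/2}\subset H^{-1/2}$ with norm $\lesssim\|w\|_{H^{r+1/2}}$, and pairing with $w_r$ and interpolating $\|w\|_{H^{r+1/2-\rho}}$ between $H^r$ and $H^{r+1/2}$ gives a top-order part absorbed by $c'\|w_r\|_{H^{1/2}}^2$ and a lower part absorbed into $\|w_r\|_{L^2}^2$; finally, writing $f=f_1+f_2$ with $f_1\in L^1_zH^r$, $f_2\in L^2_zH^{r-1/2}$, the pairing with $\langle D_x\rangle^rf_1$ is bounded by $\|f_1\|_{L^1_zH^r}\sup_z\|w_r\|_{L^2}$ and that with $\langle D_x\rangle^rf_2$ by $\varepsilon\|w_r\|_{H^{1/2}}^2+C_\varepsilon\|f_2\|_{H^{r-1/2}}^2$. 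Integrating in $z$ from $z_0$, absorbing the $\varepsilon$-terms on the left, moving the $\sup_z\|w_r\|_{L^2}$ contribution to the left after taking the supremum over $J$, and applying Grönwall's inequality to $z\mapsto\|w_r(z)\|_{L^2}^2$ yields $\|w\|_{X^r(J)}\le K(\|w_0\|_{H^r}+\|f\|_{Y^r(J)})$ with $K=K(\mathcal M^1_\rho(p),\mathcal M^0_0(q),c^{-1})$ nondecreasing in each argument.

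To finish, I remove the a priori hypothesis $w\in X^r(J)$ by a standard approximation: replace $f$ by $S_nf$ and $w_0$ by $S_nw_0$ (and, if necessary, $p$ by $p+n^{-1}\langle\xi\rangle$ to keep strict parabolicity), obtaining solutions $w_n$ smooth in $x$ to which the above estimate applies; the same argument applied to $w_n-w_m$ shows $(w_n)$ is Cauchy in $X^r(J)$, and the limit solves the original problem and coincides with $w$ by uniqueness (itself the $r=0$ estimate with zero data). Continuity $w\in C_z(J;H^r)$ follows since the equation gives $\partial_zw\in L^1_z(J;H^{r-1})$, hence $w\in C_z(J;H^{r-1})$, upgraded to $C_z(J;H^r)$ via the energy identity (continuity of $z\mapsto\|w(z)\|_{H^r}^2$) together with weak continuity. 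The main obstacle is the paradifferential Gårding inequality and the bookkeeping of $[\langle D_x\rangle^r,T_p]w$: one must verify that the $\rho$-gain is exactly enough to place this commutator in $Y^r(J)$ and that its top-order part is genuinely absorbed by the coercive term rather than only by Grönwall.
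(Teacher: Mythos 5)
This statement is not proved in the paper at all: it is quoted from [ABZ3, Proposition 2.18] (see the citation attached to Theorem \ref{parabolic:Sob}), so there is no internal argument to compare yours with line by line. Your route — a paradifferential G\aa rding inequality built from $T_b$ with $b=\sqrt{\RE p}\in\Gamma^{1/2}_\rho$ and the symbolic calculus of Theorem \ref{theo:sc}, conjugation by $\langle D_x\rangle^r$, commutator bookkeeping exploiting the $\rho$-gain for the $T_p$-commutator, the splitting of $f$ according to the definition of $Y^r(J)$, and a Gr\"onwall/absorption argument, followed by regularization to upgrade the a priori bound — is the classical energy-method proof of maximal parabolic regularity, and it is sound and essentially self-contained given the calculus recalled in the appendix. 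What it buys, compared with simply invoking the reference, is an elementary proof whose constant visibly depends only on $\mathcal M^1_\rho(p)$, $\mathcal M^0_0(q)$ and $c^{-1}$, uniformly in $z$.

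Two points need tightening. First, your treatment of $[\langle D_x\rangle^r,T_q]w$ does not close as written: bounding it by $\lA w\rA_{H^{r+1/2}}$ and pairing against $\lA w_r\rA_{H^{1/2}}$ produces a term of size $\lA w_r\rA_{H^{1/2}}^2$, which Young's inequality cannot make small, since there is no gain in this commutator ($q\in\Gamma^0_0$ only). The fix is immediate and should replace that sentence: because $T_q$ has order $0$, one has $\lA[\langle D_x\rangle^r,T_q]w\rA_{L^2}\les \lA w\rA_{H^r}\sim\lA w_r\rA_{L^2}$, so this term goes directly into the Gr\"onwall term $C\lA w_r\rA_{L^2}^2$; only the $T_p$-commutator requires the interpolation/absorption you describe, and that part is correct. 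Second, the removal of the hypothesis $w\in X^r(J)$ is the genuinely delicate step and is only gestured at: to identify $w$ with the limit of the regularized solutions you need uniqueness, i.e. the $L^2$-energy identity, in whatever class $w$ is a priori assumed to lie in, and the identity $\frac{\d}{\d z}\lA w_r\rA_{L^2}^2=2\RE(\partial_z w_r,w_r)$ must itself be justified (by smoothing in $x$ or a Lemma \ref{lemm:inter}-type argument), which again presupposes some a priori space for $w$. In this paper the issue is moot because Theorem \ref{parabolic:Sob} is only ever applied to functions already known to belong to some $X^{r_0}$, so the statement is used as a bootstrap; your proof should either make that standing assumption explicit or spell out the existence theory for the regularized problems and the low-regularity uniqueness step.
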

\begin{theo}[see \protect{\cite[Proposition~3.1]{WaZh}}]\label{parabolic:Hol}
Let $r\in \xR, \ell\in [1, \infty]$ and $1\le q\le p\le \infty$. Let $\rho\in (0, 1),~J=[z_0, z_1]\subset \xR,~a\in \Gamma^1_{\rho}(\xR^d\times J)$ with the assumption that 
\[
\RE a(z; x, \xi)\ge c|\xi|,
\]
for some constant $c>0$. Assume that $w$ solves
\[
\partial_zw+T_aw=F,\quad w\arrowvert_{z=z_0}=w_0.
\]
If 
\[
w_0\in B^r_{\infty, \ell},~ F\in \wL^q(J, B^{r-1+\frac{1}{q}}_{\infty,\ell})
\]
and there exists $\delta>0$ such that $w\in \wL^p(J, \Cs^{-\delta})$, then we have $w\in \wL^p(J, B^{r+\frac{1}{p}}_{\infty, \ell})$ and 
\begin{align*}
\lA w\rA_{\wL^p(J, B^{r+\frac{1}{p}}_{\infty, \ell})}\le K\Big\{ \lA w_0\rA_{B^r_{\infty, \ell}}+\lA F\rA_{\wL^q(J, B^{r-1+\frac{1}{q}}_{\infty,\ell})}+\lA w\rA_{\wL^p(J, \Cs^{-\delta})}\Big\}.
\end{align*}
for some constant $K=K(\mathcal{M}^1_{\rho}(a), c^{-1})$ nondecreasing in each argument. When $p=\infty$, the left-hand side can be replaced by $\lA w\rA_{C(J, B^{r}_{\infty, \ell})}$.
\end{theo}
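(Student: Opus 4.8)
The plan is to prove Theorem~\ref{parabolic:Hol} by a Littlewood--Paley decomposition combined with a scalar parabolic estimate for each dyadic block, the point being that the ellipticity $\RE a\ge c|\xi|$ makes $\partial_z+T_a$ behave, at frequency $2^j$, like the model operator $\partial_z+c2^j$. First I would reduce matters to an \emph{a priori estimate}: assuming $w$ is smooth enough that all quantities below are finite, I establish the quantitative bound, and then recover the stated regularity ($w\in\wL^p(J;B^{r+1/p}_{\infty,\ell})$, resp.\ $C(J;B^r_{\infty,\ell})$ when $p=\infty$) by a routine regularization argument: mollify $w_0$, $F$ and the symbol $a$, apply the a priori estimate to the (smooth) solutions of the regularized problems, and pass to the limit, using that the constant $K=K(M^1_\rho(a),c^{-1})$ is uniform along the regularization.

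For the a priori estimate, set $w_j=\Delta_j w$, $F_j=\Delta_j F$, $w_{0,j}=\Delta_j w_0$, so that $w_j$ solves $\partial_z w_j+T_a w_j=F_j+R_j$ with $R_j:=[\Delta_j,T_a]w$, a term spectrally localized near $|\xi|\sim 2^j$ by the paraproduct structure. The core step is the block estimate: testing the equation for $w_j$ against $|w_j|^{p-2}\bar w_j$ for finite $p$ and integrating in $x$, a G\aa rding/maximum-principle argument based on $\RE a\ge c|\xi|$ (the symbolic-calculus error being of lower order, controlled by $M^1_\rho(a)$, hence harmless once $2^j$ is large; the finitely many low $j$ are treated directly by Gr\"onwall on the bounded interval $J$), then letting $p\to\infty$, yields
\[
\tfrac{d}{dz}\|w_j(z)\|_{L^\infty_x}+\tfrac{c}{2}2^j\|w_j(z)\|_{L^\infty_x}\le C\big(\|F_j(z)\|_{L^\infty_x}+\|R_j(z)\|_{L^\infty_x}\big),
\]
whence, by Duhamel, a pointwise-in-$z$ bound with convolution kernel $e^{-\frac{c}{2}2^j(z-s)}$. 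Multiplying by $2^{j(r+1/p)}$ and taking $L^p_z(J)$ norms: for the data term $\|e^{-\frac c2 2^j(\cdot-z_0)}\|_{L^p_z}\les 2^{-j/p}$ produces exactly $2^{jr}\|w_{0,j}\|_{L^\infty_x}$, summable in $\ell^\ell$ to $\|w_0\|_{B^r_{\infty,\ell}}$; for the $F_j$ term, Young's convolution inequality with $\|e^{-\frac c2 2^j\cdot}\|_{L^m_z}\les 2^{-j/m}$, $1+\frac1p=\frac1m+\frac1q$, makes the powers of $2^j$ cancel, since $(r+\frac1p)-\frac1m-(r-1+\frac1q)=0$, giving $\|F\|_{\wL^q(J;B^{r-1+1/q}_{\infty,\ell})}$ after summation.

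The commutator $R_j$ is the term requiring care. Since $a\in\Gamma^1_\rho$ with $\rho>0$, the operator $[\Delta_j,T_a]$ is of order $1-\rho$ and localized near $2^j$ (this follows from the symbolic calculus and Proposition~\ref{regu<0}); feeding $R_j$ into the Duhamel bound gains a full derivative from the parabolic kernel, so its net contribution is bounded by $C(M^1_\rho(a))\,\|w\|_{\wL^p(J;B^{r+1/p-\rho}_{\infty,\ell})}$, i.e.\ $\rho$ derivatives \emph{below} the target. Interpolating $B^{r+1/p-\rho}_{\infty,\ell}$ between the target $B^{r+1/p}_{\infty,\ell}$ and $\Cs^{-\delta}$ (taking $\delta>0$ small), this is $\le\varepsilon\|w\|_{\wL^p(J;B^{r+1/p}_{\infty,\ell})}+C_\varepsilon\|w\|_{\wL^p(J;\Cs^{-\delta})}$; choosing $\varepsilon$ small absorbs the first term into the left-hand side and leaves precisely the $\|w\|_{\wL^p(J;\Cs^{-\delta})}$ appearing in the statement. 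When $p=\infty$ one replaces $L^\infty_z$ by $C_z$ throughout, the continuity in $z$ with values in $B^r_{\infty,\ell}$ coming from the Duhamel representation and the strong continuity of the block semigroups. The main obstacle is the rigorous per-block parabolic estimate: establishing the $L^\infty_x$ decay uniformly in the frequency scale with the correct dependence of $K$ on $M^1_\rho(a)$ and $c^{-1}$, i.e.\ the passage from the $L^p_x$ energy inequalities to the $L^\infty_x$ Duhamel bound while tracking the G\aa rding error, and then making the absorption step (hence the a priori-to-regularity reduction) precise.
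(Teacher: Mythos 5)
This theorem is not proved in the paper at all: it is quoted verbatim from Wang--Zhang \cite{WaZh} (Proposition 3.1 there), so your attempt can only be measured against the standard argument of that reference. Your overall architecture does coincide with it: decompose $w_j=\Delta_j w$, treat $[\Delta_j,T_a]w$ as an operator of order $1-\rho$ localized at frequency $2^j$, get a per-block Duhamel bound with kernel $e^{-c'2^j(z-s)}$, apply Young's inequality in $z$ with $1+\frac1p=\frac1m+\frac1q$ (this bookkeeping, and the summation in $\ell^\ell$ to the Chemin--Lerner norms, is correct), and close by interpolating the $B^{r+\frac1p-\rho}_{\infty,\ell}$ commutator contribution between the target norm and $\wL^p(J,\Cs^{-\delta})$ and absorbing, the a priori finiteness being restored either by regularization or by a bootstrap in increments of $\rho$ starting from $\Cs^{-\delta}$.

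The genuine gap is in the step you yourself flag as the main obstacle, and the mechanism you propose for it does not work. The differential inequality
$\frac{\d}{\d z}\Vert w_j(z)\Vert_{L^\infty_x}+\frac{c}{2}2^j\Vert w_j(z)\Vert_{L^\infty_x}\le C\big(\Vert F_j\Vert_{L^\infty_x}+\Vert R_j\Vert_{L^\infty_x}\big)$
cannot be obtained by testing against $|w_j|^{p-2}\bar w_j$ and letting $p\to\infty$: G\aa rding-type lower bounds for paradifferential operators are intrinsically $L^2$ statements and do not yield $\RE\int (T_aw_j)\,|w_j|^{p-2}\bar w_j\ge c'2^j\Vert w_j\Vert_{L^p}^p$ with constants uniform in $p$ and $j$; and the $p=\infty$ endpoint would amount to an $L^\infty$-contraction (maximum principle) for the semigroup generated by $-T_a$, which fails for general nonlocal first-order elliptic symbols with $\RE a\ge c|\xi|$ --- that semigroup is only $L^\infty$-bounded with a constant $K(M^1_\rho(a),c^{-1})\ge 1$, so no pointwise-in-$z$ differential inequality of the above form holds; only the integrated Duhamel bound with a multiplicative constant is true. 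The correct derivation (the one in \cite{WaZh}, and in the analogous Proposition 2.4 of \cite{ABZ4}) is by parametrix: one quantizes $e^{-\int_s^z a(\tau,\cdot,\xi)\,d\tau}$ cut off at frequency $2^j$, checks the symbol bounds $|\partial_\xi^\alpha e^{-\int_s^z a}|\les \langle\xi\rangle^{-|\alpha|}e^{-\frac c2 2^j(z-s)}$ uniformly in $j$, deduces an $L^1_x$ kernel bound and hence the $L^\infty\to L^\infty$ Duhamel estimate with decay $e^{-\frac c2 2^j(z-s)}$, the parametrix error being of order $1-\rho$ and absorbed exactly like your commutator term. Without this (or an equivalent kernel argument), the central estimate of your proof, and with it the dependence $K=K(M^1_\rho(a),c^{-1})$, is unsubstantiated; the rest of your write-up would then go through as you describe.
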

Finally, we recall a classical interpolation lemma.
 \begin{lemm}[see \protect{\cite[th~3.1]{Lions}}]\label{lemm:inter}
Let $I = (-1,0)$ and $s \in \xR$. Let   $u \in L_z^2(I, H^{s+ \mez}(\xR^d))$ such that 
$\partial_z u \in L_z^2(I, H^{s-\mez}(\xR^d)).$ Then $u \in C^0([-1,0], H^{s}(\xR^d))$ 
and there exists an absolute constant $C>0$ such that
$$
\sup_{z \in [-1,0]} \| u(z, \cdot) \|_{ H^{s}({\mathbf{R}}^d))} 
\leq C \bigl(\|u\|_{L^2(I,H^{s+ \mez}(\xR^d))}+  \|\partial_z u\|_{L^2(I,H^{s- \mez}(\xR^d))} \bigr).
$$
\end{lemm}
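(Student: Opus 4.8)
The plan is to prove this by reducing, via the Fourier transform in~$x$, to a one‑dimensional (in~$z$) energy identity; after this reduction the $H^\sigma$ norms become the $\langle\xi\rangle^{\sigma}$‑weighted $L^2$ norms of $\widehat u(z,\xi)$ and everything is elementary. First I would record a preliminary reduction: since $I=(-1,0)$ is bounded, $\partial_z u\in L^2_z(I;H^{s-\mez})\subset L^1_z(I;H^{s-\mez})$, so $u$ has a representative, still written $u$, which is absolutely continuous from $\bar I=[-1,0]$ into $H^{s-\mez}$ with $u(z)=u(z')+\int_{z'}^{z}\partial_z u(\tau)\,\d\tau$ for all $z,z'\in\bar I$. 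All subsequent statements refer to this representative, so that $\sup_{z\in[-1,0]}\|u(z)\|_{H^s}$ is meaningful (a priori possibly $+\infty$), and on the Fourier side $\widehat u(z,\xi)=\widehat u(z',\xi)+\int_{z'}^{z}\partial_z\widehat u(\tau,\xi)\,\d\tau$ for all $z,z'$ and a.e.~$\xi$.

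The core estimate is pointwise in~$\xi$. For a.e.~$\xi$ the map $z\mapsto\langle\xi\rangle^{2s}|\widehat u(z,\xi)|^2$ is absolutely continuous and, by Cauchy–Schwarz and $2ab\le a^2+b^2$,
\[
\Big|\frac{d}{dz}\big(\langle\xi\rangle^{2s}|\widehat u(z,\xi)|^2\big)\Big|\le 2\langle\xi\rangle^{2s}|\widehat u(z,\xi)|\,|\partial_z\widehat u(z,\xi)|\le \langle\xi\rangle^{2s+1}|\widehat u(z,\xi)|^2+\langle\xi\rangle^{2s-1}|\partial_z\widehat u(z,\xi)|^2 .
\]
Integrating between an arbitrary $z\in\bar I$ and $z'\in I$, then averaging over $z'\in I$ (using $|I|=1$ and $\langle\xi\rangle\ge1$, so that $\langle\xi\rangle^{2s}\le\langle\xi\rangle^{2s+1}$), gives, uniformly in $z\in\bar I$,
\[
\langle\xi\rangle^{2s}|\widehat u(z,\xi)|^2\le g(\xi):=2\int_I\langle\xi\rangle^{2s+1}|\widehat u(\tau,\xi)|^2\,\d\tau+\int_I\langle\xi\rangle^{2s-1}|\partial_z\widehat u(\tau,\xi)|^2\,\d\tau .
\]
Integrating in~$\xi$ and using Plancherel yields $g\in L^1(\xR^d)$ and
\[
\sup_{z\in[-1,0]}\|u(z)\|_{H^s}^2\le C\big(\|u\|_{L^2(I;H^{s+\mez})}^2+\|\partial_z u\|_{L^2(I;H^{s-\mez})}^2\big)
\]
with $C$ absolute, which (after taking square roots) is the asserted inequality.

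It remains to prove continuity, $u\in C^0([-1,0];H^s)$, and here I would argue by a high/low frequency splitting. For $z,z'\in\bar I$ and a cutoff parameter $N$, the high‑frequency part of $\|u(z)-u(z')\|_{H^s}^2$ is bounded by $4\int_{|\xi|>N}g(\xi)\,\d\xi$, which tends to $0$ as $N\to\infty$ uniformly in $z,z'$ since $g\in L^1$; the low‑frequency part is bounded, using $|\widehat u(z,\xi)-\widehat u(z',\xi)|^2\le|z-z'|\int_I|\partial_z\widehat u(\tau,\xi)|^2\,\d\tau$ and $\langle\xi\rangle^{2s}\le\langle N\rangle\langle\xi\rangle^{2s-1}$ on $\{|\xi|\le N\}$, by $|z-z'|\,\langle N\rangle\,\|\partial_z u\|_{L^2(I;H^{s-\mez})}^2$. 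An $\eps/2$ argument (fix $N$ large, then let $|z-z'|\to0$) gives continuity into~$H^s$; since $z\mapsto u(z)$ is already continuous into $H^{s-\mez}$ there is no ambiguity of representatives. The only genuinely delicate point is precisely this last step — upgrading continuity from $H^{s-\mez}$ to $H^{s}$ — which is what forces the separate treatment of high and low frequencies rather than a single crude bound; the rest is the classical one‑line weighted energy computation on the Fourier side.
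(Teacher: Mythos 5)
Your proof is correct and complete. The paper does not prove this lemma at all --- it simply recalls it from Lions--Magenes --- so there is no internal argument to compare against; your Fourier-side energy computation (pointwise-in-$\xi$ absolute continuity, the weighted Cauchy--Schwarz bound $2\langle\xi\rangle^{2s}|\widehat u||\partial_z\widehat u|\le\langle\xi\rangle^{2s+1}|\widehat u|^2+\langle\xi\rangle^{2s-1}|\partial_z\widehat u|^2$, averaging in $z'$, and the high/low frequency splitting for continuity) is the standard concrete proof of this trace lemma, and is essentially the Fourier-multiplier instantiation of the abstract interpolation argument in the cited reference. You are also right to flag the continuity step as the only delicate point: the crude sup bound alone only gives boundedness into $H^s$ plus continuity into $H^{s-\mez}$, i.e.\ weak continuity into $H^s$, and the uniform smallness of the high-frequency tail coming from $g\in L^1$ is exactly what upgrades this to strong continuity.
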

\section*{Acknowledgment}
\hk Quang Huy Nguyen was partially supported by the labex LMH through the grant no ANR-11-LABX-0056-LMH in the ``Programme des Investissements d'Avenir'' and by Agence Nationale de la Recherche project  ANA\'E ANR-13-BS01-0010-03. We  would like to sincerely thank T.Alazard, N.Burq and C.Zuily for many fruitful discussions, suggestions when this work was preparing, as well as their helpful comments at the final stage of the work. We thank the referee for his constructive comments, which have motivated us to improve significantly an earlier version of Theorem \ref{intro:theo:blowup}.

\end{document}